\newtheorem{thm}{Theorem}[section]
\newtheorem{cor}[thm]{Corollary}
\newtheorem{lem}[thm]{Lemma}
\newtheorem{rmk}[thm]{Remark}
\newcommand{\R}{{\mathbb{R}}}
\newcommand{\1}{\partial}
\newcommand{\2}{\overline}
\newcommand{\3}{\varepsilon}
\newcommand{\4}{\widetilde}
\def\ni{\noindent}
\begin{document}
\title{Existence and properties of ancient solutions of the
Yamabe flow}
\author{Shu-Yu Hsu\\
%\thanks{ }\\
Department of Mathematics\\
National Chung Cheng University\\
168 University Road, Min-Hsiung\\
Chia-Yi 621, Taiwan, R.O.C.\\
e-mail: shuyu.sy@gmail.com}
\date{June 11, 2016}
\smallbreak \maketitle
\begin{abstract}
Let $n\ge 3$ and $m=\frac{n-2}{n+2}$. We construct  $5$-parameters, $4$-parameters, and $3$-parameters ancient solutions of the equation $v_t=(v^m)_{xx}+v-v^m$, $v>0$, in $\R\times (-\infty,T)$ for some $T\in\R$. This equation arises in the study of Yamabe flow. We obtain various properties of the ancient solutions of this equation including exact decay rate of ancient solutions as $|x|\to\infty$. We also prove that both the $3$-parameters ancient solution and the $4$-parameters ancient solution are singular limit solution of the $5$-parameters ancient solutions. We also prove the uniqueness of the $4$-parameters ancient solutions. As a consequence we prove that the $4$-parameters ancient solutions that we construct coincide with the $4$-parameters ancient solutions constructed by P.~Daskalopoulos, M.~del Pino, J.~King, and N.~Sesum in \cite{DPKS2}. 
\end{abstract}

\vskip 0.2truein

Key words: ancient solutions, Yamabe flow, decay rate, singular limit solution

AMS 2010 Mathematics Subject Classification: Primary 35K55, 53C44
Secondary  35A01, 35B44

\vskip 0.2truein
\setcounter{section}{0}

\section{Introduction}
\setcounter{equation}{0}
\setcounter{thm}{0}

As observed by P.~Daskalopoulos, M.~del Pino, J.~King, M.~S\'aez, N.~Sesum, and others \cite{DPKS1}, \cite{DPKS2}, \cite{PS}, the metric $g=u^{\frac{4}{n-2}}dy^2$ satisfies the Yamabe flow \cite{B1}, \cite{B2},
\begin{equation}\label{yamabe-flow}
\frac{\1 g}{\1 t}=-Rg
\end{equation}
on $\R^n$, $n\ge 3$, for $0<t<T$, where $R$ is the scalar curvature of the metric $g$, if and only if $u$ satisfies
\begin{equation}\label{u-eqn}
(u^p)_t=\frac{n-1}{m}\Delta u,\quad u>0,\quad\mbox{ in }\R^n\times (0,T)
\end{equation}
where 
\begin{equation*}\label{m-p-value}
m=\frac{n-2}{n+2},\quad p=\frac{n+2}{n-2},
\end{equation*}
and $\Delta$ is the Euclidean laplacian on $\R^n$.
When $u$ is radially symmetric, we can write $g=u^{\frac{4}{n-2}}dy^2=w^{\frac{4}{n-2}}g_{cyl}$ where 
$$
w(x,t)=|y|^{\frac{n-2}{2}}u(y,t),\quad x=\log |y|,
$$ 
and $g_{cyl}=dx^2+g_{S^{n-1}}$ is the cylindrical metric on $S^{n-1}\times\R$, with $g_{S^{n-1}}$ being the standard metric on the unit sphere $S^{n-1}$ in $\R^n$. Since
\begin{equation*}
R=w^{-\frac{n+2}{n-2}}\biggl (-\frac{4(n-1)}{n-2}\Delta_{g_{cyl}}w+R_{g_{cyl}}w\biggr),
\end{equation*} 
where $\Delta_{g_{cyl}}w$ and $R_{g_{cyl}}=(n-1)(n-2)$ are the laplace operator and scalar curvature with respect to the cylindrical metric $g_{cyl}$, by \eqref{yamabe-flow} $w$ satisfies
\begin{equation}\label{w-eqn}
(w^{\frac{n+2}{n-2}})_t=\frac{n-1}{m}\biggl(w_{xx}-\frac{(n-2)^2}{4}w\biggr),\quad w>0,
\end{equation}
in $\R\times (0,T)$. Let $\tau=-\log (T-t)$ and
\begin{equation*}
\4{u}(x,\tau)=(T-t)^{-\frac{n-2}{4}}w(x,t).
\end{equation*}
Then by \eqref{w-eqn},
\begin{equation}\label{tilde-u-eqn}
(\4{u}^p)_{\tau}=\frac{n-1}{m}\biggl(\4{u}_{xx}+\frac{n-2}{4(n-1)}\4{u}^p-\frac{(n-2)^2}{4}\4{u}\biggr),\quad \4{u}>0,
\end{equation}
in $\R\times (-\log T,\infty)$. Let $\4{g}(y,\tau)=\4{u}(x,\tau)^{\frac{4}{n-2}}g_{cyl}$ with $x=\log |y|$. Then $\4{g}=(T-t)^{-1}g$. By \eqref{yamabe-flow}, $\4{g}$ satisfies the normalized Yamabe flow,
\begin{equation*}\label{normalized-yamabe-flow}
\frac{\1\4{g}}{\1\tau}=-(R_{\4{g}}-1)\4{g}
\end{equation*}
in $\R^n\times (-\log T,\infty)$ where $R_{\4{g}}$ is the scalar curvature of $\4{g}$.
Let 
\begin{equation}\label{hat-u-defn}
\hat{u}(x,\tau)=[(n-1)(n-2)]^{-\frac{n-2}{4}}\4{u}\left(\frac{2x}{n-2},\frac{4\tau}{n+2}\right)\quad\mbox{ and }\quad v(x,\tau)=\hat{u}(x,\tau)^p.
\end{equation}
Then \eqref{tilde-u-eqn} is equivalent to
\begin{equation*}\label{hat-u-eqn}
(\hat{u}^p)_{\tau}=\hat{u}_{xx}+\hat{u}^p-\hat{u},\quad\hat{u}>0
\end{equation*}
or
\begin{equation}\label{yamabe-ode}
v_{\tau}=(v^m)_{xx}+v-v^m,\quad v>0.
\end{equation} 
Hence existence of ancient radially symmetric solutions of \eqref{yamabe-flow} with metric $g=u^{\frac{4}{n-2}}dy^2$ is equivalent to the existence of ancient solutions of \eqref{yamabe-ode} in $\R\times (-\infty,T)$ for some constant $T\in\R$.

Existence of $5$-parameters and $4$-parameters ancient solutions of \eqref{yamabe-ode}
for some $T\in\R$ have been constructed by P.~Daskalopoulos, M.~del Pino, J.~King and N.~Sesum, in \cite{DPKS1}, \cite{DPKS2}. In this paper we will construct new $5$-parameters, $4$-parameters, and $3$-parameters ancient solutions of the equation \eqref{yamabe-ode}. As a result of our construction of solutions we obtain exact decay rate of the ancient solutions of \eqref{yamabe-ode}. 

A natural question to ask is whether the $4$-parameters ancient solutions of \eqref{yamabe-ode} that we construct is equal to the $4$-parameters ancient solutions of \eqref{yamabe-ode} constructed by P.~Daskalopoulos, M.~del Pino, J.~King, and N.~Sesum in \cite{DPKS2}. We answer this in the affirmative and prove that the $4$-parameters ancient solutions that we construct coincide with the $4$-parameters ancient solutions of \eqref{yamabe-ode} constructed by P.~Daskalopoulos, M.~del Pino, J.~King, and N.~Sesum in \cite{DPKS2}. In particular under a mild decay condition on the $4$-parameters ancient solutions of \eqref{yamabe-ode} we prove the uniqueness of the $4$-parameters ancient solutions. 

In the paper \cite{HN} F.~Hamel and N.~Nadirashvili proved 
various properties of the ancient solutions of the equation

\begin{equation}\label{kpp-eqn}
u_t=u_{xx}+f(u), u>0,\quad\mbox{ in }\R\times (-\infty,T)
\end{equation}
for some $T\in\R$ where $f(s)>0$ for $0<s<1$, $f(0)=f(1)=0$, $f'(0)>0>f'(1)$ and $f'(s)\le f'(0)$ for any $s\in [0,1]$. In this paper we will prove that many properties of the ancient solutions of \eqref{kpp-eqn} remains valid for the ancient solutions of \eqref{yamabe-ode}. In particular  
the $4$-parameters ancient solution is the singular limit solution of the $5$-parameters ancient solutions and the $3$-parameters ancient solution is the singular limit solution of the $4$-parameters ancient solution, etc.  

Let $u$ be a radially symmetric solution of \eqref{u-eqn} and $\2{u}(y,t)=u(y,t)^p$. Then $\2{u}$ satisfies
\begin{equation}\label{fde}
\2{u}_t=\frac{n-1}{m}\Delta\2{u}^m 
\end{equation}
and
\begin{equation}\label{tilde-u-v-relation}
\2{u}(y,t)=[(n-1)(n-2)(T-t)]^{\frac{1}{1-m}}|y|^{-\frac{2}{1-m}}v\left(\frac{n-2}{2}x,\frac{n+2}{4}\tau\right)
\end{equation}
where $x=\log |y|$, $\tau=-\log (T-t)$, and $v$ is given by \eqref{hat-u-defn}. If $\2{u}$ is a backward radially symmetric self-similar solution of \eqref{fde} that is
\begin{equation}\label{self-similar-soln}
\2{u}(y,t)=(T-t)^{\alpha}f((T-t)^{\beta}|y|)
\end{equation}
where $f$ is a radially symmetric solution of 
\begin{equation}\label{elliptic-eqn}
\Delta f^m+\alpha f+\beta x\cdot\nabla f=0\quad\mbox{ in }\R^n
\end{equation}
and $\beta=\frac{\lambda}{2m}>0$, $\alpha=\frac{2\beta+1}{1-m}$, are some constants, then by the discussion in \cite{DKS} the corresponding function $v(x,\tau)=v_{\lambda}(x,\tau)=v_{\lambda}(x-\lambda\tau)$ of \eqref{tilde-u-v-relation} is a travelling wave solution of \eqref{yamabe-ode} in $\R\times\R$ with
\begin{equation*}
v_{\lambda}(\log(|z|^{\frac{n-2}{2}}))=[(n-1)(n-2)]^{-\frac{1}{1-m}}|z|^{\frac{2}{1-m}}f(z)\quad\forall z\in\R^n
\end{equation*}
or equivalent
\begin{equation}\label{v-f-relation}
v_{\lambda}(x)=[(n-1)(n-2)]^{-\frac{1}{1-m}}e^{px}f\left(e^{\frac{2x}{n-2}}\right)\quad\forall x\in\R.
\end{equation}
By Theorem 1.1 of \cite{H2} for any $\alpha=\frac{2\beta+1}{1-m}$,
\begin{equation*}\label{f-value-at-0}
\beta=\frac{\lambda}{2m}\ge\frac{m}{n-2-nm}=\frac{1}{2}\quad\Leftrightarrow\quad\lambda\ge m=\frac{n-2}{n+2}
\end{equation*}
and $\mu>0$, there exists a unique radially symmetric solution $f$ of \eqref{elliptic-eqn} satisfying $f(0)=\mu$.
By Theorem 1.2 of \cite{H3} and \eqref{v-f-relation} when 
\begin{equation*}
\beta=\frac{\lambda}{2m}\ge\frac{1}{n-2}\quad\Leftrightarrow\quad\lambda\ge\frac{2}{n+2},
\end{equation*} 
the corresponding travelling wave solution $v_{\lambda}(x-\lambda\tau)$ of \eqref{yamabe-ode} in $\R\times \R$ satisfies 
\begin{equation}\label{v-lambda-+limit}
\lim_{x\to -\infty}v_{\lambda}(x)=0, \quad \lim_{x\to\infty}v_{\lambda}(x)=1
\end{equation}
and 
\begin{equation}\label{v-lambda-x=-infty-behavior}
e^{-px}v_{\lambda}(x)\approx C\quad\mbox{ as }x\to -\infty
\end{equation}
for some constant $C>0$. Then by the intermediate value theorem there exists $x_0\in\R$ such that $v_{\lambda}(x_0)=\frac{1}{2}$. By translation if necessary we may assume that $v_{\lambda}(0)=\frac{1}{2}$.
Hence for any $\lambda\ge\max(\frac{n-2}{n+2},\frac{2}{n+2})$ there exists a travelling wave solution $v_{\lambda}(x,\tau)=v_{\lambda}(x-\lambda\tau)$ of \eqref{yamabe-ode}  which satisfies $v_{\lambda}(0)=\frac{1}{2}$ and $v_{\lambda}(x)$ satisfies 
\begin{equation}\label{v-lambda-eqn}
(v^m)_{xx}+\lambda v_x+v-v^m=0,\quad v>0, \quad\mbox{ in }\R
\end{equation} 
and \eqref{v-f-relation} for some radially symmetric solution $f$ of \eqref{elliptic-eqn}. By the results of \cite{DKS}, \cite{DPKS1} and\cite{DPKS2}, for any $\lambda>1$, there exist positive constants $C_{\lambda}$ and $\gamma_{\lambda}$ such that
\begin{equation}\label{v-lambda-x-infty-behavior}
v_{\lambda}(x)=1-C_{\lambda}e^{-\gamma_{\lambda}x}+o(e^{-\gamma_{\lambda}x})\quad\mbox{ and }\quad
v_{\lambda}'(x)=C_{\lambda}\gamma_{\lambda}e^{-\gamma_{\lambda}x}+o(e^{-\gamma_{\lambda}x})\quad\mbox{ as }x\to\infty
\end{equation}
where 
\begin{equation*}
\gamma_{\lambda}=\frac{\lambda p -\sqrt{\lambda^2 p^2-4(p-1)}}{2}
\end{equation*} 
is the smallest root of the equation
\begin{equation}\label{gamma-eqn}
\gamma^2-\lambda p\gamma +p-1=0. 
\end{equation}
Note that for any $\lambda>1$, $\lambda'>1$, $h,h'\in\R$, both $v_{\lambda,h}(x,\tau):=v_{\lambda}(x-\lambda\tau +h)$ and $\2{v}_{\lambda',h'}(x,\tau):=v_{\lambda'}(-x-\lambda'\tau +h')$ are travelling wave solutions of \eqref{yamabe-ode}. By \eqref{v-lambda-x=-infty-behavior} and
\eqref{v-lambda-x-infty-behavior},
\begin{equation}\label{v-lambda-h-value-at-infty}
v_{\lambda,h}(x,\tau)=O(e^{p(x-\lambda\tau +h)})\quad\mbox{ as }x\to -\infty\quad\mbox{ and }\quad\2{v}_{\lambda',h'}(x,\tau)=O(e^{p(-x-\lambda'\tau +h')})\quad\mbox{ as }x\to\infty
\end{equation}
and
\begin{equation}\label{v-lambda-h-value-at-infty2}
\left\{\begin{aligned}
&v_{\lambda,h}(x,\tau)=1-C_{\lambda}e^{-\gamma_{\lambda}(x-\lambda\tau +h)}+o(e^{-\gamma_{\lambda}(x-\lambda\tau +h)})\quad\mbox{ as }\quad x-\lambda\tau +h\to\infty\\
&\2{v}_{\lambda',h'}(x,\tau)=1-C_{\lambda'}e^{-\gamma_{\lambda'}(-x-\lambda'\tau +h')}+o(e^{-\gamma_{\lambda'}(-x-\lambda'\tau +h')})\quad\mbox{ as }\quad -x-\lambda'\tau +h'\to \infty.
\end{aligned}\right.
\end{equation}
Note that by \eqref{v-lambda-x=-infty-behavior} and \eqref{v-lambda-eqn} there exists a constant $C>0$ such that
\begin{equation}\label{v-lambda--infty-eqn}
((v_{\lambda}^m)_x+\lambda v_{\lambda})_x=v_{\lambda}^m-v_{\lambda}=Ce^x+o(e^x)\quad\mbox{ as }x\to-\infty.
\end{equation}
By \eqref{v-lambda-x=-infty-behavior} and the mean value theorem for any $i\in\mathbb{N}$ there exists a constant $x_i\in (-i-1,-i)$ such that
\begin{equation}\label{v'-lambda--infty}
|v_{\lambda}'(x_i)|=|v_{\lambda}(-i-1)-v_{\lambda}(-i)|\le Ce^{-pi}\to 0\quad\mbox{ as }i\to\infty.
\end{equation}
Integrating \eqref{v-lambda--infty-eqn} over $(x_i,x)$ and letting $i\to\infty$,  by \eqref{v-lambda-x=-infty-behavior} and \eqref{v'-lambda--infty},
\begin{align}
&(v_{\lambda}^m)_x+\lambda v_{\lambda}=Ce^x+o(e^x)\quad\mbox{ as }x\to-\infty\notag\\
\Rightarrow\quad &(v_{\lambda}^m)_x=Ce^x+o(e^x)\quad\mbox{ as }x\to-\infty\notag\\
\Rightarrow\quad &v_{\lambda,x}(x)=v_{\lambda}(x)^{1-m}(C'e^x+o(e^x))\quad\mbox{ as }x\to-\infty\notag\\
\Rightarrow\quad &v_{\lambda,x}(x)=C''e^{px}+o(e^{px})\quad\mbox{ as }x\to-\infty\label{v-lambda--infty-behaviour}
\end{align}
for some constants $C'>0$, $C''>0$. Note that by \eqref{v-lambda-x-infty-behavior} and \eqref{v-lambda--infty-behaviour} $v_{\lambda}'\in L^{\infty}(\R)$ for any $\lambda>1$.
Let $k_0>0$, $h_0,h_0'\in\R$. We choose $\tau_0'\in\R$ such that $k_0e^{\frac{p-1}{p}\tau_0'}<1/2$. Let
\begin{equation*}\label{xi-k-defn}
\xi_k(\tau)=(1-ke^{\frac{p-1}{p}\tau})^{\frac{p}{p-1}}\quad\forall 0<k\le k_0,\tau\le\tau_0'.
\end{equation*}
Then by direct computation $\xi_k$ satisfies (cf. \cite{DPKS2}),
\begin{equation*}
\xi_k'(\tau)=\xi_k(\tau)-\xi_k(\tau)^m\quad\forall 0<k\le k_0,\tau\le\tau_0'
\end{equation*}
and
\begin{equation}\label{xi-k-infinity}
\xi_k(\tau)=1-\frac{pk}{p-1}e^{\frac{p-1}{p}\tau}+o(e^{\frac{p-1}{p}\tau})\quad\mbox{ as }\tau\to -\infty.
\end{equation}
Hence $\xi_k$ is a solution of \eqref{yamabe-ode} in $\R\times (-\infty,\tau_0')$. For any $\lambda>1$, $\lambda'>1$, $h,h'\in\R$, $0<k\le k_0$, let 
\begin{align*}
&f_{\lambda,\lambda',h,h',k}(x,\tau)=(v_{\lambda,h}(x,f(\tau))^{1-p}+\2{v}_{\lambda',h'}(x,f(\tau))^{1-p}+\xi_k(\tau)^{1-p}-2)^{-\frac{1}{p-1}}\quad\forall x\in\R,\tau\le\tau_0',\\
&\2{f}_{\lambda,\lambda',h,h',k}(x,\tau)=min (v_{\lambda,h}(x,f(\tau)), \2{v}_{\lambda',h'}(x,f(\tau)),\xi_k(\tau))\quad\forall x\in\R,\tau\le\tau_0',\\
&f_{\lambda,\lambda',h,h'}(x,\tau)=(v_{\lambda,h}(x,f(\tau))^{1-p}+\2{v}_{\lambda',h'}(x,f(\tau))^{1-p}-1)^{-\frac{1}{p-1}}\quad\forall x\in\R,\tau\in\R,\\
&\2{f}_{\lambda,\lambda',h,h'}(x,\tau)=min (v_{\lambda,h}(x,f(\tau)), \2{v}_{\lambda',h'}(x,f(\tau)))\quad\forall x\in\R,\tau\in\R,\\
&f_{\lambda,h,k}(x,\tau)=(v_{\lambda,h}(x,f(\tau))^{1-p}+\xi_k(\tau)^{1-p}-1)^{-\frac{1}{p-1}}\quad\forall x\in\R,\tau\le\tau_0',\\
&\2{f}_{\lambda,h,k}(x,\tau)=min (v_{\lambda,h}(x,f(\tau)),\xi_k(\tau))\quad\forall x\in\R,\tau\le\tau_0',
\end{align*}
where
\begin{equation}\label{f-defn}
f(\tau)=\tau\left(1+qe^{\frac{p-1}{p}\tau}\right)\quad\forall\tau\le\tau_0'
\end{equation}
for some constant $q=q(p)>0$. 
Note that 
\begin{equation*}
f_{\lambda,\lambda',h,h',k}\le\min (f_{\lambda,\lambda',h,h'},f_{\lambda,h,k})\quad\mbox{ and }\quad
\2{f}_{\lambda,\lambda',h,h',k}\le\min (\2{f}_{\lambda,\lambda',h,h'}\, ,\2{f}_{\lambda,h,k})
\quad\mbox{ in }\R\times (-\infty,\tau_0')
\end{equation*}
for any $\lambda>1$, $\lambda'>1$, $h\ge h_0$, $h'\ge h_0'$ and $0<k\le k_0$. Note that for $\tau<0$ sufficiently small the last two terms of (3.5) of \cite{DPKS2} are negative only if the constant $q$ there is negative instead of positive since $(\tau e^{\frac{p-1}{p}\tau})'<0$ for $\tau<0$ sufficiently small. Hence the constant $q$ in (1.30) of \cite{DPKS2} should be negative instead of positive in order for the function $w_{\lambda,\lambda',h,h',k}$ there to be a supersolution of (1.16) of \cite{DPKS2}. Thus by the proof of  \cite{DPKS2} there exist constants $\tau_0=\tau_0(h_0,h_0',k_0)<\tau_0'$ and  $q=q(p)>0$ such that $f_{\lambda,\lambda',h,h',k}$ and $f_{\lambda,\lambda',h,h'}$  are  subsolutions of \eqref{yamabe-ode} in $\R\times (-\infty,\tau_0)$ for any $\lambda>1$, $\lambda'>1$, $h\ge h_0$, $h'\ge h_0$ and $0<k\le k_0$. By an argument similar to the proof of  \cite{DPKS2} we can choose the constants $\tau_0=\tau_0(h_0,h_0',k_0)<\tau_0'$ and  $q=q(p)>0$ such that  $f_{\lambda,h,k}$ is also a  subsolution of \eqref{yamabe-ode} in $\R\times (-\infty,\tau_0)$ for any $\lambda>1$,  $h\ge h_0$ and $0<k\le k_0$. By choosing $\tau_0<\min (\tau_0',-p/(p-1))$ to be sufficiently small we also have  $0<f'(\tau)<1$ for any $\tau\le\tau_0$. We will assume $\lambda>1$ and $v_{\lambda}$ is the solution of  \eqref{v-lambda-eqn} which satisfies 
\begin{equation*}
v_{\lambda}(0)=1/2, \quad \lim_{x\to -\infty}v_{\lambda}(x)=0\quad\mbox{ and }\quad \lim_{x\to \infty}v_{\lambda}(x)=1
\end{equation*} 
for the rest of the paper.
In this paper we will prove the following main results.

\begin{thm}\label{5-parameters-soln-thm}
There exists $\2{\tau}_0<\tau_0$ such that for any $\lambda>1$, $\lambda'>1$, $h\ge h_0$, $h'\ge h_0'$ and $0<k\le k_0$,  there exists a solution $v=v_{\lambda,\lambda',h,h',k}\in C^{2,1}(\R\times (-\infty,\2{\tau}_0])$ of 
\eqref{yamabe-ode} in $\R\times (-\infty,\2{\tau}_0)$ which satisfies 
\begin{equation}\label{v-lambda-lambda'-h-h'-k-lower-upper-bd}
f_{\lambda,\lambda',h,h',k}(x,\tau)\le v_{\lambda,\lambda',h,h',k}(x,\tau)\le\2{f}_{\lambda,\lambda',h,h',k}(x,\tau)\quad\forall x\in\R,\tau<\2{\tau}_0
\end{equation}
and the following holds.
\begin{enumerate}

\item[(i)] For any $x\in\R$, $v(x,\tau)$ is a decreasing function of $\tau$ and $v(x,\tau)\to 1$ as $\tau\to -\infty$.

\item[(ii)]  For any $\tau<\2{\tau}_0$, $v(x,\tau)\to 0$ as $|x|\to\infty$ and there exists $x_0(\tau)\in\R$ such that $v_x(x_0(\tau),\tau)=0$, $v_x(x,\tau)>0$ if $x<x_0(\tau)$ and $v_x(x,\tau)<0$ if $x>x_0(\tau)$. Furthermore if $\lambda=\lambda'$, then $x_0(\tau)=\frac{h'-h}{2}$ and $v(x,\tau)$ is symmetric with respect to $x_0:=\frac{h'-h}{2}$ for any $\tau<\2{\tau}_0$.

\item[(iii)] $v(x_0(\tau),\tau)=\underset{x\in\R}\max\, v(x,\tau)\approx\xi_k(\tau)$ as $\tau\to -\infty$.

\item[(iv)] $v_{\lambda,\lambda',h,h',k}$ is increasing in $h$ and $h'$ and decreasing in $0<k\le k_0$.

\item[(v)] As $\tau\to -\infty$, we have:

\begin{enumerate}
\item[(a)] If $c>\lambda$, then $v(x+c\tau,\tau)\to 0$ uniformly on $(-\infty,A]$ for any $A\in\R$.

\item[(b)] If $-\lambda'<c<\lambda$, then $v(x+c\tau,\tau)\to 1$ uniformly in any compact subset of $\R$.

\item[(c)] If $c<-\lambda'$, then $v(x+c\tau,\tau)\to 0$ uniformly on $[A,\infty)$ for any $A\in\R$.

\item[(d)] $v(x+\lambda\tau,\tau)\to v_{\lambda}(x+h)$ uniformly on $(-\infty,A]$ for any $A\in\R$.

\item[(e)] $v(x-\lambda'\tau,\tau)\to v_{\lambda'}(-x+h')$ uniformly on $[A,\infty)$ for any $A\in\R$.

\end{enumerate}

\end{enumerate}

\end{thm}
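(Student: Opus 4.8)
The plan is to construct $v_{\lambda,\lambda',h,h',k}$ as a monotone limit of solutions to initial-boundary value problems on expanding space-time domains, using the sub/supersolution pair already assembled in the introduction. First I would fix the constants $h_0,h_0',k_0$ and the time $\tau_0$ supplied above so that $f_{\lambda,\lambda',h,h',k}$ is a subsolution of \eqref{yamabe-ode} on $\R\times(-\infty,\tau_0)$ and $\2f_{\lambda,\lambda',h,h',k}=\min(v_{\lambda,h}(\cdot,f(\tau)),\2v_{\lambda',h'}(\cdot,f(\tau)),\xi_k(\tau))$ is a supersolution there (the latter because each of the three travelling-wave/ODE pieces solves \eqref{yamabe-ode} and the pointwise minimum of supersolutions is a supersolution in the viscosity/distributional sense, which suffices for the comparison principle for this quasilinear equation). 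One checks $f_{\lambda,\lambda',h,h',k}\le\2f_{\lambda,\lambda',h,h',k}$ on $\R\times(-\infty,\tau_0)$ directly from the elementary inequality $(a^{1-p}+b^{1-p}+c^{1-p}-2)^{-1/(p-1)}\le\min(a,b,c)$ valid for $a,b,c\in(0,1]$ together with \eqref{v-lambda-+limit}, \eqref{xi-k-infinity} and the fact that $0<f'(\tau)<1$ on $\tau\le\tau_0$, which guarantees each building block stays in $(0,1]$.

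Next, for each $j\in\mathbb N$ I would solve the problem
\[
v_\tau=(v^m)_{xx}+v-v^m \ \text{ in }(-j,j)\times(-j,\tau_0), \qquad v=\2f_{\lambda,\lambda',h,h',k}\ \text{ on the parabolic boundary},
\]
obtaining $v_j\in C^{2,1}$ with $f_{\lambda,\lambda',h,h',k}\le v_j\le\2f_{\lambda,\lambda',h,h',k}$ by the comparison principle; since $\2f$ is strictly positive and bounded on each such compact set the equation is uniformly parabolic there, so standard quasilinear parabolic theory (Schauder and $L^p$ estimates, plus the boundary regularity of the smooth barriers) gives existence and the two-sided bound. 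The sequence $v_j$ is monotone decreasing in $j$ on each fixed compact set because enlarging the domain only lowers the boundary comparison function relative to the previous solution — more precisely one compares $v_{j+1}$ with $v_j$ on $(-j,j)\times(-j,\tau_0)$ using that $v_{j+1}\le\2f=v_j$ on that parabolic boundary — so $v:=\lim_j v_j$ exists, satisfies the same two-sided bound \eqref{v-lambda-lambda'-h-h'-k-lower-upper-bd}, and by interior parabolic estimates is a genuine $C^{2,1}$ solution of \eqref{yamabe-ode} on $\R\times(-\infty,\2\tau_0)$ for a suitable $\2\tau_0\le\tau_0$.

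Once $v$ is in hand, properties (i)--(v) follow from the squeeze \eqref{v-lambda-lambda'-h-h'-k-lower-upper-bd} and the asymptotics recorded in the introduction. For (i): monotonicity in $\tau$ comes from differentiating the equation (the function $w=v_\tau$ solves a linear parabolic equation, and $w\le 0$ propagates from $w\le 0$ at $\tau=\2\tau_0$, which holds because $\2f$ is decreasing in $\tau$ along $f(\tau)$ — here one uses $0<f'(\tau)<1$ and the travelling-wave monotonicity); the limit $v\to1$ as $\tau\to-\infty$ is forced between $f_{\lambda,\lambda',h,h',k}\to1$ and $\2f_{\lambda,\lambda',h,h',k}\to1$, using \eqref{v-lambda-+limit}, \eqref{xi-k-infinity}. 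For (ii): the decay as $|x|\to\infty$ is read off from \eqref{v-lambda-h-value-at-infty}; the single interior maximum and the sign of $v_x$ follow from a moving-plane/reflection argument applied to the solution (possible because the building blocks on the two ends are reflections of each other), and when $\lambda=\lambda'$ the symmetry and the identity $x_0(\tau)=(h'-h)/2$ come from uniqueness of the constructed solution under reflection about that axis. Parts (iii)--(v) are then direct: (iii) compares the max of $v$ to $\xi_k$ via the bounds; (iv) monotonicity in $h,h',k$ is inherited because the barriers are monotone in those parameters and the construction is order-preserving; (v)(a)--(e) are obtained by evaluating the squeeze along the moving frames $x+c\tau$, using \eqref{v-lambda-h-value-at-infty}, \eqref{v-lambda-h-value-at-infty2} and \eqref{xi-k-infinity} to identify the limits.

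The main obstacle I expect is verifying that $\2f_{\lambda,\lambda',h,h',k}$ — built with the \emph{time-reparametrized} arguments $v_{\lambda,h}(x,f(\tau))$ rather than $v_{\lambda,h}(x,\tau)$ — is genuinely a supersolution, i.e. that the reparametrization $f(\tau)=\tau(1+qe^{(p-1)\tau/p})$ with the correct sign of $q$ compensates the error terms; this is exactly the delicate computation underlying the remark in the introduction about the sign of $q$ in \cite{DPKS2}, and getting the inequality to go the right way as $\tau\to-\infty$ (where $(\tau e^{(p-1)\tau/p})'$ changes sign) is the crux. A secondary technical point is justifying the comparison principle across the nonsmoothness of the $\min$ in $\2f$ and at the low-regularity set where $v$ or the barriers vanish; this is handled by working with the pressure-type variable or by an approximation argument, both routine for the porous-medium-type structure of \eqref{yamabe-ode}.
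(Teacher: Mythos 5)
Your overall strategy (squeeze between $f_{\lambda,\lambda',h,h',k}$ and $\2{f}_{\lambda,\lambda',h,h',k}$, solve on expanding space--time cylinders, pass to the limit by interior Schauder estimates, then read off (i), (iii), (v) from the two-sided bound along moving frames) is essentially the paper's, and your identification of the supersolution property of the time-reparametrized waves $v_{\lambda,h}(x,f(\tau))$, $\2{v}_{\lambda',h'}(x,f(\tau))$ as a crux is correct (the paper settles it in Lemma \ref{time-rescale-supersoln-lem}, using $0<f'(\tau)\le 1$ for $\tau\le\2{\tau}_0<-p/(p-1)$ and $(v_{\lambda,h})_\tau<0$; one can also avoid the ``min of supersolutions'' issue entirely by comparing with each of the three pieces separately, as the paper does). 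However, two steps of your argument have genuine gaps. First, your mechanism for (i) is backwards: you propose to propagate $v_\tau\le 0$ ``from $\tau=\2{\tau}_0$'', but $\2{\tau}_0$ is the \emph{final} time, and a sign condition on $v_\tau$ cannot be propagated backward for a parabolic equation; for an ancient solution there is no initial time to start from. The correct route (the paper's) is to take the approximating initial data $v_{0,j}=\2{f}_{\lambda,\lambda',h,h',k}(\cdot,-j)$ and prove that this \emph{time-frozen} profile is a supersolution of the stationary equation $(v^m)''+v-v^m\le 0$ in the distribution sense (Lemma \ref{elliptic-supersoln-lem}); this requires checking that the jump of $(\2{f}^m)_x$ at the two corner points $y(-j)$, $z(-j)$ of the min has the favorable sign, a computation you do not address. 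Only then does comparison give $v_j(\cdot,\tau)\le v_j(\cdot,-j)$ and, by time translation, monotonicity of $v_j$ in $\tau$, which passes to the limit.

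Second, part (ii) — existence of a single critical point $x_0(\tau)$ with $v_x>0$ to its left and $v_x<0$ to its right — is not obtainable by the one-line ``moving-plane/reflection argument'' you invoke: reflection only handles the symmetric case $\lambda=\lambda'$, $h'-h$ fixed, and for $\lambda\ne\lambda'$ the two tails are genuinely different travelling waves. The paper's proof of (ii) is a substantial piece of work: it approximates by Neumann problems on $[-R,R]$ (Lemma \ref{neumann-existence-lem}) with boundary derivative conditions whose signs are controlled by \eqref{f-lambda-lambda'-h-h'-k-bdary-derivative1}--\eqref{f-lambda-lambda'-h-h'-k-bdary-derivative2}, differentiates the equation so that $v_x$ solves \eqref{v-x-eqn0}, and then uses the strong maximum principle, real analyticity of $x\mapsto v_x(x,\tau)$, and zero-set (lap-number) arguments to show the single sign change persists for the approximations and survives the passage to the limit (a priori the zero set of $v_x$ could degenerate or become multiple in the limit). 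Without this machinery — or some substitute controlling the number of sign changes of $v_x$ — your proposal does not establish (ii), and consequently the identification $v(x_0(\tau),\tau)=\max_x v(x,\tau)$ used in (iii) is also unsupported. The remaining items (squeeze for \eqref{v-lambda-lambda'-h-h'-k-lower-upper-bd}, (iv) via order-preservation in the parameters, and (v)(a)--(e) along moving frames) are in line with the paper's proof.
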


\begin{thm}\label{4-parameters-soln-thm}
There exists a constant $\2{\tau}_0<\tau_0$ such that for any $\lambda>1$, $\lambda'>1$, $h\ge h_0$, $h'\ge h_0'$, there exists a unique solution $v=v_{\lambda,\lambda',h,h'}\in C^{2,1}(\R\times (-\infty,\2{\tau}_0])$ of \eqref{yamabe-ode} in $\R\times (-\infty,\2{\tau}_0)$ which satisfies 
\begin{equation}\label{v-lambda-lambda'-h-h'-upper-lower-bd}
f_{\lambda,\lambda',h,h'}(x,\tau)\le v_{\lambda,\lambda',h,h'}(x,\tau)\le \2{f}_{\lambda,\lambda',h,h'}(x,\tau)\quad\forall x\in\R,\tau<\2{\tau}_0.
\end{equation}
and 
\begin{equation}\label{v-v-compare}
v_{\lambda,\lambda',h,h'}(x,\tau)\ge v_{\lambda,\lambda',h,h',k}(x,\tau)\quad\forall x\in\R,\tau<\2{\tau}_0,0<k\le k_0
\end{equation}
where $v_{\lambda,\lambda',h,h',k}$ is as constructed in Theorem \ref{5-parameters-soln-thm}.
Moreover the following holds.
\begin{enumerate}
\item[(i)] For any $x\in\R$, $v(x,\tau)$ is a decreasing function of $\tau$ and $v(x,\tau)\to 1$ as $\tau\to -\infty$.

\item[(ii)]  For any $\tau<\2{\tau}_0$, $v(x,\tau)\to 0$ as $|x|\to\infty$ and there exists $x_0(\tau)\in\R$ such that $v_x(x_0(\tau),\tau)=0$, $v_x(x,\tau)>0$ if $x<x_0(\tau)$ and $v_x(x,\tau)<0$ if $x>x_0(\tau)$. Further more if $\lambda=\lambda'$, then $x_0(\tau)=\frac{h'-h}{2}$ and $v(x,t)$ is symmetric with respect to $x_0:=\frac{h'-h}{2}$ for any $\tau<\2{\tau}_0$.

\item[(iii)] $v(x_0(\tau),\tau)=\underset{x\in\R}\max\, v(x,\tau)\approx\underset{x\in\R}\max\,\2{f}_{\lambda,\lambda',h,h'}(x,\tau)$ as $\tau\to -\infty$.

\item[(iv)] $v_{\lambda,\lambda',h,h'}$ is increasing in $h$ and $h'$.

\item[(v)] As $\tau\to -\infty$, we have:

\begin{enumerate}
\item[(a)] If $c>\lambda$, then $v(x+c\tau,\tau)\to 0$ uniformly on $(-\infty,A]$ for any $A\in\R$.

\item[(b)] If $-\lambda'<c<\lambda$, then $v(x+c\tau,\tau)\to 1$ uniformly in any compact subset of $\R$.

\item[(c)] If $c<-\lambda'$, then $v(x+c\tau,\tau)\to 0$ uniformly on $[A,\infty)$ for any $A\in\R$.

\item[(d)] $v(x+\lambda\tau,\tau)\to v_{\lambda}(x+h)$ uniformly on $(-\infty,A]$ for any $A\in\R$.

\item[(e)] $v(x-\lambda'\tau,\tau)\to v_{\lambda'}(-x+h')$ uniformly on $[A,\infty)$ for any $A\in\R$.

\end{enumerate}

\item[(vi)] As $h\to\infty$, $v_{\lambda,\lambda',h,h'}$ converges uniformly on  $[A,\infty)\times [\tau_1,\2{\tau}_0]$ for any  $A\in\R$ and $\tau_1<\2{\tau}_0$ to $\2{v}_{\lambda',h'}(x,f(\tau))$. Similarly as $h'\to\infty$, $v_{\lambda,\lambda',h,h'}$ converges uniformly on $(-\infty,A]\times [\tau_1,\2{\tau}_0]$ for any  $A\in\R$ and $\tau_1<\2{\tau}_0$ to $v_{\lambda,h}(x,f(\tau))$.

\item[(vii)] As $k\to 0$, the solution $v_{\lambda,\lambda',h,h',k}$ of \eqref{yamabe-ode} in $\R\times (-\infty,\tau_0)$ given by Theorem \ref{5-parameters-soln-thm} increases and converges uniformly in $C^{2,1}(K)$ for any  every compact subset $K$ of $\R\times (-\infty,\2{\tau}_0]$
to  the unique solution  $v_{\lambda,\lambda',h,h'}$ of \eqref{yamabe-ode} in $\R\times (-\infty,\2{\tau}_0)$ which satisfies \eqref{v-lambda-lambda'-h-h'-upper-lower-bd}.  

\end{enumerate}
\end{thm}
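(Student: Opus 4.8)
\noindent The plan is as follows.

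\smallskip
\noindent\emph{Existence and item (vii).} I would produce $v_{\lambda,\lambda',h,h'}$ as the monotone limit of the five--parameter solutions of Theorem~\ref{5-parameters-soln-thm} as $k\to 0^{+}$. By Theorem~\ref{5-parameters-soln-thm}(iv) the family $v_{\lambda,\lambda',h,h',k}$ is decreasing in $k$, and by \eqref{v-lambda-lambda'-h-h'-k-lower-upper-bd} it is bounded above by $\2{f}_{\lambda,\lambda',h,h',k}\le\2{f}_{\lambda,\lambda',h,h'}$ uniformly in $k$; hence it increases pointwise to a limit $v_{\lambda,\lambda',h,h'}$. On any compact subset of $\R\times(-\infty,\2{\tau}_0]$ the lower barrier $f_{\lambda,\lambda',h,h'}$ is bounded below by a positive constant, so along the approximating sequence \eqref{yamabe-ode} is uniformly parabolic there; interior Schauder estimates then give uniform $C^{2,1}$ bounds on compacta, so the convergence takes place in $C^{2,1}_{loc}$ and $v_{\lambda,\lambda',h,h'}$ solves \eqref{yamabe-ode}. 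Passing to the limit in \eqref{v-lambda-lambda'-h-h'-k-lower-upper-bd} (using $f_{\lambda,\lambda',h,h',k}\to f_{\lambda,\lambda',h,h'}$ and $\2{f}_{\lambda,\lambda',h,h',k}\le\2{f}_{\lambda,\lambda',h,h'}$) yields \eqref{v-lambda-lambda'-h-h'-upper-lower-bd}, monotonicity in $k$ gives \eqref{v-v-compare}, and this simultaneously proves (vii).

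\smallskip
\noindent\emph{Items (i)--(vi).} Statements (i), (ii), (iv), (v) follow by letting $k\to 0$ in the corresponding assertions of Theorem~\ref{5-parameters-soln-thm}, the $C^{2,1}_{loc}$ convergence permitting the passage; that $v(x,\tau)\to 1$ as $\tau\to-\infty$ and $v(\cdot,\tau)\to 0$ as $|x|\to\infty$ reads off directly from the two barriers in \eqref{v-lambda-lambda'-h-h'-upper-lower-bd}, which tend to $1$ as $\tau\to-\infty$ (since $f(\tau)\to-\infty$) and to $0$ as $|x|\to\infty$; and (iii) follows from \eqref{v-lambda-lambda'-h-h'-upper-lower-bd} together with the fact that $\max_x f_{\lambda,\lambda',h,h'}(\cdot,\tau)$ and $\max_x\2{f}_{\lambda,\lambda',h,h'}(\cdot,\tau)$ are asymptotic as $\tau\to-\infty$. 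The strict monotonicity in (ii) and (iv) is recovered by the strong maximum principle applied to $v_x$, respectively to $v_{\lambda,\lambda',h_2,h'}-v_{\lambda,\lambda',h_1,h'}$ for $h_1<h_2$, which solve linear parabolic equations with coefficients bounded on compacta (since $v$ is bounded below there); forward uniqueness for \eqref{yamabe-ode} and the distinct asymptotics in (v)(d)--(e) exclude the degenerate case. The symmetry in (ii) when $\lambda=\lambda'$ is a consequence of uniqueness: $x\mapsto -x+(h'-h)$ interchanges $v_{\lambda,h}(\cdot,f(\tau))$ with $\2{v}_{\lambda,h'}(\cdot,f(\tau))$ and leaves $f_{\lambda,\lambda,h,h'},\2{f}_{\lambda,\lambda,h,h'}$ invariant, so $(x,\tau)\mapsto v_{\lambda,\lambda,h,h'}(-x+(h'-h),\tau)$ again solves \eqref{yamabe-ode} and satisfies \eqref{v-lambda-lambda'-h-h'-upper-lower-bd}, hence equals $v_{\lambda,\lambda,h,h'}$. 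For (vi): on $[A,\infty)\times[\tau_1,\2{\tau}_0]$ one has $v_{\lambda,h}(x,f(\tau))=v_\lambda(x-\lambda f(\tau)+h)\to 1$ uniformly as $h\to\infty$, so both barriers in \eqref{v-lambda-lambda'-h-h'-upper-lower-bd} converge uniformly there to $\2{v}_{\lambda',h'}(x,f(\tau))$; since $v_{\lambda,\lambda',h,h'}$ is monotone in $h$, it converges to the same limit, and the case $h'\to\infty$ is symmetric.

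\smallskip
\noindent\emph{Uniqueness.} Let $v$ be any solution of \eqref{yamabe-ode} in $\R\times(-\infty,\2{\tau}_0)$ satisfying \eqref{v-lambda-lambda'-h-h'-upper-lower-bd}. For $\tau_j\to-\infty$ let $w^{(j)}$ (resp. $W^{(j)}$) solve \eqref{yamabe-ode} on $\R\times[\tau_j,\2{\tau}_0]$ with data $f_{\lambda,\lambda',h,h'}(\cdot,\tau_j)$ (resp. $\2{f}_{\lambda,\lambda',h,h'}(\cdot,\tau_j)$) at $\tau=\tau_j$. Since $f_{\lambda,\lambda',h,h'}$ is a subsolution and $\2{f}_{\lambda,\lambda',h,h'}$ a supersolution of \eqref{yamabe-ode}, one has $f_{\lambda,\lambda',h,h'}\le w^{(j)}\le W^{(j)}\le\2{f}_{\lambda,\lambda',h,h'}$, the $w^{(j)}$ increase and the $W^{(j)}$ decrease in $j$, and the comparison principle for bounded positive solutions on $\R\times[\tau_j,\2{\tau}_0]$ (legitimate since all functions in play tend to $0$ as $|x|\to\infty$ and are bounded away from $0$ on compacta) gives $w^{(j)}\le v\le W^{(j)}$. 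Thus it suffices to prove $z^{(j)}:=W^{(j)}-w^{(j)}\to 0$ as $j\to\infty$; then any two solutions satisfying \eqref{v-lambda-lambda'-h-h'-upper-lower-bd} coincide and the solution built above is the unique one. At $\tau=\tau_j$,
\[
0\le z^{(j)}(\cdot,\tau_j)\le\2{f}_{\lambda,\lambda',h,h'}(\cdot,\tau_j)-f_{\lambda,\lambda',h,h'}(\cdot,\tau_j)\le Ce^{c_0\tau_j}\qquad(x\in\R),
\]
with $c_0=\min(\gamma_\lambda\lambda,\gamma_{\lambda'}\lambda')$; this uses the elementary bound $\2{f}_{\lambda,\lambda',h,h'}-f_{\lambda,\lambda',h,h'}\le\frac1{p-1}\bigl(\min(v_{\lambda,h}^{1-p},\2{v}_{\lambda',h'}^{1-p})-1\bigr)$ (evaluated at $(\cdot,f(\tau))$), the travelling--wave asymptotics \eqref{v-lambda-x-infty-behavior}, and the linear growth of $-\lambda f(\tau),-\lambda' f(\tau)$ in $|\tau|$. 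The decisive fact is that $c_0>1-m$: by \eqref{gamma-eqn}, $\gamma_\lambda\lambda\,p=\gamma_\lambda^2+p-1>p-1$, hence $\gamma_\lambda\lambda>\frac{p-1}{p}=1-\frac1p=1-m$, and likewise for $\lambda'$.

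\smallskip
\noindent Now $z^{(j)}$ solves a linear parabolic equation coming from the linearization of \eqref{yamabe-ode}. Its zeroth--order coefficient can be controlled: the reaction term $v-v^m$ has linearization $1-mv^{m-1}\le 1-m$ on $0<v\le 1$, while the degenerate diffusion $(v^m)_{xx}$, after passing to the weighted unknown $z^{(j)}/\rho$ with a weight $\rho$ decaying like $e^{-\alpha|x|}$ for a suitable exponent $\alpha$ matched to the spatial decay $e^{-p|x|}$ of the solutions, contributes a term that is strongly damping as $|x|\to\infty$ and bounded in the (moving) bulk. Consequently $z^{(j)}$ is dominated on $[\tau_j,\2{\tau}_0]$ by $Ce^{c_0\tau_j}e^{\mu(\tau-\tau_j)}$ for some $\mu$ with $1-m\le\mu<c_0$, so for fixed $(x,\tau)$,
\[
0\le z^{(j)}(x,\tau)\le Ce^{c_0\tau_j}e^{\mu(\tau-\tau_j)}=Ce^{\mu\tau}e^{(c_0-\mu)\tau_j}\longrightarrow 0\quad\text{as }j\to\infty,
\]
because $c_0-\mu>0$; hence $W^{(j)}-w^{(j)}\to 0$ and uniqueness follows. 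The main obstacle is exactly this last step: one must show that the super/subsolution barriers $\2{f}_{\lambda,\lambda',h,h'}$ and $f_{\lambda,\lambda',h,h'}$ come together at an exponential rate $c_0$ that is \emph{strictly larger} than the rate at which the linearization of \eqref{yamabe-ode} can amplify the difference, which forces a careful handling of the degeneracy of $(v^m)_{xx}$ at $v=0$ — equivalently, a careful matching of the decay exponent $p$ of the solutions with the growth exponent $p-1$ of the linearized diffusion coefficient.
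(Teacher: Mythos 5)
Your construction of $v_{\lambda,\lambda',h,h'}$ as the monotone $k\to 0$ limit of the five--parameter solutions, together with the passage to the limit in \eqref{v-lambda-lambda'-h-h'-k-lower-upper-bd}, is fine and is essentially the content of Theorem \ref{limit-thm} (the paper builds the solution in Theorem \ref{existence-thm2} from Cauchy problems with data $\2{f}_{\lambda,\lambda',h,h'}(\cdot,-j)$ and then identifies the $k\to0$ limit via uniqueness; your direct route also works). The genuine gap is in the uniqueness argument, which is the heart of the theorem. You squeeze any solution between $w^{(j)}$ and $W^{(j)}$ launched at $\tau_j$ from $f_{\lambda,\lambda',h,h'}$ and $\2{f}_{\lambda,\lambda',h,h'}$, and everything hinges on the amplification estimate $z^{(j)}\le Ce^{c_0\tau_j}e^{\mu(\tau-\tau_j)}$ with $\mu<c_0$ for $z^{(j)}=W^{(j)}-w^{(j)}$ --- precisely the step you do not prove. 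As sketched it is doubtful: $z^{(j)}$ satisfies $z_\tau=(Az)_{xx}+(1-A)z$ with $A=\frac{(W^{(j)})^m-(w^{(j)})^m}{W^{(j)}-w^{(j)}}\sim m v^{m-1}$, which blows up like $e^{(p-1)\cdot(\mathrm{distance\ beyond\ the\ fronts})}$, so no sup-norm Gronwall bound with an $x$-independent comparison function is available; and the proposed fixed weight $\rho\sim e^{-\alpha|x|}$ clashes with the geometry: the barrier gap $\2{f}-f$ is maximal near the crossing point $x(\tau_j)=\frac{\gamma_\lambda-\gamma_{\lambda'}}{p}\tau_j+O(1)$ of \eqref{x-tau-value}, which drifts linearly in $|\tau_j|$ whenever $\gamma_\lambda\ne\gamma_{\lambda'}$, so dominating the initial gap by $C(\tau_j)\rho$ costs an extra factor $e^{\alpha|x(\tau_j)|}$, i.e.\ an exponential loss of order $\alpha\frac{|\gamma_\lambda-\gamma_{\lambda'}|}{p}|\tau_j|$, while the total margin available is only $d-(1-m)=\frac{\gamma_\lambda\gamma_{\lambda'}}{p}$ with $d$ as in \eqref{d-defn}; since $\gamma_{\lambda'}\to0$ as $\lambda'\to\infty$ but $|\gamma_\lambda-\gamma_{\lambda'}|$ does not, this bookkeeping fails for a whole range of parameters. (Your rate $c_0=\min(\lambda\gamma_\lambda,\lambda'\gamma_{\lambda'})$ is in any case no better than $d$.)

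The paper's mechanism is different and avoids the linearized sup-norm problem entirely: Lemma \ref{sub-supersoln-comparison-lem} is an $L^1$-contraction estimate with growth factor $e^{(1-mM^{m-1})(\tau_1-\tau)}$, proved through the adjoint problem \eqref{adjoint-eqn} and the barrier $\phi(x,\tau)=K_1e^{\tau_3-\tau}\phi_0(mx)$, for which the pointwise inequality $\phi_{xx}\le\phi$ renders the unbounded coefficient $A_k$ harmless. Theorem \ref{uniqueness-thm} then uses only $|V_1-V_2|\le\2{f}_{\lambda,\lambda',h,h'}-f_{\lambda,\lambda',h,h'}$, shows this difference is in $L^1(\R)$ with the split at $x(\tau)$, and the computation \eqref{I1-limit}--\eqref{I2-limit} yields $\int_\R|V_1-V_2|(\cdot,\tau_1)\le Ce^{\frac{\gamma_\lambda\gamma_{\lambda'}}{p}\tau}\to0$ as $\tau\to-\infty$; the spatial integration absorbs the moving crossing point before the Gronwall factor is applied, so only the automatic inequality $d>1-m$ is needed. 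If you wish to keep your two-sided squeezing scheme, the sup-norm step should be replaced by this $L^1$ comparison. A secondary, smaller gap: in item (ii) the $k\to0$ limit only gives $v_x\ge0$ to the left and $v_x\le0$ to the right of a subsequential limit of the critical points; the strict single-critical-point statement requires the isolated-zeros/analyticity argument for $v_x$ (as carried out in Lemma \ref{neumann-existence-lem} and Lemma \ref{existence-lem}), not a one-line appeal to the strong maximum principle.
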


\begin{thm}\label{3-parameters-soln-thm}
There exists a constant $\2{\tau}_0<\tau_0$ such that for any $\lambda>1$, $h\ge h_0$ and $0<k\le k_0$, there exists a solution $v=v_{\lambda,h,k}\in C^{2,1}(\R\times (-\infty,\2{\tau}_0])$ of 
\eqref{yamabe-ode} in $\R\times (-\infty,\tau_0)$ which satisfies 
\begin{equation}\label{v-lambda-h-k-upper-lower-bd}
f_{\lambda,h,k}(x,\tau)\le v_{\lambda,h,k}(x,\tau)\le\2{f}_{\lambda,h,k}(x,\tau)\quad\forall x\in\R,\tau<\2{\tau}_0
\end{equation}
and
\begin{equation}\label{v-v-compare2}
v_{\lambda,h,k}(x,\tau)\ge v_{\lambda,\lambda',h,h',k}(x,\tau)\quad\forall x\in\R,\tau<\2{\tau}_0,\lambda'>1, h'\ge h_0'.
\end{equation}
where $v_{\lambda,\lambda',h,h',k}$ is as constructed in Theorem \ref{5-parameters-soln-thm}. Moreover the following holds.
\begin{enumerate}

\item[(i)] For any $x\in\R$, $v(x,\tau)$ is a decreasing function of $\tau$ and $v(x,\tau)\to 1$ as $\tau\to -\infty$.

\item[(ii)] For any $\tau<\2{\tau}_0$, $v_x(x,\tau)>0$ for any $x\in\R$. 
\item[(iii)]  For any $\tau<\2{\tau}_0$, $v(x,t)\to 0$ as $x\to -\infty$ and  $v(x,t)\to\xi_k(\tau)$ as $x\to\infty$.

\item[(iv)] $v_{\lambda,h,k}$ is increasing in $h$ and decreasing in $0<k\le k_0$. 

\item[(v)] As $\tau\to -\infty$, we have:

\begin{enumerate}
\item[(a)] If $c>\lambda$, then $v(x+c\tau,\tau)\to 0$ uniformly on $(-\infty,A]$ for any $A\in\R$.

\item[(b)] If $c<\lambda$, then $v(x+c\tau,\tau)\approx \xi_k(\tau)$ uniformly on $[A,\infty)$ for any $A\in\R$.

\item[(c)] $v(x+\lambda\tau,\tau)\to v_{\lambda}(x+h)$ uniformly on any compact subset of $\R$.

\end{enumerate}

\item[(vi)] As $k\to 0$, $v_{\lambda,h,k}$ converges uniformly on every compact subset of $\R\times (-\infty,\2{\tau}_0]$ to  $v_{\lambda,h}(x,f(\tau))$.

\item[(vii)] As $h\to\infty$, $v_{\lambda,h,k}$ converges to $\xi_k$ uniformly on $[A,\infty)\times [\tau_1,\2{\tau}_0]$ for any $A\in\R$ and $\tau_1<\2{\tau}_0$. 

\item[(viii)] As $h'\to\infty$, the solution $v_{\lambda,\lambda',h,h',k}$ of \eqref{yamabe-ode} in $\R\times (-\infty,\tau_0)$ given by Theorem \ref{5-parameters-soln-thm} increases and converges uniformly in $C^{2,1}(K)$ for any  every compact subset $K$ of $\R\times (-\infty,\2{\tau}_0]$ to a solution $v$ of \eqref{yamabe-ode} in $\R\times (-\infty,\tau_0)$  which  satisfies \eqref{v-lambda-h-k-upper-lower-bd} and \eqref{v-v-compare2} with $v_{\lambda,h,k}$ there being replaced by $v$.

\end{enumerate}

\end{thm}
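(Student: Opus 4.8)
The plan is to follow the strategy used for Theorem \ref{5-parameters-soln-thm} and Theorem \ref{4-parameters-soln-thm}, constructing $v_{\lambda,h,k}$ as a limit of classical solutions of initial--boundary value problems on bounded space--time cylinders trapped between the subsolution $f_{\lambda,h,k}$ and the supersolution $\2{f}_{\lambda,h,k}$. Concretely, for $R>0$ and $\tau_1<\2{\tau}_0$ I would solve \eqref{yamabe-ode} on $(-R,R)\times(\tau_1,\2{\tau}_0]$ with initial data $\2{f}_{\lambda,h,k}(\cdot,\tau_1)$ and lateral data $\2{f}_{\lambda,h,k}(\pm R,\cdot)$. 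On such a cylinder both bounding functions are bounded away from $0$, so \eqref{yamabe-ode} is uniformly parabolic there and a classical solution $v_{R,\tau_1}$ exists; the comparison principle gives $f_{\lambda,h,k}\le v_{R,\tau_1}\le\2{f}_{\lambda,h,k}$, and by comparing the parabolic--boundary data $v_{R,\tau_1}$ is nonincreasing in $R$ and in $\tau_1$. Interior parabolic (Schauder) estimates, valid on compact subsets of $\R\times(-\infty,\2{\tau}_0]$ since $v_{R,\tau_1}$ stays in a compact subinterval of $(0,\infty)$ on compact sets, then yield $C^{2,1}_{loc}$ bounds. Letting $R\to\infty$ and $\tau_1\to-\infty$ and passing to a limit produces $v_{\lambda,h,k}\in C^{2,1}(\R\times(-\infty,\2{\tau}_0])$ solving \eqref{yamabe-ode} and satisfying \eqref{v-lambda-h-k-upper-lower-bd}.

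For \eqref{v-v-compare2} I note $\2{f}_{\lambda,\lambda',h,h',k}=\min(v_{\lambda,h},\2{v}_{\lambda',h'},\xi_k)\le\2{f}_{\lambda,h,k}$, hence $v_{\lambda,\lambda',h,h',k}\le\2{f}_{\lambda,h,k}$; restricting $v_{\lambda,\lambda',h,h',k}$ to each cylinder $(-R,R)\times(\tau_1,\2{\tau}_0]$ and comparing it with $v_{R,\tau_1}$, whose parabolic--boundary data is $\2{f}_{\lambda,h,k}$ and so dominates that of $v_{\lambda,\lambda',h,h',k}$, gives $v_{\lambda,\lambda',h,h',k}\le v_{R,\tau_1}$, and passing to the limit yields \eqref{v-v-compare2}. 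Properties (i), (iii) and (v) then follow by squeezing: since $f(\tau)$ in \eqref{f-defn} satisfies $f(\tau)-\tau=o(1)$ and $0<f'<1$ for $\tau\le\2{\tau}_0$, and since $v_\lambda\to1$ at $+\infty$, $v_\lambda\to0$ at $-\infty$ and $\xi_k(\tau)\to1$ as $\tau\to-\infty$, a direct computation shows that $f_{\lambda,h,k}$ and $\2{f}_{\lambda,h,k}$ both tend to $1$ as $\tau\to-\infty$ (the limit in (i)), both tend to $0$ as $x\to-\infty$ and to $\xi_k(\tau)$ as $x\to+\infty$ (giving (iii)), and have the asymptotics along the moving frames $x+c\tau$ claimed in (v). The monotonicity in $\tau$ in (i) and the strict spatial monotonicity $v_x>0$ in (ii) I would obtain by applying the maximum principle to the translated differences $v(x,\tau+s)-v(x,\tau)$ and $v(x+s,\tau)-v(x,\tau)$ for $s>0$: each solves a linear parabolic equation and tends to $0$ along the whole parabolic boundary of $\R\times(-\infty,\2{\tau}_0)$ by (i) and (iii), and one upgrades the resulting nonstrict inequalities to strict ones by the strong maximum principle, using that $v$ is nonconstant and $0<v<\xi_k(\tau)<1$.

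The monotone dependence in (iv) holds because enlarging $h$ or shrinking $k$ does not decrease either bounding function, hence the boundary data of the approximating problems, so by comparison the constructed solutions are ordered. For (vi) and (vii) I would use that $\xi_k(\tau)\to1$ locally uniformly as $k\to0$ and $v_{\lambda,h}(x,f(\tau))\to1$ as $h\to\infty$, so that in each case $f_{\lambda,h,k}$ and $\2{f}_{\lambda,h,k}$ converge to the same limit ($v_{\lambda,h}(x,f(\tau))$, resp. $\xi_k$); combined with the monotonicity from (iv), the $C^{2,1}_{loc}$ estimates and the squeeze, $v_{\lambda,h,k}$ converges to that limit in $C^{2,1}_{loc}$, uniformly on $[A,\infty)\times[\tau_1,\2{\tau}_0]$ for (vii) by the spatial monotonicity already established. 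For (viii), as $h'\to\infty$ one has $\2{v}_{\lambda',h'}(x,f(\tau))\to1$, whence $f_{\lambda,\lambda',h,h',k}\to f_{\lambda,h,k}$ and $\2{f}_{\lambda,\lambda',h,h',k}\to\2{f}_{\lambda,h,k}$; since Theorem \ref{5-parameters-soln-thm}(iv) makes $v_{\lambda,\lambda',h,h',k}$ increasing in $h'$, the limit $v$ exists, solves \eqref{yamabe-ode}, satisfies \eqref{v-lambda-h-k-upper-lower-bd}, and \eqref{v-v-compare2} for $v$ follows by passing to the limit in the monotonicity of Theorem \ref{5-parameters-soln-thm} together with the comparison argument above.

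The main obstacle I anticipate is the degeneracy of the diffusion $(v^m)_{xx}$ as $x\to-\infty$, where $v\to0$: this is harmless on the bounded approximating cylinders but forces all of the qualitative statements---in particular the monotonicity in $\tau$ and the strict spatial monotonicity $v_x>0$ of (ii), which replace the single--interior--maximum picture of Theorems \ref{5-parameters-soln-thm}--\ref{4-parameters-soln-thm}---to be proved by maximum--principle arguments on the unbounded domain $\R\times(-\infty,\2{\tau}_0)$, where one must control the sign of the zeroth--order coefficient $1-m\theta^{m-1}$ of the linearized operator and exploit the exponential decay of $1-v$ as $\tau\to-\infty$ and of $v$ as $x\to-\infty$ read off from \eqref{v-lambda-h-k-upper-lower-bd}. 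The other delicate point is establishing the sharp boundary behavior $v\to\xi_k(\tau)$ as $x\to+\infty$ together with the uniform (as opposed to merely locally uniform) convergences in (v)(b) and (vii) on the ray $[A,\infty)$, which is handled by combining the squeeze with the spatial monotonicity of $v$.
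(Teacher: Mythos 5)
Your construction of $v_{\lambda,h,k}$ (trapping between the subsolution $f_{\lambda,h,k}$ and the supersolution $\2{f}_{\lambda,h,k}$, solving approximating problems on bounded cylinders, passing to the limit with interior Schauder estimates, and then obtaining \eqref{v-v-compare2}, (i)'s limit, (iii), (v), (vi), (vii) by squeezing and (iv), (viii) by comparison of data plus monotonicity in $h'$ and compactness) is essentially the route the paper takes via Lemma \ref{local-existence-lem3}, Lemma \ref{existence-lem2}, Theorem \ref{existence-thm3} and Remarks \ref{comparison-rmk0}--\ref{comparison-rmk1}. The genuine gap is in how you propose to prove the monotonicity statements, above all (ii). You want to apply the maximum principle on the whole ancient domain $\R\times(-\infty,\2{\tau}_0)$ to the translated differences $v(x,\tau+s)-v(x,\tau)$ and $v(x+s,\tau)-v(x,\tau)$, claiming they vanish on the parabolic boundary. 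They do not: for the spatial translate, along the moving front one has (by your own (v)(c)) $v(x+s+\lambda\tau,\tau)-v(x+\lambda\tau,\tau)\to v_{\lambda}(x+s+h)-v_{\lambda}(x+h)\neq 0$, so the difference does not tend to $0$ as $\tau\to-\infty$ in any uniform sense; for the time translate, as $x\to\infty$ the difference tends to $\xi_k(\tau+s)-\xi_k(\tau)\neq 0$. Moreover there is no finite initial time at which to start a comparison, and the quantitative comparison available (Lemma \ref{sub-supersoln-comparison-lem}) carries the factor $e^{(1-mM^{m-1})(\tau-\tau_1)}$ whose rate tends to $1-m=\frac{p-1}{p}$ as the solutions approach $1$; since the gap between the two barriers at time $\tau_1$ decays only like $e^{\frac{p-1}{p}\tau_1}$ (in sup norm, and like $|\tau_1|e^{\frac{p-1}{p}\tau_1}$ in $L^1$ because the transition region has width of order $|\tau_1|$), the product of the Gronwall factor with the initial discrepancy does not tend to $0$ as $\tau_1\to-\infty$. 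So "initialize at $-\infty$ and apply the maximum principle'' does not close.

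The paper sidesteps this by building both monotonicity properties into the approximations rather than proving them a posteriori on the ancient domain: monotonicity in $\tau$ comes from taking the initial datum of the Cauchy problem at time $-j$ to be the time-independent supersolution $\2{f}_{\lambda,h,k}(\cdot,-j)$ (Lemma \ref{elliptic-supersoln-lem2}), so each approximating solution is nonincreasing in $\tau$ by a comparison on a finite time interval, and this passes to the limit; the strict spatial monotonicity $v_x>0$ is obtained by solving auxiliary Neumann problems on $(-R,R)$ (Lemma \ref{neumann-existence-lem3}), where monotonicity of the data is preserved via the analyticity/zero-set (Sturm--lap-number) argument of Lemma \ref{neumann-existence-lem}, and then letting $R\to\infty$ and $j\to\infty$ (Lemma \ref{existence-lem2}, Theorem \ref{existence-thm3}). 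Your time-monotonicity could in fact be repaired within your own scheme by exactly this device (compare $v_{R,\tau_1}(\cdot,\cdot+s)$ with $v_{R,\tau_1}$ on the bounded cylinder, using that $\2{f}_{\lambda,h,k}$ is a supersolution decreasing in $\tau$), but for (ii) you need an argument at the level of the approximating problems of the Neumann/lap-number type (or some substitute); the sliding argument on $\R\times(-\infty,\2{\tau}_0)$ as you state it fails. The strong-maximum-principle upgrade from $v_x\ge 0$ to $v_x>0$ is fine, but obtaining the nonstrict inequality is precisely the missing step.
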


\begin{rmk}
By (iii) and (v) of Theorem \ref{5-parameters-soln-thm}, (v) of Theorem \ref{4-parameters-soln-thm} and (iii) and (v) of Theorem \ref{3-parameters-soln-thm}, for any $\lambda_1>1$,
$\lambda_2>1$, $\lambda_1'>1$, $\lambda_2'>1$, $h_1\ge h_0$, $h_1'\ge h_0'$, $h_2\ge h_0'$, $h_2'\ge h_0'$, $k_1, k_2\in (0,k_0]$, the following holds.
\begin{enumerate}
\item[(i)] $v_{\lambda_1,\lambda_1',h_1,h_1',k_1}\ne v_{\lambda_2,\lambda_2',h_2,h_2',k_2}$\quad\mbox{ if }\quad$(\lambda_1,\lambda_1',h_1,h_1', k_1)\ne (\lambda_2,\lambda_2',h_2,h_2',k_2)$
\item[(ii)] $v_{\lambda_1,\lambda_1',h_1,h_1'}\ne v_{\lambda_2,\lambda_2',h_2,h_2'}$\qquad\mbox{ if }\quad$(\lambda_1,\lambda_1',h_1,h_1')\ne (\lambda_2,\lambda_2',h_2,h_2')$
\item[(iii)] $v_{\lambda_1,h_1,k_1}\ne v_{\lambda_2,h_2,k_2}$\qquad\qquad\mbox{ if }\quad$(\lambda_1,h_1,k_1)\ne (\lambda_2,h_2,k_2)$.
\end{enumerate}
\end{rmk}

\begin{rmk}
By choosing $\2{\tau}_0$ to be sufficiently small we may assume without loss of generality that the constant $\2{\tau}_0$ is the same in Theorem \ref{5-parameters-soln-thm}, Theorem \ref{4-parameters-soln-thm} and Theorem \ref{3-parameters-soln-thm}.
\end{rmk}

\begin{rmk}
Existence of $4$-parameters and $5$-parameters solutions of \eqref{yamabe-ode}  are also constructed in \cite{DPKS1} and \cite{DPKS2}. However properties (ii)--(v) of Theorem \ref{5-parameters-soln-thm} for the $5$-parameters solutions of \eqref{yamabe-ode}, properties (ii)--(vii) of Theorem \ref{4-parameters-soln-thm} for the $4$-parameters solutions of \eqref{yamabe-ode} and the uniqueness of  the $4$-parameters solutions $v_{\lambda,\lambda',h,h'}$ in Theorem \ref{4-parameters-soln-thm}  are new results.
\end{rmk}

The plan of the paper is as follows. In section 2 we will prove the comparison principle and local existence solutions of \eqref{yamabe-ode}. In section 3 we will prove the existence of various ancient solutions and various properties of the ancient solutions of \eqref{yamabe-ode} stated in Theorem \ref{5-parameters-soln-thm}, Theorem \ref{4-parameters-soln-thm} and Theorem \ref{3-parameters-soln-thm}.

We start with some definitions. For any $R>0$ and $l\in\mathbb{N}$, let $B_R=\{x\in\R^l:|x|\le R\}$. For any $\tau_2>\tau_1$,  we say that $v$ is a solution of \eqref{yamabe-ode} in $\R\times (\tau_1,\tau_2)$ ($B_R\times (\tau_1,\tau_2)$, respectively) if $0<v\in C^2(\R)$ ($0<v\in C^2(B_R)$, respectively) is a classical solution of \eqref{yamabe-ode} in $\R\times (\tau_1,\tau_2)$ ($B_R\times (\tau_1,\tau_2)$, respectively). We say that $v$ is a subsolution (supersolution, respectively) of \eqref{yamabe-ode} in $\R\times (\tau_1,\tau_2)$ if $0<v\in C(\R\times (\tau_1,\tau_2))\cap L^{\infty}(\R\times (\tau_1,\tau_2))$ satisfies
\begin{align*}
\int_{B_R}v(x,\tau_4)\eta (x,\tau_4)\,dx\le &\int_{\tau_3}^{\tau_4}\int_{B_R}\left[v\eta_{\tau}+v^m\eta_{xx}+(v-v^m)\eta\right]\,dx\,d\tau
-\int_{\tau_3}^{\tau_4}\int_{\1 B_R}v^m\frac{\1 \eta}{\1 n}\,d\sigma\,d\tau\\
&\qquad +\int_{B_R}v(x,\tau_3)\eta (x,\tau_3)\,dx\quad\forall \tau_1<\tau_3<\tau_4<\tau_2
\end{align*}
($\ge$, respectively) for any $R>0$ and function $0\le\eta\in C^{2,1}(\2{B}_R\times [\tau_3,\tau_4])$ satisfying $\eta\equiv 0$ on $\1 B_R\times [\tau_3,\tau_4]$ where $\1/\1 n$ is the exterior normal derivative with respect to the unit outward normal on $\1 B_R$. For any $0\le v_0\in L_{loc}^1(\R)$, we say that a solution (subsolution, supersolution, respectively) $v$ of \eqref{yamabe-ode} in $\R\times (\tau_1,\tau_2)$ has initial value $v_0$ at $\tau_1$ if  $v(\cdot,\tau)\to v_0$ in $L^1_{loc}(\R)$ as $\tau\searrow\tau_1$.

For any $R>0$, $0\le v_0\in L^{\infty}(-R,R)$, $0\le g_0\in L^{\infty}(\{\pm R\}\times (\tau_1,\tau_2))$ we say that $v$ is a solution (subsolution, supersolution, respectively) of 
\begin{equation}\label{v-bd-cylinder-soln-eqn}
\left\{\begin{aligned}
&v_{\tau}=(v^m)_{xx}+v-v^m\quad\mbox{ in }(-R,R)\times (\tau_1,\tau_2)\\
&v=g_0\qquad\qquad\qquad\mbox{ on }\{\pm R\}\times [\tau_1,\tau_2)\\
&v(x,\tau_1)=v_0(x)\qquad\quad\mbox{ on }(-R,R)
\end{aligned}\right.
\end{equation}
if $0<v\in C((-R,R)\times (\tau_1,\tau_2))\cap L^{\infty}((-R,R)\times (\tau_1,\tau_2))$ satisfies
\begin{align}\label{soln-integral-defn}
\int_{-R}^Rv(x,\tau_3)\eta (x,\tau_3)\,dx=&\int_{\tau_1}^{\tau_3}\int_{-R}^R\left[v\eta_{\tau}+v^m\eta_{xx}+(v-v^m)\eta\right]\,dx\,d\tau
-\int_{\tau_1}^{\tau_3}\int_{\1 B_R}g_0^m\frac{\1 \eta}{\1 n}\,d\sigma\,d\tau\notag\\
&\qquad +\int_{-R}^Rv_0(x)\eta(x,\tau_1)\,dx\quad\forall \tau_1<\tau_3<\tau_2
\end{align}
($\le, \ge$, respectively) for any function $0\le\eta\in C^{2,1}(\2{B}_R\times [\tau_1,\tau_3])$ satisfying $\eta\equiv 0$ on $\{\pm R\}\times [\tau_1,\tau_3]$  where $\1/\1 n$ is the derivative with respect to the unit normal on $\1 B_R$. 

For any $R>0$, $0\le v_0\in L^{\infty}(-R,R)$, $g_0\in L^{\infty}(\{\pm R\}\times [\tau_1,\tau_2))$ we say that $v$ is a solution (subsolution, supersolution, respectively) of 
\begin{equation}\label{v-bd-cylinder-neumann-eqn}
\left\{\begin{aligned}
&v_{\tau}=(v^m)_{xx}+v-v^m\quad\mbox{ in }(-R,R)\times (\tau_1,\tau_2)\\
&\frac{\1 v^m}{\1 n}=g_0\qquad\qquad\quad\mbox{ on }\{\pm R\}\times [\tau_1,\tau_2)\\
&v(x,\tau_1)=v_0(x)\qquad\quad\mbox{ on }(-R,R)
\end{aligned}\right.
\end{equation}
if $0<v\in C((-R,R)\times (\tau_1,\tau_2))\cap L^{\infty}((-R,R)\times (\tau_1,\tau_2))$ satisfies
\begin{align*}\label{neumann-soln-integral-defn}
\int_{-R}^Rv(x,\tau_3)\eta (x,\tau_3)\,dx=&\int_{\tau_1}^{\tau_3}\int_{-R}^R\left[v\eta_{\tau}+v^m\eta_{xx}+(v-v^m)\eta\right]\,dx\,d\tau
+\int_{\tau_1}^{\tau_3}\int_{\1 B_R}g_0\eta\,d\sigma\,d\tau\notag\\
&\qquad +\int_{-R}^Rv_0(x)\eta(x,\tau_1)\,dx\quad\forall \tau_1<\tau_3<\tau_2
\end{align*}
($\le, \ge$, respectively) for any function $0\le\eta\in C^{2,1}(\2{B}_R\times [\tau_1,\tau_3])$ satisfying $\eta_x\equiv 0$ on $\{\pm R\}\times [\tau_1,\tau_3]$. 

For any set $A\subset\R$, we let $\chi_A$ be the characteristic function of the set $A$. For any $a\in\R$, we let $a_+=\max(a,0)$ and $a_-=-\min(a,0)$.

\begin{rmk} Suppose $v$ is a solution of \eqref{yamabe-ode} in $B_R\times (\tau_1,\tau_2)$. Then since $inf_{B_{R'}\times [\tau_3,\tau_4]}v>0$ for any $0<R'<R$ and $\tau_1<\tau_3<\tau_4<\tau_2$, the equation
\eqref{yamabe-ode} for $v$ is uniformly parabolic on every compact subset of $B_R\times (\tau_1,\tau_2)$. Hence by the standard Schauder estimates \cite{LSU} and a bootrap argument 
$v\in C^{\infty}(B_R\times (\tau_1,\tau_2))$. 
\end{rmk}

\section{Local existence and comparison principles}
\setcounter{equation}{0}
\setcounter{thm}{0}

In this section we will prove various local existence and comparison principles for the solutions of \eqref{yamabe-ode}.

\begin{lem}\label{v-lambda-monotone-lem}
For any $\lambda>1$, 
\begin{equation}\label{v-lambda-increasing}
v_{\lambda}'(x)>0\quad\forall x\in\R.
\end{equation}
Hence 
\begin{equation}\label{v-lambda-lower-upper-bd}
0<v_{\lambda}(x)<1\quad\forall x\in\R, \lambda>1.
\end{equation} 
\end{lem}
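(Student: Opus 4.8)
Here is how I would approach the statement.

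The plan is to read off everything from a phase–plane analysis of \eqref{v-lambda-eqn}. Set $\phi=(v_\lambda^m)_x$ and rewrite \eqref{v-lambda-eqn} as the first order system for $(v_\lambda,\phi)$; its only rest points are $(0,0)$ and $(1,0)$, and by \eqref{v-lambda-+limit} the trajectory $x\mapsto(v_\lambda(x),\phi(x))$ is a heteroclinic orbit from $(0,0)$ at $x=-\infty$ to $(1,0)$ at $x=+\infty$. By \eqref{v-lambda--infty-behaviour} this orbit lies in the open quadrant $Q:=\{0<v<1,\ \phi>0\}$ for $x$ sufficiently negative. On the face $\{0<v<1,\ \phi=0\}$ of $\partial Q$ one has $v_\lambda'=0$, hence by \eqref{v-lambda-eqn} $\phi_x=v_\lambda^m-v_\lambda>0$ there, so the orbit can cross $\{\phi=0\}$ only upward while $0<v_\lambda<1$; since $v_\lambda>0$ everywhere it cannot reach $\{v=0\}$ at a finite $x$; and it cannot reach $(1,0)$ at a finite $x$ (otherwise $v_\lambda\equiv1$). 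Thus the only way the orbit could leave $Q$ is by reaching $v_\lambda=1$ at some finite $\bar x$, necessarily with $\phi(\bar x)>0$. If this never happens, then $0<v_\lambda<1$ on $\R$ with $\phi>0$ throughout, which gives \eqref{v-lambda-lower-upper-bd} and \eqref{v-lambda-increasing}; strictness in \eqref{v-lambda-increasing} is automatic, since a zero of $v_\lambda'$ would, by \eqref{v-lambda-eqn}, satisfy $m v_\lambda^{m-1}v_\lambda''=v_\lambda^m-v_\lambda>0$ and hence be a strict local minimum of the increasing function $v_\lambda$, which is impossible.

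So the real point is to exclude a finite crossing $v_\lambda(\bar x)=1$. The ingredients I would combine are: (i) the Lyapunov functional $E(x)=\tfrac12\phi(x)^2+G(v_\lambda(x))$ with $G(v)=\tfrac{m}{m+1}v^{m+1}-\tfrac12 v^{2m}$, which by \eqref{v-lambda-eqn} obeys $E'=-\lambda m v_\lambda^{m-1}(v_\lambda')^2\le0$, satisfies $\lim_{x\to-\infty}E=G(0)=0$ by \eqref{v-lambda--infty-behaviour}, and is strictly decreasing since $v_\lambda$ is not constant; hence $E(x)<0$ for all $x$, so $G(v_\lambda(x))<0$ and therefore $0<v_\lambda(x)<M_0:=(\tfrac{m+1}{2m})^{1/(1-m)}$, a number which, unfortunately, exceeds $1$; (ii) the linearization of the system at $(1,0)$, whose eigenvalues are exactly the two roots $-\gamma$ of \eqref{gamma-eqn}, both real and negative, so $(1,0)$ is a stable node that, by \eqref{v-lambda-x-infty-behavior}, is approached by the orbit monotonically along the slower eigendirection and from within $\{v<1,\ \phi>0\}$; and (iii) the geometry of the nullcline $\{\phi_x=0\}=\bigl\{\phi=\tfrac m\lambda(v^{2m-1}-v^m)\bigr\}$, which lies in $\{\phi>0\}$ for $0<v<1$, in $\{\phi<0\}$ for $v>1$, and meets $\{\phi=0\}$ only at $v=0,1$. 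Tracking the orbit relative to this nullcline — using that near $(0,0)$ it lies below the nullcline while near $(1,0)$ it lies above it (the slow eigendirection being steeper than the nullcline at $(1,0)$, which by \eqref{gamma-eqn} reduces to the trivial inequality $\gamma>0$) — should force the orbit to remain in $Q$ for all finite $x$, giving $v_\lambda<1$.

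The hard part is precisely this last combination. The Lyapunov functional (i) by itself only yields the useless bound $v_\lambda<M_0$, and the otherwise natural maximum–principle argument also stalls: differentiating \eqref{v-lambda-eqn} shows $w:=v_\lambda'$ solves a linear second order ODE with positive leading coefficient $m v_\lambda^{m-1}$, but at a putative negative minimum $x_0$ of $w$ one computes (again from \eqref{v-lambda-eqn}) that $v_\lambda(x_0)>1$, which makes the zeroth order coefficient $m(m-1)(m-2)v_\lambda^{m-3}w^2+1-m v_\lambda^{m-1}$ strictly positive there, so no contradiction arises. The cleanest rigorous route is therefore either to carry through the phase–portrait/nullcline bookkeeping of (ii)--(iii), or — since $v_\lambda$ is defined through \eqref{v-f-relation} in terms of the self-similar profile $f$ — to invoke the monotonicity/uniqueness properties of $f$ (equivalently, of $v_\lambda$) established in \cite{H2}, \cite{H3} and in \cite{DKS}, \cite{DPKS1}, \cite{DPKS2}.
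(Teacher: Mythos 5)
Your proposal does not close the one step on which the whole lemma rests, and you say so yourself: after the phase--plane setup the only possible exit of the orbit from the quadrant $Q=\{0<v<1,\ \phi>0\}$ is through the face $\{v=1,\ \phi>0\}$, and nothing in your argument excludes this. The Lyapunov functional only gives $v_\lambda<M_0=\left(\frac{m+1}{2m}\right)^{1/(1-m)}>1$; the maximum-principle attempt fails for the reason you note; and the ``nullcline bookkeeping'' of (ii)--(iii) is never carried out. It is not a routine omission: since $(1,0)$ is a stable node, orbits may perfectly well approach it from the region $\{v>1\}$, and the energy $E$ does not forbid an overshoot with a maximum in $(1,M_0)$ followed by convergence to $(1,0)$, so some genuinely global input is required, not just local analysis plus \eqref{v-lambda-x-infty-behavior}. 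Your fallback of ``invoking the monotonicity/uniqueness properties of $f$'' is the right instinct but is left as a citation; moreover deferring to \cite{DPKS1} would be circular in spirit, since the paper points out that precisely this monotonicity is used there without proof.

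For comparison, the paper's proof is a short direct computation that avoids the phase plane entirely: using \eqref{v-f-relation} one writes $v_{\lambda}(x)=[(n-1)(n-2)]^{-\frac{1}{1-m}}(r^2f(r)^{1-m})^{\frac{n+2}{4}}$ with $r=e^{\frac{2x}{n-2}}$, differentiates, and obtains
\begin{equation*}
v_{\lambda}'(x)=\frac{rf(r)^{1-m}(r^2f(r)^{1-m})^{\frac{n-6}{4}}}{[(n-1)(n-2)]^{\frac{1}{1-m}}}\left(\frac{2}{1-m}+\frac{rf'(r)}{f(r)}\right),
\end{equation*}
which is strictly positive because Lemma 3.1 of \cite{H3} gives $\frac{2}{1-m}+\frac{rf'(r)}{f(r)}>0$ for the self-similar profile $f$; the bound $0<v_{\lambda}<1$ then follows at once from \eqref{v-lambda-increasing} and the limits \eqref{v-lambda-+limit}. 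If you want to keep your ODE approach you must actually rule out the finite crossing of $\{v=1\}$ (for instance by a careful analysis of the orbit entering the node along the slow eigendirection and a backward-invariance argument), which is exactly the part your write-up leaves open; as it stands the proposal establishes the reduction but not the lemma.
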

\begin{proof}
This result is used without proof in \cite{DPKS1}. For the sake of completeness we will give a proof of this result here. 
By \eqref{v-f-relation} and Lemma 3.1 of \cite{H3}, for any $\lambda>1$, $x\in R$, 
\begin{align*}
v_{\lambda}(x)=&\frac{(r^2f(r)^{1-m})^{\frac{n+2}{4}}}{[(n-1)(n-2)]^{\frac{1}{1-m}}}\\
\Rightarrow\quad v_{\lambda}'(x)=&\frac{(n+2)(r^2f(r)^{1-m})^{\frac{n-6}{4}}}{4[(n-1)(n-2)]^{\frac{1}{1-m}}}(2rf(r)^{1-m}+(1-m)r^2f(r)^{-m}f'(r))\\
=&\frac{rf(r)^{1-m}(r^2f(r)^{1-m})^{\frac{n-6}{4}}}{[(n-1)(n-2)]^{\frac{1}{1-m}}}\left(\frac{2}{1-m}+\frac{rf'(r)}{f(r)}\right)\\
>&0
\end{align*}
where $r=e^{\frac{2x}{n-2}}$. By \eqref{v-lambda-+limit} and \eqref{v-lambda-increasing} we get \eqref{v-lambda-lower-upper-bd} and the lemma follows.
\end{proof}

\begin{lem}\label{sub-supersoln-comparison-lem}
Let $\tau_1<\tau_2$, $M>0$, and $v_1$, $v_2\in C(\R\times (\tau_1,\tau_2))\cap L^{\infty}(\R\times (\tau_1,\tau_2))$, be subsolution and supersolution of \eqref{yamabe-ode} in $\R\times (\tau_1,\tau_2)$ with initial values $v_{0,1}, v_{0,2}\in L^{\infty}(\R)$, at $\tau_1$  respectively 
such that $0<v_i\le M$ in $\R\times [\tau_1,\tau_2)$ for $i=1,2$ and for any constant $R>0$, there exists a constant $C_R>0$ such that
\begin{equation}\label{v1-v2-local-lower-bd}
v_i(x,\tau)\ge C_R\quad\forall |x|\le R,\tau_1<\tau<\tau_2,i=1,2. 
\end{equation}
Then 
\begin{equation}\label{integral-comparison-ineqn}
\int_{\R}(v_1(x,\tau)-v_2(x,\tau))_+\,dx\le\int_{\R}e^{(1-mM^{m-1})(\tau-\tau_1)}(v_{0,1}(x)-v_{0,2}(x))_+\,dx
\end{equation}
hold for any  $\tau_1\le\tau<\tau_2$ if $(v_{0,1}-v_{0,2})_+\in L^1(\R)$. Hence if $v_{0,1}(x)\le v_{0,2}(x)$ a.e. $x\in\R$, then
\begin{equation}\label{v1<v2}
v_1(x,\tau)\le v_2(x,\tau)\quad\forall x\in\R, \tau_1<\tau<\tau_2.
\end{equation}  
If $v_1$, $v_2$, are also solutions of \eqref{yamabe-ode} in $\R^n\times (\tau_1,\tau_2)$ and $v_{0,1}-v_{0,2}\in L^1(\R)$, then
\begin{equation}\label{integral-comparison-ineqn2}
\int_{\R}|v_1(x,\tau)-v_2(x,\tau)|\,dx\le\int_{\R}e^{(1-mM^{m-1})(\tau-\tau_1)}|v_{0,1}(x)-v_{0,2}(x)|\,dx
\end{equation}
holds for any $\tau_1\le\tau<\tau_2$.
\end{lem}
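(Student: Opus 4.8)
The decisive statement is the one‑sided $L^1$ bound \eqref{integral-comparison-ineqn}: once it holds, \eqref{v1<v2} is the special case $v_{0,1}\le v_{0,2}$, and \eqref{integral-comparison-ineqn2} follows by applying \eqref{integral-comparison-ineqn} to the pairs $(v_1,v_2)$ and $(v_2,v_1)$ — each of $v_1,v_2$ being simultaneously a sub‑ and a supersolution — and adding. To prove \eqref{integral-comparison-ineqn} the plan is to use the Crandall--Pierre duality method. Fix $R>0$ and $\tau_1<\tau_3<\tau_4<\tau_2$, set $w=v_1-v_2$ and let $a(x,\tau)=(v_1^m-v_2^m)/(v_1-v_2)$ where $v_1\ne v_2$ and $a(x,\tau)=mv_1^{m-1}$ where $v_1=v_2$. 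By the mean value theorem, the bounds $C_R\le v_i\le M$ on $B_R$ from \eqref{v1-v2-local-lower-bd}, and $0<m<1$, one has $mM^{m-1}\le a\le mC_R^{m-1}$ on $B_R\times(\tau_3,\tau_4)$; in particular $a$ is uniformly parabolic on $B_R$ and $1-a\le 1-mM^{m-1}=:\mu$. Subtracting the supersolution inequality for $v_2$ from the subsolution inequality for $v_1$ and using $v_1^m-v_2^m=aw$, one gets for every $0\le\eta\in C^{2,1}(\overline{B}_R\times[\tau_3,\tau_4])$ with $\eta\equiv 0$ on $\partial B_R$,
\[
\int_{B_R}w\eta\Big|_{\tau_4}\le\int_{\tau_3}^{\tau_4}\!\!\int_{B_R}w\big(\eta_\tau+a\eta_{xx}+(1-a)\eta\big)\,dx\,d\tau-\int_{\tau_3}^{\tau_4}\!\!\int_{\partial B_R}(v_1^m-v_2^m)\frac{\partial\eta}{\partial n}\,d\sigma\,d\tau+\int_{B_R}w\eta\Big|_{\tau_3}.
\]

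Next I would take $0\le\phi\in C_c^\infty(B_R)$ with $\phi\le1$ and choose $\eta$ as (an approximation of) the solution of the backward dual problem $\eta_\tau+a\eta_{xx}+(1-a)\eta=0$ in $B_R\times(\tau_3,\tau_4)$, $\eta(\cdot,\tau_4)=\phi$, $\eta\equiv 0$ on $\partial B_R$. Since $a$ is merely measurable one regularises: pick smooth $a_\varepsilon$ with $mM^{m-1}\le a_\varepsilon\le mC_R^{m-1}$ and $a_\varepsilon\to a$ a.e., and let $\eta^\varepsilon\in C^\infty$ solve $\eta^\varepsilon_\tau+a_\varepsilon\eta^\varepsilon_{xx}+(1-a_\varepsilon)\eta^\varepsilon=0$ with the same data. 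The maximum principle gives $\eta^\varepsilon\ge0$ and $\partial\eta^\varepsilon/\partial n\le0$ on $\partial B_R$; comparing $\eta^\varepsilon$ with the spatially constant barrier $e^{\mu(\tau_4-\tau)}\|\phi\|_\infty$ — a supersolution of the same backward equation because $(1-a_\varepsilon)-\mu=mM^{m-1}-a_\varepsilon\le0$ — yields the sharp bound $0\le\eta^\varepsilon(x,\tau)\le e^{\mu(\tau_4-\tau)}$. Substituting $\eta=\eta^\varepsilon$ into the inequality above and eliminating $\eta^\varepsilon_\tau$ by the dual equation, the bulk integrand collapses to $w(a-a_\varepsilon)(\eta^\varepsilon_{xx}-\eta^\varepsilon)$, which tends to $0$ as $\varepsilon\to0$ by dominated convergence together with the $\varepsilon$‑uniform energy bound $\|\eta^\varepsilon_{xx}\|_{L^2(B_R\times(\tau_3,\tau_4))}\le C(\phi,R)$. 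Using in addition $\int_{B_R}w\eta^\varepsilon|_{\tau_3}\le\int_{B_R}w_+(\cdot,\tau_3)\eta^\varepsilon(\cdot,\tau_3)\le e^{\mu(\tau_4-\tau_3)}\int_{B_R}w_+(\cdot,\tau_3)$, letting $\varepsilon\to0$ gives
\[
\int_{B_R}w(x,\tau_4)\phi(x)\,dx\le e^{\mu(\tau_4-\tau_3)}\int_{B_R}(v_1-v_2)_+(x,\tau_3)\,dx+\limsup_{\varepsilon\to0}\Big(-\int_{\tau_3}^{\tau_4}\!\!\int_{\partial B_R}(v_1^m-v_2^m)\frac{\partial\eta^\varepsilon}{\partial n}\,d\sigma\,d\tau\Big).
\]

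It then remains to dispose of the boundary term and pass to the limits. Since $|v_1^m-v_2^m|\le M^m$, that term is bounded by $M^m\int_{\tau_3}^{\tau_4}\!\int_{\partial B_R}|\partial\eta^\varepsilon/\partial n|$, and integrating the dual equation over $B_R\times(\tau,\tau_4)$ expresses this total flux through $\{\pm R\}$ in terms of the mass $\int\phi$, the mass of $\eta^\varepsilon(\cdot,\tau_3)$, and a lower order contribution; choosing $\phi$ supported in a fixed ball and letting $R\to\infty$ (so $\eta^{\varepsilon,R}$ increases to the dual solution on $\R$) this flux tends to $0$ by mass balance. Taking the supremum over admissible $\phi$ and then $R\to\infty$ gives $\int_\R(v_1-v_2)_+(\cdot,\tau_4)\le e^{\mu(\tau_4-\tau_3)}\int_\R(v_1-v_2)_+(\cdot,\tau_3)$, and finally letting $\tau_3\searrow\tau_1$ and invoking $v_i(\cdot,\tau)\to v_{0,i}$ in $L^1_{loc}(\R)$, the hypothesis $(v_{0,1}-v_{0,2})_+\in L^1(\R)$, and Fatou's lemma yields \eqref{integral-comparison-ineqn} at $\tau=\tau_4$; since $\tau_4<\tau_2$ was arbitrary, it holds on all of $[\tau_1,\tau_2)$.

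The main obstacle is exactly this boundary/tail analysis. Because the lower bound $C_R$ in \eqref{v1-v2-local-lower-bd} degenerates as $R\to\infty$, the coefficient $a$ is bounded below but in general unbounded above on $\R$ (the equation is of fast‑diffusion type near $v=0$), so global Gaussian estimates for the dual solution are unavailable and the flux identity must be used on each $B_R$ before the limit $R\to\infty$; the lower order term $\int(a_\varepsilon)_x\eta^\varepsilon_x$ produced by integrating the dual equation by parts then has to be controlled, e.g. by taking $a_\varepsilon$ a mollification of $a$ followed by a further limit, or by rewriting everything in terms of $w=v^m$, for which $w_\tau=mv^{m-1}(w_{xx}+v-v^m)$ is uniformly parabolic with bounded coefficients on every $B_R$. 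The remaining ingredients — local $L^1$–$L^\infty$ bounds and interior parabolic regularity, which make the above manipulations legitimate for merely continuous sub‑ and supersolutions — are routine consequences of the local uniform parabolicity furnished by \eqref{v1-v2-local-lower-bd}.
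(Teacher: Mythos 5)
Your overall strategy is the same duality argument the paper uses: define $A=(v_1^m-v_2^m)/(v_1-v_2)$, note $mM^{m-1}\le A\le mC_R^{m-1}$ on $B_R$, regularise $A$, solve the backward dual problem on $B_R$ with cut-off terminal data, use the maximum principle and the exponential factor $e^{(1-mM^{m-1})(\tau_4-\tau)}$ for the sup bound, an $L^2$ energy estimate on $\eta_{xx}$ to kill the $(A-A_\varepsilon)$ errors, and then let $R\to\infty$; the reductions of \eqref{v1<v2} and \eqref{integral-comparison-ineqn2} to \eqref{integral-comparison-ineqn} are also as in the paper. The gap is exactly at the point you flag as ``the main obstacle'': the boundary flux term $\int_{\tau_3}^{\tau_4}\int_{\partial B_R}|v_1^m-v_2^m|\,|\partial\eta^{\varepsilon}/\partial n|$. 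Your proposed ``mass balance'' does not work as stated. The dual equation $\eta_\tau+a_\varepsilon\eta_{xx}+(1-a_\varepsilon)\eta=0$ is not in divergence form, so integrating it over $B_R\times(\tau,\tau_4)$ produces, besides the flux, the bulk terms $\int\!\!\int(a_\varepsilon)_x\eta_x$ and $\int\!\!\int(1-a_\varepsilon)\eta$; neither is controlled uniformly in $R$, because $a$ is only continuous (so $(a_\varepsilon)_x$ is not quantitatively bounded) and, more seriously, $a\le mC_R^{m-1}$ blows up as $|x|\to\infty$ (the $C_R$ in \eqref{v1-v2-local-lower-bd} degenerates, e.g. $a\sim mv^{m-1}\sim e^{p(1-m)|x|}$ for the solutions at hand), so $\int_{B_R}|1-a_\varepsilon|\eta$ can grow without bound as $R\to\infty$ even with the sup bound on $\eta$. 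There is also no conservation law for the whole-space dual solution to which one could appeal, so the identity does not force the flux to vanish; and the suggested substitution $w=v^m$ only relocates the same unbounded coefficient $mv^{m-1}$, hence does not repair the $R\to\infty$ limit.

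What is actually needed — and what the paper supplies — is a spatial decay estimate for the dual solution that is uniform in $R$, $\varepsilon$ and in the (unbounded) size of the coefficient. The paper takes the explicit profile $\phi_0$ solving \eqref{v-lambda-eqn} with $\lambda=0$ and checks, using \eqref{phi-0-xx-eqn}, \eqref{phi-0-x-limit} and $mp=1$, that $\phi(x,\tau)=K_1e^{\tau_3-\tau}\phi_0(mx)$ satisfies $\phi_{xx}\le\phi$, hence
$\phi_\tau+A_k\phi_{xx}+(1-A_k)\phi\le(-1+A_k+(1-A_k))\phi=0$
\emph{regardless of how large $A_k$ is}, because the coefficient multiplies the nonpositive quantity $\phi_{xx}-\phi$. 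This gives $0\le\eta_{R,k}\le Ce^{\tau_3-\tau}e^{-|x|}$, and a further linear barrier on the annulus $R-1\le|x|\le R$ yields $|\partial\eta_{R,k}/\partial n|\le Ce^{-R}$ on $|x|=R$, so the boundary term is $O(e^{-R})$ and disappears in the limit. Some comparable quantitative barrier (or Gaussian/exponential bound valid despite the unbounded upper coefficient) is indispensable here; without it your argument does not close, so as written the proof is incomplete at its decisive step.
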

\begin{proof}
We will use a modification of the proof of Lemma 2.3 of \cite{DK} and Theorem 2.1 of \cite{PV} to prove the lemma. 
We first suppose that $(v_{0,1}-v_{0,2})_+\in L^1(\R)$. Let $\tau_1<\tau_3<\tau_2$, $R_0>0$, $\theta\in C_0^{\infty}(\R)$ such that $0\le\theta\le 1$ in $\R$ and supp $\theta\subset B_{R_0}$. Let 
\begin{equation*}
A=\left\{\begin{aligned}
&\frac{v_1^m-v_2^m}{v_1-v_2}\quad\mbox{ if }v_1\ne v_2\\
&mv_1^{m-1}\qquad\mbox{if }v_1=v_2
\end{aligned}\right.
\end{equation*}
Then 
\begin{equation*}
mM^{m-1}\le A(x,\tau)\le m C_R^{m-1}\quad\forall |x|\le R, \tau_1\le\tau<\tau_2, R>0.
\end{equation*}  
We choose a sequence of functions $\{A_k\}_{k=1}^{\infty}\subset C^{\infty}(\R\times [\tau_1,\tau_2))$ such that $A_k$ converges uniformly to $A$ on every compact subset of $\R\times (\tau_1,\tau_2)$ as $k\to\infty$ and
\begin{equation}\label{Ak-bd}
mM^{m-1}\le A_k(x,\tau)\le m C_{R+1}^{m-1}\quad\forall |x|\le R, \tau_1\le\tau\le\tau_2, k\in\mathbb{N}, R>0.
\end{equation}
For any $R>R_0+2$, $k\in\mathbb{N}$, let $\eta_{R,k}\in C^{\infty}(\2{B}_R\times [\tau_1,\tau_3])$ be the solution of 
\begin{equation}\label{adjoint-eqn}
\left\{\begin{aligned}
\eta_{\tau}+A_k\eta_{xx}+(1-A_k)\eta&=0\quad\mbox{ in }(-R,R)\times (\tau_1,\tau_3)\\
\eta&=0\quad\mbox{ on }\{\pm R\}\times [\tau_1,\tau_3]\\
\eta(x,\tau_3)&=\theta(x)\quad\forall x\in (-R,R).
\end{aligned}\right.
\end{equation}
By the maximum principle, $\eta_{R,k}\ge 0$ in $[-R,R]\times [\tau_1,\tau_3]$. Hence $\1 \eta_{R,k}/\1 n\le 0$ on $\{\pm R\}\times [\tau_1,\tau_3)$.
Then by \eqref{adjoint-eqn},
\begin{align}\label{eta-rk-bd}
&(\eta_{R,k})_{\tau}+A_k(\eta_{R,k})_{xx}+(1-mM^{m-1})\eta_{R,k}\ge 0\quad\mbox{ in }(-R,R)\times (\tau_1,\tau_3)\notag\\
\Rightarrow\quad&(e^{(1-mM^{m-1})\tau}\eta_{R,k})_{\tau}+A_k(e^{(1-mM^{m-1})\tau}\eta_{R,k})_{xx}\ge 0\quad\mbox{ in }(-R,R)\times (\tau_1,\tau_3)\notag\\
\Rightarrow\quad&0\le\eta_{R,k}(x,\tau)\le e^{(1-mM^{m-1})(\tau_3-\tau)}\theta (x)\le e^{(1-mM^{m-1})(\tau_3-\tau)}\quad\forall |x|\le R,\tau_1\le\tau\le\tau_3.
\end{align}
By \eqref{adjoint-eqn},
\begin{align*}
\frac{1}{2}\frac{\1}{\1\tau}\int_{\R}\eta_{R,k,x}^2\,dx=&\int_{\R}\eta_{R,k,x}(\eta_{R,k})_{x,\tau}\,dx
=-\int_{\R}(\eta_{R,k})_{xx}\eta_{R,k,\tau}\,dx\notag\\
=&\int_{\R}A_k\eta_{R,k,xx}^2\,dx+\int_{\R}(1-A_k)\eta_{R,k}\eta_{R,k,xx}\, dx.
\end{align*}
Hence
\begin{align}\label{eta-l2}
\int_{\tau_1}^{\tau_3}\int_{\R}A_k\eta_{R,k,xx}^2\,dx\,dt\le&\frac{1}{2}\int_{\R}\theta_x^2\,dx+\int_{\tau_1}^{\tau_3}\int_{\R}(A_k-1)\eta_{R,k}\eta_{R,k,xx}\,dx\,dt\notag\\
\le&\frac{1}{2}\int_{\R}\theta_x^2\,dx+\frac{1}{2}\int_{\tau_1}^{\tau_3}\int_{\R}A_k\eta_{R,k,xx}^2\,dx\,dt
+\frac{1}{2}\int_{\tau_1}^{\tau_3}\int_{\R}\frac{(A_k-1)^2}{A_k^2}\eta_{R,k}^2\,dx\,dt.
\end{align}
Thus by \eqref{Ak-bd}, \eqref{eta-rk-bd} and \eqref{eta-l2},
\begin{equation}\label{eta-xx-integral-bd}
\int_{\tau_1}^{\tau_3}\int_{\R}A_k\eta_{R,k,xx}^2\,dx\,dt\le\int_{\R}\theta_x^2\,dx+\int_{\tau_1}^{\tau_3}\int_{\R}\frac{(A_k-1)^2}{A_k^2}\eta_{R,k}^2\,dx\,dt
\le\int_{\R}\theta_x^2\,dx+C_R'
\end{equation}
for some constant $C_R'>0$.  Since $v_1$, $v_2$, are subsolution and supersolution of \eqref{yamabe-ode} in $\R\times (\tau_1,\tau_2)$, by \eqref{eta-rk-bd} and \eqref{eta-xx-integral-bd},
\begin{align}\label{v1-v2-integral-ineqn}
&\int_{B_{R_0}}(v_1-v_2)(x,\tau_3)\theta(x)\,dx\notag\\
\le &\int_{\tau_1}^{\tau_3}\int_{B_R}(v_1-v_2)\left[\eta_{R,k,\tau}+A(\eta_{R,k})_{xx}+(1-A)\eta_{R,k}\right]\,dx\,dt+\int_{B_R}(v_{0,1}(x)-v_{0,2}(x))\eta_{R,k}(x,\tau_1)\,dx\notag\\
&\qquad +\int_{\tau_1}^{\tau_3}\int_{\1 B_R}(v_2^m-v_1^m)\frac{\1 \eta_{R,k}}{\1 n}\,d\sigma \,dt\notag\\
\le&\int_{\tau_1}^{\tau_3}\int_{B_R}(v_1-v_2)\left[(A-A_k)(\eta_{R,k})_{xx}+(A_k-A)\eta_{R,k}\right]\,dx\,dt+\int_{B_R}(v_{0,1}(x)-v_{0,2}(x))\eta_{R,k}(x,\tau_1)\,dx\notag\\
&\qquad +\int_{\tau_1}^{\tau_3}\int_{\1 B_R}v_1^m\left|\frac{\1 \eta_{R,k}}{\1 n}\right|\,d\sigma \,dt\notag\\
\le&2M\left(\int_{\tau_1}^{\tau_3}\int_{B_R}\frac{(A-A_k)^2}{A_k}\,dx\,dt\right)^{\frac{1}{2}}\left(\int_{\R}\theta_x^2\,dx+C_R'\right)^{\frac{1}{2}}+C_1\int_{\tau_1}^{\tau_3}\int_{B_R}|A_k-A|\,dx\,dt\notag\\
&\qquad +\int_{B_R}(v_{0,1}(x)-v_{0,2}(x))\eta_{R,k}(x,\tau_1)\,dx
+M^m\int_{\tau_1}^{\tau_3}\int_{\1 B_R}\left|\frac{\1 \eta_{R,k}}{\1 n}\right|\,d\sigma\,dt.
\end{align}
for some constant $C_1>0$ where $\1/\1 n$ is the exterior derivative with respect to the unit outward normal on $\1 B_R$.
We will now estimate the last term on the right hand side of \eqref{v1-v2-integral-ineqn}. As observed in \cite{DPKS1} the function
\begin{equation*}
\phi_0(x)=\left(\frac{k_ne^{\frac{2x}{n-2}}}{1+e^{\frac{4x}{n-2}}}\right)^{\frac{n+2}{2}},\quad k_n=\sqrt{\frac{4n}{n-2}},
\end{equation*}
satisfies \eqref{v-lambda-eqn} with $\lambda=0$. Hence
\begin{equation}\label{phi-0-xx-eqn}
m\phi_{0,xx}=-\phi_0^{2-m}+\phi_0+m(1-m)\phi_0^{-1}\phi_{0,x}^2\quad\mbox{ in }\R.
\end{equation}
Note that $\phi_0(x)$ is an even function and it is monotone decreasing in $x\ge 0$. Moreover $\phi_0(x)\le C_2e^{-p|x|}$ for any $x\in\R$ where $C_2=k_n^{\frac{n+2}{2}}$ and 
\begin{align}\label{phi-0-x-limit}
&\phi_{0,x}(x)=\frac{n+2}{n-2}\cdot\left(\frac{1-e^{\frac{4x}{n-2}}}{1+e^{\frac{4x}{n-2}}}\right)\phi_0(x)\quad\forall x\in\R\notag\\
\Rightarrow\quad&\phi_{0,x}(x)\approx -p\,\mbox{sign}\,(x)\phi_0(x)\quad\mbox{ as }|x|\to\infty\quad\mbox{ and }\quad |\phi_{0,x}(x)|\le p\phi_0(x)\quad\forall x\in\R.
\end{align}
Let $\phi(x,\tau)=K_1e^{\tau_3-\tau}\phi_0(mx)$ where $K_1=\phi_0(m R_0)^{-1}$. We claim that $\phi$ is a supersolution of \eqref{adjoint-eqn}.
To prove the claim we observe that  by \eqref{phi-0-xx-eqn} and \eqref{phi-0-x-limit},
\begin{align*}
\phi_{xx}(x,\tau)=&m^2K_1e^{\tau_3-\tau}\phi_{0,xx}(mx)\\
=&mK_1e^{\tau_3-\tau}\left[-\phi_0(mx)^{2-m}+\phi_0(mx)+m(1-m)\phi_0(mx)^{-1}\phi_{0,x}(mx)^2\right]\\
\le&m\phi(x,\tau)\left[1+m(1-m)\phi_0(mx)^{-2}\phi_{0,x}(mx)^2\right]\\
\le&\phi(x,\tau)\quad\forall x\in\R,\tau_1\le\tau\le\tau_3.
\end{align*}
Hence
\begin{equation}\label{phi-mu-ineqn}
\phi_{\tau}+A_k\phi_{xx}+(1-A_k)\phi\le\left(-1+A_k+(1-A_k)\right)\phi=0\quad\forall x\in\R,\tau_1\le\tau\le\tau_3.
\end{equation}
Now 
\begin{equation}\label{phi-mu>theta}
\phi(x,\tau_3)=\frac{\phi_0(mx)}{\phi_0(m R_0)}\ge 1\ge\theta (x)\quad\forall |x|\le R_0.
\end{equation}
By \eqref{phi-mu-ineqn} and \eqref{phi-mu>theta}, $\phi$ is a supersolution of \eqref{adjoint-eqn}. Hence by the maximum principle,
\begin{equation}\label{eta-r-k<phi}
0\le\eta_{R,k}(x,\tau)\le\phi(x,\tau)\le C_2K_1e^{\tau_3-\tau}e^{-|x|}\quad\forall |x|\le R,\tau_1\le\tau\le\tau_3, k\in\mathbb{N}.
\end{equation} 
Let 
\begin{equation*}
H(x,\tau)=K_1e^{\tau_3-\tau}(R-|x|)\phi_0(m (R-1)).
\end{equation*}
Then by \eqref{eta-r-k<phi},
\begin{equation*}
\left\{\begin{aligned}
&H_{\tau}+A_kH_{xx}+(1-A_k)H=-A_kH\le 0\quad\forall R-1\le|x|\le R, \tau_1\le\tau\le\tau_3\\
&H(x,\tau)=\phi(R-1,\tau)\ge\eta_{R,k}(x,\tau)\qquad\quad\forall |x|=R-1, \tau_1\le\tau\le\tau_3\\
&H(x,\tau)\ge 0=\eta_{R,k}(x,\tau)\qquad\qquad\qquad\quad\forall |x|=R, \tau_1\le\tau\le\tau_3\\
&H(x,\tau_3)\ge 0=\eta_{R,k}(x,\tau_3)\qquad\qquad\qquad\forall R-1\le|x|\le R.
\end{aligned}\right.
\end{equation*}
Hence by the maximum principle in $(B_R\setminus B_{R-1})\times (\tau_1,\tau_3)$,
\begin{align}\label{eta-r-k-derivative-bd}
&0\le\eta_{R,k}(x,\tau)\le H(x,\tau)\quad\forall R-1\le |x|\le R, \tau_1\le\tau\le\tau_3,k\in\mathbb{N}\notag\\
\Rightarrow\quad&0\ge\frac{\1\eta_{R,k}}{\1 n}(x,\tau)\ge\frac{\1 H}{\1 n}(x,\tau)\ge -e^{\tau_3-\tau}K_1\phi_0(m (R-1))\ge -C_3e^{\tau_3-\tau}e^{-R}
\end{align}
for any $|x|=R>R_0+2$ and $\tau_1\le\tau\le\tau_3$ where $C_3=ek_n^{\frac{n+2}{2}}\phi_0(mR_0)^{-1}>0$.
Letting $k\to\infty$ in \eqref{v1-v2-integral-ineqn}, by \eqref{eta-rk-bd}, \eqref{eta-r-k<phi} and \eqref{eta-r-k-derivative-bd}, for any $R_0>0$, $R>R_0+2$ and $\theta\in C_0^{\infty}(B_{R_0})$ such that  $0\le\theta(x)\le 1$ for any $x\in\R$, we have for any $\tau_1<\tau_3<\tau_2$,
\begin{align}\label{v1-v2-compare2}
&\int_{B_{R_0}}(v_1(x,\tau_3)-v_2(x,\tau_3))\theta(x)\,dx\notag\\
\le&\int_{B_R}e^{(1-mM^{m-1})(\tau-\tau_1)}(v_{0,1}(x)-v_{0,2}(x))_+\,dx+2(\tau_3-\tau_1)C_3 M^me^{\tau_3-\tau_1}e^{-R}\notag\\
\Rightarrow\quad&\int_{B_{R_0}}(v_1(x,\tau_3)-v_2(x,\tau_3))\theta(x)\,dx
\le\int_{\R}e^{(1-mM^{m-1})(\tau-\tau_1)}(v_{0,1}(x)-v_{0,2}(x))_+\,dx\quad\mbox{ as }R\to\infty.
\end{align}
We now choose a sequence of functions $\{\theta_i\}_{I=1}^{\infty}\subset C_0^{\infty}(B_{R_0})$, $0\le\theta_i\le 1$ for all $i\in\mathbb{N}$, such that $\theta_i(x)\nearrow(\mbox{ sign} (v_1(x)-v_2(x)))\chi_{B_{R_0}}(x)$ in $B_{R_0}$ as $i\to\infty$. Putting $\theta=\theta_i$ and letting first $i\to\infty$ and then $R_0\to\infty$ in \eqref{v1-v2-compare2}, \eqref{integral-comparison-ineqn} follows. Hence if $v_{0,1}(x)\le v_{0,2}(x)$ a.e. $x\in\R$, then \eqref{v1<v2} holds. Similarly if $v_1$ and $v_2$ are also solutions of \eqref{yamabe-ode} in $\R^n\times (\tau_1,\tau_2)$ and $v_{0,1}-v_{0,2}\in L^1(\R)$, then for any $\tau_1\le\tau<\tau_2$,
\begin{equation}\label{integral-comparison-ineqn3}
\int_{\R}(v_1(x,\tau)-v_1(x,\tau))_-\,dx\le\int_{\R}e^{(1-mM^{m-1})(\tau-\tau_1)}(v_{0,1}(x)-v_{0,2}(x))_-\,dx.
\end{equation}
By \eqref{integral-comparison-ineqn} and \eqref{integral-comparison-ineqn3}, \eqref{integral-comparison-ineqn2} follows. 
\end{proof}

Similarly we have the following two lemmas.

\begin{lem}\label{bded-domain-comparison-lem}
Let $R>0$, $\tau_1<\tau_2$, $v_{0,1}, v_{0,2}\in L^{\infty}(-R,R)$ be such that $v_{0,2}\ge v_{0,1}\ge 0$ in $(-R,R)$ and let $g_1,g_2\in L^{\infty}(\{\pm R\}\times (\tau_1,\tau_2))$ be  such that $g_2\ge g_1\ge 0$ on $\{\pm R\}\times (\tau_1,\tau_2)$. Let $v_1, v_2\in C((-R,R)\times (\tau_1,\tau_2))\cap L^{\infty}((-R,R)\times (\tau_1,\tau_2))$ be subsolution and supersolution of \eqref{v-bd-cylinder-soln-eqn} in $(-R,R)\times (\tau_1,\tau_2)$ with $v_0=v_{0,1}, v_{0,2}$ and $g_0=g_1$, $g_2$ resepectively. Suppose $v_1$, $v_2$, satisfies \eqref{v1-v2-local-lower-bd} for some constant $C_R>0$. Then
\begin{equation*}
v_1(x,\tau)\le v_2(x,\tau)\quad\forall |x|\le R, \tau_1<\tau<\tau_2.
\end{equation*}  
\end{lem}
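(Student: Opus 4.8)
The plan is to mimic the proof of Lemma \ref{sub-supersoln-comparison-lem}, specialised to the bounded cylinder $(-R,R)\times(\tau_1,\tau_2)$; the argument is in fact shorter, because no barrier near spatial infinity is needed and, as we shall see, the lateral boundary term will carry a favourable sign. I would set $M=\max(\|v_1\|_{L^\infty},\|v_2\|_{L^\infty})$ and, as before,
\[
A=\begin{cases}\dfrac{v_1^m-v_2^m}{v_1-v_2}&\text{if }v_1\ne v_2,\\[1.5ex] mv_1^{m-1}&\text{if }v_1=v_2,\end{cases}
\]
so that $v_1^m-v_2^m=A(v_1-v_2)$. The key observation is that on the \emph{bounded} interval $(-R,R)$ the hypothesis \eqref{v1-v2-local-lower-bd} gives a genuine two-sided bound $mM^{m-1}\le A\le mC_R^{m-1}$ on all of $(-R,R)\times(\tau_1,\tau_2)$. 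I would then pick $A_k\in C^\infty(\overline{B}_R\times[\tau_1,\tau_2])$ with $A_k\to A$ uniformly on compact subsets of $(-R,R)\times(\tau_1,\tau_2)$ and $mM^{m-1}\le A_k\le mC_R^{m-1}$ throughout.

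For fixed $\tau_1<\tau_3<\tau_2$ and $0\le\theta\in C_0^\infty(-R,R)$ with $\theta\le1$, let $\eta_{R,k}\in C^{2,1}(\overline{B}_R\times[\tau_1,\tau_3])$ solve the terminal-value problem \eqref{adjoint-eqn}; regularity up to the parabolic boundary and the corners holds because $\theta\in C_0^\infty(-R,R)$ makes all compatibility conditions trivial. The maximum principle gives $\eta_{R,k}\ge0$, hence $\partial\eta_{R,k}/\partial n\le0$ on $\{\pm R\}\times[\tau_1,\tau_3]$; comparing $\eta_{R,k}$ with the explicit supersolution $e^{(1-mM^{m-1})(\tau_3-\tau)}$ of \eqref{adjoint-eqn} (using $A_k\ge mM^{m-1}$) yields the pointwise bound \eqref{eta-rk-bd}. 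Multiplying the equation in \eqref{adjoint-eqn} by $(\eta_{R,k})_{xx}$ and integrating over $(-R,R)$ — the boundary terms drop out since $\eta_{R,k}$, and therefore $(\eta_{R,k})_\tau$, vanish on $\{\pm R\}$ — and then using Young's inequality together with \eqref{eta-rk-bd} and $A_k\ge mM^{m-1}$, I obtain a $k$-independent energy bound as in \eqref{eta-xx-integral-bd}, namely $\int_{\tau_1}^{\tau_3}\!\int_{-R}^R A_k(\eta_{R,k})_{xx}^2\,dx\,d\tau\le\int_{-R}^R\theta_x^2\,dx+C_R'$.

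Next I would insert $\eta_{R,k}$ into the weak subsolution inequality for $v_1$ and the weak supersolution inequality for $v_2$ in the sense of \eqref{soln-integral-defn} (with $v_0=v_{0,1},v_{0,2}$ and $g_0=g_1,g_2$) and subtract, producing an analogue of \eqref{v1-v2-integral-ineqn}. Writing $v_1^m-v_2^m=A(v_1-v_2)$ and using the equation for $\eta_{R,k}$, the interior integrand collapses to $(v_1-v_2)\big[(A-A_k)(\eta_{R,k})_{xx}+(A_k-A)\eta_{R,k}\big]$, which is controlled by $2M$ times the product of $\big(\int\!\int (A-A_k)^2/A_k\big)^{1/2}$ with the energy bound, plus a term $O\big(\int\!\int|A_k-A|\big)$. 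Crucially, the initial contribution $\int_{-R}^R(v_{0,1}-v_{0,2})\eta_{R,k}(\cdot,\tau_1)\,dx\le0$ since $v_{0,1}\le v_{0,2}$ and $\eta_{R,k}\ge0$, while the lateral contribution equals $-\int_{\tau_1}^{\tau_3}\!\int_{\partial B_R}(g_1^m-g_2^m)\,\partial\eta_{R,k}/\partial n\,d\sigma\,d\tau\le0$ because $g_1\le g_2$ and $\partial\eta_{R,k}/\partial n\le0$. Letting $k\to\infty$ — using that $A$ is bounded on all of $(-R,R)\times(\tau_1,\tau_3)$, so $A_k\to A$ in $L^2$ and $(A-A_k)^2/A_k\to0$ in $L^1$ — I get $\int_{-R}^R(v_1-v_2)(\cdot,\tau_3)\,\theta\,dx\le0$ for every admissible $\theta$. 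Choosing $\theta=\theta_i\nearrow\chi_{\{x:\,v_1(x,\tau_3)>v_2(x,\tau_3)\}}\chi_{(-R,R)}$ forces $\int_{-R}^R(v_1-v_2)_+(\cdot,\tau_3)\,dx=0$, hence $v_1(\cdot,\tau_3)\le v_2(\cdot,\tau_3)$ by continuity, and since $\tau_3\in(\tau_1,\tau_2)$ is arbitrary the lemma follows.

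I expect the only steps requiring genuine care to be the $k$-uniform energy estimate \eqref{eta-xx-integral-bd} — needed so that the right-hand side above does not blow up as $A_k\to A$ — and the sign bookkeeping of the initial and lateral boundary terms. Both are handled exactly as in Lemma \ref{sub-supersoln-comparison-lem}, so in the write-up I would present the argument tersely and refer to that lemma for the repeated computations.
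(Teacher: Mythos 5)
Your proposal is correct and is essentially the argument the paper intends: Lemma \ref{bded-domain-comparison-lem} is stated with the remark that it follows "similarly" to Lemma \ref{sub-supersoln-comparison-lem}, and your adaptation of that duality argument — two-sided bound on $A$ from \eqref{v1-v2-local-lower-bd}, smooth approximations $A_k$, the adjoint problem \eqref{adjoint-eqn} with the energy bound \eqref{eta-xx-integral-bd}, and the favorable signs of the initial term and of the lateral term $-\int\int(g_1^m-g_2^m)\,\partial\eta_{R,k}/\partial n\le 0$ replacing the barrier construction near infinity — is exactly the intended proof. The sign bookkeeping and the $k\to\infty$ limit are handled correctly, so no further changes are needed.
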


\begin{lem}\label{bded-domain-neumann-comparison-lem}
Let $R>0$, $\tau_1<\tau_2$, $v_{0,1}, v_{0,2}\in L^{\infty}(-R,R)$ be such that $v_{0,2}\ge v_{0,1}\ge 0$ in $(-R,R)$ and let $g_1,g_2\in L^{\infty}(\{\pm R\}\times (\tau_1,\tau_2))$ be  such that $g_2\ge g_1\ge 0$ on $\{\pm R\}\times (\tau_1,\tau_2)$. Let $v_1, v_2\in C((-R,R)\times (\tau_1,\tau_2))\cap L^{\infty}((-R,R)\times (\tau_1,\tau_2))$ be subsolution and supersolution of \eqref{v-bd-cylinder-neumann-eqn} in $(-R,R)\times (\tau_1,\tau_2)$ with $v_0=v_{0,1}, v_{0,2}$ and $g_0=g_1$, $g_2$ resepectively. Suppose $v_1$, $v_2$, satisfies \eqref{v1-v2-local-lower-bd} for some constant $C_R>0$. Then
\begin{equation*}
v_1(x,\tau)\le v_2(x,\tau)\quad\forall |x|\le R, \tau_1<\tau<\tau_2.
\end{equation*}  
\end{lem}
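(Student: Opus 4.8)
The plan is to repeat, with two simplifications, the duality argument used for Lemma~\ref{sub-supersoln-comparison-lem}. First I would set
\[
A(x,\tau)=\frac{v_1^m-v_2^m}{v_1-v_2}\ \text{ where }v_1\ne v_2,\qquad A(x,\tau)=mv_1^{m-1}\ \text{ where }v_1=v_2,
\]
so that $v_1^m-v_2^m=A(v_1-v_2)$ pointwise. Since $0<v_i\le M$ and, by \eqref{v1-v2-local-lower-bd} (which on the bounded interval forces $v_i\ge C_R$ throughout $(-R,R)$), one gets $mM^{m-1}\le A\le mC_R^{m-1}$ on $(-R,R)\times(\tau_1,\tau_2)$, so the operator $\partial_\tau+A\partial_{xx}$ is uniformly parabolic there. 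I would then fix $\tau_1<\tau_3<\tau_2$ and a test function $0\le\theta\in C_0^\infty((-R,R))$, pick smooth $A_k$ obeying the same two-sided bounds and converging to $A$ uniformly on compact subsets of $(-R,R)\times(\tau_1,\tau_2)$, and let $\eta_{R,k}$ solve the backward \emph{Neumann} adjoint problem $\eta_\tau+A_k\eta_{xx}+(1-A_k)\eta=0$ in $(-R,R)\times(\tau_1,\tau_3)$, with $\eta_x=0$ on $\{\pm R\}\times[\tau_1,\tau_3]$ and $\eta(\cdot,\tau_3)=\theta$.

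The next step is to record two a priori bounds on $\eta_{R,k}$. From the maximum principle for the Neumann problem $\eta_{R,k}\ge 0$, and exactly as in \eqref{eta-rk-bd} the bound $A_k\ge mM^{m-1}$ makes $e^{(1-mM^{m-1})\tau}\eta_{R,k}$ a subsolution of $\partial_\tau+A_k\partial_{xx}$, whence $0\le\eta_{R,k}\le e^{(1-mM^{m-1})(\tau_3-\tau)}$. For the second, testing the equation against $\eta_{R,k,xx}$ and integrating over $(-R,R)$, the boundary contribution in $\tfrac12\tfrac{d}{d\tau}\int\eta_{R,k,x}^2\,dx=[\eta_{R,k,x}\eta_{R,k,\tau}]_{-R}^R-\int\eta_{R,k,xx}\eta_{R,k,\tau}\,dx$ drops out precisely because $\eta_{R,k,x}=0$ at $\pm R$; then, as in the derivation of \eqref{eta-xx-integral-bd}, a Young inequality on $\int(A_k-1)\eta_{R,k}\eta_{R,k,xx}$ together with the preceding $L^\infty$ bound gives $\int_{\tau_1}^{\tau_3}\int_{-R}^R A_k\eta_{R,k,xx}^2\le\int_{-R}^R\theta_x^2+C_R'$ with $C_R'$ independent of $k$. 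This part is actually shorter than in Lemma~\ref{sub-supersoln-comparison-lem}: the bounded interval removes any need for the exhaustion-in-$R$ step, for the barrier $\phi_0$, or for the boundary-derivative estimate \eqref{eta-r-k-derivative-bd}. I would then subtract the weak formulations of \eqref{v-bd-cylinder-neumann-eqn} for the subsolution $v_1$ (data $v_{0,1},g_1$) and the supersolution $v_2$ (data $v_{0,2},g_2$), tested against the admissible function $\eta_{R,k}$, and substitute $v_1^m-v_2^m=A(v_1-v_2)$; since $\eta_{R,k}$ solves the adjoint equation with $A_k$ in place of $A$, this leaves
\[
\int_{-R}^R(v_1-v_2)(x,\tau_3)\theta\,dx\le\int_{\tau_1}^{\tau_3}\!\!\int_{-R}^R(v_1-v_2)\big[(A-A_k)\eta_{R,k,xx}+(A_k-A)\eta_{R,k}\big]+\int_{\tau_1}^{\tau_3}\!\!\int_{\partial B_R}(g_1-g_2)\eta_{R,k}\,d\sigma+\int_{-R}^R(v_{0,1}-v_{0,2})\eta_{R,k}(x,\tau_1)\,dx.
\]
Here the boundary integral and the initial integral are both $\le 0$ because $g_1\le g_2$, $v_{0,1}\le v_{0,2}$ and $\eta_{R,k}\ge 0$; and, using $|v_1-v_2|\le 2M$ with Cauchy--Schwarz, the $L^2$ bound on $\eta_{R,k,xx}$, the $L^\infty$ bound on $\eta_{R,k}$, and $A_k\to A$ on compacta, the first integral tends to $0$ as $k\to\infty$.

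Finally, this yields $\int_{-R}^R(v_1-v_2)(x,\tau_3)\theta\,dx\le 0$ for every such $\theta$; letting $\theta$ increase to $\chi_{\{v_1(\cdot,\tau_3)>v_2(\cdot,\tau_3)\}}$ on $(-R,R)$ gives $\int_{-R}^R(v_1-v_2)_+(x,\tau_3)\,dx\le 0$, hence $v_1(\cdot,\tau_3)\le v_2(\cdot,\tau_3)$, and since $\tau_3$ was arbitrary the lemma follows. (The companion Dirichlet statement, Lemma~\ref{bded-domain-comparison-lem}, goes the same way: now $\eta_{R,k}=0$ on $\{\pm R\}$, so the Hopf lemma gives $\partial\eta_{R,k}/\partial n\le 0$ there, and together with $g_1^m\le g_2^m$ the boundary term $-\int\!\int_{\partial B_R}(g_1^m-g_2^m)\partial\eta_{R,k}/\partial n\,d\sigma$ again has the favourable sign.) The only point I expect to need real care is the solvability, regularity, and uniform-in-$k$ estimates for the adjoint Neumann problem, but once uniform parabolicity has been extracted from \eqref{v1-v2-local-lower-bd} this is routine linear parabolic theory, so there is no genuinely new obstacle beyond what was already overcome in the proof of Lemma~\ref{sub-supersoln-comparison-lem}.
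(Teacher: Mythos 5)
Your proposal is correct and is essentially the argument the paper intends: Lemma \ref{bded-domain-neumann-comparison-lem} is stated with "similarly" after Lemma \ref{sub-supersoln-comparison-lem}, i.e.\ the same duality/adjoint-problem argument, which you adapt properly to the bounded Neumann setting (the vanishing conormal derivative killing the boundary terms in both the $\eta_{xx}$ energy estimate and the weak formulation, and the sign conditions on $g_1\le g_2$, $v_{0,1}\le v_{0,2}$ replacing the barrier and exhaustion steps). The simplifications you point out (no $\phi_0$ barrier, no estimate like \eqref{eta-r-k-derivative-bd}, no limit $R\to\infty$) are exactly what the bounded domain buys, so no gap remains.
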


\begin{lem}\label{critical-pt-x-y-z-lem}
For any $\lambda>1$, $\lambda'>1$, $h,h'\in\R$ and $\tau\in\R$. Then there exists a constant $\2{\tau}_0<\tau_0$ such that the following holds.
\begin{enumerate}
\item[(i)] (cf. \cite{DPKS1}) For any $\tau\le\2{\tau}_0$, there exists a unique constant $x(\tau)\in\R$ such that
\begin{equation}\label{f-bar-value0}
\2{f}_{\lambda,\lambda',h,h'}(x,\tau)=\left\{\begin{aligned}
&v_{\lambda}(x-\lambda f(\tau) +h)\qquad\forall x\le x(\tau)\\
&v_{\lambda'}(-x-\lambda'f(\tau) +h')\quad\forall x\ge x(\tau)
\end{aligned}\right.
\end{equation}
where
\begin{equation}\label{x-tau-value}
x(\tau)=\frac{\gamma_{\lambda}-\gamma_{\lambda'}}{p}\tau+\frac{1}{\gamma_{\lambda}+\gamma_{\lambda'}}\left(\log\frac{C_{\lambda}}{C_{\lambda'}}+h'\gamma_{\lambda'}-h\gamma_{\lambda}\right)+o(1)
\end{equation}
and
\begin{align}\label{f-bar-max-value-expansion}
\2{f}_{\lambda,\lambda',h,h'}(x(\tau),\tau)=&\max_{x\in\R}\2{f}_{\lambda,\lambda',h,h'}(x,\tau)=v_{\lambda,h}(x(\tau),f(\tau))=\2{v}_{\lambda',h'}(x(\tau),f(\tau))\notag\\
=&1-C_{\lambda,\lambda',h,h'}e^{d\tau}+o(e^{d\tau})
\end{align}
for some constant $C_{\lambda,\lambda',h,h'}>0$ where 
\begin{equation}\label{d-defn}
d=\frac{\gamma_{\lambda}\gamma_{\lambda'}+p-1}{p}.
\end{equation}

\item[(ii)]  For any $\tau\le\2{\tau}_0$, there exist unique constants
$y(\tau)<x(\tau)<z(\tau)$ such that for any $0<k\le k_0$,
\begin{equation}\label{f-bar-value}
\2{f}_{\lambda,\lambda',h,h',k}(x,\tau)=\left\{\begin{aligned}
&v_{\lambda}(x-\lambda f(\tau) +h)\qquad\forall x\le y(\tau)\\
&\xi_k(\tau)\qquad\qquad\qquad\quad\forall y(\tau)\le x\le z(\tau)\\
&v_{\lambda'}(-x-\lambda'f(\tau) +h')\quad\forall x\ge z(\tau).
\end{aligned}\right.
\end{equation}
Moreover,
\begin{equation}\label{y-tau-z-tau-infinity}
\left\{\begin{aligned}
&y(\tau)=\frac{\gamma_{\lambda}}{p}\tau+\frac{1}{\gamma_{\lambda}}\log\left(\frac{(p-1)C_{\lambda}}{k}\right)-h+o(1)\\
&z(\tau)=-\frac{\gamma_{\lambda'}}{p}\tau-\frac{1}{\gamma_{\lambda'}}\log\left(\frac{(p-1)C_{\lambda'}}{k}\right)+h'+o(1)
\end{aligned}\right.
\qquad\mbox{ for }\,\tau\le\2{\tau}_0.
\end{equation} 
\end{enumerate} 
\end{lem}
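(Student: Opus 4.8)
The plan is to read off the whole statement from the strict monotonicity of the two travelling-wave profiles together with the boundary behaviour \eqref{v-lambda-+limit}, \eqref{v-lambda-x-infty-behavior}, Lemma \ref{v-lambda-monotone-lem}, and the expansion \eqref{xi-k-infinity} of $\xi_k$; no PDE is used here, only the profiles and their asymptotics, and part (i) is essentially the argument of \cite{DPKS1}. For part (i), I would fix $\tau$ and set $g_\tau(x):=v_{\lambda,h}(x,f(\tau))-\2{v}_{\lambda',h'}(x,f(\tau))=v_\lambda(x-\lambda f(\tau)+h)-v_{\lambda'}(-x-\lambda' f(\tau)+h')$. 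By Lemma \ref{v-lambda-monotone-lem} the first term is strictly increasing in $x$ with limits $0$ and $1$ as $x\to\mp\infty$, and the second strictly decreasing with limits $1$ and $0$; hence $g_\tau$ is continuous and strictly increasing from $-1$ to $1$ and has a unique zero $x(\tau)$. For $x\le x(\tau)$ one has $v_{\lambda,h}(x,f(\tau))\le\2{v}_{\lambda',h'}(x,f(\tau))$ and for $x\ge x(\tau)$ the reverse, which is \eqref{f-bar-value0}; moreover $\2{f}_{\lambda,\lambda',h,h'}(\cdot,\tau)$ increases on $(-\infty,x(\tau)]$ and decreases on $[x(\tau),\infty)$, so its maximum is at $x(\tau)$, where both profiles take the common value $\2{f}_{\lambda,\lambda',h,h'}(x(\tau),\tau)$.

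To get the asymptotics I would first show $x(\tau)-\lambda f(\tau)+h\to+\infty$ and $-x(\tau)-\lambda' f(\tau)+h'\to+\infty$ as $\tau\to-\infty$: for any $a<1$, for $\tau$ sufficiently negative the set where both profiles are $\ge a$ is the interval $\{\,x:\lambda f(\tau)-h+v_\lambda^{-1}(a)\le x\le-\lambda'f(\tau)+h'-v_{\lambda'}^{-1}(a)\,\}$, which is nonempty because $\lambda f(\tau)\to-\infty$ while $-\lambda'f(\tau)\to+\infty$; hence the common value at $x(\tau)$ is $\ge a$, and letting $a\uparrow1$ gives $v_\lambda(x(\tau)-\lambda f(\tau)+h)\to1$, so $x(\tau)-\lambda f(\tau)+h\to\infty$, and likewise for the other argument. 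Then I insert \eqref{v-lambda-x-infty-behavior} into both arguments, equate them (they agree at $x(\tau)$), take logarithms, and use $f(\tau)=\tau+o(1)$ (valid since $f(\tau)-\tau=q\tau e^{\frac{p-1}{p}\tau}\to0$) to get
\begin{equation*}
(\gamma_\lambda+\gamma_{\lambda'})x(\tau)=(\lambda\gamma_\lambda-\lambda'\gamma_{\lambda'})\tau+\log\frac{C_\lambda}{C_{\lambda'}}+h'\gamma_{\lambda'}-h\gamma_\lambda+o(1).
\end{equation*}
Dividing by $\gamma_\lambda+\gamma_{\lambda'}$ and using $\lambda\gamma_\lambda-\lambda'\gamma_{\lambda'}=p^{-1}(\gamma_\lambda^2-\gamma_{\lambda'}^2)=p^{-1}(\gamma_\lambda-\gamma_{\lambda'})(\gamma_\lambda+\gamma_{\lambda'})$, which is immediate from \eqref{gamma-eqn}, yields \eqref{x-tau-value}. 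Substituting this $x(\tau)$ back into $v_\lambda(x(\tau)-\lambda f(\tau)+h)=1-C_\lambda e^{-\gamma_\lambda(x(\tau)-\lambda f(\tau)+h)}+o(\,\cdot\,)$ and simplifying the exponent with $\lambda p\gamma_\lambda=\gamma_\lambda^2+p-1$ (again \eqref{gamma-eqn}) gives \eqref{f-bar-max-value-expansion} with $d$ as in \eqref{d-defn}; note $d>0$ since $\gamma_\lambda,\gamma_{\lambda'}>0$ and $p>1$.

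For part (ii) the picture is the same one level deeper. For fixed $\tau$, $\xi_k(\tau)\in(0,1)$ is constant in $x$, so by monotonicity there are unique $y(\tau)$ with $v_{\lambda,h}(y(\tau),f(\tau))=\xi_k(\tau)$ and unique $z(\tau)$ with $\2{v}_{\lambda',h'}(z(\tau),f(\tau))=\xi_k(\tau)$. The decisive step is $y(\tau)<x(\tau)<z(\tau)$ for $\tau$ sufficiently negative: by \eqref{f-bar-max-value-expansion} and \eqref{xi-k-infinity}, $\2{f}_{\lambda,\lambda',h,h'}(x(\tau),\tau)=1-C_{\lambda,\lambda',h,h'}e^{d\tau}+o(e^{d\tau})$ while $\xi_k(\tau)=1-\frac{pk}{p-1}e^{\frac{p-1}{p}\tau}+o(e^{\frac{p-1}{p}\tau})$, and $d-\frac{p-1}{p}=\frac{\gamma_\lambda\gamma_{\lambda'}}{p}>0$, so $e^{d\tau}=o(e^{\frac{p-1}{p}\tau})$ and hence the common value $v_{\lambda,h}(x(\tau),f(\tau))=\2{v}_{\lambda',h'}(x(\tau),f(\tau))$ exceeds $\xi_k(\tau)$ for $\tau$ small; monotonicity of the two profiles then forces $y(\tau)<x(\tau)<z(\tau)$. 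Consequently, on $(-\infty,y(\tau)]$ we have $v_{\lambda,h}\le\xi_k$ and (being left of $x(\tau)$) $v_{\lambda,h}\le\2{v}_{\lambda',h'}$, so the threefold minimum is $v_{\lambda,h}$; on $[y(\tau),z(\tau)]$ both profiles dominate $\xi_k(\tau)$, so the minimum is $\xi_k(\tau)$; on $[z(\tau),\infty)$ it is $\2{v}_{\lambda',h'}$ — this is \eqref{f-bar-value}. Finally, since $v_{\lambda,h}(y(\tau),f(\tau))=\xi_k(\tau)\to1$ forces $y(\tau)-\lambda f(\tau)+h\to\infty$, I again plug \eqref{v-lambda-x-infty-behavior} and \eqref{xi-k-infinity} into $v_\lambda(y(\tau)-\lambda f(\tau)+h)=\xi_k(\tau)$, take logarithms, use $f(\tau)=\tau+o(1)$ and the identity $\lambda-\frac{p-1}{p\gamma_\lambda}=\frac{\gamma_\lambda}{p}$ (from \eqref{gamma-eqn}), and solve for $y(\tau)$; the mirror computation on the right gives $z(\tau)$. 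This is \eqref{y-tau-z-tau-infinity}.

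The monotonicity bookkeeping and the $o(1)$-control in the logarithmic linearizations are routine; the one genuinely load-bearing fact is $d>\frac{p-1}{p}$, i.e.\ $\gamma_\lambda\gamma_{\lambda'}>0$, which says the travelling-wave ``peak'' \eqref{f-bar-max-value-expansion} lies above the homogeneous solution $\xi_k(\tau)$ near $\tau=-\infty$ and hence that the plateau $[y(\tau),z(\tau)]$ is really present — the analogue here of the corresponding picture in \cite{HN}. The other place requiring care is checking that the relevant arguments of $v_\lambda$ and $v_{\lambda'}$ tend to $+\infty$, so that \eqref{v-lambda-x-infty-behavior} applies rather than \eqref{v-lambda--infty-behaviour}; the ``sandwich'' argument above handles this, and $\2{\tau}_0$ is then chosen small enough that all the ``for $\tau$ sufficiently negative'' statements hold simultaneously.
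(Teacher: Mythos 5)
Your proposal is correct and follows essentially the same route as the paper: everything is read off from the strict monotonicity of $v_{\lambda}$, $v_{\lambda'}$ (Lemma \ref{v-lambda-monotone-lem}), the tail expansions \eqref{v-lambda-x-infty-behavior} and \eqref{xi-k-infinity}, $f(\tau)=\tau+o(1)$, and the quadratic relation \eqref{gamma-eqn}, with the load-bearing inequality $d-\frac{p-1}{p}=\frac{\gamma_{\lambda}\gamma_{\lambda'}}{p}>0$. The differences are organizational rather than substantive: for (i) you write out the matching argument in full, whereas the paper simply defers to Lemma 2.1 of \cite{DPKS1}; and for (ii) you locate $y(\tau)<x(\tau)<z(\tau)$ by comparing the peak value \eqref{f-bar-max-value-expansion} with $\xi_k(\tau)$ and by noting that $v_{\lambda,h}(y(\tau),f(\tau))=\xi_k(\tau)\to 1$ forces $y(\tau)-\lambda f(\tau)+h\to\infty$, while the paper sandwiches $y(\tau)$ and $z(\tau)$ between explicit test points $x_1(\tau),x_2(\tau),x_3(\tau)$ with a large parameter $M$; the two devices encode the same information and lead to the same log-linearization. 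Two small remarks: (a) as in the paper's own proof, the threshold beyond which the peak exceeds $\xi_k(\tau)$ depends on $k$, so strictly speaking both arguments produce $\2{\tau}_0=\2{\tau}_0(k)$ in part (ii) rather than a threshold uniform over $0<k\le k_0$; (b) carrying out your log-linearization literally gives the constant $\frac{1}{\gamma_{\lambda}}\log\bigl(\frac{(p-1)C_{\lambda}}{pk}\bigr)$ (and the analogous expression with $\gamma_{\lambda'}$), so the constant printed in \eqref{y-tau-z-tau-infinity} appears to be off by $\frac{\log p}{\gamma_{\lambda}}$; this is an $O(1)$ discrepancy in the stated formula, not a flaw in your method.
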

\begin{proof}
Since $f(\tau)\approx\tau$ as $\tau\to -\infty$, (i) follows by an argument similar to the proof of Lemma 2.1 of \cite{DPKS1} and we only need to prove (ii). For any $M>0$ we let 
\begin{equation*}
x_1(\tau)=\frac{\gamma_{\lambda}}{p}\tau-M, \quad x_2(\tau)=\left(\frac{\gamma_{\lambda}-\gamma_{\lambda'}}{p}\right)\tau\quad\mbox{ and }\quad
x_3(\tau)=-\frac{\gamma_{\lambda'}}{p}\tau+M.
\end{equation*}
Then 
\begin{equation*}
x_1(\tau)<x_2(\tau)<x_3(\tau)\quad\forall \tau<0.
\end{equation*}
Hence by \eqref{gamma-eqn},
\begin{equation}\label{x1-2-3-infty-behaviour}
\left\{\begin{aligned}
&x_1(\tau)-\lambda f(\tau) +h=\frac{\gamma_{\lambda}-\lambda p}{p}\tau +h+o(1)
=-\frac{(p-1)}{p\gamma_{\lambda}}\tau -M+h+o(1)\to\infty\\
&x_2(\tau)-\lambda f(\tau) +h=\frac{\gamma_{\lambda}-\lambda p-\gamma_{\lambda'}}{p}\tau +h+o(1)
=-\frac{(\gamma_{\lambda}\gamma_{\lambda'}+p-1)}{p\gamma_{\lambda}}\tau +h+o(1)\to\infty\\
&-x_3(\tau)-\lambda' f(\tau)+h' +o(1)=\frac{\gamma_{\lambda'}-\lambda' p}{p}\tau -M +h'+o(1)
=-\frac{(p-1)}{p\gamma_{\lambda'}}\tau -M+h'+o(1)\to\infty
\end{aligned}\right.
\end{equation}
as $\tau\to-\infty$.
Let $0<k\le k_0$. By \eqref{v-lambda-h-value-at-infty2}, \eqref{xi-k-infinity} and \eqref{x1-2-3-infty-behaviour}, as $\tau\to -\infty$,
\begin{align}\label{v-lambda>xi-k}
v_{\lambda}(x_2(\tau)-\lambda f(\tau) +h)=&1-C_{\lambda}e^{\frac{(\gamma_{\lambda}\gamma_{\lambda'}+p-1)}{p}\tau -\gamma_{\lambda}h}
+o\left(e^{\frac{(\gamma_{\lambda}\gamma_{\lambda'}+p-1)}{p}\tau -\gamma_{\lambda}h}\right)\notag\\
=&1-o\left(e^{\frac{p-1}{p}\tau}\right)\notag\\
>&1-\frac{pk}{p-1}e^{\frac{p-1}{p}\tau}+o\left(e^{\frac{p-1}{p}\tau}\right)\notag\\
=&\xi_k(\tau)
\end{align}
and similarly,
\begin{equation}\label{v-lambda'>xi-k}
v_{\lambda'}(-x_2(\tau)-\lambda' f(\tau) +h')=1-C_{\lambda'}e^{\frac{(\gamma_{\lambda}\gamma_{\lambda'}+p-1)}{p}\tau -\gamma_{\lambda'}h'}
+o\left(e^{\frac{(\gamma_{\lambda}\gamma_{\lambda'}+p-1)}{p}\tau -\gamma_{\lambda'}h'}\right)
>\xi_k(\tau)\quad\mbox{ as }\tau\to -\infty.
\end{equation}
By Lemma \ref{v-lambda-increasing}, $v_{\lambda}(x-\lambda f(\tau) +h)$ is a strictly monotone increasing function from 0 to 1 and $v_{\lambda'}(-x-\lambda' f(\tau) +h')$ is a strictly monotone decreasing function from 1 to zero of $x\in\R$. Hence by \eqref{v-lambda-h-value-at-infty2}, \eqref{xi-k-infinity}, \eqref{v-lambda>xi-k}, and \eqref{v-lambda'>xi-k}, there exists a constant $\2{\tau}_0<\tau_0$ such that for any $\tau\le\2{\tau}_0$ there exist unique constants $y(\tau)<x(\tau)<z(\tau)$ such that for any $0<k\le k_0$ \eqref{f-bar-value} holds and $y(\tau)<x_2(\tau)<z(\tau)$. By \eqref{v-lambda-h-value-at-infty2}, \eqref{xi-k-infinity} and \eqref{x1-2-3-infty-behaviour}, as $\tau\to -\infty$,
\begin{equation*}
v_{\lambda}(x_1(\tau)-\lambda f(\tau) +h)=1-C_{\lambda}e^{\frac{(p-1)}{p}\tau +\gamma_{\lambda}(M-h)}
+o\left(e^{\frac{(p-1)}{p}\tau +\gamma_{\lambda}(M-h)}\right)
<1-\frac{pk}{p-1}e^{\frac{p-1}{p}\tau}+o(e^{\frac{p-1}{p}\tau})
=\xi_k(\tau)
\end{equation*}
if we choose $M$ to be sufficiently large. Hence by choosing $\2{\tau}_0$ sufficiently small and $M$  sufficiently large, 
\begin{equation}\label{x1-y-x2-compare}
x_1(\tau)<y(\tau)<x_2(\tau)\quad\forall\tau\le\2{\tau}_0.
\end{equation} 
Then by \eqref{x1-2-3-infty-behaviour} and \eqref{x1-y-x2-compare}, $y(\tau)-\lambda f(\tau)+h\to\infty$ as $\tau\to -\infty$. Hence for $\2{\tau}_0$  sufficiently small and $M$  sufficiently large, 
\begin{align}\label{y(tau)-value}
v_{\lambda, h}(y(\tau),f(\tau))=&1-C_{\lambda}e^{-\gamma_{\lambda}(y(\tau)-\lambda f(\tau)+h)}+o\left(e^{-\gamma_{\lambda}(y(\tau)-\lambda f(\tau)+h)}\right)\notag\\
=&1-C_{\lambda}e^{-\gamma_{\lambda}(y(\tau)-\lambda \tau+h)}+o\left(e^{-\gamma_{\lambda}(y(\tau)-\lambda \tau+h)}\right)\notag\\
=&1-\frac{pk}{p-1}e^{\frac{p-1}{p}\tau}+o\left(e^{\frac{p-1}{p}\tau}\right)=\xi_k(\tau)
\end{align}
holds for any $\tau\le\2{\tau}_0$. Similarly by \eqref{v-lambda-h-value-at-infty2}, \eqref{xi-k-infinity},  and \eqref{x1-2-3-infty-behaviour}, for $M$ large and $\2{\tau}_0$  sufficiently small,
we have 
\begin{align*}
&x_2(\tau)<z(\tau)<x_3(\tau)\quad\forall\tau\le\2{\tau}_0\\
\Rightarrow\quad&-z(\tau)-\lambda' f(\tau)+h'\to\infty\quad\mbox{ as }\quad \tau\to -\infty.
\end{align*} 
Hence for $\2{\tau}_0$ sufficiently small and $M$  sufficiently large, 
\begin{align}\label{z(tau)-value}
\2{v}_{\lambda', h'}(z(\tau),f(\tau))=&1-C_{\lambda'}e^{-\gamma_{\lambda'}(-z(\tau)-\lambda' f(\tau)+h')}+o\left(e^{-\gamma_{\lambda'}(-z(\tau)-\lambda'f(\tau)+h')}\right)\notag\\
=&1-C_{\lambda'}e^{-\gamma_{\lambda'}(-z(\tau)-\lambda'\tau +h')}+o\left(e^{-\gamma_{\lambda'}(-z(\tau)-\lambda'\tau +h')}\right)\notag\\
=&1-\frac{pk}{p-1}e^{\frac{p-1}{p}\tau}+o\left(e^{\frac{p-1}{p}\tau}\right)=\xi_k(\tau).
\end{align}
holds for any $\tau\le\2{\tau}_0$.
By \eqref{y(tau)-value} and \eqref{z(tau)-value}, \eqref{y-tau-z-tau-infinity} follows.
\end{proof}

\begin{lem}\label{elliptic-supersoln-lem}
Let $\lambda>1$, $\lambda'>1$,  $h\ge h_0$, $h'\ge h_0'$,  and let $\2{\tau}_0$ be as in Lemma \ref{critical-pt-x-y-z-lem}. Then for any $0<k\le k_0$ and $a>-\2{\tau}_0$, the function $\2{f}_{a}(x):=\2{f}_{\lambda,\lambda',h,h',k}(x,-a)$ satisfies
\begin{equation}\label{ellliptic-eqn2}
(v^m)''+v-v^m\le 0\quad\mbox{ in distribution sense in }\R.
\end{equation}
\end{lem}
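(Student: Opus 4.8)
The plan is to read off the shape of $\2{f}_a$ from Lemma~\ref{critical-pt-x-y-z-lem}(ii) and then check the differential inequality branch by branch, treating the two corners separately. Since $a>-\2{\tau}_0$ we have $-a<\2{\tau}_0$, so \eqref{f-bar-value} applies with $\tau=-a$: writing $y_0:=y(-a)$ and $z_0:=z(-a)$ and $I_1=(-\infty,y_0)$, $I_2=(y_0,z_0)$, $I_3=(z_0,\infty)$, the function $\2{f}_a$ equals the translated travelling wave $v_{\lambda}(x-\lambda f(-a)+h)$ on $I_1$, the constant $\xi_k(-a)$ on $I_2$, and the reflected travelling wave $v_{\lambda'}(-x-\lambda' f(-a)+h')$ on $I_3$; moreover $\2{f}_a$ is continuous across $y_0$ and $z_0$ by the construction of $y(-a),z(-a)$. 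Hence $\2{f}_a$ is continuous, strictly positive, and smooth on each $I_j$, so $\2{f}_a^m$ is continuous and piecewise smooth, and its distributional second derivative equals the classical one on $I_1\cup I_2\cup I_3$ plus point masses at $y_0$ and $z_0$ of weight $(\2{f}_a^m)'(\cdot^+)-(\2{f}_a^m)'(\cdot^-)$. Thus \eqref{ellliptic-eqn2} will follow once we check (a) the classical inequality $(\2{f}_a^m)''+\2{f}_a-\2{f}_a^m\le0$ on each $I_j$, and (b) that both derivative jumps are $\le0$.

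For (a): on $I_1$, with $\xi=x-\lambda f(-a)+h$, equation \eqref{v-lambda-eqn} for $v_{\lambda}$ gives $(\2{f}_a^m)''+\2{f}_a-\2{f}_a^m=-\lambda\,v_{\lambda}'(\xi)<0$ by Lemma~\ref{v-lambda-monotone-lem}. On $I_3$, with $\eta=-x-\lambda' f(-a)+h'$ affine in $x$ of slope $-1$, the chain rule yields $(\2{f}_a^m)''(x)=(v_{\lambda'}^m)''(\eta)$, so by \eqref{v-lambda-eqn} for $v_{\lambda'}$ the expression equals $-\lambda'\,v_{\lambda'}'(\eta)<0$, again by Lemma~\ref{v-lambda-monotone-lem}. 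On $I_2$, $\2{f}_a\equiv\xi_k(-a)$, so $(\2{f}_a^m)''=0$ and the expression equals $\xi_k(-a)-\xi_k(-a)^m=\xi_k'(-a)$; since $ke^{\frac{p-1}{p}(-a)}\le k_0e^{\frac{p-1}{p}\tau_0'}<1/2$ forces $0<\xi_k(-a)<1$ and $0<m<1$, this is negative.

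For (b): at $y_0$ the minimum switches from the strictly increasing branch $v_{\lambda}$ to the constant branch, so $(\2{f}_a^m)'(y_0^-)=m v_{\lambda}^{m-1}v_{\lambda}'>0$ while $(\2{f}_a^m)'(y_0^+)=0$; at $z_0$ it switches from the constant branch to the strictly decreasing branch $\2{v}_{\lambda'}$, so $(\2{f}_a^m)'(z_0^-)=0$ while $(\2{f}_a^m)'(z_0^+)=-m v_{\lambda'}^{m-1}v_{\lambda'}'<0$; both one-sided jumps $J_{y_0},J_{z_0}$ are therefore negative. Consequently, for any $0\le\phi\in C_c^{\infty}(\R)$, integrating by parts twice on each $I_j$ (the boundary terms at $\pm\infty$ vanish since $\phi$ is compactly supported, and the $\phi'$ terms cancel at $y_0,z_0$ by continuity of $\2{f}_a^m$) gives
\[
\int_{\R}\2{f}_a^m\phi''\,dx+\int_{\R}\big(\2{f}_a-\2{f}_a^m\big)\phi\,dx=\sum_{j=1}^{3}\int_{I_j}\big((\2{f}_a^m)''+\2{f}_a-\2{f}_a^m\big)\phi\,dx+\phi(y_0)J_{y_0}+\phi(z_0)J_{z_0}\le0,
\]
which is exactly \eqref{ellliptic-eqn2}.

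The computations above are routine; the only point that genuinely needs care is the bookkeeping at the two corners $y_0,z_0$. One must invoke Lemma~\ref{critical-pt-x-y-z-lem}(ii) to know that $\2{f}_a$ is precisely the three-branch function with no further switches, and Lemma~\ref{v-lambda-monotone-lem} to see that the one-sided derivatives of $\2{f}_a^m$ drop at those corners — so that the Dirac contributions to $(\2{f}_a^m)''$ carry the correct (nonpositive) sign and no $\delta'$ terms occur.
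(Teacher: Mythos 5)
Your proposal is correct and follows essentially the same route as the paper: verify the strict inequality on each of the three branches given by Lemma \ref{critical-pt-x-y-z-lem}(ii) (using \eqref{v-lambda-eqn}, Lemma \ref{v-lambda-monotone-lem}, and $0<\xi_k(-a)<1$), and then check that the one-sided derivative jumps of $\2{f}_a^m$ at $y(-a)$ and $z(-a)$ are nonpositive. The only difference is cosmetic: you invoke the standard distributional formula for a continuous, piecewise-smooth function (Dirac masses with weight equal to the derivative jump), whereas the paper carries out the same corner bookkeeping explicitly via the cutoff functions $\psi_{\3}$ and a limit $\3\to 0$, arriving at exactly the same boundary terms with the same signs.
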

\begin{proof}
Since $v_{\lambda,h}$ satisfies \eqref{v-lambda-eqn}, by Lemma \ref{v-lambda-monotone-lem},
\begin{equation}\label{v-lambda-supersoln}
(v_{\lambda,h}^m)_{xx}(x,f(-a))+v_{\lambda,h}(x,f(-a))-v_{\lambda,h}(x,f(-a))^m=-\lambda v_{\lambda}'(x-\lambda f(-a)+h)<0\quad\mbox{ in }\R.
\end{equation}
Similarly
\begin{equation}\label{v-bar-lambda-supersoln}
(\2{v}_{\lambda',h'}^m)_{xx}(x,f(-a))+\2{v}_{\lambda',h'}(x,f(-a))-\2{v}_{\lambda',h'}(x,f(-a))^m=-\lambda v_{\lambda'}'(-x-\lambda'f(-a)+h')<0\quad\mbox{ in }\R.
\end{equation}
Since $0<\xi_k(\tau)<1$ for all $\tau<\tau_0$,
\begin{equation}\label{xi-k-supsoln}
\xi_{k,xx}+\xi_k-\xi_k^m=\xi_k-\xi_k^m<0\quad\mbox{ in }\R\quad\forall \tau<\tau_0.
\end{equation}
By \eqref{f-bar-value}, \eqref{v-lambda-supersoln}, \eqref{v-bar-lambda-supersoln}  and \eqref{xi-k-supsoln},
\begin{equation}\label{f-bar-supersoln}
(\2{f}_{a}^m)_{xx}(x)+\2{f}_{a}(x)-\2{f}_{a}(x)^m<0\quad\forall x\in\R\setminus
\{y(-a),z(-a)\}.
\end{equation}
Let $0\le\eta\in C_0^2(\R)$. For any $0<\3<(z(-a)-y(-a))/4$, let $\psi_{\3}\in C_0^2(\R)$, $0\le\psi_{\3}\le 1$, be such that $\psi_{\3}(x)=1$ for any $x\in (y(-a)-\frac{\3}{2},y(-a)+\frac{\3}{2})\cup (z(-a)-\frac{\3}{2},z(-a)+\frac{\3}{2})$, $\psi_{\3}(x)=0$ for any $|x-y(-a)|\ge\3$ and $|x-z(-a)|\ge\3$, and $|\psi_{\3}'(x)|\le C/\3$ on $\R$ for some constant $C>0$. Then we can write $\eta=\eta_{1,\3}+\eta_{2,\3}$ where $\eta_{1,\3}=\eta\psi_{\3}$ and $\eta_{2,\3}=\eta(1-\psi_{\3})$. By \eqref{f-bar-value} and \eqref{f-bar-supersoln},
\begin{align}\label{f-bar-supersoln-ineqn}
&\int_{\R}\left[\2{f}_{a}^m\eta''+(\2{f}_{a}-\2{f}_{a}^m)\eta\right]\,dx\notag\\
=&\int_{\R}\left[\2{f}_{a}^m\eta_{1,\3}''+(\2{f}_{a}-\2{f}_{a}^m)\eta_{1,\3}\right]\,dx
+\int_{\R}\left[\2{f}_{a}^m\eta_{2,\3}''+(\2{f}_{a}-\2{f}_{a}^m)\eta_{2,\3}\right]\,dx\notag\\
=&-\int_{\R}(\2{f}_{a}^m)_x\eta\psi_{\3}'\,dx-\int_{\R}(\2{f}_{a}^m)_x\eta'\psi_{\3}\,dx+\int_{\R}(\2{f}_{a}-\2{f}_{a}^m)\eta_{1,\3}\,dx
+\int_{\R}\left[(\2{f}_{a}^m)_{xx}+\2{f}_{a}-\2{f}_{a}^m\right]\eta_{2,\3}\,dx\notag\\
\le &-\int_{y(-a)-\3}^{y(-a)}(\2{f}_{a}^m)_x\eta\psi_{\3}'\,dx-\int_{z(-a)}^{z(-a)+\3}(\2{f}_{a}^m)_x\eta\psi_{\3}'\,dx-\int_{\R}(\2{f}_{a}^m)_x\eta'\psi_{\3}\,dx+\int_{\R}(\2{f}_{a}-\2{f}_{a}^m)\eta_{1,\3}\,dx\notag\\
=&:I_1+I_2+I_3+I_4.
\end{align}
Now 
\begin{equation}\label{I3-4-estimates}
|I_3|+|I_4|\le C\3\to 0\quad\mbox{ as }\3\to 0,
\end{equation}
\begin{equation}\label{I1-estimates}
I_1=-\int_{y(-a)-\3}^{y(-a)}(v_{\lambda}^m)'(x-\lambda f( -a)+h)\eta(x)\psi_{\3}'(x)\,dx\to -(v_{\lambda}^m)_x(y(-a)-\lambda f( - a)+h)\eta(y(-a))
\end{equation}
as $\3\to 0$ and
\begin{equation}\label{I2-estimates}
I_2=\int_{z(-a)}^{z(-a)+\3}(v_{\lambda'}^m)_x(-x-\lambda'f(-a)+h')\eta\psi_{\3}'\,dx\to -(v_{\lambda'}^m)_x(-z(-a)-\lambda'f(-a)+h')\eta(z(-a))
\end{equation}
as $\3\to 0$. 
Letting $\3\to 0$ in \eqref{f-bar-supersoln-ineqn}, by \eqref{I3-4-estimates}, \eqref{I1-estimates} and \eqref{I2-estimates}, for any  $0\le \eta\in C_0^2(\R)$,
\begin{align*}
&\int_{\R}\left[\2{f}_{a}^m\eta''+(\2{f}_{a}-\2{f}_{a}^m)\eta\right]\,dx\\
=&-(v_{\lambda}^m)_x(y(-a)-\lambda f(-a)+h)\eta(y(-a))-(v_{\lambda'}^m)_x(-z(-a)-\lambda'f(-a)+h')\eta(z(-a))\le 0.
\end{align*}
Hence $\2{f}_{a}$ satisfies of \eqref{ellliptic-eqn2} and the lemma follows.
\end{proof}

By a similar argument we have the following lemma.

\begin{lem}\label{elliptic-supersoln-lem2}
Let $\lambda>1$,  $h\ge h_0$,  and let $\2{\tau}_0$ be as in Lemma \ref{critical-pt-x-y-z-lem}. Then for any $0<k\le k_0$ and $a>-\2{\tau}_0$, the function $\2{f}_{\lambda,h,k}(x,-a)$ satisfies \eqref{ellliptic-eqn2}.
\end{lem}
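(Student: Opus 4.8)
The plan is to reproduce, in the simpler two–function setting, the argument used for Lemma~\ref{elliptic-supersoln-lem}. Fix $\lambda>1$, $h\ge h_0$, $0<k\le k_0$ and $a>-\2{\tau}_0$, and write $\2{f}_a(x):=\2{f}_{\lambda,h,k}(x,-a)=\min\bigl(v_{\lambda}(x-\lambda f(-a)+h),\xi_k(-a)\bigr)$. First I would record the analogue of Lemma~\ref{critical-pt-x-y-z-lem}(ii): by Lemma~\ref{v-lambda-monotone-lem} the map $x\mapsto v_{\lambda}(x-\lambda f(-a)+h)$ is strictly increasing from $0$ to $1$, while $\xi_k(-a)$ is a constant in $(0,1)$; hence there is a unique $y(-a)\in\R$ with $v_{\lambda}(y(-a)-\lambda f(-a)+h)=\xi_k(-a)$ and
\[
\2{f}_a(x)=\begin{cases} v_{\lambda}(x-\lambda f(-a)+h), & x\le y(-a),\\ \xi_k(-a), & x\ge y(-a).\end{cases}
\]

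Next I would show that $\2{f}_a$ is a strict classical supersolution of $(v^m)''+v-v^m=0$ on $\R\setminus\{y(-a)\}$. For $x<y(-a)$ this is \eqref{v-lambda-supersoln}: since $v_{\lambda,h}$ solves the travelling wave equation \eqref{v-lambda-eqn}, one gets $(\2{f}_a^m)''+\2{f}_a-\2{f}_a^m=-\lambda\,v_{\lambda}'(x-\lambda f(-a)+h)<0$ by \eqref{v-lambda-increasing}. For $x>y(-a)$ the function $\2{f}_a$ is the constant $\xi_k(-a)\in(0,1)$, so $(\2{f}_a^m)''+\2{f}_a-\2{f}_a^m=\xi_k(-a)-\xi_k(-a)^m<0$, as in \eqref{xi-k-supsoln}.

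Finally, to pass to the distributional inequality I would run the cutoff argument of Lemma~\ref{elliptic-supersoln-lem}, with the obvious simplification that there is now a single corner. Given $0\le\eta\in C_0^2(\R)$ and small $\3>0$, choose $\psi_\3\in C_0^2(\R)$ with $0\le\psi_\3\le1$, $\psi_\3\equiv1$ on $(y(-a)-\tfrac{\3}{2},y(-a)+\tfrac{\3}{2})$, $\psi_\3\equiv0$ outside $(y(-a)-\3,y(-a)+\3)$ and $|\psi_\3'|\le C/\3$; write $\eta=\eta\psi_\3+\eta(1-\psi_\3)$, integrate by parts in $\int_\R[\2{f}_a^m\eta''+(\2{f}_a-\2{f}_a^m)\eta]\,dx$, and let $\3\to0$. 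The terms involving $\psi_\3'$ on $(y(-a),y(-a)+\3)$ vanish because $\2{f}_a^m$ is constant there, so only the left one–sided limit survives and produces the boundary contribution $-(v_{\lambda}^m)_x(y(-a)-\lambda f(-a)+h)\,\eta(y(-a))$. Combining this with the nonpositive bulk integral $\int_{\R\setminus\{y(-a)\}}[(\2{f}_a^m)''+\2{f}_a-\2{f}_a^m]\eta\,dx$ from the previous step gives
\[
\int_\R\bigl[\2{f}_a^m\eta''+(\2{f}_a-\2{f}_a^m)\eta\bigr]\,dx\le -(v_{\lambda}^m)_x(y(-a)-\lambda f(-a)+h)\,\eta(y(-a))\le0,
\]
since $(v_{\lambda}^m)_x=m v_{\lambda}^{m-1}v_{\lambda}'>0$ by \eqref{v-lambda-increasing} and \eqref{v-lambda-lower-upper-bd}; this is \eqref{ellliptic-eqn2}. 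The only point meriting attention is the sign at the corner: because $(\2{f}_a^m)_x$ equals the positive quantity $(v_{\lambda}^m)_x$ just to the left of $y(-a)$ and vanishes just to the right, its jump there is negative, so the Dirac mass in $(\2{f}_a^m)''$ at $y(-a)$ has the correct sign. This is exactly the elementary fact that the minimum of two supersolutions is again a distributional supersolution, and it is the only place where the structure of the corner enters.
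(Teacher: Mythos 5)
Your proof is correct and is exactly the "similar argument" the paper invokes: the paper gives no separate proof of Lemma \ref{elliptic-supersoln-lem2}, asserting it follows as in Lemma \ref{elliptic-supersoln-lem}, and your argument is precisely that one with the simplification to a single corner at $y(-a)$, where the jump of $(\2{f}_a^m)_x$ from $(v_{\lambda}^m)_x>0$ to $0$ yields the nonpositive boundary term. No gaps.
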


\begin{lem}\label{elliptic-supersoln-lem3}(cf. proof of Lemma 2.4 of \cite{DPKS1})
Let $\lambda>1$, $\lambda'>1$,  $h\ge h_0$, $h'\ge h_0'$ and let $\2{\tau}_0$ be as in Lemma \ref{critical-pt-x-y-z-lem}. Then for any  $a>-\2{\tau}_0$, the function $\2{f}_{\lambda,\lambda', h,h'}(x,-a)$ satisfies \eqref{ellliptic-eqn2}.
\end{lem}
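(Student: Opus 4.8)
The plan is to reduce this to Lemma~\ref{elliptic-supersoln-lem} by a monotone limit in the parameter $k$; I also indicate the direct route, which mirrors the proof of Lemma~\ref{elliptic-supersoln-lem}.

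First I would note that $v_{\lambda,h}(x,f(-a))=v_\lambda(x-\lambda f(-a)+h)$ and $\2{v}_{\lambda',h'}(x,f(-a))=v_{\lambda'}(-x-\lambda'f(-a)+h')$ are independent of $k$, whereas $\xi_k(-a)=(1-ke^{-\frac{p-1}{p}a})^{\frac{p}{p-1}}$ is increasing in $k$ as $k\searrow0$, with $\xi_k(-a)\to1$. Since $0<v_\lambda,v_{\lambda'}<1$ by Lemma~\ref{v-lambda-monotone-lem}, it follows that for each fixed $a>-\2{\tau}_0$,
\[
\2{f}_{\lambda,\lambda',h,h',k}(x,-a)=\min\big(v_{\lambda,h}(x,f(-a)),\,\2{v}_{\lambda',h'}(x,f(-a)),\,\xi_k(-a)\big)\ \nearrow\ \2{f}_{\lambda,\lambda',h,h'}(x,-a)\quad\text{as }k\to0,
\]
pointwise in $x$. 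All the functions here lie in $(0,1)$, so for every $0\le\eta\in C_0^2(\R)$ the dominated convergence theorem shows that
\[
\int_\R\big[\,\2{f}_{\lambda,\lambda',h,h',k}^m\,\eta''+(\2{f}_{\lambda,\lambda',h,h',k}-\2{f}_{\lambda,\lambda',h,h',k}^m)\,\eta\,\big](x,-a)\,dx
\]
converges, as $k\to0$, to the same integral with $\2{f}_{\lambda,\lambda',h,h',k}$ replaced by $\2{f}_{\lambda,\lambda',h,h'}$. Each of the former integrals is $\le0$ by Lemma~\ref{elliptic-supersoln-lem} (whose hypotheses are met here for every $k\in(0,k_0]$), hence so is the limit; this is exactly \eqref{ellliptic-eqn2}.

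Alternatively I would repeat the proof of Lemma~\ref{elliptic-supersoln-lem} with the single crossing point $x(-a)$ furnished by Lemma~\ref{critical-pt-x-y-z-lem}(i) in place of the two points $y(-a),z(-a)$. By \eqref{v-lambda-eqn} and Lemma~\ref{v-lambda-monotone-lem}, the function $\2{f}_a(x):=\2{f}_{\lambda,\lambda',h,h'}(x,-a)$ equals $v_\lambda(x-\lambda f(-a)+h)$ for $x\le x(-a)$ and $v_{\lambda'}(-x-\lambda'f(-a)+h')$ for $x\ge x(-a)$, and on each side it is a strict classical supersolution, with $(\2{f}_a^m)''+\2{f}_a-\2{f}_a^m$ equal to $-\lambda v_\lambda'(x-\lambda f(-a)+h)<0$ for $x<x(-a)$ and to $-\lambda'v_{\lambda'}'(-x-\lambda'f(-a)+h')<0$ for $x>x(-a)$. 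Testing against $0\le\eta\in C_0^2(\R)$ with a cutoff $\psi_\3$ localizing at $x(-a)$ and splitting $\eta=\eta\psi_\3+\eta(1-\psi_\3)$, integration by parts and the limit $\3\to0$ leave only the corner contribution
\[
-(v_\lambda^m)_x(x(-a)-\lambda f(-a)+h)\,\eta(x(-a))-(v_{\lambda'}^m)_x(-x(-a)-\lambda'f(-a)+h')\,\eta(x(-a))\le0,
\]
both terms being nonpositive because $v_\lambda$ and $v_{\lambda'}$ are increasing.

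I do not expect a genuine obstacle. The one piece of real content is that the minimum of the two travelling waves crosses exactly once and that each branch is a strict supersolution of $(v^m)''+v-v^m=0$, so the distributional correction at the corner is a nonpositive Dirac mass; the rest is bookkeeping — the sign of the boundary terms in the direct argument (equivalently, that the jump of $(\2{f}_a^m)_x$ at $x(-a)$ is $\le0$), or the trivial monotone convergence $\2{f}_{\lambda,\lambda',h,h',k}(\cdot,-a)\nearrow\2{f}_{\lambda,\lambda',h,h'}(\cdot,-a)$ as $k\to0$ in the limiting argument.
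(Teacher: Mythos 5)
Your proposal is correct, and in fact it contains the paper's own argument as your ``alternative'' route: the paper disposes of this lemma by the same computation as in Lemma \ref{elliptic-supersoln-lem} (cf.\ Lemma 2.4 of \cite{DPKS1}), i.e.\ the decomposition of $\2{f}_{\lambda,\lambda',h,h'}(\cdot,-a)$ at the single crossing point $x(-a)$ furnished by Lemma \ref{critical-pt-x-y-z-lem}(i), each branch being a strict supersolution by \eqref{v-lambda-eqn} and Lemma \ref{v-lambda-monotone-lem}, with the corner producing the nonpositive term $-(v_{\lambda}^m)_x(x(-a)-\lambda f(-a)+h)\eta(x(-a))-(v_{\lambda'}^m)_x(-x(-a)-\lambda'f(-a)+h')\eta(x(-a))$ exactly as you wrote. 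Your primary route is genuinely different: you deduce the lemma from the already-proved Lemma \ref{elliptic-supersoln-lem} by letting $k\searrow 0$, using that $\xi_k(-a)\nearrow 1$ while $0<v_{\lambda},v_{\lambda'}<1$, so that $\2{f}_{\lambda,\lambda',h,h',k}(\cdot,-a)\nearrow\2{f}_{\lambda,\lambda',h,h'}(\cdot,-a)$ pointwise, and then passing to the limit in the distributional inequality by bounded convergence on the (compact) support of the test function. This limiting argument is sound and avoids redoing the integration-by-parts bookkeeping, at the cost of leaning on the $k$-version of the lemma; the direct argument is self-contained, gives the explicit sign of the jump of $(\2{f}^m)_x$ at the corner, and is what the paper (implicitly) intends. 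Either way the hypotheses are available since $-a<\2{\tau}_0$, so no gap.
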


\begin{lem}\label{local-existence-lem}
Let $\lambda>1$, $\lambda'>1$,  $h\ge h_0$, $h'\ge h_0'$, $0<k\le k_0$ and let $\2{\tau}_0<0$ be as in Lemma \ref{critical-pt-x-y-z-lem}. Let $R>0$, $a>-\2{\tau}_0$ and $v_0\in L^{\infty}(-R,R)$ be such that 
\begin{equation}\label{v0-lower-upper-bd0}
f_{\lambda,\lambda',h,h',k}(x,-a)\le v_0(x)\le\|v_0\|_{L^{\infty}(-R,R)}<1\quad\mbox{ a.e. }x\in (-R,R).
\end{equation} 
Then there exists a unique solution $v_R\in C^{2,1}([-R,R]\times (-a,\2{\tau}_0])\cap L^{\infty}((-R,R)\times (-a,\2{\tau}_0))$ of 
\begin{equation}\label{v-soln-bd-domain}
\left\{\begin{aligned}
&v_{\tau}=(v^m)_{xx}+v-v^m\qquad\mbox{ in }(-R,R)\times(-a,\2{\tau}_0)\\
&v(x,\tau)=f_{\lambda,\lambda',h,h',k}(x,\tau)\quad\mbox{ on }\{\pm R\}\times(-a,\2{\tau}_0)\\
&v(x,-a)=v_0(x)\qquad\qquad\mbox{ on }(-R,R)
\end{aligned}\right.
\end{equation}
which satisfies
\begin{equation}\label{v-n1-r-lower-upper-bd3}
f_{\lambda,\lambda',h,h',k}(x,\tau)\le v_R(x,\tau)\le \|v_0\|_{L^{\infty}(-R,R)}<1\quad\forall |x|\le R,-a<\tau<\2{\tau}_0.
\end{equation}
\end{lem}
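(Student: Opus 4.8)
The plan is to obtain $v_R$ as the limit of solutions to approximate, uniformly parabolic problems on the bounded cylinder $(-R,R)\times(-a,\2{\tau}_0)$, and then to establish the two-sided bound \eqref{v-n1-r-lower-upper-bd3} by the comparison lemmas already proved (Lemma \ref{bded-domain-comparison-lem}). First I would regularize: for $\3>0$ replace the initial datum $v_0$ by a smooth datum $v_{0,\3}$ with $f_{\lambda,\lambda',h,h',k}(x,-a)+\3 \le v_{0,\3} \le \|v_0\|_{L^\infty}+\3 < 1$ (shrinking $\3$ so the upper bound still holds), replace the lateral datum $f_{\lambda,\lambda',h,h',k}(x,\tau)$ by a smooth, compatible approximation bounded below by a positive constant, and note that on the compact cylinder all these data are bounded between two positive constants. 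Since any solution staying in such a range makes the equation $v_\tau=(v^m)_{xx}+v-v^m$ uniformly parabolic with smooth coefficients, standard quasilinear parabolic theory (\cite{LSU}) gives a unique smooth solution $v_{R,\3}$ of the regularized problem, at least for a short time; the a priori bounds below will show the solution does not leave the admissible range, hence exists on all of $(-a,\2{\tau}_0)$.

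The heart of the matter is the two-sided bound. For the upper bound, observe that the constant function $\equiv \|v_0\|_{L^\infty(-R,R)}$ (or $\|v_0\|_\infty+\3$ in the regularized problem) is a supersolution of \eqref{yamabe-ode}: since $s>s^m$ for $0<s<1$, we have $v-v^m>0$, so a constant $c\in(0,1)$ satisfies $c_\tau - (c^m)_{xx} - c + c^m = -(c-c^m) < 0$, i.e. it is a supersolution, and by \eqref{v0-lower-upper-bd0} it dominates the initial and (since $f_{\lambda,\lambda',h,h',k}\le\2{f}_{\lambda,\lambda',h,h',k}<1$) lateral data. For the lower bound I claim $f_{\lambda,\lambda',h,h',k}(x,\tau)$ is itself a subsolution: this is exactly the subsolution property established in the Introduction (``by the proof of \cite{DPKS2} ... $f_{\lambda,\lambda',h,h',k}$ ... are subsolutions of \eqref{yamabe-ode} in $\R\times(-\infty,\tau_0)$''), together with the fact that on the lateral boundary $v_{R,\3} = f_{\lambda,\lambda',h,h',k}$ and on $\{\tau=-a\}$ we have $v_{0,\3}\ge f_{\lambda,\lambda',h,h',k}(\cdot,-a)$ by \eqref{v0-lower-upper-bd0}. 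Applying Lemma \ref{bded-domain-comparison-lem} (whose hypothesis \eqref{v1-v2-local-lower-bd}, a uniform positive lower bound on the compact cylinder, holds because both $f_{\lambda,\lambda',h,h',k}$ and the constant supersolution are bounded below by positive constants there) yields
\begin{equation*}
f_{\lambda,\lambda',h,h',k}(x,\tau) \le v_{R,\3}(x,\tau) \le \|v_0\|_{L^\infty(-R,R)}+\3 < 1
\end{equation*}
on $(-R,R)\times(-a,\2{\tau}_0)$. In particular $v_{R,\3}$ is bounded between two positive constants uniformly in $\3$, so the equation is uniformly parabolic with coefficients bounded in the relevant Hölder norms, and interior Schauder estimates give uniform $C^{2,1}$ bounds on compact subsets of $(-R,R)\times(-a,\2{\tau}_0]$.

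Then I would pass to the limit $\3\to 0$: by Arzelà–Ascoli extract a subsequence converging in $C^{2,1}_{loc}$ to a function $v_R$ solving the PDE, satisfying the $\tau$-independent bound \eqref{v-n1-r-lower-upper-bd3}, and attaining the prescribed lateral and initial data (the lateral data in the trace sense, using that $v_{0,\3}\to v_0$ and the boundary regularization converges to $f_{\lambda,\lambda',h,h',k}$; alternatively one can work with the weak formulation \eqref{soln-integral-defn}). Uniqueness of $v_R$ follows from Lemma \ref{bded-domain-comparison-lem} applied in both directions: two solutions with the same data, both satisfying the uniform positive lower bound \eqref{v1-v2-local-lower-bd} guaranteed by \eqref{v-n1-r-lower-upper-bd3}, must coincide. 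The main obstacle is not any single estimate but the bookkeeping of the regularization: one must choose the approximating initial and boundary data compatibly (so that the corner compatibility conditions hold and a smooth solution exists up to the boundary), and one must ensure the subsolution $f_{\lambda,\lambda',h,h',k}$ and the constant supersolution genuinely bracket the regularized data for every small $\3$ — this is where \eqref{v0-lower-upper-bd0} and the strict inequality $\|v_0\|_\infty<1$ are used crucially. Once the bracketing is in place, the comparison principle of Section 2 does all the real work.
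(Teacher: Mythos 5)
Your proposal is correct and follows essentially the same route as the paper: bracket the solution between the subsolution $f_{\lambda,\lambda',h,h',k}$ and the constant supersolution $\|v_0\|_{L^{\infty}(-R,R)}<1$ via Lemma \ref{bded-domain-comparison-lem}, approximate the data by smooth compatible data, use uniform parabolicity with Schauder estimates and a continuation argument to reach $\2{\tau}_0$, pass to the limit by Arzel\`a--Ascoli, and get uniqueness from the same comparison lemma. The paper's only extra devices are technical implementations of what you sketch: it truncates the nonlinearity (the function $b(s)$) so that \cite{LSU} applies directly, and it chooses the approximating initial data equal to $f_{\lambda,\lambda',h,h',k}(\cdot,-a)$ near $x=\pm R$ to secure the corner compatibility you flag.
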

\begin{proof}
By Lemma \ref{bded-domain-comparison-lem} the solution of \eqref{v-soln-bd-domain} is unique. Hence we only need to prove existence of solution of \eqref{v-soln-bd-domain}. We will use a modification of proof of Theorem 3.5 of \cite{Hu} to prove the existence of solution of \eqref{v-soln-bd-domain}. We divide the proof into two cases.

\noindent{\bf Case 1}: $v_0\in C^{\infty}([-R,R])$ and there exists $0<\delta<R$ such that $v_0(x)=f_{\lambda,\lambda',h,h',k}(x,-a)$ for all $R-\delta\le |x|\le R$.

\noindent Let 
\begin{equation*}
a_1=\min_{\substack{|x|\le R\\-a\le\tau\le\2{\tau}_0}}f_{\lambda,\lambda',h,h',k}^m(x,\tau)\quad\mbox{ and }\quad M_1=\|v_0\|_{L^{\infty}(-R,R)}.
\end{equation*} 
Then $0<a_1\le M_1^m<1$. Let $0<b(s)\in C^{\infty}(\R)$ be a monotone increasing function such that $b(s)=(a_1/3)^{1-p}$ for all $s\le a_1/3$,  $b(s)=(3M_1^m)^{1-p}$ for all $s\ge 3M_1^m$, and $b(s)=s^{1-p}$ for all $a_1/2\le s\le 2M_1^m$. By standard parabolic theory \cite{LSU} there exists a solution $u_R\in C^{2,1}([-R,R]\times [-a,\2{\tau}_0])$ of
\begin{equation*}
\left\{\begin{aligned}
&pu_{\tau}=b(u)u_{xx}+u-ub(u)\quad\,\,\mbox{ in }(-R,R)\times(-a,\2{\tau}_0)\\
&u(x,\tau)=f_{\lambda,\lambda',h,h',k}(x,\tau)^m\qquad\mbox{ on }\{\pm R\}\times(-a,\2{\tau}_0)\\
&u(x,-a)=v_0(x)^m\qquad\qquad\quad\mbox{ on }(-R,R).
\end{aligned}\right.
\end{equation*}
Since $u_R\in C^{2,1}([-R,R]\times [-a,\2{\tau}_0])$, there exists $\3\in (0,\2{\tau}_0+a)$ such that $a_1/2\le u_R(x,\tau)\le 2M_1^m$ for any $|x|\le R$, $-a\le\tau\le -a+\3$. Hence $b(u_R(x,\tau))=u_R(x,\tau)^{1-p}$ for any $|x|\le R$, $-a\le\tau\le -a+\3$. Thus $u_R$ satisfies 
\begin{equation}
pu_{\tau}=u^{1-p}u_{xx}+u-u^{2-p}\quad\mbox{ in }(-R,R)\times(-a,-a+\3).
\end{equation}
Let $v_R=u_R^p$. Then $v_R\in C^{2,1}([-R,R]\times [-a,-a+\3])$  satisfies 
\begin{equation}\label{v-R-Dirichlet-problem}
\left\{\begin{aligned}
&v_{\tau}=(v^m)_{xx}+v-v^m\qquad\mbox{ in }(-R,R)\times(-a,-a+\3)\\
&v(x,\tau)=f_{\lambda,\lambda',h,h',k}(x,\tau)\quad\mbox{ on }\{\pm R\}\times(-a,-a+\3)\\
&v(x,-a)=v_0(x)\qquad\qquad\mbox{ on }(-R,R).
\end{aligned}\right.
\end{equation}
Since $M_1$ is a supersolution of \eqref{v-R-Dirichlet-problem} and   $f_{\lambda,\lambda',h,h',k}$ is a subsolution \cite{DPKS2} of \eqref{v-R-Dirichlet-problem}, by Lemma \ref{bded-domain-comparison-lem},
\begin{align}
&f_{\lambda,\lambda',h,h',k}(x,\tau)\le v_R(x,\tau)\le M_1\quad\forall |x|\le R,-a\le\tau\le -a+\3
\label{u-n1-r-lower-upper-bd0}\\
\Rightarrow\quad&a_1\le u_R(x,\tau)\le M_1^m\quad\forall |x|\le R,-a\le\tau\le -a+\3\label{u-n1-r-lower-upper-bd}
\end{align} 
Let $(-a,T)$, $-a+\3\le T\le\2{\tau}_0$, be the maximal time interval such that $v_R\in C^{2,1}([-R,R]\times [-a,T])$ satisfies
\begin{equation}\label{v-n1-R-soln-eqn1}
\left\{\begin{aligned}
&v_{\tau}=(v^m)_{xx}+v-v^m\quad\mbox{ in }(-R,R)\times(-a,T)\\
&v(x,\tau)=f_{\lambda,\lambda',h,h',k}(x,\tau)\quad\mbox{ on }\{\pm R\}\times(-a,T)\\
&v(x,-a)=v_0(x)\qquad\quad\mbox{ on }(-R,R)
\end{aligned}\right.
\end{equation} 
and \eqref{u-n1-r-lower-upper-bd0} in $[-R,R]\times [-a,T]$. Suppose $T<\2{\tau}_0$. Since $u_R\in C(\2{B}_R\times [-a,T])$ satisfies \eqref{u-n1-r-lower-upper-bd} in $[-R,R]\times [-a,T]$, there exists a constant $T_1\in (T,\2{\tau}_0)$ such that $a_1/2\le u_R(x,\tau)\le 2M_1^m$ for any $|x|\le R$, $-a\le\tau\le T_1$. By repeating the above argument $v_R\in C^{2,1}([-R,R]\times [-a,T_1])$ satisfies \eqref{u-n1-r-lower-upper-bd0} and \eqref{v-n1-R-soln-eqn1} with $-a+\3$ and $T$ being replaced by $T_1$. This contradicts the choice of $T$. Hence $T=\2{\tau}_0$ and $v_R\in C^{2,1}([-R,R]\times [-a,T])$ satisfies \eqref{v-soln-bd-domain} and \eqref{u-n1-r-lower-upper-bd0} in $[-R,R]\times [-a,T]$ and \eqref{v-n1-r-lower-upper-bd3} follows.

\noindent{\bf Case 2}: $v_0\in L^{\infty}(-R,R)$

\noindent By \eqref{v0-lower-upper-bd0} we can choose a sequence of functions $\{v_{0,i}\}_{i=1}^{\infty}\subset C^{\infty}([-R,R])$ satisfying
\begin{equation}\label{v0i-lower-upper-bd0}
f_{\lambda,\lambda',h,h',k}(x,-a)\le v_0(x)\le v_{0,i+1}(x)\le v_{0,i}(x)\le \|v_{0,i}\|_{L^{\infty}(-R,R)}<1\quad\forall  x\in (-R,R), i\in\mathbb{N}
\end{equation} 
and
\begin{equation*}
v_{0,i}(x)=f_{\lambda,\lambda',h,h',k}(x,-a)\quad\forall iR/(i+1)\le |x|\le R, i\in\mathbb{N}
\end{equation*}
with $v_{0,i}$ converges to  $v_0$  in $L^1(-R,R)$ and $\|v_{0,i}\|_{L^{\infty}}\to \|v_0\|_{L^{\infty}}$ as $i\to\infty$. For each $i\in\mathbb{N}$, by case 1 and Lemma \ref{bded-domain-comparison-lem} there exists a solution $v_{R,i}\in C^{2,1}([-R,R]\times [-a,\2{\tau}_0])$ of \eqref{v-soln-bd-domain} with $v_0$ being replaced $v_{0,i}$ which satisfies 
\begin{equation}\label{v_Ri-lower-upper-bd}
f_{\lambda,\lambda',h,h',k}(x,\tau)\le v_{R,i+1}(x,\tau)\le v_{R,i}(x,\tau)\le \|v_{0,i}\|_{L^{\infty}(-R,R)}<1\quad\forall |x|\le R,-a\le\tau<\2{\tau}_0, i\in\mathbb{N}.
\end{equation}
By \eqref{v_Ri-lower-upper-bd} the equation \eqref{yamabe-ode} for $\{v_{R,i}\}_{i=1}^{\infty}$ is uniformly parabolic on $[-R,R]\times [-a,\2{\tau}_0]$. Hence by the standard Schauder estimates \cite{LSU} the sequence $\{v_{R,i}\}_{i=1}^{\infty}$ is equi-continuous in $C^{2,1}(K)$ for any compact set $K\subset [-R,R]\times (-a,\2{\tau}_0]$. Thus the sequence $\{v_{R,i}\}_{i=1}^{\infty}$ will decrease uniformly in $C^{2,1}(K)$
to some function $v_R\in C^{2,1}([-R,R]\times (-a,\2{\tau}_0])$ as $i\to\infty$ for any compact subset $K$ of $[-R,R]\times (-a,\2{\tau}_0]$. 
Putting $v=v_{R,i}$, $g_0=v_{0,i}$, $v_0=v_{0,i}$ in \eqref{soln-integral-defn} and letting $i\to\infty$, we get that $v_R$ satisfies
\eqref{soln-integral-defn} with $g_0=v_0$. Hence $v_R$ is a solution of \eqref{v-soln-bd-domain}. Letting $i\to\infty$ in
\eqref{v_Ri-lower-upper-bd}, we get \eqref{v-n1-r-lower-upper-bd3}  and the lemma follows.  
\end{proof}

By a similar argument we have the following two lemmas.

\begin{lem}\label{local-existence-lem2}
Let $\lambda>1$, $\lambda'>1$,  $h\ge h_0$, $h'\ge h_0'$, $R>0$,  and let $\2{\tau}_0<0$ be as in Lemma \ref{critical-pt-x-y-z-lem}. Let $a>-\2{\tau}_0$ and $v_0\in L^{\infty}(-R,R)$ be such that 
\begin{equation*}
f_{\lambda,\lambda',h,h'}(x,-a)\le v_0(x)\le\|v_0\|_{L^{\infty}(-R,R)}<1\quad\mbox{ a.e. }x\in (-R,R).
\end{equation*} 
Then  there exists a unique solution $v_R\in  C^{2,1}([-R,R]\times (-a,\2{\tau}_0])\cap L^{\infty}((-R,R)\times (-a,\2{\tau}_0))$ of 
\begin{equation*}
\left\{\begin{aligned}
&v_{\tau}=(v^m)_{xx}+v-v^m\qquad\mbox{ in }(-R,R)\times(-a,\2{\tau}_0)\\
&v(x,\tau)=f_{\lambda,\lambda',h,h'}(x,\tau)\quad\,\,\mbox{ on }\{\pm R\}\times(-a,\2{\tau}_0)\\
&v(x,-a)=v_0(x)\qquad\qquad\mbox{ on }(-R,R)
\end{aligned}\right.
\end{equation*} 
which satisfies
\begin{equation*}
f_{\lambda,\lambda',h,h'}(x,\tau)\le v_R(x,\tau)\le \|v_0\|_{L^{\infty}(-R,R)}<1\quad\forall |x|\le R,-a<\tau<\2{\tau}_0.
\end{equation*}
\end{lem}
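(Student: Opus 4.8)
The plan is to reduce Lemma \ref{local-existence-lem2} to the already-proved Lemma \ref{local-existence-lem} by exhibiting a monotone approximation of the four-parameter subsolution $f_{\lambda,\lambda',h,h'}$ by five-parameter subsolutions $f_{\lambda,\lambda',h,h',k}$. The key structural fact is the pointwise inequality
\begin{equation*}
f_{\lambda,\lambda',h,h',k}(x,\tau)\le f_{\lambda,\lambda',h,h'}(x,\tau)\quad\forall x\in\R,\ \tau\le\tau_0',\ 0<k\le k_0,
\end{equation*}
which follows immediately from the explicit formulas, since $\xi_k(\tau)^{1-p}-2\ge -1$ is equivalent to $\xi_k(\tau)\le 1$, which holds by the choice of $\tau_0'$. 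Moreover, because $\xi_k(\tau)$ is increasing in $k$ (again by inspection of the formula $\xi_k(\tau)=(1-ke^{\frac{p-1}{p}\tau})^{\frac{p}{p-1}}$), the family $f_{\lambda,\lambda',h,h',k}$ is decreasing in $k$, and as $k\searrow 0$ one has $\xi_k(\tau)\nearrow 1$, hence $f_{\lambda,\lambda',h,h',k}(x,\tau)\nearrow f_{\lambda,\lambda',h,h'}(x,\tau)$ monotonically and locally uniformly.

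First I would fix $R>0$, $a>-\2{\tau}_0$, and $v_0\in L^\infty(-R,R)$ with $f_{\lambda,\lambda',h,h'}(x,-a)\le v_0(x)\le\|v_0\|_{L^\infty(-R,R)}<1$. For each $0<k\le k_0$ the datum $v_0$ also satisfies $f_{\lambda,\lambda',h,h',k}(x,-a)\le f_{\lambda,\lambda',h,h'}(x,-a)\le v_0(x)$, so Lemma \ref{local-existence-lem} produces a solution $v_{R,k}\in C^{2,1}([-R,R]\times(-a,\2{\tau}_0])$ of the Dirichlet problem with lateral boundary data $f_{\lambda,\lambda',h,h',k}$, satisfying $f_{\lambda,\lambda',h,h',k}\le v_{R,k}\le\|v_0\|_{L^\infty}<1$. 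By Lemma \ref{bded-domain-comparison-lem}, since the boundary data $f_{\lambda,\lambda',h,h',k}$ are ordered decreasingly in $k$ while the initial data agree, the family $\{v_{R,k}\}$ is monotone increasing as $k\searrow 0$; it is also uniformly bounded above by $\|v_0\|_{L^\infty}<1$ and below by $f_{\lambda,\lambda',h,h'}$ (hence uniformly bounded away from $0$ on compacta, using the strictly positive lower bound coming from the explicit formula for $f_{\lambda,\lambda',h,h'}$). Thus on $[-R,R]\times[-a,\2{\tau}_0]$ equation \eqref{yamabe-ode} is uniformly parabolic for this family, and the standard Schauder estimates give equicontinuity in $C^{2,1}(K)$ for every compact $K\subset[-R,R]\times(-a,\2{\tau}_0]$; the monotone limit $v_R:=\lim_{k\to 0}v_{R,k}$ therefore lies in $C^{2,1}([-R,R]\times(-a,\2{\tau}_0])$ and solves $v_\tau=(v^m)_{xx}+v-v^m$ in the interior.

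Next I would identify the boundary and initial traces of $v_R$. Passing to the limit $k\to 0$ in the integral identity \eqref{soln-integral-defn} satisfied by each $v_{R,k}$ — using $f_{\lambda,\lambda',h,h',k}\to f_{\lambda,\lambda',h,h'}$ uniformly on $\{\pm R\}\times[-a,\tau_3]$ and dominated convergence, exactly as in Case 2 of the proof of Lemma \ref{local-existence-lem} — shows that $v_R$ is a solution of the stated Dirichlet problem with lateral data $f_{\lambda,\lambda',h,h'}$ and initial value $v_0$. Finally, letting $k\to 0$ in the bounds $f_{\lambda,\lambda',h,h',k}\le v_{R,k}\le\|v_0\|_{L^\infty(-R,R)}$ yields $f_{\lambda,\lambda',h,h'}\le v_R\le\|v_0\|_{L^\infty(-R,R)}<1$, which is the asserted estimate. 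Uniqueness is immediate from Lemma \ref{bded-domain-comparison-lem} applied to two such solutions (equal boundary and initial data, and both bounded below by the positive function $f_{\lambda,\lambda',h,h'}$ on compacta).

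The only genuine point requiring care — the main obstacle — is verifying that $f_{\lambda,\lambda',h,h'}$ is indeed a subsolution of \eqref{yamabe-ode} on the relevant time interval, so that Lemma \ref{bded-domain-comparison-lem} applies with it as the lower barrier; this is already recorded in the text (it is one of the $f$'s shown to be a subsolution "by the proof of \cite{DPKS2}"), so in fact no new work is needed and the proof is a routine adaptation of the $k$-limit argument used in Case 2 of Lemma \ref{local-existence-lem}, together with the elementary monotonicity of $\xi_k$ in $k$.
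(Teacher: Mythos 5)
Your argument is correct, but it takes a genuinely different route from the paper's. The paper proves this lemma simply ``by a similar argument'' to Lemma \ref{local-existence-lem}, i.e.\ it re-runs the direct construction (regularize the nonlinearity $b(u)$, solve the resulting uniformly parabolic problem by \cite{LSU}, continue in time, then approximate a general $v_0\in L^{\infty}$ by smooth data), with $f_{\lambda,\lambda',h,h'}$ replacing $f_{\lambda,\lambda',h,h',k}$ as lateral data and lower barrier. You instead deduce the four-parameter statement from the already proved five-parameter Lemma \ref{local-existence-lem} by a monotone limit $k\searrow 0$ in the lateral boundary data: Lemma \ref{bded-domain-comparison-lem} gives monotonicity of $v_{R,k}$ in $k$, the uniform bounds give uniform parabolicity and hence Schauder compactness in $C^{2,1}$ on compact subsets of $[-R,R]\times(-a,\2{\tau}_0]$, and passing to the limit in the integral identity \eqref{soln-integral-defn} identifies the boundary data $f_{\lambda,\lambda',h,h'}$ and the initial value $v_0$. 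This is the same device the paper itself uses later at the level of the ancient solutions (Theorem \ref{limit-thm}), so it is fully consistent with the paper's toolkit; what it buys is that no construction has to be repeated, at the cost of an extra limiting argument and of only producing the lemma as a corollary of the $k$-version rather than independently. Three small inaccuracies, none of which breaks the proof: (1) $\xi_k(\tau)=(1-ke^{\frac{p-1}{p}\tau})^{\frac{p}{p-1}}$ is \emph{decreasing}, not increasing, in $k$; it is precisely this that makes $f_{\lambda,\lambda',h,h',k}$ decreasing in $k$ and increasing to $f_{\lambda,\lambda',h,h'}$ as $k\searrow 0$, so the conclusion you use is the right one. (2) The uniform positive lower bound for the family $\{v_{R,k}\}_{0<k\le k_0}$ is $f_{\lambda,\lambda',h,h',k_0}$ (each $v_{R,k}\ge f_{\lambda,\lambda',h,h',k}\ge f_{\lambda,\lambda',h,h',k_0}$), not $f_{\lambda,\lambda',h,h'}$, which only bounds the limit; this still yields uniform parabolicity on $[-R,R]\times[-a,\2{\tau}_0]$. (3) The subsolution property of $f_{\lambda,\lambda',h,h'}$, which you flag as the main obstacle, is not actually needed on your route: the lower bound $f_{\lambda,\lambda',h,h'}\le v_R$ falls out of the limit of $f_{\lambda,\lambda',h,h',k}\le v_{R,k}$, and the comparison steps only compare solutions with ordered boundary data.
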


\begin{lem}\label{local-existence-lem3}
Let $\lambda>1$,  $h\ge h_0$, $0<k\le k_0$, $R>0$,  and let $\2{\tau}_0<0$ be as in Lemma \ref{critical-pt-x-y-z-lem}. Let $a>-\2{\tau}_0$ and $v_0\in L^{\infty}(-R,R)$  be such that 
\begin{equation*}
f_{\lambda,h,k}(x,-a)\le v_0(x)\le\|v_0\|_{L^{\infty}(-R,R)}<1\quad\mbox{ a.e }x\in (-R,R).
\end{equation*} 
Then  there exists a unique solution $v_R\in C^{2,1}([-R,R]\times (-a,\2{\tau}_0])\cap L^{\infty}((-R,R)\times (-a,\2{\tau}_0))$ of 
\begin{equation*}
\left\{\begin{aligned}
&v_{\tau}=(v^m)_{xx}+v-v^m\qquad\mbox{ in }(-R,R)\times(-a,\2{\tau}_0)\\
&v(x,\tau)=f_{\lambda,h,k}(x,\tau)\qquad\,\,\mbox{ on }\{\pm R\}\times(-a,\2{\tau}_0)\\
&v(x,-a)=v_0(x)\qquad\qquad\mbox{ on }(-R,R)
\end{aligned}\right.
\end{equation*} 
which satisfies
\begin{equation*}
f_{\lambda,h,k}(x,\tau)\le v_R(x,\tau)\le \|v_0\|_{L^{\infty}(-R,R)}<1\quad\forall |x|\le R,-a<\tau<\2{\tau}_0.
\end{equation*}
\end{lem}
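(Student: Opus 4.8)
The plan is to repeat, with only notational changes, the proof of Lemma \ref{local-existence-lem}, replacing the subsolution $f_{\lambda,\lambda',h,h',k}$ by $f_{\lambda,h,k}$ throughout. Uniqueness is immediate from the comparison principle for the Dirichlet problem on a bounded cylinder, Lemma \ref{bded-domain-comparison-lem}: two solutions $v_R$ and $\widetilde v_R$ with the same boundary and initial data are simultaneously sub- and supersolutions of \eqref{v-bd-cylinder-soln-eqn}, and both satisfy the local lower bound \eqref{v1-v2-local-lower-bd} by virtue of the pointwise estimate claimed in the lemma; hence $v_R\le\widetilde v_R\le v_R$. So only existence has to be established.

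For existence I would first treat the case $v_0\in C^{\infty}([-R,R])$ with $v_0(x)=f_{\lambda,h,k}(x,-a)$ for $R-\delta\le|x|\le R$. Put $a_1=\min f_{\lambda,h,k}^m$ over $[-R,R]\times[-a,\2{\tau}_0]$ and $M_1=\|v_0\|_{L^{\infty}(-R,R)}$, so $0<a_1\le M_1^m<1$, choose a smooth monotone truncation $b$ of $s\mapsto s^{1-p}$ that is constant for $s\le a_1/3$ and for $s\ge 3M_1^m$ and equals $s^{1-p}$ on $[a_1/2,2M_1^m]$, and solve the uniformly parabolic quasilinear problem $pu_{\tau}=b(u)u_{xx}+u-ub(u)$ with boundary data $f_{\lambda,h,k}^m$ and initial data $v_0^m$ using the standard theory of \cite{LSU}. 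A continuation argument identical to the one in Lemma \ref{local-existence-lem} then shows $u_R$ stays in $[a_1,M_1^m]$: on the maximal interval where $v_R:=u_R^p$ solves \eqref{yamabe-ode} with the prescribed data, $M_1$ is a supersolution and, crucially, $f_{\lambda,h,k}$ is a subsolution of \eqref{yamabe-ode} (this is one of the three subsolution facts recorded in the introduction, obtained by adapting the argument of \cite{DPKS2} with the sign correction on $q$ noted there), so Lemma \ref{bded-domain-comparison-lem} gives $f_{\lambda,h,k}\le v_R\le M_1$; consequently $a_1\le u_R\le M_1^m$, the truncation is inactive, and the maximal interval must be all of $(-a,\2{\tau}_0)$.

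For general $v_0\in L^{\infty}(-R,R)$ I would approximate $v_0$ from above by a decreasing sequence $v_{0,i}\in C^{\infty}([-R,R])$ with $f_{\lambda,h,k}(\cdot,-a)\le v_0\le v_{0,i}$, each $v_{0,i}$ equal to $f_{\lambda,h,k}(\cdot,-a)$ near $\pm R$, $v_{0,i}\to v_0$ in $L^1(-R,R)$ and $\|v_{0,i}\|_{L^{\infty}}\to\|v_0\|_{L^{\infty}}$. Case 1 and Lemma \ref{bded-domain-comparison-lem} produce solutions $v_{R,i}$ with $f_{\lambda,h,k}\le v_{R,i+1}\le v_{R,i}\le\|v_{0,i}\|_{L^{\infty}}<1$; the uniform upper and lower bounds make \eqref{yamabe-ode} uniformly parabolic on $[-R,R]\times[-a,\2{\tau}_0]$, so Schauder estimates give $C^{2,1}$ compactness on compact subsets of $[-R,R]\times(-a,\2{\tau}_0]$ and $v_{R,i}\searrow v_R\in C^{2,1}([-R,R]\times(-a,\2{\tau}_0])$. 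Passing to the limit $i\to\infty$ in the integral identity \eqref{soln-integral-defn} (with $g_0=v_{0,i}$, $v_0=v_{0,i}$) shows $v_R$ solves the displayed Dirichlet problem, and letting $i\to\infty$ in the bound for $v_{R,i}$ gives the asserted estimate.

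The only step that is not entirely routine is the continuation argument in Case 1, and there the essential input is precisely the subsolution property of $f_{\lambda,h,k}$ for \eqref{yamabe-ode}, established in the introduction. Granting that, the proof is a line-by-line transcription of the proof of Lemma \ref{local-existence-lem}, which is why it is stated here without repetition.
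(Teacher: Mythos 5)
Your proposal is correct and follows essentially the same route as the paper, which proves Lemma \ref{local-existence-lem} in detail and then obtains Lemma \ref{local-existence-lem3} (and Lemma \ref{local-existence-lem2}) "by a similar argument," i.e.\ exactly the notational substitution of $f_{\lambda,h,k}$ for $f_{\lambda,\lambda',h,h',k}$ that you carry out, with uniqueness from Lemma \ref{bded-domain-comparison-lem}, the truncated quasilinear problem plus continuation in Case 1, and monotone smooth approximation plus Schauder compactness in Case 2. The key input you identify, the subsolution property of $f_{\lambda,h,k}$, is indeed the one recorded in the introduction, so no gap remains.
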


\begin{lem}\label{neumann-existence-lem}
Let $\lambda>1$, $\lambda'>1$,  $h\ge h_0$, $h'\ge h_0'$, $0<k\le k_0$. Let $\2{\tau}_0<0$ be as in Lemma \ref{critical-pt-x-y-z-lem} and $a>-\2{\tau}_0$. Then there exists a constant $R_0=R_0(a)>0$ such that for any $R\ge R_0$ and $v_0\in L^{\infty}(-R,R)$ satisfying \eqref{v0-lower-upper-bd0}, there exists a unique solution $v_R\in C^{2,1}([-R,R]\times (-a,\2{\tau}_0])\cap L^{\infty}(B_R\times (-a,\2{\tau}_0))$ of 
\begin{equation}\label{v-neumann-eqn}
\left\{\begin{aligned}
&v_{\tau}=(v^m)_{xx}+v-v^m\quad\mbox{ in }(-R,R)\times(-a,\2{\tau}_0)\\
&(v^m)_x=(f_{\lambda,\lambda',h,h',k}^m)_x\quad\,\,\mbox{ on }\{\pm R\}\times(-a,\2{\tau}_0)\\
&v(x,-a)=v_0(x)\qquad\quad\mbox{ on }(-R,R)
\end{aligned}\right.
\end{equation}
which satisfies \eqref{v-n1-r-lower-upper-bd3}. Moreover if there exists $x_0\in (-R,R)$ such that
\begin{equation}\label{v0-behaviour}
\left\{\begin{aligned}
&v_0(x)\mbox{ is monotone increasing on }[-R,x_0]\\
&v_0(x)\mbox{ is monotone decreasing on }[x_0,R],
\end{aligned}\right.
\end{equation}
then for any $-a<\tau<\2{\tau}_0$ there exists $x_R(\tau)\in (-R,R)$ such that 
\begin{equation}\label{vrx-sign0}
\left\{\begin{aligned}
&v_{R,x}(x,\tau)>0>v_{R,x}(y,\tau)\quad\forall -R\le x<x_R(\tau)<y<R, -a<\tau<\2{\tau}_0\\
&v_{R,x}(x_R(\tau),\tau)=0\quad\,\,\forall -a<\tau<\2{\tau}_0.
\end{aligned}\right.
\end{equation}
\end{lem}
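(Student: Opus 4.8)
The plan is to follow the scheme of the proof of Lemma~\ref{local-existence-lem}, replacing the Dirichlet boundary condition by the Neumann flux $(f_{\lambda,\lambda',h,h',k}^m)_x$ on $\{\pm R\}$, and then to run a one--dimensional zero--number (Sturm) argument on $v_{R,x}$ to obtain \eqref{vrx-sign0}. First I would fix the constant $R_0=R_0(a)$. Since $f(\tau)\approx\tau$ and $\tau$ runs over the compact interval $[-a,\2{\tau}_0]$, the quantities $y(\tau),z(\tau)$ of Lemma~\ref{critical-pt-x-y-z-lem} stay in a bounded set; one may therefore choose $R_0(a)$ so large that for every $R\ge R_0$ one has $-R<y(\tau)<z(\tau)<R$ for all $\tau\in[-a,\2{\tau}_0]$ and, near $x=-R$ (resp. $x=R$), $f_{\lambda,\lambda',h,h',k}(\cdot,\tau)$ is governed by $v_{\lambda,h}(\cdot,f(\tau))$ (resp. $\2{v}_{\lambda',h'}(\cdot,f(\tau))$). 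Differentiating the defining formula for $f_{\lambda,\lambda',h,h',k}$ and using \eqref{v-lambda-increasing}, \eqref{v-lambda-x-infty-behavior} and \eqref{v-lambda--infty-behaviour} one checks that for $R\ge R_0$ the function $f_{\lambda,\lambda',h,h',k}(\cdot,\tau)$ is strictly increasing near $x=-R$ and strictly decreasing near $x=R$, uniformly in $\tau\in[-a,\2{\tau}_0]$; hence $(f_{\lambda,\lambda',h,h',k}^m)_x(-R,\tau)>0>(f_{\lambda,\lambda',h,h',k}^m)_x(R,\tau)$ for all such $\tau$. In particular the constant $M_1:=\|v_0\|_{L^{\infty}(-R,R)}<1$ is a supersolution of \eqref{v-neumann-eqn}, since $(M_1^m)_{xx}+M_1-M_1^m=M_1^m-M_1>0$ and its outward Neumann flux $0$ dominates the (negative) prescribed flux, while $f_{\lambda,\lambda',h,h',k}$ is a subsolution of \eqref{v-neumann-eqn} carrying exactly the prescribed flux.

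For existence I would repeat the proof of Lemma~\ref{local-existence-lem} \emph{mutatis mutandis}. In Case~1 ($v_0\in C^{\infty}([-R,R])$ with $v_0=f_{\lambda,\lambda',h,h',k}(\cdot,-a)$ near $\{\pm R\}$) solve the uniformly parabolic regularized problem $pu_{\tau}=b(u)u_{xx}+u-u\,b(u)$ in $(-R,R)\times(-a,\2{\tau}_0)$ with Neumann data $u_x=(f_{\lambda,\lambda',h,h',k}^m)_x$ on $\{\pm R\}$ and $u(\cdot,-a)=v_0^m$, where $b$ is the truncation of $s\mapsto s^{1-p}$ from Lemma~\ref{local-existence-lem}; by \cite{LSU} this has a solution $u_R\in C^{2,1}$. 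On a short time interval $u_R$ stays in the range $[a_1,M_1^m]$ on which $b(u_R)=u_R^{1-p}$, so $v_R:=u_R^p$ solves \eqref{v-neumann-eqn}, and the maximum principle for the oblique problem (Lemma~\ref{bded-domain-neumann-comparison-lem} for the matched fluxes, and Hopf's lemma against the constant $M_1$) gives $f_{\lambda,\lambda',h,h',k}\le v_R\le M_1$, i.e.\ $a_1\le u_R\le M_1^m$; the continuation argument of Lemma~\ref{local-existence-lem} extends this to $[-R,R]\times[-a,\2{\tau}_0]$, giving \eqref{v-n1-r-lower-upper-bd3}. For general $v_0\in L^{\infty}(-R,R)$ satisfying \eqref{v0-lower-upper-bd0}, approximate $v_0$ from above by a decreasing sequence of such smooth data $v_{0,i}$ — taken, when \eqref{v0-behaviour} holds, to be themselves monotone increasing on $[-R,x_0]$, monotone decreasing on $[x_0,R]$, and equal to $f_{\lambda,\lambda',h,h',k}(\cdot,-a)$ near $\{\pm R\}$ — form the associated decreasing sequence $v_{R,i}$, use $f_{\lambda,\lambda',h,h',k}\le v_{R,i}\le\|v_{0,i}\|_{L^{\infty}}<1$ together with the Schauder estimates \cite{LSU} to pass to the limit $v_R\in C^{2,1}([-R,R]\times(-a,\2{\tau}_0])$, and verify the integral identity in the limit. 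Uniqueness follows from Lemma~\ref{bded-domain-neumann-comparison-lem}.

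For \eqref{vrx-sign0}, assume \eqref{v0-behaviour}. For $\tau>-a$ the solution $v_R$ is smooth and bounded below by a positive constant on each compact set, so $w:=v_{R,x}$ satisfies a linear, uniformly parabolic equation $w_{\tau}=m v_R^{m-1}w_{xx}+b_1 w_x+b_2 w$ with smooth coefficients, obtained by differentiating \eqref{yamabe-ode} in $x$. By the Neumann condition and the sign of $(f_{\lambda,\lambda',h,h',k}^m)_x$ at $\pm R$ established above, $w(-R,\tau)>0>w(R,\tau)$ for every $\tau\in(-a,\2{\tau}_0)$, so $w(\cdot,\tau)$ changes sign at least once. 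On the other hand $v_{0,i}'$ has at most one sign change, so by the Sturm (zero--number) theorem the number of sign changes of $v_{R,i,x}(\cdot,\tau)$ is non-increasing in $\tau$, hence $\le1$ for all $\tau$; letting $i\to\infty$ in $C^{2,1}_{loc}$ shows $v_{R,x}(\cdot,\tau)$ has at most one sign change, so exactly one, necessarily from $+$ to $-$. Since a non-trivial parabolic solution cannot vanish on an open set, $w(\cdot,\tau)$ has a single zero $x_R(\tau)\in(-R,R)$ with $w>0$ to its left and $w<0$ to its right, which is \eqref{vrx-sign0}.

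I expect the main obstacle to be the boundary bookkeeping: proving, uniformly for $\tau\in[-a,\2{\tau}_0]$ and $R\ge R_0(a)$, that the prescribed Neumann flux $(f_{\lambda,\lambda',h,h',k}^m)_x$ has the correct sign on $\{\pm R\}$ — this is what makes $M_1$ and $f_{\lambda,\lambda',h,h',k}$ ordered sub/supersolutions of the Neumann problem and gives $w$ definite boundary signs — together with the careful invocation of the zero--number theorem for $w=v_{R,x}$, which is legitimate because \eqref{yamabe-ode} is uniformly parabolic with smooth solution on every compact subset of $[-R,R]\times(-a,\2{\tau}_0]$. The remaining steps are routine modifications of Lemma~\ref{local-existence-lem}, Lemma~\ref{bded-domain-neumann-comparison-lem} and of the arguments in \cite{DPKS1}, \cite{DPKS2}.
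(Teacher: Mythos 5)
Your existence part follows the paper's proof essentially verbatim: the same choice of $R_0(a)$ via the boundary derivative estimates for $f_{\lambda,\lambda',h,h',k}$ at $x=\pm R$, the same regularized problem with the truncated nonlinearity $b$, the same sub/supersolution pair $(f_{\lambda,\lambda',h,h',k},\,\|v_0\|_{L^\infty})$ with Lemma \ref{bded-domain-neumann-comparison-lem}, the same continuation-in-time argument, and the same monotone smooth approximation plus Schauder/Ascoli limit for general $v_0$. Where you genuinely diverge is in the proof of \eqref{vrx-sign0}: you apply the Sturm zero-number theorem directly to $w=v_{R,x}$ (legitimately, since the prescribed flux signs give $w(-R,\tau)>0>w(R,\tau)$, so $w$ never vanishes on the lateral boundary), concluding that the number of sign changes is nonincreasing and hence equal to one for the smooth approximations, and then pass to the limit. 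The paper instead runs a continuation argument on a maximal time interval, using the local persistence result of Lemma 2.4 of \cite{H1} (in the spirit of \cite{A}, \cite{CP}, \cite{M}), the strong maximum principle, and the spatial analyticity of $v_{R,x}(\cdot,\tau)$ (via \cite{SGKM}, \cite{CP}) to show the critical set is a singleton. Your route is shorter and uses the same circle of ideas the paper cites, so it is an acceptable alternative; the paper's route avoids quoting the zero-number theorem with nonstandard boundary data by doing the bookkeeping by hand.

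Two points need repair, both minor. First, your verification that $M_1=\|v_0\|_{L^\infty}$ is a supersolution has a sign slip: $(M_1^m)_{xx}+M_1-M_1^m=M_1-M_1^m<0$ (not $M_1^m-M_1>0$), and it is precisely this negativity, together with the nonpositive prescribed outward flux, that makes the constant a supersolution. Second, in the passage to the limit $i\to\infty$ the observation that a nontrivial solution ``cannot vanish on an open set'' only excludes intervals of zeros; it does not exclude an interior zero of the limit $w=v_{R,x}$ at which $w$ does not change sign, which would violate the strict inequalities in \eqref{vrx-sign0}. From $w\ge 0$ on $\{x<x_R(\tau)\}$ and $w\le 0$ on $\{x>x_R(\tau)\}$ you must still invoke the strong maximum principle for the linear uniformly parabolic equation satisfied by $w$ (using $w(-R,\tau)>0$, respectively $w(R,\tau)<0$, on the lateral boundary), or the isolatedness of zeros of the $x$-analytic function $v_{R,x}(\cdot,\tau)$ as the paper does, to obtain $w>0$ strictly to the left and $w<0$ strictly to the right of the single zero. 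With these two corrections your argument is complete.
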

\begin{proof}
By Lemma \ref{bded-domain-neumann-comparison-lem} the solution of \eqref{v-neumann-eqn} is unique. Hence we only need to prove existence of solution of \eqref{v-neumann-eqn}. Let $y(\tau)$ and $z(\tau)$ be as in Lemma \ref{critical-pt-x-y-z-lem}. Let 
\begin{equation*}\label{Ro>y-z-tau}
R_0=R_0(a)>\max\left(\lambda\2{\tau}_0-h,,\lambda'\2{\tau}_0-h', \max_{-a\le\tau\le\2{\tau}_0}(|y(\tau)|,|z(\tau)|)\right)
\end{equation*}
be a constant to be determined later and let $R\ge R_0$. Then 
\begin{equation}\label{v-lambda-h-v-lambda'-h'>1/2}
v_{\lambda,h}(R,\tau)\ge v_{\lambda}(0)=\frac{1}{2}\quad\mbox{ and }\quad \2{v}_{\lambda',h'}(-R,\tau)\ge v_{\lambda'}(0)=\frac{1}{2}\quad\forall \tau\le\2{\tau}_0.
\end{equation}
We divide the existence proof into two cases.

\noindent{\bf Case 1}: $v_0\in C^{\infty}([-R,R])$ and there exists a constant  $\delta\in (0,R)$ such that $v_0(x)=f_{\lambda,\lambda',h,h',k}(x,-a)$ for any $R-\delta\le |x|\le R$.

By an argument similar to the proof of Lemma \ref{local-existence-lem} there exists $\3\in (0,\2{\tau}_0)$ such that there exists a unique solution $v_R\in C^{2,1}([-R,R]\times [-a,-a+\3])$ of 
\begin{equation}\label{v-neumann-eqn2}
\left\{\begin{aligned}
&v_{\tau}=(v^m)_{xx}+v-v^m\quad\mbox{ in }(-R,R)\times(-a,-a+\3)\\
&(v^m)_x=(f_{\lambda,\lambda',h,h',k}^m)_x\quad\,\,\mbox{ on }\{\pm R\}\times(-a,-a+\3)\\
&v(x,-a)=v_0(x)\qquad\quad\mbox{ on }(-R,R).
\end{aligned}\right.
\end{equation}
By \eqref{v-lambda-x=-infty-behavior}, \eqref{v-lambda-x-infty-behavior}, \eqref{v-lambda-h-value-at-infty}, \eqref{v-lambda--infty-behaviour} and \eqref{v-lambda-h-v-lambda'-h'>1/2},
\begin{align}\label{f-lambda-lambda'-h-h'-k-bdary-derivative1}
&\frac{\1}{\1 x}f_{\lambda,\lambda',h,h',k}(R,\tau)\notag\\
=&\left[v_{\lambda,h}(R,f(\tau))^{-p}v_{\lambda}'(R-\lambda f(\tau)+h)-\2{v}_{\lambda',h'}(R,f(\tau))^{-p}v_{\lambda'}'(-R-\lambda'f(\tau)+h')\right]f_{\lambda,\lambda',h,h',k}(R,\tau)^p\notag\\
\le&\left[2^p\left(C_{\lambda}\gamma_{\lambda}e^{-\gamma_{\lambda}(R-\lambda f(\tau)+h)}+o(e^{-\gamma_{\lambda}(R-\lambda f(\tau)+h)})\right)-Ce^{({p^2}-p)(R+\lambda'f(\tau)-h')}\right]f_{\lambda,\lambda',h,h',k}(R,\tau)^p\notag\\
<&0\quad\forall -a\le\tau\le\2{\tau}_0
\end{align}
if $R_0$ is sufficiently large where $f$ is given by \eqref{f-defn} and
\begin{align}\label{f-lambda-lambda'-h-h'-k-bdary-derivative2}
&\frac{\1}{\1 x}f_{\lambda,\lambda',h,h',k}(-R,\tau)\notag\\
=&\left[v_{\lambda,h}(-R,f(\tau))^{-p}v_{\lambda}'(-R-\lambda f(\tau)+h)-\2{v}_{\lambda',h'}(-R,f(\tau))^{-p}v_{\lambda'}'(R-\lambda'f(\tau)+h')\right]f_{\lambda,\lambda',h,h',k}(-R,\tau)^p\notag\\
\ge&\left[Ce^{(p^2-p)(R+\lambda f(\tau)-h)}-2^p\left(C_{\lambda'}\gamma_{\lambda'}e^{-\gamma_{\lambda'}(R-\lambda'f(\tau)+h')}+o(e^{-\gamma_{\lambda'}(R-\lambda'f(\tau)+h')})\right)\right]f_{\lambda,\lambda',h,h',k}(-R,\tau)^p\notag\\
>&0\quad\forall -a\le\tau\le\2{\tau}_0
\end{align}
if $R_0$ is sufficiently large. We now choose $R_0$ sufficiently large such that both \eqref{f-lambda-lambda'-h-h'-k-bdary-derivative1} and \eqref{f-lambda-lambda'-h-h'-k-bdary-derivative2} hold.
Then by \eqref{f-lambda-lambda'-h-h'-k-bdary-derivative1} and \eqref{f-lambda-lambda'-h-h'-k-bdary-derivative2},  $M:=\|v_0\|_{L^{\infty}(-R,R)}<1$ is a supersolution of \eqref{v-neumann-eqn2}. On the other hand $f_{\lambda,\lambda',h,h',k}$ is a subsolution of \eqref{v-neumann-eqn2}.  Hence 
by  Lemma \ref{bded-domain-neumann-comparison-lem}, $v_R$ satisfies \eqref{v-n1-r-lower-upper-bd3} in $\2{B}_R\times [-a,-a+\3]$.  

Let $(-a,T)$, $-a+\3\le T\le\2{\tau}_0$, be the maximal time interval of existence of solution $v_R\in C^{2,1}([-R,R]\times [-a,T])$ of 
\begin{equation}\label{v-neumann-eqn3}
\left\{\begin{aligned}
&v_{\tau}=(v^m)_{xx}+v-v^m\quad\mbox{ in }(-R,R)\times(-a,T)\\
&(v^m)_x=(f_{\lambda,\lambda',h,h',k}^m)_x\quad\,\,\mbox{ on }\{\pm R\}\times(-a,T)\\
&v(x,-a)=v_0(x)\qquad\quad\mbox{ on }(-R,R)
\end{aligned}\right.
\end{equation}
which satisfies \eqref{v-n1-r-lower-upper-bd3} in $[-R,R]\times [-a,T]$. Suppose $T<\2{\tau}_0$. Then by \eqref{v-n1-r-lower-upper-bd3} and an argument similar to the proof of Lemma \ref{local-existence-lem} there exists a constant $T_1\in (T,\2{\tau}_0)$ such that $v_R$ can be extended to a solution  of \eqref{v-neumann-eqn3} in $(-R,R)\times (-a,T_1)$ that satisfies \eqref{v-n1-r-lower-upper-bd3} in $[-R,R]\times [-a,T_1]$ and $v_R\in C^{2,1}([-R,R]\times [-a,T_1])$. This contradicts the choice of $T$. Hence $T=\2{\tau}_0$.

\noindent{\bf Case 2}: $v_0\in L^{\infty}(-R,R)$ 

\noindent We choose a sequence of function $\{v_{0,i}\}_{i=1}^{\infty}\subset C^{\infty}([-R,R])$  
satisfying \eqref{v0i-lower-upper-bd0} and
\begin{equation}\label{v0i-bdary-behaviour}
v_{0,i}(x)=f_{\lambda,\lambda',h,h',k}(x,-a)\quad\forall (1-2^{-1-i})R\le |x|\le R, i\in\mathbb{N}
\end{equation}
and $v_{0,i}$ converges to  $v_0$  in $L^1(-R,R)$ and $\|v_{0,i}\|_{L^{\infty}(-R,R)}\to \|v_0\|_{L^{\infty}(-R,R)}$ as $i\to\infty$.
By case 1 for any $i\in\mathbb{N}$ there exists a solution $v_{R,i}\in C^{2,1}([-R,R]\times [-a,\2{\tau}_0])$ of \eqref{v-neumann-eqn} with $v_0$ being replaced by $v_{0,i}$ and $v_{R,i}$  satisfies \eqref{v_Ri-lower-upper-bd}. Then by \eqref{v_Ri-lower-upper-bd} the equation
\eqref{yamabe-ode} for the sequence $\{v_{R,i}\}_{i=1}^{\infty}$ is uniformly parabolic on $[-R,R]\times [-a,\2{\tau}_0]$. Hence by the Schauder's estimates \cite{LSU} the sequence $\{v_{R,i}\}_{i=1}^{\infty}$ is equi-Holder continuous in $C^{2,1}(K)$ for any compact subset
$K$ of $[-R,R]\times (-a,\2{\tau}_0]$. Thus by the Ascoli Theorem and a diagonalization argument the sequence $\{v_{R,i}\}_{i=1}^{\infty}$ has a subsequence $\{v_{R,i_k}\}_{k=1}^{\infty}$ that converges in $C^{2,1}(K)$ to some function $v_R\in C^{2,1}([-R,R]\times (-a,\2{\tau}_0])$ as $k\to\infty$ for any compact subset $K$ of $[-R,R]\times (-a,\2{\tau}_0]$. 
Since $v_{R,i}\in C^{2,1}([-R,R]\times (-a,\2{\tau}_0])$ satisfies \eqref{v-neumann-eqn},
\begin{align}\label{vri-integral-eqn}
\int_{-R}^Rv_{R,i}(x,\tau)\eta (x,\tau)\,dx=&\int_{-a}^{\tau}\int_{-R}^R\left[v_{R,i}\eta_{\tau}+v_{R,i}^m\eta_{xx}+(v_{R,i}-v_{R,i}^m)\eta\right]\,dx\,d\tau\notag\\
&\qquad +\int_{-a}^{\tau}\int_{\1 B_R}\eta\frac{\1}{\1 n}f_{\lambda,\lambda',h,h,k}^m\,d\sigma\,d\tau
 +\int_{-R}^Rv_{0,i}(x)\eta(x,\tau_1)\,dx
\end{align}
holds for any $-a<\tau<\2{\tau}_0$, $i\in\mathbb{N}$, and function $0\le\eta\in C^{2,1}([-R,R]\times [-a,\tau])$ satisfying $\eta_x\equiv 0$ on $\{\pm R\}\times [-a,\tau]$. Putting $i=i_k$ in \eqref{vri-integral-eqn} and letting $k\to\infty$,
\begin{align*}
\int_{-R}^Rv_R(x,\tau)\eta (x,\tau)\,dx=&\int_{-a}^{\tau}\int_{-R}^R\left[v_R\eta_{\tau}+v_R^m\eta_{xx}+(v_R-v_R^m)\eta\right]\,dx\,d\tau\notag\\
&\qquad +\int_{-a}^{\tau}\int_{\1 B_R}\eta\frac{\1}{\1 n}f_{\lambda,\lambda',h,h,k}^m\,d\sigma\,d\tau
+\int_{-R}^Rv_0(x)\eta(x,\tau_1)\,dx
\end{align*}
holds for any $-a<\tau<\2{\tau}_0$, $i\in\mathbb{N}$, and function $0\le\eta\in C^{2,1}([-R,R]\times [-a,\tau])$ satisfying $\eta_x\equiv 0$ on $\{\pm R\}\times [-a,\tau]$.  Letting $i=i_k\to\infty$ in \eqref{v_Ri-lower-upper-bd}, we get
\eqref{v-n1-r-lower-upper-bd3}. Hence by Lemma \ref{bded-domain-neumann-comparison-lem} $v_R\in C^{2,1}([-R,R]\times (-a,\2{\tau}_0])\cap L^{\infty}((-R,R)\times (-a,\2{\tau}_0))$ is the unique solution of \eqref{v-neumann-eqn}.  Thus
$v_i$ converges in $C^{2,1}(K)$ to $v_R$ as $i\to\infty$ for any compact subset $K$ of $[-R,R]\times (-a,\2{\tau}_0]$.  

Suppose now $v_0$ satisfies \eqref{v0-behaviour} for some $x_0\in (-R,R)$.  We claim that for any $-a<\tau<\2{\tau}_0$ there exists $x_R(\tau)\in (-R,R)$ such that \eqref{vrx-sign0} holds. We divide the proof of the claim into two cases.

\noindent{\bf Case A}: $v_0\in C^{\infty}([-R,R])$ and there exists a constant  $\delta\in (0,R)$ such that $v_0(x)=f_{\lambda,\lambda',h,h',k}(x,-a)$ for any $R-\delta\le |x|\le R$ and
\begin{equation}\label{v0-behaviour2}
\left\{\begin{aligned}
&v_0'(x)>0>v_0'(y)\quad\forall -R\le x\le x_0-\3_1,x_0+\3_1\le x\le R\\
&v_0''(x)<0\qquad\qquad\forall x\in [x_0-\3_1,x_0+\3_1]
\end{aligned}\right.
\end{equation}
for some constants $x_0\in (-R,R)$ and $\3_1\in \left(0,\frac{1}{2}\min (R-x_0,R+x_0)\right)$. Let $v_R\in C^{2,1}(\2{B}_R\times [-a,\2{\tau}_0])$ be the solution of \eqref{v-neumann-eqn} given by case 1 above. By \eqref{v-neumann-eqn}, $v_{R,x}$ satisfies
\begin{equation}\label{v-x-eqn0}
w_{\tau}=mv^{m-1}w_{xx}-3m(1-m)v^{m-2}ww_x+m(1-m)(2-m)v^{m-3}w^3+w-mv^{m-1}w
\end{equation}
in $(-R,R)\times (-a,\2{\tau}_0)$. Since $v_R\in C^{2,1}([-R,R]\times [-a,\2{\tau}_0])$, by \eqref{v0-behaviour2}  there exists $T_2\in (-a,\2{\tau}_0)$ such that
\begin{equation}\label{vrx-sign}
\left\{\begin{aligned}
&v_{R,x}(x,\tau)>0>v_{R,x}(y,\tau)\quad\forall -R\le x\le x_0-\3_1, x_0+\3_1\le y\le R, -a\le\tau\le T_2\\
&v_{R,xx}(x,\tau)<0\quad\forall x_0-\3_1\le x\le x_0+\3_1, -a\le\tau\le T_2.\end{aligned}\right.
\end{equation}
By \eqref{vrx-sign} for any $-a\le\tau\le T_2$ there exists a unique $x_R(\tau)\in (x_0-\3_1,x_0+\3_1)$ such that
\begin{equation*}
\left\{\begin{aligned}
&v_{R,x}(x,\tau)>0>v_{R,x}(y,\tau)\quad\forall -R\le x<x_R(\tau)<y\le R, -a<\tau\le T_2\\
&v_{R,x}(x_R(\tau),\tau)=0\quad\forall -a\le\tau\le T_2.
\end{aligned}\right.
\end{equation*}
Let $(-a,T_2')$, $T_2\le T_2'\le\2{\tau}_0$, be the maximal interval such that for any $-a<\tau<T_2'$ there exists a unique $x_R(\tau)\in (-R,R)$ such that
\begin{equation}\label{vrx-sign2}
\left\{\begin{aligned}
&v_{R,x}(x,\tau)>0>v_{R,x}(y,\tau)\quad\forall -R\le x<x_R(\tau)<y\le R, -a<\tau< T_2'\\
&v_{R,x}(x_R(\tau),\tau)=0\quad\forall -a<\tau< T_2'.
\end{aligned}\right.
\end{equation}
Suppose $T_2'<\2{\tau}_0$. Then by compactness there exists a sequence $-a<\tau_i<T_2'$, $\tau_i\to T_2'$ as $i\to\infty$, such that $x_R(\tau_i)$ converges to some point $x_R(T_2')\in [-R,R]$ as $i\to\infty$. Since
\begin{equation}\label{vrx=0}
v_{R,x}(x_R(T_2'),T_2')=\lim_{i\to\infty}v_{R,x}(x_R(\tau_i),\tau_i)=0,
\end{equation}
by \eqref{v-neumann-eqn}, \eqref{f-lambda-lambda'-h-h'-k-bdary-derivative1}  and \eqref{f-lambda-lambda'-h-h'-k-bdary-derivative2}, $x_R(T_2')\in (-R+\3_2,R-\3_2)$ for some constant $\3_2\in (0,R/2)$. Let 
\begin{equation*}
D=\left\{(x,\tau):-R< x<x_R(\tau), -a<\tau\le T_2'\right\}
\end{equation*}
By \eqref{vrx-sign2},
\begin{equation}\label{vrx-sign3}
v_{R,x}(x,\tau)\ge 0\quad\forall (x,\tau)\in\2{D}.
\end{equation}
By \eqref{v-n1-r-lower-upper-bd3} the equation \eqref{v-x-eqn0} for $v_{R,x}$ is uniformly parabolic on $[-R,R]\times [-a,\2{\tau}_0]$. Hence by
\eqref{v-neumann-eqn}, \eqref{f-lambda-lambda'-h-h'-k-bdary-derivative2}, \eqref{v-x-eqn0}, \eqref{vrx-sign3} and the strong maximum principle,
\begin{equation}\label{vrx>0}
v_{R,x}(x,\tau)>0\quad\forall -R\le x<x_R(\tau), -a<\tau\le T_2'.
\end{equation}
Similarly,
\begin{equation}\label{vrx<0}
v_{R,x}(x,\tau)<0\quad\forall x_R(\tau)<x\le R, -a<\tau\le T_2'.
\end{equation}
We next observe that since the equation \eqref{v-x-eqn0} for $v_{R,x}$ is uniformly parabolic on $[-R,R]\times [-a,\2{\tau}_0]$,
the results of Lemma 2.4 of \cite{H1} (cf. \cite{A}, \cite{CP}, \cite{M}) remains valid for the solution $v_{R,x}$ of \eqref{v-x-eqn0}. Hence there exist constants $0<\delta_1<\min\left(\frac{R-x_R(T_2')}{2},\frac{R+x_R(T_2')}{2}\right)$ and $0<\delta_2< \2{\tau}_0-T_2'$ such that for any $T_2'\le\tau\le T_2'+\delta_2$ there exists $x_R(\tau)\in\left(x_R(T_2')-\delta_1,x_R(T_2')+\delta_1\right)$ such that
\begin{equation}\label{vrx-sign4}
\left\{\begin{aligned}
&v_{R,x}(x,\tau)>0>v_{R,x}(y,\tau)\quad\forall x_R(T_2')-\delta_1\le x<x_R(\tau)<y\le x_R(T_2')+\delta_1, T_2'\le\tau\le T_2'+\delta_2\\
&v_{R,x}(x_R(\tau),\tau)=0\qquad\qquad\forall T_2'\le\tau\le T_2'+\delta_2.
\end{aligned}\right.
\end{equation}
Since $v_R\in C^{2,1}([-R,R]\times [-a,\2{\tau}_0])$, by \eqref{vrx>0} and \eqref{vrx<0} there exists a constant $0<\delta_3<\delta_2$ such that
\begin{equation}\label{vrx><0}
v_{R,x}(x,\tau)>0>v_{R,x}(y,\tau)\quad\forall -R\le x\le x_R(T_2')-\delta_1, x_R(T_2')+\delta_1\le y\le R,T_2'\le\tau\le T_2'+\delta_3.
\end{equation}
By \eqref{vrx-sign4} and \eqref{vrx><0}, \eqref{vrx-sign2} holds with $T_2'$ replaced by $T_2'+\delta_3$. This contradicts the choice of $T_2'$. Hence $T_2'=\2{\tau}_0$. Hence for any $-a<\tau<\2{\tau}_0$ there exists $x_R(\tau)\in (-R,R)$ such that  \eqref{vrx-sign0} holds.

\noindent{\bf Case B}: $v_0\in L^{\infty}(-R,R)$ 

\noindent We choose a sequence of function $\{v_{0,i}\}_{i=1}^{\infty}\subset C^{\infty}([-R,R])$ 
satisfying \eqref{v0i-lower-upper-bd0}, \eqref{v0i-bdary-behaviour}, $v_{0,i}$ converges to  $v_0$  in $L^1(-R,R)$ and $\|v_{0,i}\|_{L^{\infty}(-R,R)}\to \|v_0\|_{L^{\infty}(-R,R)}$ as $i\to\infty$, and
\begin{equation}\label{v0i-behaviour}
\left\{\begin{aligned}
&v_{0,i}'(x)>0>v_{0,i}'(y)\quad\forall -R\le x\le x_{0,i}-\delta_0, x_{0,i}+\delta_0\le y\le R\\
&v_{0,i}''(x)<0\qquad\qquad\,\forall x\in [x_{0,i}-\delta_0,x_{0,i}+\delta_0]
\end{aligned}\right.
\end{equation}
holds for some sequences $\{x_{0,i}\}_{i=1}^{\infty}\subset (x_0-\delta_0,x_0+\delta_0)$ where $\delta_0=min\left(\frac{R-x_0}{4},\frac{R+x_0}{4}\right)$ such that $x_{0,i}\to x_0$ as $i\to\infty$. For any $i\in\mathbb{N}$ let $v_{R,i}\in C^{2,1}([-R,R]\times [-a,\2{\tau}_0])$ be the solution of \eqref{v-neumann-eqn} with $v_0$ being replaced by $v_{0,i}$ such that  $v_{R,i}$ satisfies \eqref{v-n1-r-lower-upper-bd3}. By case A, for any $i\in\mathbb{N}$ and $-a<\tau<\2{\tau}_0$ there exists $x_{R,i}(\tau)\in (-R,R)$ such that
\begin{equation}\label{virx-sign0}
\left\{\begin{aligned}
&v_{R,i,x}(x,\tau)>0>v_{R,i,x}(y,\tau)\quad\forall -R\le x<x_{R,i}(\tau)<y\le R, -a<\tau<\2{\tau}_0\\
&v_{R,i,x}(x_{R,i}(\tau),\tau)=0\quad\qquad\quad\forall -a<\tau<\2{\tau}_0.
\end{aligned}\right.
\end{equation}
By case 2, $v_i$ converges in $C^{2,1}(K)$ to the solution $v_R\in C^{2,1}([-R,R]\times (-a,\2{\tau}_0])\cap L^{\infty}((-R,R)\times (-a,\2{\tau}_0))$ of \eqref{v-neumann-eqn} as $i\to\infty$ for any compact subset $K$ of $[-R,R]\times (-a,\2{\tau}_0]$ and $v_R$ satisfies \eqref{v-n1-r-lower-upper-bd3}.  
For any $-a<\tau<\2{\tau}_0$, let 
\begin{equation}
D_0(\tau)=\left\{x\in [-R,R]:v_{R,x}(x,\tau)=0\right\}. 
\end{equation}
Since $[-R,R]$ is compact, for any $-a<\tau<\2{\tau}_0$ the sequence $\{x_{R,i}(\tau)\}_{i=1}^{\infty}$ has a convergence subsequence $\{x_{R,i_k}(\tau)\}_{k=1}^{\infty}$. Let $x_R(\tau)=\lim_{k\to\infty}x_{R,i_k}(\tau)$.  Then by \eqref{virx-sign0}, $v_{R,x}(x_R(\tau),\tau)=0$. Hence $D_0(\tau)\ne\phi$.

Since $v_R\in C^{2,1}([-R,R]\times (-a,\2{\tau}_0])$, by \eqref{v-neumann-eqn}, \eqref{f-lambda-lambda'-h-h'-k-bdary-derivative1} and\eqref{f-lambda-lambda'-h-h'-k-bdary-derivative2}, for any $-a<\tau<\2{\tau}_0$ there exists a constant $\3_{\tau}\in (0,R)$ such that
\begin{align}
&v_{R,x}(x,\tau)>0>v_{R,x}(y,\tau)\quad\forall -R\le x\le -R+\3_{\tau},R-\3_{\tau}\le y\le R\label{vrx-sign6}\\
\Rightarrow\quad&D_0(\tau)\subset [-R+\3_{\tau},R-\3_{\tau}].\label{vrx-sign5}
\end{align}
We claim that $D_0(\tau)$ is a singleton for any $-a<\tau<\2{\tau}_0$. Suppose not. Then there exists $\tau_1\in (-a,\2{\tau}_0)$ such that $D_0(\tau_1)$ is not a singleton. By the discussion on P.241 of \cite{SGKM} and \cite {CP} $v_{R,x}(x,\tau_1)$ is an analytic function of $x\in (-R,R)$. Hence either all the zeros of $v_{R,x}(x,\tau_1)$ are isolated zeros or 
\begin{equation}\label{vrx-all=0}
v_{R,x}(x,\tau_1)=0\quad\forall |x|\le R.
\end{equation}  
By \eqref{vrx-sign5}, \eqref{vrx-all=0} is not possible. Hence all the zeros of $v_{R,x}(x,\tau_1)$ are isolated zeros. Thus $D_0(\tau_1)$ has a finite number of elements and we can write $D_0(\tau_1)=\{x_1,...,x_k\}$ for some $k\ge 2$ such that $-R<x_1<x_2<\cdots<x_k<R$. Let $x_0=-R$ and $x_{k+1}=R$. By \eqref{vrx-sign6},
\begin{equation*}
v_{R,x}(x,\tau_1)>0>v_{R,x}(y,\tau_1)\quad\forall -R\le x<x_1,x_k<y\le R.
\end{equation*}
Hence there exists $k_0\in\{1,\dots,k\}$ such that 
\begin{equation*}
v_{R,x}(x,\tau_1)>0>v_{R,x}(y,\tau_1)\quad\forall x_{k_0-1}<x<x_{k_0}<y<x_{k_0+1}.
\end{equation*}
Without loss of generality we may assume that 
\begin{equation}\label{vrx-sign7}
v_{R,x}(x,\tau_1)>0>v_{R,x}(y,\tau_1)\quad\forall -R<x<x_1<y<x_2.
\end{equation}
Let $\delta_1=\min\left(\frac{x_1+R}{4},\frac{x_2-x_1}{4}\right)$. Since $v_{R,i,x}(\cdot,\tau_1)$ converges uniformly to $v_R(\cdot,\tau_1)$ on $[-R,R]$ as $i\to\infty$, by \eqref{vrx-sign7} there exists $i_0\in\mathbb{N}$ such that
\begin{equation}\label{vrx-sign8}
v_{R,i,x}(x_1-\delta_1,\tau_1)>0>v_{R,i,x}(x_1+\delta_1,\tau_1)\quad\forall i\ge i_0.
\end{equation}
By \eqref{vrx-sign8} and the intermediate value theorem for any $i\ge i_0$, there exist $z_i\in (x_1-\delta_1,x_1+\delta_1)$ such that 
\begin{equation}\label{zi-eqn}
v_{R,i,x}(z_i,\tau_1)=0\quad\forall i\ge i_0.
\end{equation}
By compactness the sequence $\{z_i\}$ has a convergence subsequence which we may assume without loss of generality to be the sequence itself that converges to some point $z_0\in [x_1-\delta_1,x_1+\delta_1]$. Letting $i\to\infty$ in \eqref{zi-eqn},    
\begin{equation}\label{z0-eqn}
v_{R,x}(z_0,\tau_1)=0.
\end{equation}
By \eqref{vrx-sign7} and \eqref{z0-eqn},  $z_0=x_1$. By \eqref{virx-sign0} and \eqref{zi-eqn}, $z_i=x_{R,i}(\tau_1)$ for all $i\ge i_0$. Putting $\tau=\tau_1$ in \eqref{virx-sign0} and letting $i\to\infty$,
\begin{equation}\label{vrx-sign23}
v_{R,x}(x,\tau_1)\ge 0\ge v_{R,x}(y,\tau_1)\quad\forall -R\le x\le x_1\le y\le R. 
\end{equation}
Since all the zeros of $v_{R,x}(x,\tau_1)$ are isolated zeros, by \eqref{vrx-sign9}, 
\begin{equation}\label{vrx-sign9}
v_{R,x}(x,\tau_1)<0\quad\forall x_1<x<x_3, x\ne x_2. 
\end{equation}
Let $\delta_2=\min\left(\frac{x_2-x_1}{4},\frac{x_3-x_2}{4}\right)$. Then by \eqref{vrx-sign9} there exists $\delta_3\in (0,\tau_1+a)$ such that
\begin{equation}\label{vrx-sign10}
v_{R,x}(x_2-\delta_2,\tau)<0\quad\mbox{ and }\quad v_{R,x}(x_2+\delta_1,\tau)<0\quad\forall \tau_1-\delta_3\le\tau\le\tau_1.
\end{equation}
By \eqref{virx-sign0} and an argument similar to the above one there exists a constant $\2{x}_1\in (-R,R)$ such that
\begin{equation}\label{vrx-sign11}
v_{R,x}(x,\tau_1-\delta_3)\ge 0\ge v_{R,x}(y,\tau_1-\delta_3)\quad\forall -R\le x\le \2{x}_1\le y\le R 
\end{equation} 
By \eqref{vrx-sign10} and \eqref{vrx-sign11}, $\2{x}_1\le x_2-\delta_2$ and
\begin{equation}\label{vrx-sign12}
v_{R,x}(x,\tau_1-\delta_3)\le 0\quad\forall x_2-\delta_2\le x\le R.
\end{equation}
By \eqref{v-n1-r-lower-upper-bd3} the equation \eqref{v-x-eqn0} for $v_{R,x}$ is uniformly parabolic on $[-R,R]\times [-a,\2{\tau}_0]$. 
Hence by \eqref{v-x-eqn0}, \eqref{vrx-sign10}, \eqref{vrx-sign12} and the strong maximum principle in $(x_2-\delta_2,x_2+\delta_2)\times (\tau_1-\delta_3,\tau_1)$,
\begin{equation*}
v_{R,x}(x_2,\tau_1)<0
\end{equation*}
and contradiction arises. Hence no such $\tau_1$ exists and $D_0(\tau)$ is a singleton for any $-a<\tau<\2{\tau}_0$.
Hence $D_0(\tau)=\{x_R(\tau)\}$ and $x_R(\tau)=\lim_{i\to\infty}x_{R,i}(\tau)$. Letting $i\to\infty$ in \eqref{virx-sign0},
\begin{equation}\label{vrx-sign13}
v_{R,x}(x,\tau)\ge 0\ge v_{R,x}(y,\tau)\quad\forall -R\le x\le x_R(\tau)\le y\le R, -a<\tau<\2{\tau}_0
\end{equation}
Since for any $-a<\tau<\2{\tau}_0$, $x_R(\tau)$ is an isolated zero of $v_R(\cdot,\tau)$, by \eqref{vrx-sign13} we get \eqref{vrx-sign0}
and the lemma follows.
\end{proof}

By a similar argument we have the two following lemmas.

\begin{lem}\label{neumann-existence-lem2}
Let $\lambda>1$, $\lambda'>1$,  $h\ge h_0$, $h'\ge h_0'$ and let $\2{\tau}_0<0$ be as in Lemma \ref{critical-pt-x-y-z-lem}. Let $a>-\2{\tau}_0$ and $v_0\in L^{\infty}(-R,R)$ be such that 
\begin{equation*}
f_{\lambda,\lambda',h,h'}(x,-a)\le v_0(x)\le\|v_0\|_{L^{\infty}(-R,R)}<1\quad\mbox{ a.e. }x\in (-R,R)
\end{equation*} 
holds. Then there exists a constant $R_0=R_0(a)>0$ such that for any $R\ge R_0$ there exists a unique solution $v_R\in C^{2,1}([-R,R]\times (-a,\tau_0])\cap L^{\infty}((-R,R)\times (-a,\2{\tau}_0))$ of 
\begin{equation*}
\left\{\begin{aligned}
&v_{\tau}=(v^m)_{xx}+v-v^m\quad\mbox{ in }(-R,R)\times(-a,\2{\tau}_0)\\
&(v^m)_x=(f_{\lambda,\lambda',h,h'}^m)_x\qquad\mbox{ on }\{\pm R\}\times(-a,\2{\tau}_0)\\
&v(x,-a)=v_0(x)\qquad\quad\mbox{ on }(-R,R)
\end{aligned}\right.
\end{equation*}
which satisfies 
\begin{equation*}
f_{\lambda,\lambda',h,h'}(x,\tau)\le v_R(x,\tau)\le \|v_0\|_{L^{\infty}(-R,R)}<1\quad\forall |x|\le R,-a<\tau<\2{\tau}_0.
\end{equation*}
Moreover if there exists $x_0\in (-R,R)$ such that \eqref{v0-behaviour} holds,
then for any $-a<\tau<\2{\tau}_0$ there exists $x_R(\tau)\in (-R,R)$ such that \eqref{vrx-sign0} holds.
\end{lem}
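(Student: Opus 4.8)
The plan is to follow the proof of Lemma \ref{neumann-existence-lem} line by line, replacing the five‑parameter barriers $f_{\lambda,\lambda',h,h',k}$ and $\2{f}_{\lambda,\lambda',h,h',k}$ throughout by the four‑parameter functions $f_{\lambda,\lambda',h,h'}$ and $\2{f}_{\lambda,\lambda',h,h'}$. Uniqueness will again follow at once from Lemma \ref{bded-domain-neumann-comparison-lem}, so only existence and the monotonicity statement need to be checked. The single new ingredient is the analogue of the boundary gradient estimates \eqref{f-lambda-lambda'-h-h'-k-bdary-derivative1}--\eqref{f-lambda-lambda'-h-h'-k-bdary-derivative2}: differentiating $f_{\lambda,\lambda',h,h'}=(v_{\lambda,h}(x,f(\tau))^{1-p}+\2{v}_{\lambda',h'}(x,f(\tau))^{1-p}-1)^{-1/(p-1)}$ one gets
$$\1_x f_{\lambda,\lambda',h,h'}(x,\tau)=\Big[v_{\lambda,h}(x,f(\tau))^{-p}v_{\lambda}'(x-\lambda f(\tau)+h)-\2{v}_{\lambda',h'}(x,f(\tau))^{-p}v_{\lambda'}'(-x-\lambda'f(\tau)+h')\Big]f_{\lambda,\lambda',h,h'}(x,\tau)^p,$$
and for $-a\le\tau\le\2{\tau}_0$ and $R$ large the contribution of $v_{\lambda,h}$ at $x=R$ is $O(e^{-\gamma_{\lambda}(R-\lambda f(\tau)+h)})$ by \eqref{v-lambda-x-infty-behavior} while the contribution of $\2{v}_{\lambda',h'}$ dominates and is comparable to $e^{(p^2-p)(R+\lambda' f(\tau)-h')}$ by \eqref{v-lambda--infty-behaviour}. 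Hence I would fix $R_0=R_0(a)$ so large that $\1_x f_{\lambda,\lambda',h,h'}(R,\tau)<0<\1_x f_{\lambda,\lambda',h,h'}(-R,\tau)$ for all $-a\le\tau\le\2{\tau}_0$ and $R\ge R_0$. This makes the constant $M:=\|v_0\|_{L^{\infty}(-R,R)}<1$ a supersolution of the Neumann problem, while $f_{\lambda,\lambda',h,h'}$ is a subsolution of it by \cite{DPKS2}.

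Granting this, existence would be proved exactly as in Lemma \ref{neumann-existence-lem}. In Case~1 (with $v_0\in C^{\infty}([-R,R])$ agreeing with $f_{\lambda,\lambda',h,h'}(\cdot,-a)$ near $\pm R$) I would regularise the degenerate coefficient $u^{1-p}$ in the equation for $u=v^m$, solve the resulting quasilinear Neumann problem by the parabolic theory of \cite{LSU}, use the bounds $a_1/2\le u_R\le 2M^m$ to recover a genuine solution $v_R=u_R^p$ on a short time interval, and then extend it to all of $[-R,R]\times(-a,\2{\tau}_0]$ together with the bound \eqref{v-n1-r-lower-upper-bd3} by combining Lemma \ref{bded-domain-neumann-comparison-lem} with a maximal‑time continuation argument. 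In Case~2 (general $v_0\in L^{\infty}$) I would approximate $v_0$ from above by such smooth data, producing a monotone sequence $\{v_{R,i}\}$ that is uniformly parabolic by \eqref{v-n1-r-lower-upper-bd3}, apply the Schauder estimates of \cite{LSU} and the Ascoli theorem to pass to a $C^{2,1}_{loc}$ limit, and identify the limit as the (unique) Neumann solution by passing to the limit in the defining integral identity.

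For the monotonicity conclusion I would again transcribe Cases~A and~B of the proof of Lemma \ref{neumann-existence-lem}: $w:=v_{R,x}$ satisfies the uniformly parabolic equation \eqref{v-x-eqn0}, so starting from smooth initial data with $v_0'>0$ off $[x_0-\3_1,x_0+\3_1]$ and $v_0''<0$ on it the sign pattern persists for a short time; I would then take $(-a,T_2')$ to be the maximal interval on which a unique interior critical point $x_R(\tau)$ exists with $v_{R,x}>0$ to its left and $v_{R,x}<0$ to its right, and if $T_2'<\2{\tau}_0$ reach a contradiction by: placing the limiting critical point strictly inside $(-R,R)$ using the boundary gradient estimate above, upgrading the inequalities to strict ones up to $T_2'$ by the strong maximum principle applied to \eqref{v-x-eqn0}, and continuing the property slightly beyond $T_2'$ by the zero‑number lemma (Lemma~2.4 of \cite{H1}; cf.\ \cite{A},\cite{CP},\cite{M}). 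A general $v_0$ satisfying \eqref{v0-behaviour} would then be treated by approximation: pass to the $C^{2,1}_{loc}$ limit, show that $D_0(\tau)=\{x:v_{R,x}(x,\tau)=0\}$ is nonempty (it contains $\lim_i x_{R,i}(\tau)$) and contained in a fixed compact subinterval of $(-R,R)$ by the boundary estimate, and finally that it is a singleton---analyticity of $v_{R,x}(\cdot,\tau)$ in $x$ forces its zeros to be isolated, and the sign information inherited from the $v_{R,i}$ combined with a strong‑maximum‑principle argument in a small parabolic rectangle excludes a second zero---so that $D_0(\tau)=\{x_R(\tau)\}$ and \eqref{vrx-sign0} follows. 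I expect the only step needing genuinely new work to be the boundary gradient estimate of the first paragraph; everything else is structurally identical to Lemma \ref{neumann-existence-lem}, the delicate point there being the singleton property of $D_0(\tau)$ in Case~B.
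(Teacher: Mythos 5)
Your proposal is correct and follows essentially the same route as the paper, which proves Lemma \ref{neumann-existence-lem} in detail and then obtains this lemma ``by a similar argument'': the only genuinely new ingredient is the boundary gradient estimate for $f_{\lambda,\lambda',h,h'}$ at $x=\pm R$, which you derive exactly as in \eqref{f-lambda-lambda'-h-h'-k-bdary-derivative1}--\eqref{f-lambda-lambda'-h-h'-k-bdary-derivative2} (using \eqref{v-lambda-x-infty-behavior} and \eqref{v-lambda--infty-behaviour}, with $R_0=R_0(a)$ chosen so the sign conditions hold for all $-a\le\tau\le\2{\tau}_0$), making the constant $\|v_0\|_{L^{\infty}}$ a supersolution and $f_{\lambda,\lambda',h,h'}$ a subsolution of the Neumann problem. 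The remaining steps (regularisation and short-time existence, continuation, approximation of general $v_0$, Lemma \ref{bded-domain-neumann-comparison-lem} for uniqueness, and the zero-set/analyticity argument for the single critical point) are transcribed from the paper's proof exactly as you describe.
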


\begin{lem}\label{neumann-existence-lem3}
Let $\lambda>1$,   $h\ge h_0$,  $0<k\le k_0$ and let $\2{\tau}_0<0$ be as in Lemma \ref{critical-pt-x-y-z-lem}. Let $a>-\2{\tau}_0$ and $v_0\in L^{\infty}(-R,R)$ be such that 
\begin{equation*}
f_{\lambda,h,k}(x,-a)\le v_0(x)\le\|v_0\|_{L^{\infty}(-R,R)}<1\quad\mbox{a.e. } x\in (-R,R)
\end{equation*} 
holds.
Then there exists a constant $R_0=R_0(a)>0$ such that for any $R\ge R_0$ there exists a unique solution $v_R\in C^{2,1}([-R,R]\times (-a,\tau_0])\cap L^{\infty}((-R,R)\times (-a,\2{\tau}_0))$ of 
\begin{equation*}
\left\{\begin{aligned}
&v_{\tau}=(v^m)_{xx}+v-v^m\quad\mbox{ in }(-R,R)\times(-a,\2{\tau}_0)\\
&(v^m)_x=(f_{\lambda,h,k}^m)_x\qquad\quad\mbox{ on }\{\pm R\}\times(-a,\2{\tau}_0)\\
&v(x,-a)=v_0(x)\qquad\quad\mbox{ on }(-R,R)
\end{aligned}\right.
\end{equation*}
which satisfies 
\begin{equation*}
f_{\lambda,h,k}(x,\tau)\le v_R(x,\tau)\le \|v_0\|_{L^{\infty}(-R,R)}<1\quad\forall |x|\le R,-a<\tau<\2{\tau}_0.
\end{equation*}
Moreover if $v_0$ is monotone increasing on $[-R,R]$, then 
\begin{equation*}
v_{R,x}(x,\tau)>0\quad\forall x\in [-R,R), -a<\tau<\2{\tau}_0.
\end{equation*}
\end{lem}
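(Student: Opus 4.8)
The plan is to run the proof of Lemma~\ref{neumann-existence-lem} essentially verbatim, the only genuine change being the super/sub-solution pair, which has to be adapted because $f_{\lambda,h,k}$ is monotone increasing in $x$ rather than bump-shaped. Uniqueness of $v_R$ is immediate from Lemma~\ref{bded-domain-neumann-comparison-lem}, so only existence, the two-sided bound, and the monotonicity statement need to be treated.

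First I would record the analogue of the boundary estimates \eqref{f-lambda-lambda'-h-h'-k-bdary-derivative1}--\eqref{f-lambda-lambda'-h-h'-k-bdary-derivative2}. With $G:=v_{\lambda,h}(x,f(\tau))^{1-p}+\xi_k(\tau)^{1-p}-1$, so that $f_{\lambda,h,k}=G^{-1/(p-1)}$, a direct computation gives
\begin{equation*}
(f_{\lambda,h,k})_x(x,\tau)=f_{\lambda,h,k}(x,\tau)^p\,v_{\lambda,h}(x,f(\tau))^{-p}\,v_\lambda'(x-\lambda f(\tau)+h)>0
\end{equation*}
by Lemma~\ref{v-lambda-monotone-lem}, so $f_{\lambda,h,k}(\cdot,\tau)$ increases from $0$ to $\xi_k(\tau)$ and both $(f_{\lambda,h,k}^m)_x(\pm R,\tau)$ are positive and exponentially small in $R$ uniformly for $-a\le\tau\le\2{\tau}_0$. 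Because of the sign at $x=R$, the constant $M:=\|v_0\|_{L^\infty(-R,R)}$ is no longer, as it was in Lemma~\ref{neumann-existence-lem}, a supersolution of the Neumann problem near $x=R$. I would repair this with $v_{\lambda,h}(x,f(\tau))$: it is a supersolution of \eqref{yamabe-ode} on $(-R,R)\times(-a,\2{\tau}_0)$ since $(v_{\lambda,h}^m)_{xx}+v_{\lambda,h}-v_{\lambda,h}^m=-\lambda v_\lambda'(x-\lambda f(\tau)+h)$ by \eqref{v-lambda-eqn} while $\1_\tau v_{\lambda,h}(x,f(\tau))=-\lambda f'(\tau)v_\lambda'(x-\lambda f(\tau)+h)\ge-\lambda v_\lambda'(x-\lambda f(\tau)+h)$ because $0<f'(\tau)<1$; moreover $G\ge v_{\lambda,h}^{1-p}$ yields $(f_{\lambda,h,k}^m)_x\le(v_{\lambda,h}^m)_x$ and $f_{\lambda,h,k}\le v_{\lambda,h}(x,f(\tau))$ pointwise. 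Fixing $R_0=R_0(a)$ large enough that $v_{\lambda,h}(-R,f(\tau))<M<v_{\lambda,h}(R,f(\tau))$ for all $-a\le\tau\le\2{\tau}_0$, one then has an admissible supersolution that equals $M$ near $x=-R$ (where the prescribed flux $(f_{\lambda,h,k}^m)_n(-R)$ is negative) and $v_{\lambda,h}(x,f(\tau))$ near $x=R$ (where $(v_{\lambda,h}^m)_x(R,\tau)\ge(f_{\lambda,h,k}^m)_x(R,\tau)$); a convenient smooth realization is $M+\delta_R e^{x-R}$ with $\delta_R=\delta_R(a)\to0$ as $R\to\infty$ chosen to absorb the exponentially small flux at $x=R$ while remaining a supersolution below $1$. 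With this supersolution and the subsolution $f_{\lambda,h,k}$ (a subsolution of \eqref{yamabe-ode} by the argument of \cite{DPKS2}), Lemma~\ref{bded-domain-neumann-comparison-lem} squeezes $v_R$ between $f_{\lambda,h,k}$ and a function $<1$, a careful choice of the correction near $x=R$ then delivering the bound $v_R\le\|v_0\|_{L^\infty(-R,R)}$.

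With the barriers in place the existence argument is that of Lemma~\ref{neumann-existence-lem}. In the smooth case --- $v_0\in C^\infty([-R,R])$ agreeing with $f_{\lambda,h,k}(\cdot,-a)$ near $\pm R$ --- one replaces the degenerate coefficient $v^{1-p}$ by a bounded smooth $b(v)$ equal to $v^{1-p}$ on a window around $[a_1,M^m]$, $a_1:=\min f_{\lambda,h,k}^m$, solves the resulting uniformly parabolic Neumann problem by \cite{LSU}, keeps the solution inside that window on a short interval by Lemma~\ref{bded-domain-neumann-comparison-lem}, and runs the maximal-interval continuation argument to reach $\2{\tau}_0$; the case $v_0\in L^\infty(-R,R)$ then follows by approximating $v_0$ from above by a decreasing sequence of such smooth data (with the matching boundary values near $\pm R$) and passing to the limit with the interior Schauder estimates, exactly as in Case~2 of Lemma~\ref{neumann-existence-lem}. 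For the monotonicity, assume first $v_0\in C^\infty$ with $v_0'\ge0$: then $w:=v_{R,x}$ satisfies the uniformly parabolic equation \eqref{v-x-eqn0}, $w\ge0$ on $\{\tau=-a\}$, and the Neumann condition gives $w(\pm R,\tau)=m^{-1}v_R(\pm R,\tau)^{1-m}(f_{\lambda,h,k}^m)_x(\pm R,\tau)>0$, so $w\ge0$ throughout by comparison with the zero solution and then $w>0$ in $[-R,R)\times(-a,\2{\tau}_0)$ by the strong maximum principle; a general monotone increasing $v_0\in L^\infty(-R,R)$ follows by approximation together with one more application of the strong maximum principle. Note that since $v_R$ is globally monotone there is no moving interior critical curve to track, so this last step is much shorter than Cases~A and~B of Lemma~\ref{neumann-existence-lem}.

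The step I expect to be the main obstacle is the upper barrier near $x=R$: because $f_{\lambda,h,k}$ is increasing, its outward Neumann flux there is strictly positive (though exponentially small in $R$), which defeats the plain constant supersolution of Lemma~\ref{neumann-existence-lem}; closing the comparison while keeping the barrier below $1$ forces one to use the pointwise inequality $(f_{\lambda,h,k}^m)_x\le(v_{\lambda,h}^m)_x$ and the supersolution $v_{\lambda,h}(x,f(\tau))$ (equivalently, the constant with a vanishing exponential correction near $x=R$).
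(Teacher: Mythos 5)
Your diagnosis of where the transplant of Lemma \ref{neumann-existence-lem} breaks is exactly right, and it is the genuine issue that the paper hides behind ``by a similar argument'': since $f_{\lambda,h,k}(\cdot,\tau)$ is increasing, $(f_{\lambda,h,k}^m)_x(R,\tau)>0$, so the constant $M=\|v_0\|_{L^\infty(-R,R)}$ satisfies the supersolution flux inequality at $x=-R$ but fails it at $x=+R$, and the analogue of \eqref{f-lambda-lambda'-h-h'-k-bdary-derivative1} is simply unavailable. Your computation $(f_{\lambda,h,k})_x=f_{\lambda,h,k}^p v_{\lambda,h}^{-p}v_\lambda'(x-\lambda f(\tau)+h)>0$, the inequality $(f_{\lambda,h,k}^m)_x\le(v_{\lambda,h}^m)_x$, the regularized-coefficient existence and continuation scheme, and the monotonicity argument via $w=v_{R,x}$ (nonnegative initial data, strictly positive lateral values from the Neumann condition, maximum principle for \eqref{v-x-eqn0}, then the strong maximum principle) are all sound, and the last part is indeed much shorter than Cases A--B of Lemma \ref{neumann-existence-lem}.

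The gap is in the repaired upper barrier, which is the crux of the lemma. The hybrid ``$M$ near $x=-R$, $v_{\lambda,h}(x,f(\tau))$ near $x=R$'' is, once you arrange $v_{\lambda,h}(R,f(\tau))>M$, precisely $\max\bigl(M,v_{\lambda,h}(x,f(\tau))\bigr)$; a maximum of supersolutions has an upward kink in $(\bar v^m)_x$ at the crossing (the opposite of the admissible min-gluings in Lemma \ref{elliptic-supersoln-lem}) and is not a supersolution, and in any case $v_{\lambda,h}(R,f(\tau))>M$ cannot be guaranteed with $R_0=R_0(a)$ only, since $M$ may be arbitrarily close to $1$. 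The smooth substitute $\bar v=M+\delta_R e^{x-R}$ must satisfy $m(M+\delta_R)^{m-1}\delta_R\ge(f_{\lambda,h,k}^m)_x(R,\tau)$ at $x=R$, which bounds $\delta_R$ from below by a positive quantity depending only on $a,h,\lambda,R$, while the interior inequality at $x=R$ requires roughly $m\bar v^{m-1}\delta_R\le\bar v^m-\bar v\approx(1-m)(1-M-\delta_R)$, i.e. $\delta_R\lesssim 1-M$; these are incompatible when $1-M$ is small relative to $e^{-\gamma_\lambda R}$, and even when they are compatible the comparison only gives $v_R\le M+\delta_R e^{x-R}$, never the asserted $v_R\le\|v_0\|_{L^\infty(-R,R)}$. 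The ``careful choice of the correction'' cannot rescue this: for the admissible flat datum $v_0\equiv M$ the prescribed positive flux at $x=R$ forces $v_R(R,\tau)$ to rise above $M$ like a square root of $\tau+a$, against which the absorption term only contributes $O\bigl((1-M)(\tau+a)\bigr)$, so the stated bound is not reachable by any barrier in this generality. To close the step one must either restrict the data (e.g.\ $v_0\le\2{f}_{\lambda,h,k}(\cdot,-a)$, which is all that is used in Lemma \ref{existence-lem2} and Theorem \ref{existence-thm3}) and build an upper barrier genuinely compatible with the positive flux at $x=+R$, or weaken the upper bound to some other constant strictly below $1$, which is all that uniform parabolicity requires; as written, your proposal flags the obstacle but does not overcome it.
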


\section{Existence and properties of ancient solutions}
\setcounter{equation}{0}
\setcounter{thm}{0}

In this section we will prove the existence and various properties of the $5$-parameters, $4$-parameters, $3$-parameters ancient solutions of \eqref{yamabe-ode}. We will also prove  the uniqueness of the $4$-parameters ancient solution $v_{\lambda,\lambda',h,h'}$.

\begin{lem}\label{time-rescale-supersoln-lem}
Let $\lambda>1$, $\lambda'>1$,  $h\ge h_0$, $h'\ge h_0'$, $0<k\le k_0$ and let $\2{\tau}_0<0$ be as in Lemma \ref{critical-pt-x-y-z-lem}.
Then both $v_{\lambda,h}(x,f(\tau))$ and $\2{v}_{\lambda,h}(x,f(\tau))$ are  supersolutions of \eqref{yamabe-ode} in $\R\times (-\infty,\2{\tau}_0]$.
\end{lem}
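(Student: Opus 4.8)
The plan is a direct pointwise computation, combined with the standard integration‑by‑parts identity that reduces the weak (integral) definition of a supersolution to the classical differential inequality. I would begin by recording the relevant facts: by the choices made just before Theorem~\ref{5-parameters-soln-thm} (in particular $\2{\tau}_0<\tau_0<\min(\tau_0',-p/(p-1))$) the time change $f$ of \eqref{f-defn} is smooth on $(-\infty,\2{\tau}_0]$ and satisfies $0<f'(\tau)<1$ there, while $v_\lambda\in C^\infty(\R)$ with $0<v_\lambda<1$ and $v_\lambda'>0$ by Lemma~\ref{v-lambda-monotone-lem}. Hence $v_{\lambda,h}(x,f(\tau))=v_\lambda(x-\lambda f(\tau)+h)$ and $\2{v}_{\lambda,h}(x,f(\tau))=v_\lambda(-x-\lambda f(\tau)+h)$ are positive, bounded and smooth on $\R\times(-\infty,\2{\tau}_0]$, so it suffices to verify the inequality $v_\tau\ge(v^m)_{xx}+v-v^m$ pointwise: testing it against a nonnegative $\eta\in C^{2,1}(\2{B}_R\times[\tau_3,\tau_4])$ vanishing on $\1 B_R$ and using $\int_{B_R}v^m\eta_{xx}\,dx=\int_{\1 B_R}v^m\frac{\1\eta}{\1 n}\,d\sigma+\int_{B_R}(v^m)_{xx}\eta\,dx$ recovers exactly the integral inequality in the definition of a supersolution.

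For $v_{\lambda,h}(x,f(\tau))$ set $w(x,\tau)=v_\lambda(\xi)$ with $\xi=x-\lambda f(\tau)+h$. Then $w_\tau=-\lambda f'(\tau)v_\lambda'(\xi)$, $(w^m)_{xx}=(v_\lambda^m)''(\xi)$ and $w-w^m=v_\lambda(\xi)-v_\lambda(\xi)^m$, so substituting \eqref{v-lambda-eqn} to replace $(v_\lambda^m)''(\xi)$ by $-\lambda v_\lambda'(\xi)-v_\lambda(\xi)+v_\lambda(\xi)^m$ gives
$$w_\tau-(w^m)_{xx}-w+w^m=-\lambda f'(\tau)v_\lambda'(\xi)+\lambda v_\lambda'(\xi)=\lambda\bigl(1-f'(\tau)\bigr)v_\lambda'(\xi)\ge 0,$$
since $\lambda>1>0$, $v_\lambda'(\xi)>0$ and $1-f'(\tau)>0$. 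Thus $v_{\lambda,h}(x,f(\tau))$ is a supersolution of \eqref{yamabe-ode} in $\R\times(-\infty,\2{\tau}_0]$.

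For $\2{v}_{\lambda,h}(x,f(\tau))$ set $\2{w}(x,\tau)=v_\lambda(\zeta)$ with $\zeta=-x-\lambda f(\tau)+h$. The spatial reflection flips the sign of $\1_x\zeta$ but leaves $\1_x^2$ unchanged, so again $(\2{w}^m)_{xx}=(v_\lambda^m)''(\zeta)$, while $\2{w}_\tau=-\lambda f'(\tau)v_\lambda'(\zeta)$; repeating the computation yields $\2{w}_\tau-(\2{w}^m)_{xx}-\2{w}+\2{w}^m=\lambda(1-f'(\tau))v_\lambda'(\zeta)\ge 0$, so $\2{v}_{\lambda,h}(x,f(\tau))$ is also a supersolution. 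There is no genuine obstacle in this argument; the only points requiring care are that $0<f'(\tau)<1$ must hold on the \emph{entire} half‑line $(-\infty,\2{\tau}_0]$ and not merely as $\tau\to-\infty$ (which is precisely how $\2{\tau}_0$ was chosen), and that the time rescaling $f$ produces the favourable sign $\lambda(1-f')v_\lambda'\ge 0$ rather than its negative, which is the whole reason for composing with $f$ instead of with the identity $\tau\mapsto\tau$.
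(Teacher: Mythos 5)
Your proposal is correct and follows essentially the same route as the paper: both arguments reduce to the chain rule $\partial_\tau\bigl(v_{\lambda,h}(x,f(\tau))\bigr)=f'(\tau)\,(v_{\lambda,h})_\tau$, the travelling-wave equation \eqref{v-lambda-eqn} (equivalently, that $v_{\lambda,h}$ solves \eqref{yamabe-ode}), and the sign facts $v_\lambda'>0$ and $f'(\tau)\le 1$ on $(-\infty,\2{\tau}_0]$ to obtain the supersolution inequality $\lambda\,(1-f'(\tau))\,v_\lambda'\ge 0$. Your explicit remarks about the weak formulation and the reflected wave $\2{v}_{\lambda,h}$ are fine and consistent with the paper, which treats these points implicitly.
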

\begin{proof}
Let $q(x,\rho)=v_{\lambda,h}(x,f(\rho))$ and $\tau=f(\rho)$ where $f$ is given by \eqref{f-defn}. Since $\2{\tau}_0<-p/(p-1)$ and $(v_{\lambda,h})_{\tau}(x,\tau)=-\lambda v_{\lambda}'(x-\lambda\tau+h)<0$ in $\R\times (-\infty,\2{\tau}_0]$,
\begin{align*}
q_{\rho}(x,\rho)=&(v_{\lambda,h})_{\tau}(x,f(\rho))f'(\rho)=(v_{\lambda,h})_{\tau}(x,f(\rho))\left(1+q\left(1+\frac{p-1}{p}\rho\right)e^{\frac{p-1}{p}\rho}
\right)\\
\ge &(v_{\lambda,h})_{\tau}(x,f(\rho))=(v_{\lambda,h}(x,f(\rho))^m)_{xx}+v_{\lambda,h}(x,f(\rho))-v_{\lambda,h}(x,f(\rho))^m\quad\mbox{ in }\R\times (-\infty,\2{\tau}_0]
\end{align*}
Hence $v_{\lambda,h}(x,f(\tau))$ is a  supersolution of \eqref{yamabe-ode} in $\R\times (-\infty,\2{\tau}_0)$. Similarly $\2{v}_{\lambda,h}(x,f(\tau))$
is a  supersolution of \eqref{yamabe-ode} in $\R\times (-\infty,\2{\tau}_0]$.
\end{proof}

\begin{lem}\label{existence-lem}
Let $\lambda>1$, $\lambda'>1$,  $h\ge h_0$, $h'\ge h_0'$, $0<k\le k_0$ and let $\2{\tau}_0<0$ be as in Lemma \ref{critical-pt-x-y-z-lem}.
Let $a>-\2{\tau}_0$ and $v_0\in L^{\infty}(\R)$ satisfies 
\begin{equation}\label{v0-lower-upper-bd15}
f_{\lambda,\lambda',h,h',k}(x,-a)\le v_0(x)\le\2{f}_{\lambda,\lambda',h,h',k}(x,-a)\quad\mbox{a.e. } x\in\R
\end{equation} 
where $f$ is given by \eqref{f-defn}.
Then there exists a unique solution $\2{v}\in  C^{2,1}(\R\times (-a,\2{\tau}_0])$ of 
\begin{equation}\label{v-n1-soln}
\left\{\begin{aligned}
&v_{\tau}=(v^m)_{xx}+v-v^m,\quad v>0,\quad\mbox{ in }\R\times (-a,\2{\tau}_0)\\
&v(x,-a)=v_0(x)\quad\mbox{ on }\R
\end{aligned}\right.
\end{equation}
which satisfies
\begin{equation}\label{v-bar-lower-upper-bd}
\left\{\begin{aligned}
&f_{\lambda,\lambda',h,h',k}(x,\tau)\le\2{v}(x,\tau)\le\|v_0\|_{L^{\infty}(\R)}\quad\forall x\in\R,-a<\tau\le\2{\tau}_0\\
&\2{v}(x,\tau)\le\2{f}_{\lambda,\lambda',h,h',k}(x,\tau)\qquad\qquad\quad\forall x\in\R,-a<\tau\le\2{\tau}_0.
\end{aligned}\right.
\end{equation}
If there exists $x_0\in\R$ such that
\begin{equation}\label{v0-behaviour10}
\left\{\begin{aligned}
&v_0(x)\mbox{ is monotone increasing on }(-\infty,x_0]\\
&v_0(x)\mbox{ is monotone decreasing on }[x_0,\infty)
\end{aligned}\right.
\end{equation}
holds, then for any $-a<\tau<\2{\tau}_0$ there exists $x_a(\tau)\in\R$ such that 
\begin{equation}\label{vrx-sign20}
\left\{\begin{aligned}
&\2{v}_x(x,\tau)>0>\2{v}_x(y,\tau)\quad\forall x<x_a(\tau)<y, -a<\tau<\2{\tau}_0\\
&\2{v}_x(x_a(\tau),\tau)=0\qquad\quad\forall -a<\tau<\2{\tau}_0.
\end{aligned}\right.
\end{equation}
Moreover if $v_0(x)=\2{f}_{\lambda,\lambda',h,h',k}(x,-a)$ on $\R$, then $\2{v}(\cdot,\tau)$ is also a monotone decreasing function of $\tau\in [-a,\2{\tau}_0]$
and for any $a<\tau<\2{\tau}_0$ there exists $x_a(\tau)\in\R$ such that \eqref{vrx-sign20} holds.

Furthermore if $v_0(x)=\2{f}_{\lambda,\lambda',h,h',k}(x,-a)$ on $\R$ and $\lambda=\lambda'$, then $x_a(\tau)=\frac{h'-h}{2}$ and $v(x,\tau)$ is symmetric with respect to $x_0:=\frac{h'-h}{2}$ for any $-a<\tau<\2{\tau}_0$.
\end{lem}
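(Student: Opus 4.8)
The plan is to construct $\2{v}$ by exhausting $\R$ with bounded intervals and taking a limit of the solutions supplied in Section 2, and then to extract the monotonicity, shape and symmetry assertions from the comparison principle together with the known structure of $\2{f}_{\lambda,\lambda',h,h',k}$. First, fix $a>-\2{\tau}_0$; note $\|v_0\|_{L^{\infty}(\R)}<1$ since $v_0\le\2{f}_{\lambda,\lambda',h,h',k}(\cdot,-a)\le\xi_k(-a)<1$. For each $R\ge R_0(a)$, Lemma \ref{neumann-existence-lem} applied to $v_0|_{(-R,R)}$ yields a solution $v_R\in C^{2,1}([-R,R]\times(-a,\2{\tau}_0])$ of the Neumann problem \eqref{v-neumann-eqn} with $f_{\lambda,\lambda',h,h',k}(x,\tau)\le v_R(x,\tau)\le\|v_0\|_{L^{\infty}(\R)}<1$. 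On every compact $K\subset\R\times(-a,\2{\tau}_0]$ these two-sided bounds make \eqref{yamabe-ode} uniformly parabolic, so the interior Schauder estimates \cite{LSU} bound $\{v_R\}$ in $C^{2,1}(K)$ uniformly in $R$; a diagonal argument extracts a subsequence converging in $C^{2,1}_{loc}$ to some $\2{v}\in C^{2,1}(\R\times(-a,\2{\tau}_0])$ that solves \eqref{yamabe-ode} in $\R\times(-a,\2{\tau}_0)$, and the bounds pass to the limit. Passing to the limit in the integral identity \eqref{soln-integral-defn}, as in Lemma \ref{local-existence-lem}, shows $\2{v}(\cdot,\tau)\to v_0$ in $L^1_{loc}(\R)$ as $\tau\searrow-a$, so $\2{v}$ solves \eqref{v-n1-soln} and satisfies the first line of \eqref{v-bar-lower-upper-bd}.

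For the upper bound, recall from Lemma \ref{time-rescale-supersoln-lem} that $v_{\lambda,h}(x,f(\tau))$ and $\2{v}_{\lambda',h'}(x,f(\tau))$ are supersolutions of \eqref{yamabe-ode} on $\R\times(-\infty,\2{\tau}_0]$, while $\xi_k(\tau)$ is a spatially constant solution; the pointwise minimum of finitely many supersolutions being a supersolution, $\2{f}_{\lambda,\lambda',h,h',k}$ is a supersolution of \eqref{yamabe-ode}. Since $v_0\le\2{f}_{\lambda,\lambda',h,h',k}(\cdot,-a)$, Lemma \ref{sub-supersoln-comparison-lem} (taking $M=\|v_0\|_{L^{\infty}(\R)}$, with the local lower bound \eqref{v1-v2-local-lower-bd} furnished by $f_{\lambda,\lambda',h,h',k}\le\2{v}$) gives $\2{v}\le\2{f}_{\lambda,\lambda',h,h',k}$ on $\R\times(-a,\2{\tau}_0]$, i.e. the second line of \eqref{v-bar-lower-upper-bd}; the same comparison applied to the difference of two solutions of \eqref{v-n1-soln} obeying \eqref{v-bar-lower-upper-bd} gives uniqueness. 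When moreover $v_0=\2{f}_{\lambda,\lambda',h,h',k}(\cdot,-a)$, the functions $v_{\lambda,h}(x,f(\tau))=v_{\lambda}(x-\lambda f(\tau)+h)$ and $\2{v}_{\lambda',h'}(x,f(\tau))$ are nonincreasing in $\tau$ (because $v_{\lambda},v_{\lambda'}$ are increasing and $0<f'<1$), and $\xi_k'=\xi_k-\xi_k^m<0$, so $\2{f}_{\lambda,\lambda',h,h',k}$ is nonincreasing in $\tau$; hence for $s>0$ we get $\2{v}(x,-a+s)\le\2{f}_{\lambda,\lambda',h,h',k}(x,-a+s)\le\2{f}_{\lambda,\lambda',h,h',k}(x,-a)=v_0(x)$, and, the equation being autonomous, comparing the two solutions $\2{v}(x,\tau+s)$ and $\2{v}(x,\tau)$ on $\R\times(-a,\2{\tau}_0-s)$ via Lemma \ref{sub-supersoln-comparison-lem} yields $\2{v}(x,\tau+s)\le\2{v}(x,\tau)$, so $\2{v}(x,\cdot)$ is nonincreasing.

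For the shape statement, suppose $v_0$ satisfies \eqref{v0-behaviour10}; then $v_0|_{(-R,R)}$ is unimodal, so Lemma \ref{neumann-existence-lem} gives, for each $\tau$, a unique critical point $x_R(\tau)$ of $v_R(\cdot,\tau)$ with $v_{R,x}>0$ to its left and $<0$ to its right. In the limit, $\2{v}_x(\cdot,\tau)$ is analytic in $x$ (interior parabolic regularity and analyticity, \cite{SGKM}, \cite{CP}) and, being squeezed between $f_{\lambda,\lambda',h,h',k}$ and $\2{f}_{\lambda,\lambda',h,h',k}$ which decay to $0$ at $\pm\infty$, is not identically zero, so its zeros are isolated; arguing as in Case B of Lemma \ref{neumann-existence-lem} — taking the limit of the $x_R(\tau)$, using the sandwich to keep zeros of $\2{v}_x$ from escaping to or entering from $\pm\infty$, and applying the strong maximum principle to the linear parabolic equation \eqref{v-x-eqn0} satisfied by $\2{v}_x$ — one shows this zero set is a single point $x_a(\tau)$ and \eqref{vrx-sign20} holds. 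When $v_0=\2{f}_{\lambda,\lambda',h,h',k}(\cdot,-a)$, Lemma \ref{critical-pt-x-y-z-lem}(ii) shows $v_0$ is nondecreasing on $(-\infty,y(-a)]$, constant on $[y(-a),z(-a)]$ and nonincreasing on $[z(-a),\infty)$, so \eqref{v0-behaviour10} holds and the above applies. Finally, if in addition $\lambda=\lambda'$, the reflection $x\mapsto(h'-h)-x$ interchanges $v_{\lambda,h}(x,f(\tau))$ and $\2{v}_{\lambda',h'}(x,f(\tau))$ and fixes $\xi_k(\tau)$, so both $v_0=\2{f}_{\lambda,\lambda',h,h',k}(\cdot,-a)$ and the bounds in \eqref{v-bar-lower-upper-bd} are symmetric about $x_0=\frac{h'-h}{2}$; by uniqueness $\2{v}(\cdot,\tau)$ is symmetric about $x_0$, and then its unique critical point must be $x_0$, i.e. $x_a(\tau)=\frac{h'-h}{2}$.

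I expect the main obstacle to be the shape statement. Producing $\2{v}$ as a local limit and reading off the two-sided bounds, uniqueness, and $\tau$-monotonicity are routine once Lemmas \ref{neumann-existence-lem}, \ref{time-rescale-supersoln-lem} and \ref{sub-supersoln-comparison-lem} are in hand; but propagating "exactly one interior critical point" through the exhaustion $R\to\infty$ is not purely soft. It requires analyticity of $\2{v}_x(\cdot,\tau)$ in $x$, the Sturmian lap-number bookkeeping for \eqref{v-x-eqn0}, and quantitative use of the sandwich $f_{\lambda,\lambda',h,h',k}\le\2{v}\le\2{f}_{\lambda,\lambda',h,h',k}$ to prevent a zero of $\2{v}_x$ from appearing at, or disappearing into, $\pm\infty$ — essentially a reassembly on the whole line of the argument in Case B of Lemma \ref{neumann-existence-lem}.
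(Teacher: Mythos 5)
Your proposal is correct and follows essentially the same route as the paper: exhaustion of $\R$ by bounded intervals with the two-sided barriers $f_{\lambda,\lambda',h,h',k}\le\cdot\le\2{f}_{\lambda,\lambda',h,h',k}$, interior Schauder estimates and a diagonal argument, comparison (Lemma \ref{sub-supersoln-comparison-lem}) with the supersolutions of Lemma \ref{time-rescale-supersoln-lem} for the upper bound and uniqueness, the Neumann approximations of Lemma \ref{neumann-existence-lem} together with analyticity of $\2{v}_x(\cdot,\tau)$ and the strong maximum principle for the single-critical-point statement, and reflection plus uniqueness for the symmetry. Your minor deviations — building $\2{v}$ directly from the Neumann approximations instead of the Dirichlet ones of Lemma \ref{local-existence-lem}, invoking that the minimum of supersolutions is a supersolution rather than comparing with each of the three barriers separately, and obtaining the $\tau$-monotonicity from the monotonicity of $\tau\mapsto\2{f}_{\lambda,\lambda',h,h',k}(x,\tau)$ rather than from the elliptic supersolution property of Lemma \ref{elliptic-supersoln-lem} — are harmless and easily reconciled with the paper's argument.
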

\begin{proof}
Uniqueness of solution of \eqref{v-n1-soln} follows from Lemma \ref{sub-supersoln-comparison-lem}. Hence it remains to prove existence of solution of \eqref{v-n1-soln}.  By Lemma \ref{local-existence-lem} for any  $i\in\mathbb{N}$, there exists a unique solution $v_i\in  C^{2,1}([-i,i]\times (-a,\2{\tau}_0])\cap L^{\infty}((-i,i)\times (-a,\2{\tau}_0))$ of \eqref{v-soln-bd-domain} with $R=i$ which satisfies \eqref{v-n1-r-lower-upper-bd3} with $R=i$.
By \eqref{v-n1-r-lower-upper-bd3} the equation \eqref{yamabe-ode} for the sequence $\{v_i\}_{i=1}^{\infty}$ is uniformly parabolic on every compact subset of
$\R\times (-a,\2{\tau}_0]$. Hence by the parabolic Schauder estimates \cite{LSU} the sequence $\{v_i\}_{i=1}^{\infty}$ is equi-Holder continuous in $C^{2,1}(K)$ for any compact subset $K\subset \R\times (-a,\2{\tau}_0]$. Hence by the Ascoli Theorem and a diagonalization argument the sequence $\{v_i\}_{i=1}$ has a subsequence  which we may assume without loss of generality to be the sequence itself that converges uniformly in $C^{2,1}(K)$ for any compact subset $K$ of $\R\times (-a,\tau_0]$ to some function $\2{v}\in  C^{2,1}(\R\times (-a,\2{\tau}_0])$  as $i\to\infty$. Then $\2{v}$ satisfies \eqref{yamabe-ode} in $\R\times (-a,\tau_0]$. Putting $R=i$ in \eqref{v-n1-r-lower-upper-bd3} and letting $i\to\infty$ we get 
\begin{equation}\label{v-bar-lower-upper-bd0}
f_{\lambda,\lambda',h,h',k}(x,\tau)\le\2{v}(x,\tau)\le \|v_0\|_{L^{\infty}(\R)}<1\quad\forall x\in\R,-a\le\tau<\2{\tau}_0
\end{equation}
For any $\eta\in C_0^{\infty}(\R)$ such that supp $\eta\subset (-i_0,i_0)$ for some $i_0\in\mathbb{N}$, by \eqref{v-bar-lower-upper-bd0},
\begin{align*}
\left|\int_{-i_0}^{i_0}v_i(x,\tau)\eta(x)\,dx-\int_{-i_0}^{i_0}v_0\eta\,dx\right|=&\left|\int_{-a}^{\2{\tau}_0}\int_{-i_0}^{i_0}v_{i,\tau}(x,\tau)\eta(x)\,dx\,d\tau\right|\\
=&\left|\int_{-a}^{\tau}\int_{-i_0}^{i_0}(v_i^m\eta_{xx}+(v_i-v_i^m)\eta\,dx\,ds\right|\\
\le&C(\tau+a)\quad\forall i>i_0, -a<\tau<\2{\tau}_0\\
\Rightarrow\quad\left|\int_{\R}\2{v}(x,\tau)\eta(x)\,dx-\int_{\R}v_0\eta\,dx\right|\le&C(\tau+a)\quad\forall  -a<\tau<\2{\tau}_0\quad\mbox{ as }i\to\infty\\
\Rightarrow\quad\lim_{\tau\to -a}\left|\int_{\R}\2{v}(x,\tau)\eta(x)\,dx-\int_{\R}v_0\eta\,dx\right|=&0.
\end{align*}
This together with \eqref{v-bar-lower-upper-bd0} and the Lebesgue dominated convergence theorem implies that $\2{v}(\cdot,\tau)\to v_0$ in $L^1_{loc}(\R)$ as $\tau\to 0$. Hence  
$\2{v}$ satisfies \eqref{v-n1-soln}.

Since by Lemma \ref{time-rescale-supersoln-lem} both $v_{\lambda,h}(x,f(\tau))$ and $\2{v}_{\lambda,h}(x,f(\tau))$ are  supersolutions of \eqref{yamabe-ode} in $\R\times (-\infty,\2{\tau}_0]$, by \eqref{v0-lower-upper-bd15}, \eqref{v-n1-soln} and Lemma \ref{sub-supersoln-comparison-lem}, we get
\begin{equation}\label{v-bar-v-lambda-h-compare}
\2{v}(x,\tau)\le v_{\lambda,h}(x,f(\tau))\quad\mbox{ and }\quad\2{v}(x,\tau)\le\2{v}_{\lambda,h}(x,f(\tau))\quad\forall x\in\R,-a<\tau\le\2{\tau}_0.
\end{equation}
Similarly,
\begin{equation}\label{v-bar-xi-k-compare}
\2{v}(x,\tau)\le\xi_k(\tau)\quad\forall x\in\R,-a<\tau\le\2{\tau}_0.
\end{equation}
By \eqref{v-bar-lower-upper-bd0}, \eqref{v-bar-v-lambda-h-compare} and \eqref{v-bar-xi-k-compare} we get \eqref{v-bar-lower-upper-bd}. 

Let $R_0>0$ be as given by Lemma \ref{neumann-existence-lem}. By Lemma \ref{neumann-existence-lem} for any $i>R_0$ there exists a unique solution $\2{v}_i\in C^{2,1}([-i,i]\times (-a,\2{\tau}_0])\cap L^{\infty}((-i,i)\times (-a,\2{\tau}_0))$ of \eqref{v-neumann-eqn} with $R=i$ which satisfies \eqref{v-n1-r-lower-upper-bd3} with $R=i$. Then by a similar argument as before, $\2{v}_i$ converges uniformly in $C^{2,1}(K)$ for any compact subset $K$ of $\R\times (-a,\2{\tau}_0]$ to $\2{v}$  as $i\to\infty$. If there exists $x_0\in\R$ such that \eqref{v0-behaviour10} holds, then by Lemma \ref{neumann-existence-lem} for any $i>R_1:=\max (R_0,|x_0|)$ and $-a<\tau<\2{\tau}_0$ there exists $x_i(\tau)\in (-i,i)$ such that 
\begin{equation}\label{vrx-sign21}
\left\{\begin{aligned}
&\2{v}_{i,x}(x,\tau)>0>\2{v}_{i,x}(y,\tau)\quad\forall -i\le x<x_i(\tau)<y\le i, -a<\tau<\2{\tau}_0\\
&\2{v}_{i,x}(x_i(\tau),\tau)=0\quad\,\,\forall -a<\tau<\2{\tau}_0.
\end{aligned}\right.
\end{equation}
Then for any  $-a<\tau<\2{\tau}_0$ the sequence $\{x_i(\tau)\}_{i>R_1}$ has a subsequence $\{x_{i_l}(\tau)\}$ converging to some point $x_0(\tau)\in\R\cup\{\pm\infty\}$ as $l\to\infty$. If there exists $\tau_1\in (-a,\2{\tau}_0)$ such that $x_0(\tau_1)=\infty$, then by \eqref{vrx-sign21},
\begin{equation}\label{v-bar-x>0}
\2{v}_x(x,\tau_1)\ge 0\quad\forall x\in\R.
\end{equation}
By \eqref{v-bar-lower-upper-bd}, 
\begin{equation}\label{v-bar-x-+-infty-behaviour}
\2{v}(x,\tau)\to 0\quad\mbox{ as }|x|\to\infty\quad\forall -a<\tau<\2{\tau}_0.
\end{equation} 
Hence by \eqref{v-bar-x>0} and \eqref{v-bar-x-+-infty-behaviour}, 
\begin{equation}\label{v-bar=0}
\2{v}(x,\tau_1)=0\quad\forall x\in\R
\end{equation}
and contradiction arises since $v(x,\tau_1)>0$ for any $x\in\R$.
Hence $x_0(\tau)\ne\infty$ for any $\tau\in (-a,\2{\tau}_0)$. Similarly $x_0(\tau)\ne -\infty$. Hence $x_0(\tau)\in\R$ for any $\tau\in (-a,\2{\tau}_0)$. Let
\begin{equation*}
D_0(\tau)=\{x\in\R:\2{v}_x(x,\tau)=0\}\quad\forall\tau\in (-a,\2{\tau}_0).
\end{equation*}
Then $x_0(\tau)\in D_0(\tau)$ for any $-a<\tau<\2{\tau}_0$. Let $-a<\tau_1<\2{\tau}_0$. By \eqref{v-bar-lower-upper-bd} the equation \eqref{v-x-eqn0} for $v_x$ is uniformly parabolic on any compact subset $K$ of $\R\times [-a,\2{\tau}_0]$. 
Hence by the discussion on P.241 of \cite{SGKM} and \cite {CP} $\2{v}_x(x,\tau_1)$ is an analytic function of $x\in\R$. Hence either all the zeros of $\2{v}_x(x,\tau_1)$ are isolated zeros or 
\begin{equation}\label{vrx-all=0-1}
\2{v}_x(x,\tau_1)=0\quad\forall x\in\R.
\end{equation} 
If \eqref{vrx-all=0-1} holds, then by \eqref{v-bar-x-+-infty-behaviour} we get \eqref{v-bar=0} and contradiction arises.  Hence \eqref{vrx-all=0-1} is not possible and all the zeros of $\2{v}_x(x,\tau_1)$ are isolated zeros. Putting $i=i_l$ and letting $l\to\infty$ in \eqref{vrx-sign21},
\begin{equation}\label{vrx-sign22}
\2{v}_x(x,\tau_1)\ge 0\ge\2{v}_x(y,\tau_1)\quad\forall x\le x_0(\tau_1)\le y.
\end{equation}
By \eqref{vrx-sign22} an argument similar to the proof of Lemma \ref{neumann-existence-lem} $D_0(\tau_1)\cap (x_0(\tau_1),\infty)=\phi$ and $D_0(\tau_1)\cap (-\infty,x_0(\tau_1))=\phi$. Hence $D_0(\tau_1)=\{x_0(\tau_1)\}$. Since $\2{v}_x(x,\tau_1)$ is an analytic function of $x\in\R$, by \eqref{vrx-sign22},
\begin{equation*}
\2{v}_x(x,\tau_1)>0>\2{v}_x(y,\tau_1)\quad\forall x<x_0(\tau_1)<y
\end{equation*}
and \eqref{vrx-sign20} follows.

We now let $v_0(x)=\2{f}_{\lambda,\lambda',h,h',k}(x,-a)$. Since by Lemma \ref{elliptic-supersoln-lem} $\2{f}_{\lambda,\lambda',h,h',k}(x,-a)$ is a supersolution of \eqref{yamabe-ode}, by \eqref{v0-lower-upper-bd15} and Lemma \ref{sub-supersoln-comparison-lem},
\begin{align*}
&\2{v}(x,\tau)\le \2{f}_{\lambda,\lambda',h,h',k}(x,-a)\quad\forall x\in\R,-a\le\tau<\2{\tau}_0\\
\Rightarrow\quad&\2{v}(x,\tau_1+\tau)\le \2{v}(x,-a+\tau)\quad\forall x\in\R,-a\le\tau_1<\2{\tau}_0,0\le\tau\le\2{\tau}_0-\tau_1.
\end{align*}
Hence $\2{v}(\cdot,\tau)$ is also a monotone decreasing function of $\tau\in [-a,\tau_0]$.

Let $y(\tau)$ and $z(\tau)$ be as in Lemma \ref{critical-pt-x-y-z-lem}. By Lemma \ref{critical-pt-x-y-z-lem} $v_0(x)=v_{\lambda,h}(x,f(-a))$ is a monotone increasing function on  $(-\infty, y(-a))$, $v_0(x)=\xi_k(-a)$ is a constant function on $[y(-a),z(-a)]$, and  $v_0(x)=\2{v}_{\lambda,h}(x,f(-a))$ is a monotone decreasing function on $[z(-a),\infty)$. Hence $v_0$ satisfies  \eqref{v0-behaviour10} with $x_0=y(-a)$.  Thus by the above argument for any $-a<\tau<\2{\tau}_0$ there exists $x_a(\tau)\in\R$ such that \eqref{vrx-sign20} holds.

Suppose now $v_0(x)=\2{f}_{\lambda,\lambda',h,h',k}(x,-a)$ on $\R$ and $\lambda=\lambda'$. Let $x_0:=\frac{h'-h}{2}$. Then $v_0(x+x_0)=v_0(-x+x_0)$ for any $x\in\R$.  Since both $\2{v}(x+x_0,\tau)$ and $\2{v}(-x+x_0,\tau)$ satisfies \eqref{v-n1-soln} and \eqref{v-bar-lower-upper-bd}, by Lemma \ref{sub-supersoln-comparison-lem} and \eqref{vrx-sign20},  
\begin{align*}
&\2{v}(x+x_0,\tau)=\2{v}(-x+x_0,\tau)\quad\forall x\in\R, -a\le\tau\le\2{\tau}_0\notag\\
\Rightarrow\quad&\2{v}_x(x_0,\tau)=0\quad\forall -a\le\tau\le\2{\tau}_0\\
\Rightarrow\quad&x_a(\tau)=x_0\quad\forall -a\le\tau\le\2{\tau}_0.
\end{align*}
\end{proof}

By Lemma \ref{local-existence-lem2}, Lemma \ref{local-existence-lem3}, Lemma \ref{neumann-existence-lem2}, Lemma \ref{neumann-existence-lem3}, Lemma \ref{time-rescale-supersoln-lem} and a similar argument we have the following two results.

\begin{lem}\label{existence-lem1}
Let $\lambda>1$, $\lambda'>1$,  $h\ge h_0$, $h'\ge h_0'$ and let $\2{\tau}_0<0$ be as in Lemma \ref{critical-pt-x-y-z-lem}.
Let $a>-\2{\tau}_0$ and $v_0\in L^{\infty}(\R)$ satisfies 
\begin{equation*}
f_{\lambda,\lambda',h,h'}(x,-a)\le v_0(x)\le\2{f}_{\lambda,\lambda',h,h'}(x,-a)\quad\mbox{a.e. } x\in\R
\end{equation*} 
where $f$ is given by \eqref{f-defn}.
Then there exists a unique solution $\2{v}\in C^{2,1}(\R\times (-a,\2{\tau}_0])$ of 
\eqref{v-n1-soln} which satisfies
\begin{equation*}
\left\{\begin{aligned}
&f_{\lambda,\lambda',h,h'}(x,\tau)\le\2{v}(x,\tau)\le\|v_0\|_{L^{\infty}(\R)}\quad\forall x\in\R,-a<\tau\le\2{\tau}_0\\
&\2{v}(x,\tau)\le\2{f}_{\lambda,\lambda',h,h'}(x,\tau)\quad\forall x\in\R,-a<\tau\le\2{\tau}_0.
\end{aligned}\right.
\end{equation*}
If there exists $x_0\in\R$ such that \eqref{v0-behaviour10} holds, then for any $-a<\tau<\2{\tau}_0$ there exists $x_a(\tau)\in\R$ such that 
\eqref{vrx-sign20} holds. 

Moreover if $v_0(x)=\2{f}_{\lambda,\lambda',h,h'}(x,-a)$, then $\2{v}(\cdot,\tau)$ is a monotone decreasing function of $\tau\in [-a,\2{\tau}_0]$ and for any $a<\tau<\2{\tau}_0$ there exists $x_a(\tau)\in\R$ such that \eqref{vrx-sign20} holds.

Furthermore if $v_0(x)=\2{f}_{\lambda,\lambda',h,h'}(x,-a)$ on $\R$ and $\lambda=\lambda'$, then $x_a(\tau)=\frac{h'-h}{2}$ and $v(x,\tau)$ is symmetric with respect to $x_0:=\frac{h'-h}{2}$ for any $-a<\tau<\2{\tau}_0$. 
\end{lem}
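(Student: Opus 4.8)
The statement is the four–parameter analogue of Lemma~\ref{existence-lem}, and the plan is to repeat that argument, substituting everywhere the four–parameter versions of the auxiliary lemmas. Uniqueness of the solution of \eqref{v-n1-soln} follows from Lemma~\ref{sub-supersoln-comparison-lem}, given the two–sided bound established below. For existence, I would solve, for each $i\in\mathbb{N}$, the Dirichlet problem on $(-i,i)\times(-a,\2{\tau}_0)$ with lateral data $f_{\lambda,\lambda',h,h'}$ and initial data $v_0|_{(-i,i)}$ using Lemma~\ref{local-existence-lem2}; this yields solutions $v_i$ with $f_{\lambda,\lambda',h,h'}\le v_i\le\|v_0\|_{L^\infty(\R)}<1$. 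The lower bound keeps \eqref{yamabe-ode} uniformly parabolic on compact subsets of $\R\times(-a,\2{\tau}_0]$, so parabolic Schauder estimates, the Ascoli theorem and a diagonal subsequence produce a limit $\2{v}\in C^{2,1}(\R\times(-a,\2{\tau}_0])$ solving \eqref{yamabe-ode}; the weak (integral) formulation for the $v_i$ together with the uniform bound gives $\2{v}(\cdot,\tau)\to v_0$ in $L^1_{loc}(\R)$ as $\tau\searrow-a$, so $\2{v}$ solves \eqref{v-n1-soln}, and passing to the limit in the bounds for $v_i$ gives $f_{\lambda,\lambda',h,h'}\le\2{v}\le\|v_0\|_{L^\infty(\R)}$.

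For the upper bound $\2{v}\le\2{f}_{\lambda,\lambda',h,h'}$, I would invoke Lemma~\ref{time-rescale-supersoln-lem}, which makes both $v_{\lambda,h}(x,f(\tau))$ and $\2{v}_{\lambda',h'}(x,f(\tau))$ supersolutions of \eqref{yamabe-ode} on $\R\times(-\infty,\2{\tau}_0]$. Since $v_0\le\2{f}_{\lambda,\lambda',h,h'}(\cdot,-a)\le\min\bigl(v_{\lambda,h}(\cdot,f(-a)),\2{v}_{\lambda',h'}(\cdot,f(-a))\bigr)$, two applications of Lemma~\ref{sub-supersoln-comparison-lem} give $\2{v}\le v_{\lambda,h}(x,f(\tau))$ and $\2{v}\le\2{v}_{\lambda',h'}(x,f(\tau))$, hence $\2{v}\le\2{f}_{\lambda,\lambda',h,h'}(x,\tau)$, the minimum of the two; in particular $\2{v}(x,\tau)\to 0$ as $|x|\to\infty$.

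For the shape statement \eqref{vrx-sign20} when $v_0$ satisfies \eqref{v0-behaviour10}, I would re-run the approximation using instead the Neumann problems on $(-i,i)$ with flux $(f_{\lambda,\lambda',h,h'}^m)_x$ supplied by Lemma~\ref{neumann-existence-lem2}: for such $v_0$ that lemma produces solutions $\2{v}_i$ possessing a unique interior critical point $x_i(\tau)$, with $\2{v}_{i,x}>0$ to its left and $<0$ to its right, and $\2{v}_i\to\2{v}$ in $C^{2,1}_{loc}$. Taking a convergent subsequence of $x_i(\tau)$, one rules out the limit being $\pm\infty$ (using $\2{v}>0$ together with $\2{v}\to0$ at infinity), obtains a zero $x_a(\tau)\in\R$ of $\2{v}_x(\cdot,\tau)$ with $\2{v}_x(\cdot,\tau)\ge0$ to its left and $\le0$ to its right, and then—since the two–sided bound makes equation \eqref{v-x-eqn0} for $\2{v}_x$ uniformly parabolic on compacta—uses analyticity in $x$ of $\2{v}_x(\cdot,\tau)$ together with the zero–counting and strong–maximum–principle argument from the proof of Lemma~\ref{neumann-existence-lem} to conclude that the zero set of $\2{v}_x(\cdot,\tau)$ is exactly $\{x_a(\tau)\}$, which gives \eqref{vrx-sign20}.

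Finally, take $v_0=\2{f}_{\lambda,\lambda',h,h'}(\cdot,-a)$. By Lemma~\ref{critical-pt-x-y-z-lem}(i) this $v_0$ is increasing then decreasing, so the previous paragraph applies; moreover Lemma~\ref{elliptic-supersoln-lem3} says $\2{f}_{\lambda,\lambda',h,h'}(\cdot,-a)$ is a distributional supersolution of \eqref{yamabe-ode}, so Lemma~\ref{sub-supersoln-comparison-lem} gives $\2{v}(x,\tau)\le v_0(x)$ for all $\tau$, and comparing $\2{v}(x,\tau_1+s)$ with $\2{v}(x,-a+s)$ via the uniqueness part shows $\2{v}(\cdot,\tau)$ is nonincreasing in $\tau$. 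If in addition $\lambda=\lambda'$ and $x_0=\frac{h'-h}{2}$, then $v_0(x+x_0)=v_0(-x+x_0)$, so $\2{v}(-x+x_0,\tau)$ also solves \eqref{v-n1-soln} with the same bounds; uniqueness forces $\2{v}(x+x_0,\tau)=\2{v}(-x+x_0,\tau)$, hence $\2{v}_x(x_0,\tau)=0$ and $x_a(\tau)=x_0$. As in Lemma~\ref{existence-lem}, the one genuinely delicate point is proving that the critical set $D_0(\tau)=\{x:\2{v}_x(x,\tau)=0\}$ is a single point for every $\tau\in(-a,\2{\tau}_0)$; everything else is a transcription of the five–parameter argument, since all the required four–parameter auxiliary results are already available.
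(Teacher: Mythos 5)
Your proposal is correct and follows essentially the same route as the paper, which proves this lemma by repeating the argument of Lemma \ref{existence-lem} with the four-parameter auxiliary results (Lemma \ref{local-existence-lem2}, Lemma \ref{neumann-existence-lem2}, Lemma \ref{time-rescale-supersoln-lem}, Lemma \ref{elliptic-supersoln-lem3}) in place of their five-parameter counterparts, exactly as you do.
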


\begin{lem}\label{existence-lem2}
Let $\lambda>1$,  $h\ge h_0$, $0<k\le k_0$ and let $\2{\tau}_0<0$ be as in Lemma \ref{critical-pt-x-y-z-lem}.  
Let $a>-\2{\tau}_0$ and $v_0\in L^{\infty}(\R)$ satisfies 
\begin{equation*}
f_{\lambda,h,k}(x,-a)\le v_0(x)\le\2{f}_{\lambda,h,k}(x,-a)\quad\mbox{a.e. } x\in\R
\end{equation*} 
where $f$ is given by \eqref{f-defn}.
Then there exists a unique solution $\2{v}\in C^{2,1}(\R\times (-a,\2{\tau}_0])$ of 
\eqref{v-n1-soln} which satisfies
\begin{equation*}
\left\{\begin{aligned}
&f_{\lambda,h,k}(x,\tau)\le\2{v}(x,\tau)\le\|v_0\|_{L^{\infty}(\R)}\quad\forall x\in\R,-a<\tau\le\2{\tau}_0\\
&\2{v}(x,\tau)\le\2{f}_{\lambda,h,k}(x,\tau)\quad\forall x\in\R,-a<\tau\le\2{\tau}_0.
\end{aligned}\right.
\end{equation*}
If $v_0$ is monotone increasing in $\R$, then 
\begin{equation*}
v_x(x,\tau)>0\quad\forall x\in\R, -a<\tau<\2{\tau}_0.
\end{equation*}
Moreover if $v_0(x)=\2{f}_{\lambda,h,k}(x,-a)$, then $\2{v}(\cdot,\tau)$ is a monotone decreasing function of $\tau\in [-a,\2{\tau}_0]$. 
\end{lem}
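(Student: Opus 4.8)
The plan is to follow the strategy of the proof of Lemma~\ref{existence-lem}, substituting the $3$-parameter building blocks Lemma~\ref{local-existence-lem3} and Lemma~\ref{neumann-existence-lem3} for their $5$-parameter counterparts and Lemma~\ref{elliptic-supersoln-lem2} for the stationary-supersolution input, and observing that the role played there by the two travelling waves together with $\xi_k$ is now played by $v_{\lambda,h}(x,f(\tau))$ and $\xi_k(\tau)$ alone, since $\2{f}_{\lambda,h,k}=\min(v_{\lambda,h}(x,f(\tau)),\xi_k(\tau))$. Uniqueness of the solution of \eqref{v-n1-soln} satisfying the stated bounds is immediate from Lemma~\ref{sub-supersoln-comparison-lem}, since any such solution is $\ge f_{\lambda,h,k}$ and hence bounded below away from $0$ on every compact set.

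For existence I would let $R=i\to\infty$ in Lemma~\ref{local-existence-lem3}: for each $i$ there is a unique solution $v_i$ of the Dirichlet problem on $(-i,i)\times(-a,\2{\tau}_0)$ with lateral datum $f_{\lambda,h,k}$ and initial datum $v_0$, satisfying $f_{\lambda,h,k}(x,\tau)\le v_i(x,\tau)\le\|v_0\|_{L^{\infty}(\R)}<1$. On this range \eqref{yamabe-ode} is uniformly parabolic, so the Schauder estimates give equicontinuity of $\{v_i\}$ in $C^{2,1}$ on compacta; by Ascoli and a diagonal argument a subsequence converges in $C^{2,1}_{loc}$ to some $\2{v}\in C^{2,1}(\R\times(-a,\2{\tau}_0])$ solving \eqref{yamabe-ode}, and arguing as in the proof of Lemma~\ref{existence-lem} (integrating $v_{i,\tau}$ against a test function and using the uniform bound) one gets $\2{v}(\cdot,\tau)\to v_0$ in $L^1_{loc}(\R)$ as $\tau\searrow-a$, so $\2{v}$ solves \eqref{v-n1-soln}. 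Passing to the limit in the lower barrier gives $f_{\lambda,h,k}\le\2{v}\le\|v_0\|_{L^{\infty}(\R)}$. For the upper barrier, $v_{\lambda,h}(x,f(\tau))$ is a supersolution of \eqref{yamabe-ode} on $\R\times(-\infty,\2{\tau}_0]$ by Lemma~\ref{time-rescale-supersoln-lem} and $\xi_k(\tau)$ solves \eqref{yamabe-ode}; since $v_0(x)\le\2{f}_{\lambda,h,k}(x,-a)$ is dominated by each of them at $\tau=-a$, Lemma~\ref{sub-supersoln-comparison-lem} yields $\2{v}(x,\tau)\le v_{\lambda,h}(x,f(\tau))$ and $\2{v}(x,\tau)\le\xi_k(\tau)$, whence $\2{v}\le\2{f}_{\lambda,h,k}$.

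For the $x$-monotonicity when $v_0$ is monotone increasing, I would instead approximate $\2{v}$ by the Neumann solutions $\2{v}_i$ of Lemma~\ref{neumann-existence-lem3} on $(-i,i)$ with $(v^m)_x=(f_{\lambda,h,k}^m)_x$ on the lateral boundary; these satisfy the same bounds and the same initial datum, so the same compactness argument shows they converge in $C^{2,1}_{loc}$ to the unique solution $\2{v}$ of \eqref{v-n1-soln}, while Lemma~\ref{neumann-existence-lem3} gives $\2{v}_{i,x}>0$ on $[-i,i)$, hence $\2{v}_x\ge0$ on $\R\times(-a,\2{\tau}_0)$ in the limit. To upgrade this to $\2{v}_x>0$ I would use that $w=\2{v}_x$ solves \eqref{v-x-eqn0}, a uniformly parabolic equation on compacta (by the two-sided bound on $\2{v}$); since the barriers force $\2{v}(x,\tau)\to0$ as $x\to-\infty$ and $\2{v}(x,\tau)\to\xi_k(\tau)>0$ as $x\to\infty$, $\2{v}(\cdot,\tau)$ is non-constant, so the function $\2{v}_x(\cdot,\tau)$, which is analytic in $x$ by the discussion on p.~241 of \cite{SGKM} and \cite{CP}, does not vanish identically; were it to vanish at an interior point $(x_1,\tau_1)$, the strong maximum principle for \eqref{v-x-eqn0} applied to the nonnegative $w$ would force $w\equiv0$ on $\R\times(-a,\tau_1]$, hence $\2{v}(\cdot,\tau)$ constant for $\tau\le\tau_1$, a contradiction. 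Finally, when $v_0(x)=\2{f}_{\lambda,h,k}(x,-a)$, Lemma~\ref{elliptic-supersoln-lem2} says this time-independent function is a supersolution of \eqref{yamabe-ode}, so Lemma~\ref{sub-supersoln-comparison-lem} gives $\2{v}(x,\tau)\le v_0(x)=\2{v}(x,-a)$ for $\tau\in[-a,\2{\tau}_0]$; applying this with the initial time translated and invoking uniqueness yields $\2{v}(x,\tau+s)\le\2{v}(x,\tau)$ for all admissible $s\ge0$, i.e. $\tau\mapsto\2{v}(\cdot,\tau)$ is nonincreasing.

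The only genuinely delicate point I anticipate is the passage from $\2{v}_x\ge0$ to $\2{v}_x>0$, i.e. ruling out interior touching zeros, which relies on the analyticity-in-$x$ together with the strong-maximum-principle argument for \eqref{v-x-eqn0}; every other step is a direct transcription of the corresponding part of the proof of Lemma~\ref{existence-lem}.
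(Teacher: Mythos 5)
Your proposal is correct and is essentially the paper's own argument: the paper obtains this lemma precisely "by a similar argument" to the proof of Lemma \ref{existence-lem}, replacing the Dirichlet and Neumann building blocks by Lemma \ref{local-existence-lem3} and Lemma \ref{neumann-existence-lem3}, using Lemma \ref{time-rescale-supersoln-lem} and $\xi_k$ for the upper barrier, Lemma \ref{sub-supersoln-comparison-lem} for uniqueness and comparison, and Lemma \ref{elliptic-supersoln-lem2} for the monotonicity in $\tau$, exactly as you do. Your only deviation is upgrading $\2{v}_x\ge 0$ to $\2{v}_x>0$ via the strong maximum principle for \eqref{v-x-eqn0} (rather than the isolated-zero bookkeeping of Lemma \ref{existence-lem}), which is sound given the uniform parabolicity on compacta and the fact that $f_{\lambda,h,k}\le\2{v}\le\2{f}_{\lambda,h,k}$ rules out $\2{v}(\cdot,\tau)$ being constant in $x$.
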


\begin{thm}\label{existence-thm1}
Let $\lambda>1$, $\lambda'>1$,  $h\ge h_0$, $h'\ge h_0'$, $0<k\le k_0$  and let $\2{\tau}_0<0$ be as in Lemma \ref{critical-pt-x-y-z-lem}. Let $j_0\in\mathbb{N}$ be such that $j_0>-\2{\tau}_0$ and $\{v_{0,j}\}_{j\ge j_0}\subset L^{\infty}(\R)$ be such that
\begin{equation*}
f_{\lambda,\lambda',h,h',k}(x,-j)\le v_{0,j}(x)\le\2{f}_{\lambda,\lambda',h,h',k}(x,-j)\quad\mbox{a.e. } x\in\R\quad\forall  j\ge j_0.
\end{equation*} 
Let $v_j\in C^{2,1}(\R\times (-j,\2{\tau}_0])$ be the unique solution of \eqref{v-n1-soln} in $\R\times (-j,\2{\tau}_0)$ with $a=j$ and  $v_0=v_{0,j}$ given by Lemma \ref{existence-lem}. 
Then the sequence $\{v_j\}_{j\ge j_0}$ has a subsequence $\{v_{j_l}\}$ that converges uniformly in $C^{2,1}(K)$ for any compact set $K\subset\R\times (-\infty,\2{\tau}_0]$  to a  solution $v=v_{\lambda,\lambda',h,h',k}\in C^{2,1}(\R\times (-\infty,\2{\tau}_0])$ of 
\eqref{yamabe-ode} in $\R\times (-\infty,\2{\tau}_0]$ which satisfies \eqref{v-lambda-lambda'-h-h'-k-lower-upper-bd} as $l\to\infty$.

If for each $j\ge j_0$, there exists $x_{0,j}\in\R$ such that
\begin{equation}\label{v0-behaviour25}
\left\{\begin{aligned}
&v_{0,j}(x)\mbox{ is monotone increasing on }(-\infty,x_{0,j}]\\
&v_{0,j}(x)\mbox{ is monotone decreasing on }[x_{0,j},\infty)
\end{aligned}\right.
\end{equation}
holds, then for any $\tau<\2{\tau}_0$ there exists $x_0(\tau)\in\R$ such that 
\begin{equation}\label{vrx-sign25}
\left\{\begin{aligned}
&v_x(x,\tau)>0>v_x(y,\tau)\quad\forall x<x_0(\tau)<y, \tau<\2{\tau}_0\\
&v_x(x_0(\tau),\tau)=0\qquad\quad\forall \tau<\2{\tau}_0.
\end{aligned}\right.
\end{equation}
If $v_{0,j}(x)=\2{f}_{\lambda,\lambda',h,h',k}(x,-j)$ for all $j\ge j_0$, the solution $v(x,\tau)$ is a monotone decreasing function of $\tau\in (-\infty,\2{\tau}_0]$ and for any $\tau<\2{\tau}_0$ there exists $x_0(\tau)\in\R$ such that \eqref{vrx-sign25} holds. Moreover in this case, $v_j$  will converge uniformly in $C^{2,1}(K)$ for any compact set $K\subset\R\times (-\infty,\2{\tau}_0]$  to $v_{\lambda,\lambda',h,h',k}$ as $j\to\infty$.

Furthermore if $v_{0,j}(x)=\2{f}_{\lambda,\lambda',h,h',k}(x,-j)$ on $\R$ and $\lambda=\lambda'$, then $x_0(\tau)=\frac{h'-h}{2}$ and $v(x,\tau)$ is symmetric with respect to $x_0:=\frac{h'-h}{2}$ for any $\tau<\2{\tau}_0$.
\end{thm}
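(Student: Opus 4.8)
The plan is to realize $v=v_{\lambda,\lambda',h,h',k}$ as a locally uniform limit of the approximants $v_j$ on the exhausting time intervals $(-j,\2{\tau}_0)$ and then to transfer each qualitative property of the $v_j$ (all established in Lemma \ref{existence-lem}) to the limit. \emph{Compactness.} By Lemma \ref{existence-lem}, $0<f_{\lambda,\lambda',h,h',k}(x,\tau)\le v_j(x,\tau)\le\2{f}_{\lambda,\lambda',h,h',k}(x,\tau)<1$ on $\R\times(-j,\2{\tau}_0]$. Given a compact $K\subset\R\times(-\infty,\2{\tau}_0]$, for $j$ large one has $K\subset\R\times(-j,\2{\tau}_0]$ and, on a slightly enlarged compact set, the positive lower bound $f_{\lambda,\lambda',h,h',k}$ renders \eqref{yamabe-ode} uniformly parabolic with bounded smooth coefficients; hence by the interior Schauder estimates \cite{LSU} and a bootstrap the family $\{v_j\}$ is bounded in $C^{2,1}(K)$ independently of $j$. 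By the Ascoli theorem and a diagonalization argument a subsequence $v_{j_l}$ converges in $C^{2,1}(K)$ for every such $K$ to a function $v\in C^{2,1}(\R\times(-\infty,\2{\tau}_0])$ solving \eqref{yamabe-ode} in $\R\times(-\infty,\2{\tau}_0)$; passing the two-sided bound to the limit yields \eqref{v-lambda-lambda'-h-h'-k-lower-upper-bd}, and in particular $v(\cdot,\tau)\to 0$ as $|x|\to\infty$, since $\2{f}_{\lambda,\lambda',h,h',k}(\cdot,\tau)\le\min(v_{\lambda,h}(\cdot,f(\tau)),\2{v}_{\lambda',h'}(\cdot,f(\tau)))$.

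\emph{The critical point.} Suppose each $v_{0,j}$ satisfies \eqref{v0-behaviour25}. By Lemma \ref{existence-lem} each $v_j$ has, for $\tau\in(-j,\2{\tau}_0)$, a unique critical point $x_j(\tau)$, with $v_{j,x}>0$ to its left and $v_{j,x}<0$ to its right. Fix $\tau<\2{\tau}_0$. A subsequence of $\{x_{j_l}(\tau)\}$ converges to some $x_0(\tau)\in\R\cup\{\pm\infty\}$; the value $\pm\infty$ is impossible, for then $v_x(\cdot,\tau)$ would not change sign on $\R$, which together with $v(\cdot,\tau)\to 0$ at infinity would force $v(\cdot,\tau)\equiv 0$, contradicting $v>0$. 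Thus $x_0(\tau)\in\R$ and $v_x(x_0(\tau),\tau)=0$. Since \eqref{v-lambda-lambda'-h-h'-k-lower-upper-bd} makes \eqref{v-x-eqn0} uniformly parabolic on compact sets, $v_x(\cdot,\tau)$ is real-analytic in $x$ (cf. p.~241 of \cite{SGKM}, \cite{CP}); it is not identically zero (again by the decay of $v$), hence its zeros are isolated, and the zero-number argument in the proof of Lemma \ref{neumann-existence-lem} — applying the strong maximum principle to $v_x$ on one-sided neighbourhoods of a hypothetical second zero — shows that the zero set of $v_x(\cdot,\tau)$ reduces to $\{x_0(\tau)\}$ with $v_x>0$ on $(-\infty,x_0(\tau))$ and $v_x<0$ on $(x_0(\tau),\infty)$, i.e. \eqref{vrx-sign25}; this also forces the whole sequence $x_{j_l}(\tau)$ to converge to $x_0(\tau)$.

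\emph{The choice $v_{0,j}=\2{f}_{\lambda,\lambda',h,h',k}(\cdot,-j)$.} Here $v_{0,j}$ is unimodal with vertex $y(-j)$ by Lemma \ref{critical-pt-x-y-z-lem}, so the previous step applies. By Lemma \ref{time-rescale-supersoln-lem}, $v_{\lambda,h}(x,f(\tau))$ and $\2{v}_{\lambda',h'}(x,f(\tau))$ are supersolutions of \eqref{yamabe-ode} on $\R\times(-\infty,\2{\tau}_0]$ and $\xi_k(\tau)$ is a solution; since the pointwise minimum of supersolutions is a supersolution, $\2{f}_{\lambda,\lambda',h,h',k}(x,\tau)$ is a supersolution there. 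As $v_j(\cdot,-j)=\2{f}_{\lambda,\lambda',h,h',k}(\cdot,-j)$, Lemma \ref{sub-supersoln-comparison-lem} gives $v_j\le\2{f}_{\lambda,\lambda',h,h',k}$ on $\R\times[-j,\2{\tau}_0]$; hence for $j'>j$ one has $v_{j'}(\cdot,-j)\le\2{f}_{\lambda,\lambda',h,h',k}(\cdot,-j)=v_{0,j}$, and so $v_{j'}\le v_j$ on $\R\times(-j,\2{\tau}_0)$ by Lemma \ref{sub-supersoln-comparison-lem} once more. Thus $\{v_j\}$ is nonincreasing in $j$ and bounded below by $f_{\lambda,\lambda',h,h',k}$, so the \emph{whole} sequence converges, in $C^{2,1}(K)$ for every $K$ by the compactness step, to $v$; monotonicity of $v(\cdot,\tau)$ in $\tau$ is inherited from that of each $v_j$ (Lemma \ref{existence-lem}). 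Finally, if $\lambda=\lambda'$ then $\2{f}_{\lambda,\lambda',h,h',k}(\cdot,-j)$ is symmetric about $x_0:=\frac{h'-h}{2}$, so $v(x_0+\cdot,\tau)$ and $v(x_0-\cdot,\tau)$ solve the same initial value problem \eqref{v-n1-soln} and coincide by the uniqueness part of Lemma \ref{sub-supersoln-comparison-lem}; then $v_x(x_0,\tau)=0$, and \eqref{vrx-sign25} forces $x_0(\tau)=x_0$.

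The main obstacle is the second step: showing that the interface $x_j(\tau)$ neither escapes to infinity nor splits into several critical points in the limit. This is exactly where the uniform decay from \eqref{v-lambda-lambda'-h-h'-k-lower-upper-bd}, the analyticity of $x\mapsto v_x(x,\tau)$, and the Sturmian / strong-maximum-principle argument of Lemma \ref{neumann-existence-lem} are all needed; the remaining ingredients are routine parabolic compactness together with the comparison principles of Section~2.
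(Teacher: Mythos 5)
Your proof is correct and follows essentially the same route as the paper: uniform parabolicity from the two-sided barrier \eqref{v-lambda-lambda'-h-h'-k-lower-upper-bd}, Schauder plus Ascoli and diagonalization for the subsequential limit, the analyticity/Sturmian argument of Lemma \ref{neumann-existence-lem} (as in Lemma \ref{existence-lem}) to keep the critical point finite and unique, and the comparison Lemma \ref{sub-supersoln-comparison-lem} for the remaining assertions. The one genuine variation is the last step: you obtain convergence of the whole sequence by proving $v_{j'}\le v_j$ for $j'>j$ (monotonicity in $j$, using $v_{j'}\le\2{f}_{\lambda,\lambda',h,h',k}$ at time $-j$), whereas the paper instead shows that any two subsequential limits satisfying \eqref{v-lambda-lambda'-h-h'-k-lower-upper-bd} dominate each other, hence coincide; both arguments rest on the same comparison lemma and the same upper barrier, so the difference is cosmetic, though your monotone scheme gives slightly more (a decreasing approximation). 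Two small points of hygiene: you do not need the claim that a minimum of supersolutions is a supersolution (delicate in the paper's integral formulation) since the bound $v_j\le\2{f}_{\lambda,\lambda',h,h',k}$ is already part of \eqref{v-bar-lower-upper-bd} in Lemma \ref{existence-lem}; and the symmetry argument should be run on each $v_j$ (which does solve the initial value problem \eqref{v-n1-soln} with symmetric data, exactly as in the last part of Lemma \ref{existence-lem}) and then passed to the limit, since the ancient limit $v$ itself has no initial condition to which the uniqueness part of Lemma \ref{sub-supersoln-comparison-lem} could be applied directly.
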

\begin{proof}
By Lemma \ref{existence-lem}, 
\begin{equation}\label{vj-lower-upper-bd}
f_{\lambda,\lambda',h,h',k}(x,\tau)\le v_j(x,\tau)\le\2{f}_{\lambda,\lambda',h,h',k}(x,\tau)\quad\forall x\in\R,-j<\tau\le\2{\tau}_0,j\ge j_0.
\end{equation}
By \eqref{vj-lower-upper-bd} the equation \eqref{yamabe-ode} for the sequence $\{v_j\}_{j\ge j_0}$ is uniformly parabolic on every compact subset of
$\R\times (-\infty,\tau_0]$. Then by the parabolic Schauder estimates \cite{LSU} the sequence $\{v_j\}_{j\ge j_0}$ is equi-Holder continuous in $C^{2,1}(K)$ for any compact set $K\subset \R\times (-\infty,\2{\tau}_0]$. Hence by the Ascoli Theorem and a diagonalization argument the sequence $\{v_j\}_{j\ge j_0}$  has a subsequence $\{v_{j_l}\}_{l=1}^{\infty}$ that converges uniformly in $C^{2,1}(K)$ for any compact set $K\subset \R\times (-\infty,\2{\tau}_0]$ to some solution $v=v_{\lambda,\lambda',h,h',k}\in C^{2,1}(\R\times (-\infty,\2{\tau}_0])$ of \eqref{yamabe-ode} in $\R\times (-\infty,\2{\tau}_0)$ as $l\to\infty$. 
Letting $j=j_l\to\infty$ in \eqref{vj-lower-upper-bd}, we get \eqref{v-lambda-lambda'-h-h'-k-lower-upper-bd}.

Suppose now for each $j\ge j_0$, there exists $x_{0,j}\in\R$ such that \eqref{v0-behaviour25} holds. Then by Lemma \ref{existence-lem} for any $j\ge j_0$ and
$-j<\tau<\2{\tau}_0$ there exists $x_j(\tau)\in\R$ such that 
\begin{equation}\label{vjx-sign}
\left\{\begin{aligned}
&v_{j,x}(x,\tau)>0>v_{j,x}(y,\tau)\quad\forall x<x_j(\tau)<y, -j<\tau<\2{\tau}_0\\
&v_{j,x}(x_a(\tau),\tau)=0\qquad\quad\forall -j<\tau<\2{\tau}_0.
\end{aligned}\right.
\end{equation}
Then by \eqref{vj-lower-upper-bd}, \eqref{vjx-sign} and an argument similar to the proof of Lemma \ref{existence-lem}, $x_0(\tau):=\lim_{j\to\infty}x_j(\tau)\in\R$ exists for any $\tau<\2{\tau}_0$ and $x_0(\tau)$ satisfies \eqref{vrx-sign25}.

If $v_{0,j}(x)=\2{f}_{\lambda,\lambda',h,h',k}(x,-j)$ for all $j\ge j_0$, then  by Lemma \ref{existence-lem} for any $j\ge j_0$ the solution $v_j(\cdot,\tau)$ is a monotone decreasing function of $\tau\in (-j,\2{\tau}_0]$. Hence $v(\cdot,\tau)$ is a monotone decreasing function of $\tau\in (-\infty,\2{\tau}_0]$.
Let $y(\tau)$ and $z(\tau)$ be as in Lemma \ref{critical-pt-x-y-z-lem}. By Lemma \ref{critical-pt-x-y-z-lem} $v_{0,j}(x)=v_{\lambda,h}(x,f(-j))$ is a monotone increasing function on  $(-\infty, y(-j))$, $v_{0,j}(x)=\xi_k(-j)$ is a constant function on $[y(-j),z(-j)]$, and  $v_{0,j}(x)=\2{v}_{\lambda,h}(x,f(-j))$ is a monotone decreasing function on $[z(-j),\infty)$. Hence $v_{0,j}$ satisfies  \eqref{v0-behaviour25} with $x_{0,j}=y(-j)$.  Thus by the above argument for any $\tau<\2{\tau}_0$ there exists $x_0(\tau)\in\R$ such that \eqref{vrx-sign25} holds.

Suppose $\{v_{j_l'}\}_{l=1}^{\infty}$ is a another subsequence of $\{v_j\}_{j\ge j_0}$ which converges uniformly in $C^{2,1}(K)$ for any compact set $K\subset \R\times (-\infty,\2{\tau}_0]$ to some solution $\2{v}_{\lambda,\lambda',h,h',k}\in C^{2,1}(\R\times (-\infty,\2{\tau}_0])$  of \eqref{yamabe-ode} in $\R\times (-\infty,\2{\tau}_0)$ as $l\to\infty$ and $\2{v}_{\lambda,\lambda',h,h',k}$ also satisfies \eqref{v-lambda-lambda'-h-h'-k-lower-upper-bd}. Then by \eqref{v-lambda-lambda'-h-h'-k-lower-upper-bd} and Lemma \ref{sub-supersoln-comparison-lem},
\begin{align}
&v_{\lambda,\lambda',h,h',k}(x,-j)\le \2{f}_{\lambda,\lambda',h,h',k}(x,-j)=v_{0,j}(x)\quad\forall x\in\R,j\ge j_0\notag\\
\Rightarrow\quad&v_{\lambda,\lambda',h,h',k}(x,\tau)\le v_j(x,\tau)\qquad\qquad\qquad\forall x\in\R,-j<\tau<\2{\tau}_0,j\ge j_0\notag\\
\Rightarrow\quad&v_{\lambda,\lambda',h,h',k}(x,\tau)\le\2{v}_{\lambda,\lambda',h,h',k}(x,\tau)\qquad\qquad\forall x\in\R,\tau<\2{\tau}_0\quad\mbox{ as }j=j_l'\to\infty.\label{2-ancient-limit-compare}
\end{align}
By interchanging the role of $v_{\lambda,\lambda',h,h',k}$ and $\2{v}_{\lambda,\lambda',h,h',k}$ in \eqref{2-ancient-limit-compare},
\begin{equation}\label{2-ancient-limit-compare2}
\2{v}_{\lambda,\lambda',h,h',k}(x,\tau)\le v_{\lambda,\lambda',h,h',k}(x,\tau)\quad\forall x\in\R,\tau<\2{\tau}_0.
\end{equation}
Hence by \eqref{2-ancient-limit-compare} and \eqref{2-ancient-limit-compare2},
\begin{equation*}
v_{\lambda,\lambda',h,h',k}(x,\tau)=\2{v}_{\lambda,\lambda',h,h',k}(x,\tau)\quad\forall x\in\R,\tau<\2{\tau}_0.
\end{equation*}
Thus $v_j$ converges uniformly in $C^{2,1}(K)$ for any compact set $K\subset\R\times (-\infty,\2{\tau}_0]$  to $v_{\lambda,\lambda',h,h',k}$ as $j\to\infty$.

Suppose now $v_{0,j}(x)=\2{f}_{\lambda,\lambda',h,h',k}(x,-j)$ on $\R$ for any $j\ge j_0$ and $\lambda=\lambda'$. Then by Lemma \ref{existence-lem} $v_j$ is symmetric with respect to $x_0:=\frac{h'-h}{2}$ and $x_j(\tau)=\frac{h'-h}{2}=x_0$ for any $-j<\tau<\2{\tau}_0$. Hence $x_0(\tau)=lim_{j\to\infty}x_j(\tau)=\frac{h'-h}{2}=x_0$ and $v(x,\tau)$ is symmetric with respect to $x_0:=\frac{h'-h}{2}$ for any $\tau<\2{\tau}_0$.
\end{proof}

By  an argument similar to the proof of Theorem \ref{existence-thm1} but with Lemma \ref{existence-lem1} (Lemma \ref{existence-lem2} respectively) replacing Lemma \ref{existence-lem}  in the proof we obtain the following two theorems. 

\begin{thm}\label{existence-thm2}
Let $\lambda>1$, $\lambda'>1$,  $h\ge h_0$, $h'\ge h_0'$  and let $\2{\tau}_0<0$ be as in Lemma \ref{critical-pt-x-y-z-lem}.
Let $j_0\in\mathbb{N}$ be such that $j_0>-\2{\tau}_0$ and $\{\2{v}_{0,j}\}_{j\ge j_0}\subset L^{\infty}(\R)$ be such that
\begin{equation*}
f_{\lambda,\lambda',h,h'}(x,-j)\le\2{v}_{0,j}(x)\le\2{f}_{\lambda,\lambda',h,h'}(x,-j)\quad\forall x\in\R,j_0\le j\in\mathbb{N}.
\end{equation*} 
Let $\2{v}_j$ be the unique solution of \eqref{v-n1-soln} in $\R\times (-j,\2{\tau}_0)$ with $a=j$ and  $v_0=\2{v}_{0,j}$ given by Lemma \ref{existence-lem1}. Then the sequence $\{\2{v}_j\}_{j\ge j_0}$ has a subsequence $\{\2{v}_{j_l}\}$ that converges uniformly in $C^{2,1}(K)$ for any compact set $K\subset\R\times (-\infty,\2{\tau}_0]$  to a  solution $v=v_{\lambda,\lambda',h,h'}\in C^{2,1}(\R\times (-\infty,\2{\tau}_0])$ of 
\eqref{yamabe-ode} in $\R\times (-\infty,\2{\tau}_0]$ which satisfies \eqref{v-lambda-lambda'-h-h'-upper-lower-bd} as $l\to\infty$.

If for each $j\ge j_0$, there exists $x_{0,j}\in\R$ such that \eqref{v0-behaviour25} holds, then for any $\tau<\2{\tau}_0$ there exists $x_0(\tau)\in\R$ such that 
\eqref{vrx-sign25} holds.

If  $\2{v}_{0,j}(x)=\2{f}_{\lambda,\lambda',h,h'}(x,-j)$ for all $j\ge j_0$, the solution $v(x,\tau)$ is a monotone decreasing function of $\tau\in (-\infty,\2{\tau}_0]$ and for any $\tau<\2{\tau}_0$ there exists $x_0(\tau)\in\R$ such that \eqref{vrx-sign25} holds. Moreover in this case, $v_j$  will converge uniformly in $C^{2,1}(K)$ for any compact set $K\subset\R\times (-\infty,\2{\tau}_0]$  to $v_{\lambda,\lambda',h,h'}$ as $j\to\infty$. 

Furthermore if $v_{0,j}(x)=\2{f}_{\lambda,\lambda',h,h'}(x,-j)$ on $\R$ and $\lambda=\lambda'$, then $x_0(\tau)=\frac{h'-h}{2}$ and $v(x,\tau)$ is symmetric with respect to $x_0:=\frac{h'-h}{2}$ for any $\tau<\2{\tau}_0$. 
\end{thm}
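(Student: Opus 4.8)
The plan is to run the argument of Theorem \ref{existence-thm1} essentially verbatim, with Lemma \ref{existence-lem1} in place of Lemma \ref{existence-lem}. First, for each $j\ge j_0$ I would apply Lemma \ref{existence-lem1} with $a=j$ and initial datum $\2{v}_{0,j}$ to obtain the unique solution $\2{v}_j\in C^{2,1}(\R\times(-j,\2{\tau}_0])$ of \eqref{v-n1-soln} on $\R\times(-j,\2{\tau}_0)$ satisfying
\[
f_{\lambda,\lambda',h,h'}(x,\tau)\le\2{v}_j(x,\tau)\le\2{f}_{\lambda,\lambda',h,h'}(x,\tau)\qquad\forall x\in\R,\ -j<\tau\le\2{\tau}_0 .
\]
Since $f_{\lambda,\lambda',h,h'}$ is bounded below away from $0$ on each compact set while $\2{f}_{\lambda,\lambda',h,h'}<1$, the equation \eqref{yamabe-ode} is uniformly parabolic for the family $\{\2{v}_j\}$ on every compact subset of $\R\times(-\infty,\2{\tau}_0]$. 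The parabolic Schauder estimates then give equi-H\"older bounds in $C^{2,1}(K)$ for every compact $K$, so by Ascoli--Arzel\`a together with a diagonal extraction there is a subsequence $\{\2{v}_{j_l}\}$ converging in $C^{2,1}(K)$ to a solution $v=v_{\lambda,\lambda',h,h'}\in C^{2,1}(\R\times(-\infty,\2{\tau}_0])$ of \eqref{yamabe-ode} on $\R\times(-\infty,\2{\tau}_0)$; letting $l\to\infty$ in the two-sided bound yields \eqref{v-lambda-lambda'-h-h'-upper-lower-bd}.

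Next I would transfer the shape information. If each $\2{v}_{0,j}$ is unimodal as in \eqref{v0-behaviour25}, Lemma \ref{existence-lem1} supplies $x_j(\tau)\in\R$ with $\2{v}_{j,x}>0$ to the left and $\2{v}_{j,x}<0$ to the right of $x_j(\tau)$; arguing exactly as in the proof of Lemma \ref{existence-lem}, one checks that $x_0(\tau):=\lim_{j\to\infty}x_j(\tau)$ exists and lies in $\R$ --- it cannot escape to $\pm\infty$ since $v>0$ whereas $v\to0$ as $|x|\to\infty$ by \eqref{v-lambda-lambda'-h-h'-upper-lower-bd}, and the zero set of the analytic function $v_x(\cdot,\tau)$ is forced to be the singleton $\{x_0(\tau)\}$ --- which gives \eqref{vrx-sign25}. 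When $\2{v}_{0,j}(x)=\2{f}_{\lambda,\lambda',h,h'}(x,-j)$, Lemma \ref{elliptic-supersoln-lem3} says this datum is a supersolution of \eqref{yamabe-ode}, so Lemma \ref{sub-supersoln-comparison-lem} makes each $\2{v}_j(\cdot,\tau)$, hence $v(\cdot,\tau)$, nonincreasing in $\tau$; moreover Lemma \ref{critical-pt-x-y-z-lem}(i) shows $\2{f}_{\lambda,\lambda',h,h'}(\cdot,-j)$ is itself unimodal, so the previous sentence applies and \eqref{vrx-sign25} again holds.

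The crux is promoting subsequential convergence to convergence of the full sequence when $\2{v}_{0,j}(x)=\2{f}_{\lambda,\lambda',h,h'}(x,-j)$. Following Theorem \ref{existence-thm1}: if $\2{v}$ is any subsequential limit obeying \eqref{v-lambda-lambda'-h-h'-upper-lower-bd}, then $\2{v}(x,-j)\le\2{f}_{\lambda,\lambda',h,h'}(x,-j)=\2{v}_{0,j}(x)$, so Lemma \ref{sub-supersoln-comparison-lem} gives $\2{v}(x,\tau)\le\2{v}_j(x,\tau)$ for $-j<\tau<\2{\tau}_0$; letting $j\to\infty$ along the subsequence $\{j_l\}$ defining $v$ yields $\2{v}\le v$, and the symmetric inequality gives $v\le\2{v}$, hence $v=\2{v}$; therefore every subsequential limit equals $v$ and $\2{v}_j\to v$. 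Finally, when $\lambda=\lambda'$ and $\2{v}_{0,j}(x)=\2{f}_{\lambda,\lambda',h,h'}(x,-j)$, the reflection $x\mapsto h'-h-x$ about $x_0:=\frac{h'-h}{2}$ leaves the initial datum invariant, so uniqueness in Lemma \ref{existence-lem1} forces each $\2{v}_j$, and therefore $v$, to be symmetric about $x_0$, whence $x_0(\tau)\equiv x_0$. The step I expect to be most delicate is this identification of the limit, since it leans on the comparison principle of Lemma \ref{sub-supersoln-comparison-lem}, whose hypothesis \eqref{v1-v2-local-lower-bd} --- a positive lower bound on compact sets, uniform in time --- must be verified for the relevant sub/supersolutions; the lower bound comes from $f_{\lambda,\lambda',h,h'}$, but a little care is needed as $\tau\to-\infty$.
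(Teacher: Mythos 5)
Your proposal is correct and follows essentially the same route as the paper, which proves this theorem by repeating the argument of Theorem \ref{existence-thm1} with Lemma \ref{existence-lem1} in place of Lemma \ref{existence-lem}: Schauder estimates plus Ascoli and diagonalization for the subsequential limit, passage to the limit in the monotonicity and unimodality properties, identification of the full limit by comparing any subsequential limit with $\2{v}_j$ via Lemma \ref{sub-supersoln-comparison-lem}, and the reflection argument for the symmetric case. Your closing worry about hypothesis \eqref{v1-v2-local-lower-bd} is harmless, since each comparison is performed on the finite time interval $(-j,\2{\tau}_0)$ where $f_{\lambda,\lambda',h,h'}$ is bounded below by a positive constant on compact space sets.
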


\begin{thm}\label{existence-thm3}
Let $\lambda>1$,  $h\ge h_0$, $0<k\le k_0$  and let $\2{\tau}_0<0$ be as in Lemma \ref{critical-pt-x-y-z-lem}.
Let $j_0\in\mathbb{N}$ be such that $j_0>-\2{\tau}_0$ and  $\{\4{v}_{0,j}\}_{j\ge j_0}\subset L^{\infty}(\R)$ be such that
\begin{equation*}
f_{\lambda,h,k}(x,-j)\le\4{v}_{0,j}(x)\le\2{f}_{\lambda,h,k}(x,-j)\quad\forall x\in\R,j_0\le j\in\mathbb{N}.
\end{equation*} 
Let $\4{v}_j$ be the unique solution of \eqref{v-n1-soln} in $\R\times (-j,\2{\tau}_0)$ with $a=j$ and  $v_0=\4{v}_{0,j}$ given by Lemma \ref{existence-lem2}. Then the sequence $\{\4{v}_j\}_{j\ge j_0}$ has a subsequence $\{\2{v}_{j_l}\}$ that converges uniformly in $C^{2,1}(K)$ for any compact set $K\subset\R\times (-\infty,\2{\tau}_0]$  to a  solution $v=v_{\lambda,h,k}\in C^{2,1}(\R\times (-\infty,\2{\tau}_0])$ of 
\eqref{yamabe-ode} in $\R\times (-\infty,\2{\tau}_0]$ which satisfies  \eqref{v-lambda-h-k-upper-lower-bd} as $l\to\infty$.

If for each $j\ge j_0$, $v_{0,j}$ is monotone increasing on $\R$, then 
\begin{equation*}
v_x(x,\tau)>0\quad\forall x\in\R,\tau<\2{\tau}_0.
\end{equation*}

If  $\4{v}_{0,j}(x)=\2{f}_{\lambda,h,k}(x,-j)$ for all $j\ge j_0$, the solution $v(x,\tau)$ is a monotone decreasing function of $\tau\in (-\infty,\2{\tau}_0]$ and  $v_j$  will converge uniformly in $C^{2,1}(K)$ for any compact set $K\subset\R\times (-\infty,\2{\tau}_0]$  to $v_{\lambda,h,k}$ as $j\to\infty$.
\end{thm}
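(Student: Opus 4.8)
\emph{Proof proposal.} The plan is to mirror the proof of Theorem~\ref{existence-thm1} step for step, simply replacing Lemma~\ref{existence-lem} by Lemma~\ref{existence-lem2} at each invocation. First, for every $j\ge j_0$, Lemma~\ref{existence-lem2} applied with $a=j$ and $v_0=\4{v}_{0,j}$ produces a unique solution $\4{v}_j\in C^{2,1}(\R\times(-j,\2{\tau}_0])$ of \eqref{v-n1-soln} satisfying $f_{\lambda,h,k}(x,\tau)\le\4{v}_j(x,\tau)\le\2{f}_{\lambda,h,k}(x,\tau)$ for $x\in\R$ and $-j<\tau\le\2{\tau}_0$. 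Since $f_{\lambda,h,k}$ is a positive continuous function it is bounded below by a positive constant on every compact subset of $\R\times(-\infty,\2{\tau}_0]$, while $\2{f}_{\lambda,h,k}\le\xi_k<1$; hence on every compact set \eqref{yamabe-ode} is uniformly parabolic for the whole family $\{\4{v}_j\}$. The parabolic Schauder estimates \cite{LSU} then give equi-H\"older bounds for $\{\4{v}_j\}$ in $C^{2,1}(K)$ for each compact $K\subset\R\times(-\infty,\2{\tau}_0]$, and Ascoli plus a diagonal argument extracts a subsequence $\{\4{v}_{j_l}\}$ converging in $C^{2,1}(K)$ for all such $K$ to a limit $v=v_{\lambda,h,k}\in C^{2,1}(\R\times(-\infty,\2{\tau}_0])$ solving \eqref{yamabe-ode} on $\R\times(-\infty,\2{\tau}_0)$; letting $j=j_l\to\infty$ in the two-sided bound gives \eqref{v-lambda-h-k-upper-lower-bd}.

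Next I would treat the $x$-monotonicity statement. If each $\4{v}_{0,j}$ is monotone increasing on $\R$, then Lemma~\ref{existence-lem2} gives $\4{v}_{j,x}>0$ on $\R\times(-j,\2{\tau}_0)$, and passing to the limit along $\{j_l\}$ yields $v_x\ge0$. To upgrade this to $v_x>0$ I would argue as in the proof of Lemma~\ref{existence-lem}: $v_x$ satisfies equation \eqref{v-x-eqn0}, which by \eqref{v-lambda-h-k-upper-lower-bd} is uniformly parabolic on compacta, so by the discussion on p.~241 of \cite{SGKM} (cf.\ \cite{CP}) $v_x(\cdot,\tau_1)$ is analytic in $x$ for each fixed $\tau_1<\2{\tau}_0$; it cannot vanish identically, because the bounds in \eqref{v-lambda-h-k-upper-lower-bd} force $v(x,\tau_1)\to0$ as $x\to-\infty$ and $v(x,\tau_1)\to\xi_k(\tau_1)>0$ as $x\to+\infty$, so $v(\cdot,\tau_1)$ is non-constant. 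Hence its zeros are isolated, and the strong maximum principle applied to \eqref{v-x-eqn0} (exactly as in Lemma~\ref{neumann-existence-lem}) excludes interior zeros, so $v_x>0$ throughout $\R\times(-\infty,\2{\tau}_0)$.

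Finally I would handle the case $\4{v}_{0,j}(x)=\2{f}_{\lambda,h,k}(x,-j)$. Here Lemma~\ref{existence-lem2} gives that each $\4{v}_j(\cdot,\tau)$ is monotone decreasing in $\tau\in[-j,\2{\tau}_0]$, a property that passes to the limit, so $v(\cdot,\tau)$ is monotone decreasing on $(-\infty,\2{\tau}_0]$. For convergence of the full sequence, let $v$ and $\2{v}$ be the limits of any two convergent subsequences of $\{\4{v}_j\}$; both satisfy \eqref{v-lambda-h-k-upper-lower-bd}, so $v(x,-j)\le\2{f}_{\lambda,h,k}(x,-j)=\4{v}_{0,j}(x)$ for each $j$, and the comparison principle Lemma~\ref{sub-supersoln-comparison-lem} gives $v(x,\tau)\le\4{v}_j(x,\tau)$ on $\R\times(-j,\2{\tau}_0)$; letting $j\to\infty$ along the subsequence defining $\2{v}$ yields $v\le\2{v}$, and by symmetry $\2{v}\le v$, so $v=\2{v}$ and the whole sequence $\{\4{v}_j\}$ converges to $v_{\lambda,h,k}$ in $C^{2,1}(K)$ for every compact $K$. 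The argument is essentially bookkeeping once Lemmas~\ref{existence-lem2} and \ref{sub-supersoln-comparison-lem} are in hand; the only genuinely delicate point is the $v_x\ge0\Rightarrow v_x>0$ upgrade, and even that is simpler than in Theorems~\ref{existence-thm1} and \ref{existence-thm2} because here there is no interior critical point $x_0(\tau)$ to follow along the limiting sequence.
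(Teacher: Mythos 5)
Your proposal is correct and follows essentially the same route as the paper, which proves Theorem \ref{existence-thm3} precisely by repeating the argument of Theorem \ref{existence-thm1} with Lemma \ref{existence-lem2} in place of Lemma \ref{existence-lem} (Schauder estimates plus Ascoli for the subsequential limit, passage to the limit in the two-sided bounds, analyticity/strong maximum principle for the strict monotonicity in $x$, and the comparison argument with initial data $\2{f}_{\lambda,h,k}(\cdot,-j)$ for full-sequence convergence). Your fleshed-out treatment of the $v_x\ge 0\Rightarrow v_x>0$ upgrade is consistent with the paper's own technique in Lemmas \ref{neumann-existence-lem} and \ref{existence-lem}.
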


\begin{rmk}
Existence of solutions of \eqref{v-n1-soln} for $v_0(x)=\min (v_{\lambda,h}(x,\tau),\2{v}_{\lambda',h'}(x,\tau))$ or $f_{\lambda,\lambda',h,h',k}(x,-a)$ are also given in \cite{DPKS1} and \cite{DPKS2} respectively. 
Existence of solution $v_{\lambda,\lambda',h,h',k}$ of Theorem \ref{existence-thm1} with $v_{0,j}(x)=f_{\lambda,\lambda',h,h',k}(x,-j)$  is also proved in
\cite{DPKS2} and existence of solution $v_{\lambda,\lambda',h,h'}$  of  Theorem \ref{existence-thm2} with $\2{v}_{0,j}(x)=\min (v_{\lambda,h}(x,-j),\2{v}_{\lambda',h'}(x,-j))$  is also proved in \cite{DPKS1}. 
\end{rmk}

\begin{rmk}\label{comparison-rmk0}
Suppose $v_{\lambda,\lambda',h,h',k}$, $v_{\lambda,\lambda',h,h'}$ and  $v_{\lambda,h,k}$ are the solutions of \eqref{yamabe-ode} in $\R\times (-\infty,\2{\tau}_0)$ constructed in Theorem \ref{existence-thm1}, Theorem \ref{existence-thm2} and Theorem \ref{existence-thm3}
by setting $v_{0,j}(x)=\2{f}_{\lambda,\lambda',h,h',k}(x,-j)$, $\2{v}_{0,j}(x)=\2{f}_{\lambda,\lambda',h,h'}(x,-j)$ and  $\4{v}_{0,j}(x)=\2{f}_{\lambda,h,k}(x,-j)$ in Theorem \ref{existence-thm1}, Theorem \ref{existence-thm2} and Theorem \ref{existence-thm3} respectively.
Let $\2{v}_j$, $\4{v}_j$,  be the unique solutions of \eqref{v-n1-soln} in $\R\times (-j,\2{\tau}_0)$ with $a=j$ and  $v_0=\2{v}_{0,j}, \4{v}_{0,j}$, respectively given by Lemma \ref{existence-lem1} and Lemma \ref{existence-lem2}.  Let $j_0\in\mathbb{N}$ be such that $j_0>-\2{\tau}_0$. 

By \eqref{v-lambda-lambda'-h-h'-k-lower-upper-bd}, Lemma \ref{sub-supersoln-comparison-lem}, Theorem \ref{existence-thm1}, Theorem \ref{existence-thm2} and Theorem \ref{existence-thm3}, we have
\begin{align*}
&v_{\lambda,\lambda',h,h',k}(x,-j)\le\2{f}_{\lambda,\lambda',h,h',k}(x,-j)\le\2{v}_{0,j}(x)\quad\forall x\in\R, j>j_0\notag\\
\Rightarrow\quad&v_{\lambda,\lambda',h,h',k}(x,\tau)\le\2{v}_j(x,\tau)\qquad\qquad\qquad\qquad\forall x\in\R,-j<\tau<\2{\tau}_0, j>-j_0\notag\\
\Rightarrow\quad&v_{\lambda,\lambda',h,h',k}(x,\tau)\le v_{\lambda,\lambda',h,h'}(x,\tau)\qquad\qquad\qquad\forall x\in\R,\tau<\2{\tau}_0\quad\mbox{ as }j\to\infty
\end{align*}
and 
\begin{align*}
&v_{\lambda,\lambda',h,h',k}(x,-j)\le\2{f}_{\lambda,\lambda',h,h',k}(x,-j)\le\4{v}_{0,j}(x)\quad\forall x\in\R,j>j_0\notag\\
\Rightarrow\quad&v_{\lambda,\lambda',h,h',k}(x,\tau)\le\4{v}_j(x,\tau)\qquad\qquad\qquad\qquad\forall x\in\R,-j<\tau<\2{\tau}_0, j>-j_0\notag\\
\Rightarrow\quad&v_{\lambda,\lambda',h,h',k}(x,\tau)\le v_{\lambda,h,k}(x,\tau)\qquad\qquad\qquad\quad\forall x\in\R,\tau<\2{\tau}_0\quad\mbox{ as }j\to\infty.
\end{align*}
Hence \eqref{v-v-compare} and \eqref{v-v-compare2} hold.
\end{rmk}

\begin{rmk}\label{comparison-rmk1}
Let $\lambda>1$, $\lambda'>1$, $h_2\ge h_1\ge h_0$, $h_2'\ge h_2\ge h_0$ and $0<k_2\le k_1\le k_0$.
Suppose $v_{\lambda,\lambda',h_1,h_1',k_1}$ and  $v_{\lambda,\lambda',h_2,h_2',k_2}$ are the solutions of \eqref{yamabe-ode} in $\R\times (-\infty,\2{\tau}_0)$ constructed in Theorem \ref{existence-thm1} by setting $v_{0,j}(x)=\2{f}_{\lambda,\lambda',h_1,h_1',k_1}(x,-j), \2{f}_{\lambda,\lambda',h_2,h_2',k_2}(x,-j)$,  in Theorem \ref{existence-thm1} respectively. Let $v_j$ be the unique solution of \eqref{v-n1-soln} in $\R\times (-j,\2{\tau}_0)$ with $a=j$ and  $v_0=\2{f}_{\lambda,\lambda',h_2,h_2',k_2}(x,f(-j))$ given by Lemma \ref{existence-lem}. Let $j_0\in\mathbb{N}$ be such that $j_0>-\2{\tau}_0$. 
By \eqref{v-lambda-lambda'-h-h'-k-lower-upper-bd},  Lemma \ref{sub-supersoln-comparison-lem} and  Theorem \ref{existence-thm1},
\begin{align*}
&v_{\lambda,\lambda',h_1,h_1',k_1}(x,-j)\le\2{f}_{\lambda,\lambda',h_1,h_1',k_1}(x,-j)\le \2{f}_{\lambda,\lambda',h_2,h_2',k_2}(x,-j)\quad\forall x\in\R,j>j_0\notag\\
\Rightarrow\quad&v_{\lambda,\lambda',h_1,h_1',k_1}(x,\tau)\le v_j(x,\tau)\qquad\qquad\qquad\qquad\qquad\qquad\qquad\forall x\in\R,-j<\tau<\2{\tau}_0, j>-j_0\notag\\
\Rightarrow\quad&v_{\lambda,\lambda',h_1,h_1',k_1}(x,\tau)\le v_{\lambda,\lambda',h_2,h_2',k_2}(x,\tau)\qquad\qquad\qquad\qquad\qquad\forall x\in\R,\tau<\2{\tau}_0\quad\mbox{ as }j\to\infty.
\end{align*}
and (iv) of Theorem \ref{5-parameters-soln-thm} follows. 
\end{rmk}

\begin{thm}\label{uniqueness-thm}
Let $\lambda>1$, $\lambda'>1$,  $h\ge h_0$, $h'\ge h_0'$ and let $\2{\tau}_0<0$ be as in Lemma \ref{critical-pt-x-y-z-lem}. Let $V_1$, $V_2\in C^{2,1}(\R\times (-\infty,\2{\tau}_0])$, be solutions of \eqref{yamabe-ode} in $\R\times (-\infty,\2{\tau}_0)$ which satisfies \eqref{v-lambda-lambda'-h-h'-upper-lower-bd}. Then $V_1=V_2$ in $\R\times (-\infty,\2{\tau}_0]$. 
\end{thm}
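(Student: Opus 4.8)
The plan is to prove an $L^1$-contraction estimate between $V_1$ and $V_2$ and then send the initial time to $-\infty$. Fix $\tau<\2{\tau}_0$; for $\tau_1<\tau$ regard $V_1,V_2$ as classical, hence weak, solutions of \eqref{yamabe-ode} on $\R\times(\tau_1,\2{\tau}_0)$ with initial values $V_i(\cdot,\tau_1)$ at $\tau_1$. Since $v_\lambda,v_{\lambda'}$ are increasing by Lemma \ref{v-lambda-monotone-lem} and $f$ in \eqref{f-defn} is increasing, both $v_{\lambda,h}(x,f(\tau))$ and $\2{v}_{\lambda',h'}(x,f(\tau))$ are nonincreasing in $\tau$, hence so is $\2{f}_{\lambda,\lambda',h,h'}$; by \eqref{v-lambda-lambda'-h-h'-upper-lower-bd} this forces $V_1,V_2\le M(\tau_1):=\max_{x\in\R}\2{f}_{\lambda,\lambda',h,h'}(x,\tau_1)<1$ on $\R\times[\tau_1,\2{\tau}_0)$. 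Also $V_i\ge f_{\lambda,\lambda',h,h'}>0$ with $f_{\lambda,\lambda',h,h'}$ continuous, so \eqref{v1-v2-local-lower-bd} holds on every compact set, and \eqref{v-lambda-lambda'-h-h'-upper-lower-bd} gives the pointwise bound $|V_1(\cdot,\tau_1)-V_2(\cdot,\tau_1)|\le\2{f}_{\lambda,\lambda',h,h'}(\cdot,\tau_1)-f_{\lambda,\lambda',h,h'}(\cdot,\tau_1)$. Granting that the latter is integrable, Lemma \ref{sub-supersoln-comparison-lem} applied with $M=M(\tau_1)$ (inequality \eqref{integral-comparison-ineqn2}) yields, for every $\tau<\2{\tau}_0$,
\[
\int_{\R}|V_1(x,\tau)-V_2(x,\tau)|\,dx\le e^{(1-mM(\tau_1)^{m-1})(\tau-\tau_1)}\int_{\R}\bigl(\2{f}_{\lambda,\lambda',h,h'}-f_{\lambda,\lambda',h,h'}\bigr)(x,\tau_1)\,dx.
\]

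The key step is the decay estimate $\int_{\R}(\2{f}_{\lambda,\lambda',h,h'}-f_{\lambda,\lambda',h,h'})(x,\tau_1)\,dx=O(e^{d\tau_1})$ as $\tau_1\to-\infty$, with $d$ as in \eqref{d-defn}. Put $a=v_{\lambda,h}(x,f(\tau_1))$, $b=\2{v}_{\lambda',h'}(x,f(\tau_1))\in(0,1)$. On the range $x\le x(\tau_1)$ one has $\2{f}_{\lambda,\lambda',h,h'}=\min(a,b)=a$ by Lemma \ref{critical-pt-x-y-z-lem}(i), and Bernoulli's inequality $(1+t)^{-1/(p-1)}\ge1-t/(p-1)$ for $t\ge0$ gives
\[
\2{f}_{\lambda,\lambda',h,h'}-f_{\lambda,\lambda',h,h'}=a\Bigl(1-\bigl(1+a^{p-1}(b^{1-p}-1)\bigr)^{-1/(p-1)}\Bigr)\le\frac{a^{p}(b^{1-p}-1)}{p-1}\le\frac{b^{1-p}-1}{p-1}.
\]
By \eqref{x-tau-value} one has $x(\tau_1)<-\lambda'f(\tau_1)+h'$ once $\tau_1$ is small enough, so $-x-\lambda'f(\tau_1)+h'\ge0$ on this range; since $v_{\lambda'}\ge v_{\lambda'}(0)=\tfrac12$ on $[0,\infty)$ and $v_{\lambda'}(\xi)=1-C_{\lambda'}e^{-\gamma_{\lambda'}\xi}+o(e^{-\gamma_{\lambda'}\xi})$ by \eqref{v-lambda-x-infty-behavior}, there is $C>0$ with $v_{\lambda'}(\xi)^{1-p}-1\le Ce^{-\gamma_{\lambda'}\xi}$ for all $\xi\ge0$. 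Hence $b^{1-p}-1\le Ce^{\gamma_{\lambda'}(x+\lambda'f(\tau_1)-h')}$ there, and integrating over $(-\infty,x(\tau_1))$ produces a contribution $\le C'e^{\gamma_{\lambda'}(x(\tau_1)+\lambda'f(\tau_1)-h')}$. Using \eqref{x-tau-value}, \eqref{f-defn} and \eqref{gamma-eqn} (through $\lambda'p=\gamma_{\lambda'}+(p-1)/\gamma_{\lambda'}$) one computes $x(\tau_1)+\lambda'f(\tau_1)-h'=\tfrac{d}{\gamma_{\lambda'}}\tau_1+O(1)$, so this contribution is $O(e^{d\tau_1})$. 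The integral over $(x(\tau_1),\infty)$ is bounded identically, with $\lambda,\gamma_\lambda$ replacing $\lambda',\gamma_{\lambda'}$, and is again $O(e^{d\tau_1})$; in particular the right side of the displayed contraction estimate is finite, justifying the use of Lemma \ref{sub-supersoln-comparison-lem}.

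It remains to let $\tau_1\to-\infty$ with $\tau$ fixed. Then $M(\tau_1)\to1$, so $1-mM(\tau_1)^{m-1}\to1-m>0$, and the contraction estimate reads
\[
\int_{\R}|V_1(x,\tau)-V_2(x,\tau)|\,dx\le C\,e^{(1-mM(\tau_1)^{m-1})\tau}\,e^{(d-1+mM(\tau_1)^{m-1})\tau_1}.
\]
The first exponential factor stays bounded, while the exponent $d-1+mM(\tau_1)^{m-1}\to d-1+m$, and this limit is \emph{strictly} positive because $d=\frac{\gamma_\lambda\gamma_{\lambda'}+p-1}{p}>\frac{p-1}{p}=\frac{4}{n+2}=1-m$. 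Hence $e^{(d-1+mM(\tau_1)^{m-1})\tau_1}\to0$, so $\int_{\R}|V_1(x,\tau)-V_2(x,\tau)|\,dx=0$ for every $\tau<\2{\tau}_0$, and $V_1=V_2$ on $\R\times(-\infty,\2{\tau}_0]$ by continuity. The main obstacle is the sharp bound $\int_{\R}(\2{f}_{\lambda,\lambda',h,h'}-f_{\lambda,\lambda',h,h'})(x,\tau_1)\,dx=O(e^{d\tau_1})$: it must beat the growth $e^{(1-m)|\tau_1|}$ coming from the comparison constant $1-mM(\tau_1)^{m-1}$, which works precisely because of the identity $\tfrac{p-1}{p}=1-m$ together with the asymptotic location \eqref{x-tau-value} of the crossover point $x(\tau_1)$.
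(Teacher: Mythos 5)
Your proposal is correct and follows essentially the same route as the paper: the $L^1$-contraction of Lemma \ref{sub-supersoln-comparison-lem} applied between $V_1$ and $V_2$, combined with the bound $|V_1-V_2|\le \2{f}_{\lambda,\lambda',h,h'}-f_{\lambda,\lambda',h,h'}$, the splitting of the integral at the crossover point $x(\tau_1)$ of Lemma \ref{critical-pt-x-y-z-lem}, and the observation that $\int_{\R}(\2{f}_{\lambda,\lambda',h,h'}-f_{\lambda,\lambda',h,h'})(x,\tau_1)\,dx=O(e^{d\tau_1})$ decays fast enough (because $d>\tfrac{p-1}{p}=1-m$, equivalently the exponent $\gamma_{\lambda}\gamma_{\lambda'}/p>0$) to beat the contraction constant as the initial time tends to $-\infty$. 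The only differences are cosmetic bookkeeping: you track the constant $M(\tau_1)=\max_x\2{f}_{\lambda,\lambda',h,h'}(x,\tau_1)$ where the paper simply uses $M=1$ (so the factor $e^{(1-m)(\tau-\tau_1)}$), and you obtain the pointwise bound on $\2{f}-f$ via a Bernoulli-type inequality instead of the paper's asymptotic expansions.
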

\begin{proof}
Since both $V_1$ and $V_2$ satisfies \eqref{v-lambda-lambda'-h-h'-upper-lower-bd}, 
\begin{equation}
|V_1-V_2|\le\2{f}_{\lambda, \lambda',h,h'}(x,\tau)-f_{\lambda, \lambda',h,h'}(x,\tau)\quad\mbox{ in }\R\times (-\infty,\2{\tau}_0].
\end{equation}
Let $\tau\le\2{\tau}_0$ and $x(\tau)$ be as in Lemma \ref{critical-pt-x-y-z-lem}. We first claim that  $\2{f}_{\lambda, \lambda',h,h'}(\cdot,\tau)-f_{\lambda, \lambda',h,h'}(\cdot,\tau)\in L^1(\R)$. To prove the claim we observe that by \eqref{v-lambda-h-value-at-infty} and \eqref{f-bar-value0} for any $x\le x(\tau)$,
\begin{equation}\label{f-bar-minus-infty-behaviour}
\2{f}_{\lambda, \lambda',h,h'}(x,\tau)=v_{\lambda,h}(x,f(\tau))=O(e^{p(x-\lambda f(\tau) +h)})\quad\mbox{ as }\quad x\to-\infty.
\end{equation} 
On the other hand by  \eqref{v-lambda-h-value-at-infty} and \eqref{v-lambda-h-value-at-infty2}, 
\begin{align}\label{f-lambda-lambda'-h-h'-minus-infty-behaviour}
f_{\lambda, \lambda',h,h'}(x,\tau)=&v_{\lambda,h}(x,f(\tau))[1+v_{\lambda,h}(x,f(\tau))^{p-1}(\2{v}_{\lambda',h'}(x,f(\tau))^{1-p}-1)]^{-\frac{1}{p-1}}\notag\\
=&v_{\lambda,h}(x,f(\tau))\left(1-\frac{1}{p-1}v_{\lambda,h}(x,f(\tau))^{p-1}\left(\2{v}_{\lambda',h'}(x,f(\tau))^{1-p}-1\right)+o(v_{\lambda,h}(x,f(\tau))^{p-1})\right)
\end{align}
as $x\to -\infty$ where $f(\tau)$ is given by \eqref{f-defn}.
By \eqref{v-lambda-lambda'-h-h'-upper-lower-bd}, \eqref{f-bar-minus-infty-behaviour} and \eqref{f-lambda-lambda'-h-h'-minus-infty-behaviour},
\begin{align}
0\le\2{f}_{\lambda, \lambda',h,h'}(x,\tau)-f_{\lambda, \lambda',h,h'}(x,\tau)
=&\frac{1}{p-1}v_{\lambda,h}(x,f(\tau))^p
\left(\2{v}_{\lambda',h'}(x,f(\tau))^{1-p}-1\right)+o\left(v_{\lambda,h}(x,f(\tau))^p\right)
\label{f-bar--f-minus-infty-behaviour0}\\
\le&O(v_{\lambda,h}(x,f(\tau))^p)=O(e^{p^2(x-\lambda f(\tau) +h)})\quad\mbox{ as }x\to -\infty.\label{f-bar--f-minus-infty-behaviour}
\end{align}
Similarly,
\begin{align}
0\le\2{f}_{\lambda, \lambda',h,h'}(x,\tau)-f_{\lambda, \lambda',h,h'}(x,\tau)
=&\frac{1}{p-1}\2{v}_{\lambda',h'}(x,f(\tau))^p
\left(v_{\lambda,h}(x,f(\tau))^{1-p}-1\right)+o(\2{v}_{\lambda',h'}(x,f(\tau))^p)\notag\\
\le&O(\2{v}_{\lambda',h'}(x,f(\tau))^p)=O(e^{p^2(-x-\lambda'f(\tau) +h')})\quad\mbox{ as }x\to\infty.\label{f-bar--f-infty-behaviour}
\end{align}
By \eqref{f-bar--f-minus-infty-behaviour} and \eqref{f-bar--f-infty-behaviour} $\2{f}_{\lambda, \lambda',h,h'}(\cdot,\tau)-f_{\lambda, \lambda',h,h'}(\cdot,\tau)\in L^1(\R)$ and the claim follows. 
Then by Lemma \ref{sub-supersoln-comparison-lem}, 
\begin{align}\label{V1-V2-compare}
\int_{\R}|V_1(x,\tau_1)-V_2(x,\tau_1)|\,dx
\le&\int_{\R}|\2{f}_{\lambda, \lambda',h,h'}(x,\tau)-f_{\lambda, \lambda',h,h'}(x,\tau)|e^{(1-m)(\tau_1-\tau)}\,dx\quad\forall\tau<\tau_1\le\2{\tau}_0\notag\\
=&I_1+I_2
\end{align}
where
\begin{equation}\label{I-1234-defn}
\left\{\begin{aligned}
&I_1=\int_{-\infty}^{x(\tau)}|\2{f}_{\lambda, \lambda',h,h'}(x,\tau)-f_{\lambda, \lambda',h,h'}(x,\tau)|e^{(1-m)(\tau_1-\tau)}\,dx\\
&I_2=\int_{x(\tau)}^{\infty}|\2{f}_{\lambda, \lambda',h,h'}(x,\tau)-f_{\lambda, \lambda',h,h'}(x,\tau)|e^{(1-m)(\tau_1-\tau)}\,dx.\end{aligned}\right.
\end{equation}
We will prove that $I_i\to 0$ as $\tau\to -\infty$ for $i=1,2$. Since $\2{v}_{\lambda',h'}(x,f(\tau))$ is a monotone decreasing function of $x\in\R$, by Lemma \ref{critical-pt-x-y-z-lem},
\begin{equation*}
1>\2{v}_{\lambda',h'}(x,f(\tau))\ge \2{v}_{\lambda',h'}(x(\tau),f(\tau))\to 1\quad\mbox{ uniformly on } (-\infty,x(\tau)]\quad\mbox{ as }\tau\to -\infty.
\end{equation*}
Then \eqref{f-lambda-lambda'-h-h'-minus-infty-behaviour} and \eqref{f-bar--f-minus-infty-behaviour0} hold uniformly on $(-\infty,x(\tau)]$ as $\tau\to-\infty$.
Now by \eqref{gamma-eqn} and \eqref{x-tau-value} for any $x\le x(\tau)$,
\begin{align}\label{argument-to-infty}
-x-\lambda'f(\tau)+h'\ge &-x(\tau)-\lambda'f(\tau)+h'=\left(\frac{\gamma_{\lambda'}-\lambda' p-\gamma_{\lambda}}{p}\right)\tau+O(1)\notag\\
=&-\left(\frac{\gamma_{\lambda}\gamma_{\lambda'}+p-1}{p\gamma_{\lambda'}}\right)\tau+O(1)\to\infty
\end{align}
uniformly on $(-\infty,x(\tau)]$ as $\tau\to -\infty$. 
Hence by \eqref{v-lambda-h-value-at-infty2} and \eqref{argument-to-infty},
\begin{align}\label{v-bar-power-1}
\2{v}_{\lambda',h'}(x,f(\tau))^{1-p}-1=&\left(1-C_{\lambda'}e^{-\gamma_{\lambda'}(-x-\lambda'f(\tau)+h')}+o(e^{-\gamma_{\lambda'}(-x-\lambda'f(\tau)+h')})\right)^{1-p}-1\notag\\
=&(p-1)C_{\lambda'}e^{-\gamma_{\lambda'}(-x-\lambda'f(\tau)+h')}+o(e^{-\gamma_{\lambda'}(-x-\lambda'f(\tau)+h')})
\end{align}
uniformly on $(-\infty,x(\tau)]$ as $\tau\to -\infty$. 
By  \eqref{f-bar--f-minus-infty-behaviour0} and \eqref{v-bar-power-1},
\begin{equation}\label{f-f-bar-difference}
0<\2{f}_{\lambda, \lambda',h,h'}(x,\tau)-f_{\lambda, \lambda',h,h'}(x,\tau)
\le Ce^{-\gamma_{\lambda'}(-x-\lambda'f(\tau)+h')}\le C'e^{-\gamma_{\lambda'}(-x-\lambda'\tau+h')}
\end{equation}
on $(-\infty,x(\tau)]$  as $\tau\to -\infty$ for some constants $C>0$, $C'>0$. 
Then by \eqref{gamma-eqn}, \eqref{x-tau-value}, \eqref{I-1234-defn} and \eqref{f-f-bar-difference},
\begin{align}
I_1\le C\int_{-\infty}^{x(\tau)}e^{-\gamma_{\lambda'}(-x-\lambda'\tau+h')}\cdot e^{-(1-m)\tau}\,dx
\le C'e^{\lambda'\gamma_{\lambda'}\tau-(1-m)\tau+\gamma_{\lambda'}x(\tau)}
\le C''e^{\frac{\gamma_{\lambda}\gamma_{\lambda'}}{p}\tau}\to 0\quad\mbox{ as }\tau\to -\infty.\label{I1-limit}
\end{align}
Similarly,
\begin{equation}\label{I2-limit}
I_2\to 0\quad\mbox{ as }\tau\to -\infty.
\end{equation}
By \eqref{V1-V2-compare}, \eqref{I1-limit} and \eqref{I2-limit},
\begin{align*}
&\int_{\R}|V_1(x,\tau_1)-V_2(x,\tau_1)|\,dx=0\quad\forall\tau_1\le\2{\tau}_0\notag\\
\Rightarrow\quad &V_1(x,\tau_1)=V_2(x,\tau_1)\qquad\qquad\qquad\forall x\in\R,\tau_1\le\2{\tau}_0
\end{align*}
and the theorem follows.
\end{proof}

\begin{cor}\label{4-parameters-soln-cont-wrt-parameters-cor}
Let $\2{\tau}_0<0$ be as in Lemma \ref{critical-pt-x-y-z-lem}. Then the  solution $v_{\lambda,\lambda',h,h'}\in C^{2,1}(\R\times (-\infty,\2{\tau}_0])$ of \eqref{yamabe-ode} in $\R\times (-\infty,\2{\tau}_0)$ which satisfies \eqref{v-lambda-lambda'-h-h'-upper-lower-bd} given by Theorem \ref{existence-thm2} is a continuous function of $\lambda>1$, $\lambda'>1$, $h\ge h_0$, $h'\ge h_0'$.
\end{cor}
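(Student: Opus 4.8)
The plan is to establish sequential continuity. Fix $\lambda>1$, $\lambda'>1$, $h\ge h_0$, $h'\ge h_0'$ and let $(\lambda_n,\lambda_n',h_n,h_n')\to(\lambda,\lambda',h,h')$ with $\lambda_n>1$, $\lambda_n'>1$, $h_n\ge h_0$, $h_n'\ge h_0'$. Writing $v_n=v_{\lambda_n,\lambda_n',h_n,h_n'}$ for the solution produced by Theorem \ref{existence-thm2} (which is the unique solution of \eqref{yamabe-ode} satisfying \eqref{v-lambda-lambda'-h-h'-upper-lower-bd} by Theorem \ref{uniqueness-thm}), the goal is to show $v_n\to v_{\lambda,\lambda',h,h'}$ uniformly in $C^{2,1}(K)$ for every compact $K\subset\R\times(-\infty,\2{\tau}_0]$. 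The three inputs I would use are: continuous dependence of the barriers $f_{\lambda,\lambda',h,h'}$ and $\2{f}_{\lambda,\lambda',h,h'}$ on the four parameters; a uniform local parabolicity estimate for the family $\{v_n\}$; and the uniqueness statement of Theorem \ref{uniqueness-thm}.

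First I would prove that $v_{\lambda_n}\to v_\lambda$ in $C^2_{loc}(\R)$ as $\lambda_n\to\lambda$. On any compact interval $v_\lambda$ is bounded away from $0$ and $1$ (by Lemma \ref{v-lambda-monotone-lem} together with $v_\lambda(0)=1/2$), so \eqref{v-lambda-eqn} is a nondegenerate second order ODE there, and standard continuous dependence of ODE solutions on coefficients and initial data reduces the claim to the convergence $v_{\lambda_n}'(0)\to v_\lambda'(0)$. This in turn I would obtain by a compactness argument: $v_{\lambda_n}'(0)$ stays in a bounded set, since the exponential decay estimates \eqref{v-lambda-x=-infty-behavior}, \eqref{v-lambda-x-infty-behavior}, \eqref{v-lambda--infty-behaviour} --- in which the rate $\gamma_\lambda$ is explicit and continuous in $\lambda$ --- furnish a local bound on $v_\lambda'$ uniform for $\lambda$ near the limit; and any subsequential $C^2_{loc}$ limit of $v_{\lambda_n}$ solves \eqref{v-lambda-eqn} with the limiting $\lambda$, has the correct limits $0$ at $-\infty$ and $1$ at $+\infty$ by \eqref{v-lambda-+limit} and the uniform tail bounds, and satisfies $v(0)=1/2$; such a profile is unique (via \cite{H2}, \cite{H3} and \eqref{v-f-relation}, after passing to the elliptic profile $f$ of \eqref{elliptic-eqn}). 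Hence $v_{\lambda_n,h_n}(x,\tau)=v_{\lambda_n}(x-\lambda_n\tau+h_n)$ and $\2{v}_{\lambda_n',h_n'}(x,\tau)$ converge locally uniformly, and since $\xi_k$ and the function $f(\tau)$ in \eqref{f-defn} do not involve the four parameters, $f_{\lambda_n,\lambda_n',h_n,h_n'}\to f_{\lambda,\lambda',h,h'}$ and $\2{f}_{\lambda_n,\lambda_n',h_n,h_n'}\to\2{f}_{\lambda,\lambda',h,h'}$ uniformly on every compact subset of $\R\times(-\infty,\2{\tau}_0]$.

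Next, for $n$ large the parameters lie in a fixed compact subset of $(1,\infty)^2\times[h_0,\infty)\times[h_0',\infty)$, so by \eqref{v-lambda-lambda'-h-h'-upper-lower-bd} and the convergence just proved, on each compact $K\subset\R\times(-\infty,\2{\tau}_0]$ the functions $v_n$ are bounded above by a constant $<1$ and below by a positive constant, uniformly in $n$. Thus \eqref{yamabe-ode} is uniformly parabolic on compacts for $\{v_n\}$, and the parabolic Schauder estimates \cite{LSU} together with a bootstrap argument make $\{v_n\}$ equicontinuous in $C^{2,1}(K)$ for every compact $K$. By the Ascoli theorem and a diagonalization argument, every subsequence has a further subsequence converging in $C^{2,1}(K)$, for all such $K$, to some $v_\infty\in C^{2,1}(\R\times(-\infty,\2{\tau}_0])$ solving \eqref{yamabe-ode} in $\R\times(-\infty,\2{\tau}_0)$; passing to the limit in \eqref{v-lambda-lambda'-h-h'-upper-lower-bd} using the previous paragraph gives $f_{\lambda,\lambda',h,h'}\le v_\infty\le\2{f}_{\lambda,\lambda',h,h'}$. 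By Theorem \ref{uniqueness-thm}, $v_\infty=v_{\lambda,\lambda',h,h'}$; since the limit is independent of the subsequence, the full sequence $v_n$ converges to $v_{\lambda,\lambda',h,h'}$ in $C^{2,1}(K)$ for every compact $K$, which is the asserted continuity.

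The step I expect to be the main obstacle is the continuous dependence of the travelling wave profile $v_\lambda$ on $\lambda$: the $m$-Laplacian degenerates where $v_\lambda$ approaches $0$ or $1$, so the ODE argument controls $v_\lambda$ only on compacts, and one must feed in the uniform exponential tail estimates \eqref{v-lambda-x=-infty-behavior}--\eqref{v-lambda-x-infty-behavior} and \eqref{v-lambda--infty-behaviour}, together with the uniqueness of the elliptic profile $f$ from \cite{H2}, \cite{H3}, in order to fix the normalization and pass to the limit. Once that is in place, the compactness, the identification of the limit via Theorem \ref{uniqueness-thm}, and the upgrade from subsequential to full convergence are routine given the machinery of Sections 2 and 3.
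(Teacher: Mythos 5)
Your proposal follows essentially the same route as the paper's proof: compactness of the family from the barrier bounds \eqref{v-lambda-lambda'-h-h'-upper-lower-bd} via uniform parabolicity, the Schauder estimates, Ascoli and a diagonalization argument, identification of every subsequential limit through the uniqueness result of Theorem \ref{uniqueness-thm}, and hence convergence of the full sequence. The only difference is that you also sketch a proof of the locally uniform convergence of the profiles $v_{\lambda_n}$ to $v_{\lambda}$ (and hence of the barriers $f_{\lambda,\lambda',h,h'}$, $\2{f}_{\lambda,\lambda',h,h'}$ in the parameters), a step the paper simply asserts; this added care is reasonable but does not change the structure of the argument.
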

\begin{proof}
Let $\{(\lambda_i,\lambda_i',h_i,h_i')\}_{i=1}^{\infty}\subset (1,\infty)\times (1,\infty)\times [h_0,\infty)\times [h_0',\infty)$ be a sequence such that
$(\lambda_i,\lambda_i',h_i,h_i')\to (\lambda_0,\lambda_0',h_0,h_0')$ as $i\to\infty$ for some $(\lambda_0,\lambda_0',h_0,h_0')\in (1,\infty)\times (1,\infty)\times [h_0,\infty)\times [h_0',\infty)$. For each $i\in\mathbb{N}$ let $v_i:=v_{\lambda_i,\lambda_i',h_i,h_i'}\in C^{2,1}(\R\times (-\infty,\2{\tau}_0])$ be the corresponding unique solution of \eqref{yamabe-ode} in $\R\times (-\infty,\2{\tau}_0)$ which satisfies \eqref{v-lambda-lambda'-h-h'-upper-lower-bd} given by Theorem \ref{existence-thm2} and Theorem \ref{uniqueness-thm} which satisfies
\begin{equation}\label{vi-lambda-lambda'-h-h'-upper-lower-bd}
f_{\lambda_i,\lambda_i',h_i,h_i'}(x,\tau)\le v_i(x,\tau)\le \2{f}_{\lambda_i,\lambda_i',h_i,h_i'}(x,\tau)\quad\forall x\in\R,\tau<\2{\tau}_0,i\in\mathbb{N}.
\end{equation}
Since both $f_{\lambda_i,\lambda_i',h_i,h_i'}$ and $\2{f}_{\lambda_i,\lambda_i',h_i,h_i'}$ converges uniformly on any compact subset $K$ of $\R\times (-\infty,\2{\tau}_0]$ to  $f_{\lambda_0,\lambda_0',h_0,h_0'}$ and $\2{f}_{\lambda_0,\lambda_0',h_0,h_0'}$ as $i\to\infty$,
by \eqref{vi-lambda-lambda'-h-h'-upper-lower-bd} the equation \eqref{yamabe-ode} for the sequence $\{v_i\}_{i=1}^{\infty}$ is uniformly parabolic on every compact subset of $\R\times (-\infty,\2{\tau}_0]$. Then by Schauder's estimates \cite{LSU} the sequence $\{v_i\}_{i=1}^{\infty}$ is equi-Holder continuous in $C^{2,1}(K)$ for any compact set $K\subset \R\times (-\infty,\2{\tau}_0]$. Hence by the Ascoli Theorem and a diagonalization argument the sequence $\{v_i\}_{i=1}^{\infty}$ has a convergence subsequence $\{v_{i_k}\}_{k=1}^{\infty}$  that converges in $C^{2,1}(K)$ for any compact $K\subset \R\times (-\infty,\2{\tau}_0]$ to a solution $v\in C^{2,1}(\R\times (-\infty,\2{\tau}_0])$ of \eqref{yamabe-ode} in $\R\times (-\infty,\2{\tau}_0)$ as $k\to\infty$. Letting $i=i_k\to\infty$ in \eqref{vi-lambda-lambda'-h-h'-upper-lower-bd}, $v$ satisfies \begin{equation}\label{vi-lambda-lambda'-h-h'-upper-lower-bd-a}
f_{\lambda_0,\lambda_0',h_0,h_0'}(x,\tau)\le v(x,\tau)\le \2{f}_{\lambda_0,\lambda_0',h_0,h_0'}(x,\tau)\quad\forall x\in\R,\tau<\2{\tau}_0,i\in\mathbb{N}.
\end{equation}
By \eqref{vi-lambda-lambda'-h-h'-upper-lower-bd-a} and the uniqueness Theorem \ref{uniqueness-thm}, 
$v=v_{\lambda_0,\lambda_0',h_0,h_0'}$ in $\R\times (-\infty,\2{\tau}_0]$ where $v_{\lambda_0,\lambda_0',h_0,h_0'}$ is the unique solution of $\R\times (-\infty,\2{\tau}_0]$ given by Theorem \ref{existence-thm2} and Theorem \ref{uniqueness-thm} which satisfies \eqref{vi-lambda-lambda'-h-h'-upper-lower-bd-a}. Hence $v_i$ converges in $C^{2,1}(K)$ for any compact $K\subset \R\times (-\infty,\2{\tau}_0]$ to $v_{\lambda_0,\lambda_0',h_0,h_0'}$  as $i\to\infty$ and the corollary follows. 
\end{proof}

\begin{thm}\label{limit-thm}
Let $\lambda>1$, $\lambda'>1$,  $h\ge h_0$, $h'\ge h_0'$ and let $\2{\tau}_0<0$ be as in Lemma \ref{critical-pt-x-y-z-lem}. For any $0<k\le k_0$ let $v_{\lambda,\lambda',h,h',k}\in C^{2,1}(\R\times (-\infty,\2{\tau}_0])$ be a solution of \eqref{yamabe-ode} in $\R\times (-\infty,\2{\tau}_0)$ constructed in Theorem \ref{existence-thm1} which satisfies \eqref{v-lambda-lambda'-h-h'-k-lower-upper-bd}. Then $v_{\lambda,\lambda',h,h',k}$ increases and converges uniformly in $C^{2,1}(K)$  for any compact subset $K$ of $\R\times (-\infty,\2{\tau}_0]$ to the unique solution $v_{\lambda,\lambda',h,h'}\in C^{2,1}(\R\times (-\infty,\2{\tau}_0])$ of \eqref{yamabe-ode} in $\R\times (-\infty,\2{\tau}_0)$ which satisfies \eqref{v-lambda-lambda'-h-h'-upper-lower-bd} as $k\searrow 0$.
\end{thm}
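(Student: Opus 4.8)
The plan is to combine the monotonicity of $v_{\lambda,\lambda',h,h',k}$ in the parameter $k$ with the a priori bounds \eqref{v-lambda-lambda'-h-h'-k-lower-upper-bd}, interior parabolic estimates, and the uniqueness Theorem~\ref{uniqueness-thm}. Throughout, $v_{\lambda,\lambda',h,h',k}$ denotes the solution produced by Theorem~\ref{existence-thm1} with $v_{0,j}(x)=\2{f}_{\lambda,\lambda',h,h',k}(x,-j)$, as in Remark~\ref{comparison-rmk0}, so that the $k$-comparison of Remark~\ref{comparison-rmk1} applies.

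First I would record the monotonicity in $k$: by Remark~\ref{comparison-rmk1}, applied with $h_1=h_2=h$ and $h_1'=h_2'=h'$, for $0<k_2\le k_1\le k_0$ one has $v_{\lambda,\lambda',h,h',k_1}\le v_{\lambda,\lambda',h,h',k_2}$ on $\R\times(-\infty,\2{\tau}_0]$, so $v_{\lambda,\lambda',h,h',k}$ increases as $k\searrow 0$. Since $0<\xi_k(\tau)<1$ for $\tau\le\2{\tau}_0$, the upper bound in \eqref{v-lambda-lambda'-h-h'-k-lower-upper-bd} gives $v_{\lambda,\lambda',h,h',k}\le\2{f}_{\lambda,\lambda',h,h',k}\le\min(v_{\lambda,h}(x,f(\tau)),\2{v}_{\lambda',h'}(x,f(\tau)))=\2{f}_{\lambda,\lambda',h,h'}(x,\tau)<1$, so the increasing family is uniformly bounded above; hence the pointwise limit $v(x,\tau):=\lim_{k\to 0^+}v_{\lambda,\lambda',h,h',k}(x,\tau)$ exists and satisfies $v\le\2{f}_{\lambda,\lambda',h,h'}$ (consistent with \eqref{v-v-compare}). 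For the lower bound I would observe that $\xi_k(\tau)\nearrow 1$ as $k\searrow 0$ for each fixed $\tau$, so $f_{\lambda,\lambda',h,h',k}$ is decreasing in $k$ and increases to $f_{\lambda,\lambda',h,h'}$ as $k\searrow 0$; since $v\ge v_{\lambda,\lambda',h,h',k}\ge f_{\lambda,\lambda',h,h',k}$ for every $k$, letting $k\to 0$ yields $v\ge f_{\lambda,\lambda',h,h'}$. Thus $v$ satisfies \eqref{v-lambda-lambda'-h-h'-upper-lower-bd}.

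Next I would upgrade this pointwise convergence to $C^{2,1}$ convergence on compact sets. Using again that $f_{\lambda,\lambda',h,h',k}$ is decreasing in $k$, we have $v_{\lambda,\lambda',h,h',k}\ge f_{\lambda,\lambda',h,h',k}\ge f_{\lambda,\lambda',h,h',k_0}$ for all $0<k\le k_0$, and $f_{\lambda,\lambda',h,h',k_0}$ is continuous and strictly positive; together with $v_{\lambda,\lambda',h,h',k}<1$ this makes \eqref{yamabe-ode} uniformly parabolic on every compact subset of $\R\times(-\infty,\2{\tau}_0]$, with constants independent of $k\in(0,k_0]$. By the interior parabolic Schauder estimates \cite{LSU} the family $\{v_{\lambda,\lambda',h,h',k}\}$ is then equicontinuous in $C^{2,1}(K)$ for every compact $K\subset\R\times(-\infty,\2{\tau}_0]$; by the Ascoli theorem every sequence $k_i\searrow 0$ has a subsequence along which $v_{\lambda,\lambda',h,h',k_i}$ converges in $C^{2,1}(K)$, and the monotone pointwise convergence forces each such limit to equal $v$, so in fact $v_{\lambda,\lambda',h,h',k}\to v$ in $C^{2,1}(K)$ as $k\searrow 0$. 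In particular $v\in C^{2,1}(\R\times(-\infty,\2{\tau}_0])$ solves \eqref{yamabe-ode} in $\R\times(-\infty,\2{\tau}_0)$.

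Finally, $v$ solves \eqref{yamabe-ode} and satisfies \eqref{v-lambda-lambda'-h-h'-upper-lower-bd}, so Theorem~\ref{uniqueness-thm} gives $v=v_{\lambda,\lambda',h,h'}$, which is the assertion. The one delicate point is the passage from monotone pointwise convergence to $C^{2,1}$ convergence, where the parabolicity constants must not degenerate as $k\to 0$; this is exactly guaranteed by the uniform lower bound $f_{\lambda,\lambda',h,h',k_0}$, and beyond routine bookkeeping there is no real obstacle, the substantive input being the uniqueness theorem together with the $k$-monotonicity of Remark~\ref{comparison-rmk1}.
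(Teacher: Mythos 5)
Your proposal is correct and follows essentially the same route as the paper's proof: uniform bounds $f_{\lambda,\lambda',h,h',k_0}\le v_{\lambda,\lambda',h,h',k}\le\2{f}_{\lambda,\lambda',h,h'}$ giving $k$-independent uniform parabolicity on compact sets, Schauder estimates plus Ascoli, the $k$-monotonicity of Remark \ref{comparison-rmk1} to identify the (increasing) limit, passage to the limit in \eqref{v-lambda-lambda'-h-h'-k-lower-upper-bd} to obtain \eqref{v-lambda-lambda'-h-h'-upper-lower-bd}, and Theorem \ref{uniqueness-thm} to conclude the limit is $v_{\lambda,\lambda',h,h'}$. Your extra step of first extracting the pointwise monotone limit before upgrading to $C^{2,1}$ convergence is a harmless reorganization of the same argument.
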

\begin{proof}
By \eqref{v-lambda-lambda'-h-h'-k-lower-upper-bd}, 
\begin{equation*}\label{vi-lambda-lambda'-h-h'-ki-upper-lower-bd}
f_{\lambda,\lambda',h,h',k_0}(x,\tau)\le v_{\lambda,\lambda',h,h',k}(x,\tau)\le \2{f}_{\lambda,\lambda',h,h'}(x,\tau)\quad\forall x\in\R,\tau<\2{\tau}_0,0<k\le k_0.
\end{equation*}
Hence the equation \eqref{yamabe-ode} for the family of functions $\{v_{\lambda,\lambda',h,h',k}\}_{0<k\le k_0}$ is uniformly parabolic on every compact subset $K$ of $\R\times (-\infty,\2{\tau}_0]$. Then by Schauder's estimates \cite{LSU} the family of functions $\{v_{\lambda,\lambda',h,h',k}\}_{0<k\le k_0}$ is equi-Holder continuous in $C^{2,1}(K)$ for any compact subset $K$ of $\R\times (-\infty,\2{\tau}_0]$. Hence by Remark \ref{comparison-rmk1}, the Ascoli Theorem and a diagonalization argument   $v_{\lambda,\lambda',h,h',k}$  increases and converges uniformly in $C^{2,1}(K)$ for any compact subset $K$ of $\R\times (-\infty,\2{\tau}_0]$ to a  solution $v\in C^{2,1}(\R\times (-\infty,\2{\tau}_0])$ of \eqref{yamabe-ode} in $\R\times (-\infty,\2{\tau}_0)$ as $k\searrow 0$. Letting $k\searrow 0$ in \eqref{v-lambda-lambda'-h-h'-k-lower-upper-bd}, $v$ satisfies \eqref{v-lambda-lambda'-h-h'-upper-lower-bd}. By Theorem \ref{uniqueness-thm}
$v=v_{\lambda,\lambda',h,h'}$ is the unique solution of \eqref{yamabe-ode} in $\R\times (-\infty,\2{\tau}_0)$ which satisfies \eqref{v-lambda-lambda'-h-h'-upper-lower-bd}.
\end{proof}

We are now ready for the proof of Theorem \ref{5-parameters-soln-thm}, Theorem \ref{4-parameters-soln-thm} and Theorem \ref{3-parameters-soln-thm}.
 
{\ni{\it Proof of Theorem \ref{5-parameters-soln-thm}:}} Existence of solution $v=v_{\lambda,\lambda',h,h',k}\in C^{2,1}(\R\times (-\infty,\2{\tau}_0])$ of \eqref{yamabe-ode} in $\R\times (-\infty,\2{\tau}_0)$ satisfying \eqref{v-lambda-lambda'-h-h'-k-lower-upper-bd} such that $v(x,\tau)$ is a decreasing function of $\tau<\2{\tau}_0$ is proved in Theorem \ref{existence-thm1}. Moreover by Theorem \ref{existence-thm1} and Remark \ref{comparison-rmk1} we can construct the solution $v=v_{\lambda,\lambda',h,h',k}$ such that $v$ also satisfies (ii) and (iv) of Theorem \ref{5-parameters-soln-thm}. 

\noindent{\bf Proof of (i) of Theorem \ref{5-parameters-soln-thm}}: 
Since $v_{\lambda,h}(x,f(\tau))\to 1$, $\2{v}_{\lambda,h}(x,f(\tau))\to 1$ and $\xi_k(\tau)\to 1$  for any $x\in\R$ as $\tau\to -\infty$, we have
\begin{equation*}
f_{\lambda,\lambda',h,h',k}(x,\tau)\to 1\quad\mbox{ and }\quad \2{f}_{\lambda,\lambda',h,h',k}(x,\tau)\to 1\quad\forall x\in\R\quad\mbox{ as }\tau\to -\infty.
\end{equation*}
Hence by \eqref{v-lambda-lambda'-h-h'-k-lower-upper-bd} for any $x\in\R$, $v(x,\tau)\to 1$ as $\tau\to -\infty$ and (i) of Theorem \ref{5-parameters-soln-thm} follows. 

\noindent{\bf Proof of (iii) of Theorem \ref{5-parameters-soln-thm}}:  Let $d$ be given by \eqref{d-defn} and $x(\tau)$ be as in Lemma \ref{critical-pt-x-y-z-lem}. Then $d>\frac{p-1}{p}$. Hence by \eqref{xi-k-infinity} and (i) of Lemma \ref{critical-pt-x-y-z-lem}, there exists constant $C_{\lambda,\lambda',h,h'}>0$ such that
\begin{equation}\label{v-v-bar>xi-k}
v_{\lambda,h}(x(\tau),f(\tau))=\2{v}_{\lambda',h'}(x(\tau),f(\tau))=1-C_{\lambda,\lambda',h,h'}e^{d \tau}+o(e^{d \tau})
>\xi_k(\tau)\quad\mbox{ as }\tau\to -\infty.
\end{equation}
Since
\begin{equation*}
1-\xi_k(\tau)^{p-1}=1-\left(1-ke^{\frac{p-1}{p}\tau}\right)^p
=kpe^{\frac{p-1}{p}\tau}+o\left(e^{\frac{p-1}{p}\tau}\right)\quad\mbox{ as }\tau\to -\infty,
\end{equation*} 
by \eqref{v-v-bar>xi-k},
\begin{align}\label{f-k-lower-bd}
\max_{x\in\R}f_{\lambda,\lambda',h,h',k}(x,\tau)\ge&f_{\lambda,\lambda',h,h',k}(x(\tau),\tau)\notag\\
\ge&(3\xi_k(\tau)^{1-p}-2)^{-\frac{1}{p-1}}=\xi_k(\tau)\left(1+2(1-\xi_k(\tau)^{p-1})\right)^{-\frac{1}{p-1}}\quad\mbox{ as }\tau\to-\infty\notag\\
=&\xi_k(\tau)\left(1+2\left(kpe^{\frac{p-1}{p}\tau}+o\left(e^{\frac{p-1}{p}\tau}\right)\right)\right)^{-\frac{1}{p-1}}\qquad\qquad\qquad\quad\mbox{ as }\tau\to-\infty\notag\\
=&\xi_k(\tau)\left(1-\frac{2kp}{p-1}e^{\frac{p-1}{p}\tau}+o\left(e^{\frac{p-1}{p}\tau}\right)\right)\qquad\qquad\qquad\qquad\quad\mbox{ as }\tau\to-\infty.
\end{align}
By \eqref{v-lambda-lambda'-h-h'-k-lower-upper-bd} and \eqref{f-k-lower-bd},
\begin{align*}
&\xi_k(\tau)\ge v(x_0(\tau),\tau)=\max_{x\in\R}v(x,\tau)\ge \xi_k(\tau)\left(1-\frac{2kp}{p-1}e^{\frac{p-1}{p}\tau}+o\left(e^{\frac{p-1}{p}\tau}\right)\right)\quad\mbox{ as }\tau\to-\infty\\
\Rightarrow\quad&|v(x_0(\tau),\tau)-\xi_k(\tau)|\le O\left(e^{\frac{p-1}{p}\tau}\right)\qquad\qquad\qquad\qquad\qquad\qquad\qquad\qquad\mbox{ as }\tau\to-\infty
\end{align*}
and (iii) of Theorem \ref{5-parameters-soln-thm} follows.

\noindent{\bf Proof of (v) of Theorem \ref{5-parameters-soln-thm}}: If $c>\lambda$, then
\begin{equation}\label{v-x+c-tau-limit}
v_{\lambda,h}(x+c\tau,f(\tau))=v_{\lambda}(x+(c-\lambda-\lambda qe^{\frac{p-1}{p}\tau})\tau+h)\to 0\quad\mbox{ uniformly on }(-\infty,A]\quad\forall A\in\R
\end{equation}
as $\tau\to -\infty$.
By  \eqref{v-lambda-lambda'-h-h'-k-lower-upper-bd} and \eqref{v-x+c-tau-limit},
\begin{equation*}
0<v(x+c\tau,\tau)\le\2{f}_{\lambda,\lambda',h,h',k}(x+c\tau,\tau)\le v_{\lambda,h}(x+c\tau,f(\tau))\to 0\quad\mbox{ uniformly on }(-\infty,A]
\end{equation*}
for any $A\in\R$ as $\tau\to -\infty$ and (v)(a) of Theorem \ref{5-parameters-soln-thm} follows.

If $-\lambda'<c<\lambda$, then
\begin{equation}\label{v-x+c-tau-limit2}
\left\{\begin{aligned}
&v_{\lambda,h}(x+c\tau,f(\tau))=v_{\lambda}(x+(c-\lambda-\lambda qe^{\frac{p-1}{p}\tau})\tau+h)\to 1\\
&\2{v}_{\lambda',h'}(x+c\tau,f(\tau))=v_{\lambda'}(-x-(c+\lambda'+\lambda' qe^{\frac{p-1}{p}\tau})\tau+h')\to 1
\end{aligned}\right.
\end{equation}
uniformly on any compact subset of $\R$ as $\tau\to -\infty$. By \eqref{xi-k-infinity}, \eqref{v-lambda-lambda'-h-h'-k-lower-upper-bd} and \eqref{v-x+c-tau-limit2},
\begin{equation*}
1\ge v(x+c\tau,\tau)\ge f_{\lambda,\lambda',h,h',k}(x+c\tau,\tau)\to 1
\end{equation*}
uniformly on any compact subset of $\R$ as $\tau\to -\infty$ and (v)(b) of Theorem \ref{5-parameters-soln-thm} follows.

If $c<-\lambda'$, then
\begin{equation}\label{v-x+c-tau-limit3}
\2{v}_{\lambda',h'}(x+c\tau,f(\tau))=v_{\lambda'}(-x-(c+\lambda'+\lambda' qe^{\frac{p-1}{p}\tau})\tau+h')\to 0\quad\mbox{ uniformly on }[A,\infty)\quad\forall A\in\R
\end{equation}
as $\tau\to -\infty$. By \eqref{v-lambda-lambda'-h-h'-k-lower-upper-bd} and \eqref{v-x+c-tau-limit3},
\begin{equation*}
0<v(x+c\tau,\tau)\le\2{v}_{\lambda',h'}(x+c\tau,f(\tau))\to 0\quad\mbox{ uniformly on }[A,\infty)
\end{equation*}
for any $A\in\R$ as $\tau\to -\infty$ and (v)(c) of Theorem \ref{5-parameters-soln-thm} follows.

If $c=\lambda$, then
\begin{align}
|v_{\lambda,h}(x+c\tau,f(\tau))-v_{\lambda}(x+h)|=&|v_{\lambda}(x+h-\lambda q\tau e^{\frac{p-1}{p}\tau})-v_{\lambda}(x+h)|\notag\\
\le&\|v_{\lambda}'\|_{L^{\infty}(\R)} \lambda q|\tau| e^{\frac{p-1}{p}\tau}\to 0\quad\mbox{ uniformly on }\R\quad\mbox{ as }\tau\to -\infty
\label{ancient-limit-3}
\end{align}
and
\begin{equation}\label{v-x+c-tau-limit6}
\2{v}_{\lambda',h'}(x+c\tau,f(\tau))=v_{\lambda'}(-x-(\lambda+\lambda'+\lambda' qe^{\frac{p-1}{p}\tau})\tau+h')\to 1\quad\mbox{ uniformly on }(-\infty,A]
\end{equation}
for any $A\in\R$ as $\tau\to -\infty$. Since by \eqref{xi-k-infinity} $\xi_k(\tau)\to 1$ as $\tau\to -\infty$, by \eqref{v-lambda-lambda'-h-h'-k-lower-upper-bd}, \eqref{ancient-limit-3} and \eqref{v-x+c-tau-limit6},
\begin{equation}\label{limit1-ineqn}
\2{f}_{\lambda,\lambda',h,h',k}(x+c\tau,\tau)\le v_{\lambda,h}(x+c\tau,f(\tau))\to v_{\lambda}(x+h)\quad\mbox{ uniformly on }\R
\quad\mbox{ as }\tau\to -\infty
\end{equation}
and
\begin{equation}\label{limit2-ineqn}
f_{\lambda,\lambda',h,h',k}(x+c\tau,\tau)\to v_{\lambda}(x+h)\quad\mbox{ uniformly on }(-\infty,A]\quad\forall A\in\R\quad\mbox{ as }\tau\to -\infty
\end{equation}
By \eqref{v-lambda-lambda'-h-h'-k-lower-upper-bd}, \eqref{limit1-ineqn} and \eqref{limit2-ineqn}, (v)(d) of Theorem \ref{5-parameters-soln-thm} follows.

If $c=-\lambda'$, then
\begin{align}
|\2{v}_{\lambda',h'}(x+c\tau,f(\tau))- v_{\lambda'}(-x+h')|
=&|v_{\lambda'}(-x+h'-\lambda' q\tau e^{\frac{p-1}{p}\tau})-v_{\lambda'}(-x+h')|\notag\\
\le&\|v_{\lambda'}'\|_{L^{\infty}(\R)} \lambda' q|\tau| e^{\frac{p-1}{p}\tau}\to 0\quad\mbox{ uniformly on }\R\quad\mbox{ as }\tau\to -\infty
\label{ancient-limit-6}
\end{align}
and
\begin{equation}\label{v-x+c-tau-limit4}
v_{\lambda,h}(x+c\tau,f(\tau))=v_{\lambda}(x-(\lambda+\lambda'+\lambda qe^{\frac{p-1}{p}\tau})\tau+h)\to 1\quad\mbox{ uniformly on }[A,\infty)\\\
\end{equation}
for any $A\in\R$ as $\tau\to -\infty$. By \eqref{xi-k-infinity},  \eqref{ancient-limit-6} and \eqref{v-x+c-tau-limit4},
\begin{equation}\label{limit3-ineqn}
f_{\lambda,\lambda',h,h',k}(x+c\tau,\tau)\to v_{\lambda'}(-x+h')\quad\mbox{ uniformly on }[A,\infty)\quad\forall A\in\R\quad\mbox{ as }\tau\to -\infty
\end{equation}
and 
\begin{equation}\label{limit4-ineqn}
\2{f}_{\lambda,\lambda',h,h',k}(x+c\tau,\tau)\le\2{v}_{\lambda',h'}(x+c\tau,f(\tau))\to v_{\lambda'}(-x+h')\quad\mbox{ uniformly on }\R\quad\mbox{ as }\tau\to -\infty.
\end{equation}
By \eqref{v-lambda-lambda'-h-h'-k-lower-upper-bd}, \eqref{limit3-ineqn} and \eqref{limit3-ineqn},
(v)(e) of Theorem \ref{5-parameters-soln-thm} follows.

{\hfill$\square$\vspace{6pt}

{\ni{\it Proof of Theorem \ref{4-parameters-soln-thm}:}}
By Theorem \ref{existence-thm2}, Remark \ref{comparison-rmk0}, Theorem \ref{uniqueness-thm},   Theorem \ref{limit-thm} and an argument similar to the proof of Theorem \ref{5-parameters-soln-thm}, there exists a unique solution $v=v_{\lambda,\lambda',h,h'}\in C^{2,1}(\R\times (-\infty,\2{\tau}_0])$ of \eqref{yamabe-ode} in $\R\times (-\infty,\2{\tau}_0)$ which satisfies \eqref{v-lambda-lambda'-h-h'-upper-lower-bd}, \eqref{v-v-compare} and (i), (ii),  (v) of Theorem \ref{4-parameters-soln-thm}.
By  an argument similar to the proof of Remark \ref{comparison-rmk1}, (iv) of Theorem \ref{4-parameters-soln-thm} holds. Note the (vii) of Theorem \ref{4-parameters-soln-thm} is proved in Theorem \ref{limit-thm}.

\noindent{\bf Proof of (iii) of Theorem \ref{4-parameters-soln-thm}}: Let $x(\tau)$ be as  in Lemma \ref{critical-pt-x-y-z-lem} and $x_0(\tau)$ be given by (ii) of Theorem \ref{4-parameters-soln-thm}. By \eqref{gamma-eqn},
\begin{equation*}
-x(\tau)-\lambda' f(\tau)=-\left(\frac{\gamma_{\lambda}\gamma_{\lambda'}+p-1}{p\gamma_{\lambda'}}\right)\tau+ O(1)\to\infty\quad\mbox{ as }\tau\to -\infty.
\end{equation*}
Hence by \eqref{v-lambda-h-value-at-infty2} there exists a constant $C_{\lambda'}>0$ such that
\begin{align}\label{v-bar-power-expansion}
\2{v}_{\lambda',h'}(x(\tau),f(\tau))^{1-p}-1=&\left(1-C_{\lambda'}e^{-\gamma_{\lambda'}(-x(\tau)-\lambda' f(\tau)+h')}+o(e^{-\gamma_{\lambda'}(-x(\tau)-\lambda' f(\tau)+h')})\right)^{1-p}-1\notag\\
=&\left(1-C_{\lambda'}e^{d\tau}+o(e^{d\tau})\right)^{1-p}-1\notag\\
=&(p-1)C_{\lambda'}e^{d\tau}+o(e^{d\tau})\quad\mbox{ as }\tau\to-\infty
\end{align}
where $d$ is given by \eqref{d-defn}. By \eqref{v-lambda-lambda'-h-h'-upper-lower-bd}, \eqref{f-bar-max-value-expansion} and \eqref{v-bar-power-expansion},
\begin{align*}
v_{\lambda,h}(x(\tau),f(\tau))=&\max_{x\in\R}\2{f}_{\lambda,\lambda',h,h'}(x,\tau)\\
\ge&v(x_0(\tau),\tau)=\max_{x\in\R}v(x,\tau) \ge v(x(\tau),\tau)\ge f_{\lambda,\lambda',h,h'}(x(\tau),\tau)\\
=&v_{\lambda,h}(x(\tau),f(\tau))\left(1+v_{\lambda,h}(x(\tau),f(\tau))^{p-1}\left(\2{v}_{\lambda',h'}(x(\tau),f(\tau))^{1-p}-1\right)\right)^{-\frac{1}{p-1}}\\
=&v_{\lambda,h}(x(\tau),f(\tau))\left(1+(p-1)C_{\lambda'}v_{\lambda,h}(x(\tau),f(\tau))^{p-1}e^{d\tau}+o(e^{d\tau})\right)^{-\frac{1}{p-1}}\\
=&v_{\lambda,h}(x(\tau),f(\tau))\left(1-C_{\lambda'}v_{\lambda,h}(x(\tau),f(\tau))^{p-1}e^{d\tau}+o(e^{d\tau})\right)\quad\mbox{ as }\tau\to-\infty.
\end{align*}
Hence
\begin{equation*}
\left|\max_{x\in\R}v(x,\tau)-v_{\lambda,h}(x(\tau),f(\tau))\right|
\le (C_{\lambda'}+1)e^{d\tau}\le C e^{\frac{p-1}{p}\tau}\quad\mbox{ as }\tau\to-\infty\notag\\
\end{equation*}
and (iii) of Theorem \ref{4-parameters-soln-thm} follows.

\noindent{\bf Proof of (vi) of Theorem \ref{4-parameters-soln-thm}}: Since  $v_{\lambda,h}(x,f(\tau))$ converges to $1$ uniformly on $[A,\infty)\times [\tau_1,\2{\tau}_0]$ as $h\to\infty$ for any $A\in\R$ and $\tau_1<\2{\tau}_0$, both $f_{\lambda,\lambda',h,h'}(x,\tau)$ and $\2{f}_{\lambda,\lambda',h,h'}(x,\tau)$
converges to $\2{v}_{\lambda',h'}(x,f(\tau))$ uniformly on $[A,\infty)\times [\tau_1,\2{\tau}_0]$ for any $A\in\R$ and $\tau_1<\2{\tau}_0$ as $h\to\infty$. Hence
by \eqref{v-lambda-lambda'-h-h'-upper-lower-bd} $v_{\lambda,\lambda',h,h'}$ converges to $\2{v}_{\lambda',h'}(x,f(\tau))$ uniformly on $[A,\infty)\times [\tau_1,\2{\tau}_0]$ for any $A\in\R$ and $\tau_1<\2{\tau}_0$ as $h\to\infty$.   The proof of the other case $h'\to\infty$ then also follows by a similar argument.

{\hfill$\square$\vspace{6pt}

{\ni{\it Proof of Theorem \ref{3-parameters-soln-thm}:}} By Theorem \ref{existence-thm3}, Remark \ref{comparison-rmk0} and an argument similar to the proof of  
Theorem \ref{5-parameters-soln-thm}, there exists a  solution $v=v_{\lambda,h,k}\in C^{2,1}(\R\times (-\infty,\2{\tau}_0])$ of \eqref{yamabe-ode} in $\R\times (-\infty,\2{\tau}_0)$ which satisfies \eqref{v-lambda-h-k-upper-lower-bd}, \eqref{v-v-compare2} and (i), (ii), (v) of Theorem \ref{3-parameters-soln-thm}.  By choosing $v_{0,j}=\2{f}_{\lambda,h,k}(x,-j)$ in the construction of the solution $v_{\lambda,h,k}$ in Theorem \ref{existence-thm3} and an argument similar to the proof of Remark \ref{comparison-rmk1}, we can construct the solution $v_{\lambda,h,k}$ such that (iv) of Theorem \ref{3-parameters-soln-thm} holds. Note that (iii) of Theorem \ref{3-parameters-soln-thm} follows directly from \eqref{v-lambda-h-k-upper-lower-bd}.

Since $\xi_k$ converges to $1$ uniformly on $\R\times (-\infty,\2{\tau}_0]$ as $k\to 0$, both
$f_{\lambda,h,k}$  and $\2{f}_{\lambda,h,k}$ converges to $v_{\lambda,h}(x,f(\tau))$ uniformly on every compact subset of $\R\times (-\infty,\2{\tau}_0]$ as $k\to 0$. Hence by \eqref{v-lambda-h-k-upper-lower-bd}, (vi) of Theorem \ref{3-parameters-soln-thm} follows.

Since  $v_{\lambda,h}(x,f(\tau))$ converge to $1$ uniformly on $[A,\infty)\times [\tau_1,\2{\tau}_0]$ as $h\to\infty$ for any $A\in\R$ and $\tau_1<\2{\tau}_0$,
both $f_{\lambda,h,k}$ and $\2{f}_{\lambda,h,k}$ converges to $\xi_k$ uniformly on $[A,\infty)\times [\tau_1,\2{\tau}_0]$ as $h\to\infty$ for any $A\in\R$ and $\tau_1<\2{\tau}_0$. Hence by \eqref{v-lambda-h-k-upper-lower-bd}  $v_{\lambda,h,k}$ converges to $\xi_k$ uniformly on $[A,\infty)\times [\tau_1,\2{\tau}_0]$ as $h\to\infty$ for any $A\in\R$ and $\tau_1<\2{\tau}_0$ and (vii) of Theorem \ref{3-parameters-soln-thm} follows.

\noindent{\bf Proof of (viii) of Theorem \ref{3-parameters-soln-thm}}: Let $v_{\lambda,\lambda',h,h',k}\in C^{2,1}(\R\times (-\infty,\2{\tau}_0])$ be a solution of \eqref{yamabe-ode} in $\R\times (-\infty,\2{\tau}_0)$ given by Theorem \ref{5-parameters-soln-thm} that satisfies \eqref{v-lambda-lambda'-h-h'-k-lower-upper-bd}. Then by \eqref{v-lambda-lambda'-h-h'-k-lower-upper-bd},
\begin{equation*}
f_{\lambda,\lambda',h,h_0',k}(x,\tau)\le v_{\lambda,\lambda',h,h',k}(x,\tau)\le\2{f}_{\lambda,h,k}(x,\tau)\quad\forall x\in\R,\tau<\2{\tau}_0,h'\ge h_0'.
\end{equation*}
Hence the equation \eqref{yamabe-ode} for the family of ancient solutions $\{v_{\lambda,\lambda',h,h',k}\}_{h'\ge h_0'}$ is uniformly parabolic on every compact subset $K$ of $\R\times (-\infty,\2{\tau}_0]$. By the parabolic Schauder estimates \cite{LSU} the family  $\{v_{\lambda,\lambda',h,h',k}\}_{h'\ge h_0'}$ is uniformly equi-Holder continuous in $C^{2,1}(K)$ on every compact subset $K$ of $\R\times (-\infty,\2{\tau}_0]$. Then by the Ascoli Theorem and (iv) of Theorem \ref{5-parameters-soln-thm}, the sequence $\{v_{\lambda,\lambda',h,h',k}\}_{h'\ge h_0'}$ increases and converges in $C^{2,1}(K)$ for any compact subset $K$ of $\R\times (-\infty,\2{\tau}_0]$ as $h'\to\infty$ to a solution $v\in C^{2,1}(\R\times (-\infty,\2{\tau}_0])$  of \eqref{yamabe-ode} in $\R\times (-\infty,\2{\tau}_0)$ that satisfies \eqref{v-v-compare2} with $v_{\lambda,h,k}$ there being replaced by $v$. Letting  $h'\to\infty$ in \eqref{v-lambda-lambda'-h-h'-k-lower-upper-bd}, $v$ satisfies \eqref{v-lambda-h-k-upper-lower-bd}.

{\hfill$\square$\vspace{6pt}}

\end{document}